
\documentclass[aap]{imsart}

\RequirePackage{amsthm,amsmath,amsfonts,amssymb}
\RequirePackage[numbers]{natbib}
\RequirePackage{color}
\RequirePackage{mathtools,enumitem}
\RequirePackage{csquotes}
\RequirePackage{subcaption}
\RequirePackage[colorlinks,citecolor=blue,urlcolor=blue]{hyperref}
\usepackage{thmtools}
\usepackage{cleveref}
\AtBeginEnvironment{appendix}{\crefalias{section}{appendix}}
\RequirePackage{graphicx}
\usepackage{dsfont}
\usepackage{graphicx}
\usepackage{bbm}
\usepackage{mathrsfs}
\usepackage{xcolor}
\usepackage{svg}
\usepackage{bm}
\usepackage{ushort}

\startlocaldefs
\theoremstyle{plain}
\newtheorem{theorem}{Theorem}[section]

\newtheorem{lemma}[theorem]{Lemma}

\newtheorem{proposition}[theorem]{Proposition}

\newtheorem{definition}[theorem]{Definition}

\newenvironment{rem}
  {\pushQED{\qed}\remarkx}
  {\popQED\endremarkx}
\makeatletter

\newcommand{\invisible}[1]{}

\newcommand{\todotemp}[1]{}
\numberwithin{equation}{section}
\numberwithin{theorem}{section}

\setcounter{secnumdepth}{2} 






\setcounter{tocdepth}{3}

\makeatletter


\newcommand{\bfT} {{\mathbf T}}

\newcommand{\Rbold} {{\mathbb R}}


\newcommand{\e}{{\mathrm e}}


\newcommand{\PAnamdel}{{\rm PA}_{n}^{\sss (m,\delta)}(a)}

\newcommand{\PAnaonedelcol}{{\rm PA}_{mn}^{\sss (1,\delta/m)}(a)}
\newcommand{\PAnaonedel}{{\rm PA}_{n}^{\sss (1,\delta)}(a)}

\newcommand{\PAnbonedel}{{\rm PA}_{n}^{\sss (1,\delta)}(b)}

\newcommand{\PAnbonedelcol}{{\rm PA}_{mn}^{\sss (1,\delta/m)}(b)}

\newcommand{\PAncmdel}{{\rm PA}_{n}^{\sss (m,\delta)}(c)}

\newcommand{\PAndmdel}{{\rm PA}_{n}}

\newcommand{\PAndonedelcol}{{\rm PA}_{mn}^{\sss (1,\delta/m)}(d)}

\newcommand{\PAn}{{\rm PA}_n}

\newcommand{\PAnbmdel}{{\rm PA}_{n}^{\sss (m,\delta)}(b)}

\newcommand{\gdist}[1]{{\rm dist}_{\sss #1}}

\newcommand{\Ver}{o}



\newcommand{\dRG}{{\mathrm{d}}}

\newcommand{\dint}{\mathrm{d}}


\newcommand{\Bin}{{\sf Bin}}

\newcommand {\convp}{\stackrel{\sss {\mathbb P}}{\longrightarrow}}

\newcommand\1{\mathbbm{1}}
\newcommand{\indic}[1]{\1_{\{#1\}}}
\newcommand{\indicwo}[1]{\1_{#1}}


\newcommand{\expec}{{\mathbb{E}}}

\newcommand{\prob}{{\mathbb{P}}}

\newcommand{\Var}{\mathrm{Var}}



\newcommand{\R}{\Rbold}

\newcommand{\sss}   { \scriptscriptstyle }

\newcommand{\rO}{{\mathrm{\scriptstyle{O}}}}
\newcommand{\rY}{{\mathrm{\scriptstyle{Y}}}}
\newcommand{\srO}{{\mathrm{\scriptscriptstyle{O}}}}
\newcommand{\srY}{{\mathrm{\scriptscriptstyle{Y}}}}

\newcommand {\vep}{\varepsilon}


\newcommand{\jconn}[1]{\stackrel{#1}{\rightsquigarrow}}


\def\eqalign#1\enalign{
    \begin{align}#1\end{align}
    }
\newcommand{\eq}{\begin{equation}}
\newcommand{\en}{\end{equation}}
\newcommand{\ben}{\begin{enumerate}}
\newcommand{\een}{\end{enumerate}}
\newcommand{\eqn}[1]{\begin{equation} #1 \end{equation}}
\newcommand{\eqan}[1]{\eqalign #1 \enalign}


\newcommand{\nn}{\nonumber}


\renewcommand{\to}{\rightarrow}

\newcommand{\RvdH}[1]{\todo[inline, color=magenta]{Remco: #1}}
\newcommand{\beginsol}[1]{}

\newcommand{\customitem}[1]{%
\item[\rm#1]\protected@edef\@currentlabel{#1}%
}
\makeatother
\newcommand\nka{\hat{\kappa}}
\newcommand\rou[1]{\lfloor{#1}\rfloor}
\newcommand\ceil[1]{\lceil{#1}\rceil}
\newcommand{\zhu}[1]{{#1}}

\newcommand\bc[1]{\left({#1}\right)}
\newcommand\cbc[1]{\left\{{#1}\right\}}

\newcommand\brk[1]{\left\lbrack{#1}\right\rbrack}

\newcommand\norm[1]{\left\|{#1}\right\|}
\newcommand\abs[1]{\left|{#1}\right|}
\usepackage{dsfont}
\newcommand{\vpie}{\vec\pi^{\sss e}}
\newcommand{\vrhoe}{\vec\rho^{\sss e}}
\newcommand\Erw{\mathbb{E}}
\newcommand\bdt{b}

\newcommand\PP{\mathbb{P}}

\newcommand\RR{\mathbb{R}}
\newcommand\nVer{a}
\newcommand{\psE}{p_s^{\sss E}}
\newcommand{\qsE}{q_s^{\sss E}}
\newcommand{\pspi}{p_s^{\sss \pi}}
\newcommand{\qspi}{q_s^{\sss \pi}}
\newcommand{\pspiu}{p_s^{\sss \pi_{\vec u}}}
\newcommand{\qspiu}{q_s^{\sss \pi_{\vec u}}}
\newcommand{\psrho}{p_s^{\sss \rho}}
\newcommand{\qsrho}{q_s^{\sss \rho}}
\newcommand{\lb}{{\rm lb}}
\newcommand{\rangeoy}{[0,1]\times\cbc{\rO,\rY}}

\newcommand{\uhvep}{\underline{\nu}_{\zeta}}
\newcommand{\rmq}{{\rm q}}
\newcommand{\SL}{{\rm SL}}

\newcommand{\PPT}{{\rm PPT}^{\sss (m,\delta)}}
\newcommand{\PU}{ {\rm PU}_n}
\newcommand{\PUm}{{\rm PU}_{mn}}
\newcommand{\SPU}{{\rm SPU}_n}
\newcommand{\SPUm}{{\rm SPU}_{mn}}
\newcommand{\SA}{\mathscr{S}}
\newcommand{\ESA}{\mathscr{S}}
\newcommand{\TESA}{\tilde{\mathscr{S}}}
\newcommand{\BESA}{\bar{\mathscr{S}}}
\newcommand{\TSA}{\tilde{\mathscr{S}}}
\newcommand{\NESA}{\tilde{\mathscr{S}}}
\newcommand{\NA}{\mathscr{N}}
\newcommand{\NP}{\mathscr{P}}
\newcommand{\GNW}{\mathcal{E}}
\DeclareMathOperator{\sgn}{sgn}
\newcommand{\imag}{\mathrm{i}}
\newcommand{\ch}[1]{{#1}}
\newcommand{\longversion}[1]{#1}
\newcommand{\shortversion}[1]{}
\endlocaldefs
\newcommand{\PAnjmdel}{{\rm PA}_{n,j}}
\endlocaldefs

\begin{document}

\begin{frontmatter}
\title{Logarithmic typical distances in\\
preferential attachment models}
\runtitle{Typical distances in preferential attachment models}

\begin{aug}
\author[A]{\fnms{Remco} \snm{van der Hofstad }\ead[label=e1]{r.w.v.d.hofstad@tue.nl}}
\author[A]{\fnms{Haodong} \snm{Zhu}\ead[label=e2]{h.zhu1@tue.nl}}
\address[A]{Department of Mathematics and Computer Science, Eindhoven University of Technology\printead[presep={,\\}]{e1,e2}}
\end{aug}
\begin{abstract}
We prove that the typical distances in a preferential attachment model with out-degree $m\geq 2$ and strictly positive fitness parameter are close to $\log_\nu{n}$, where $\nu$ is the exponential growth parameter of the local limit of the preferential attachment model. The proof relies on a path-counting technique, the first- and second-moment methods, as well as a novel proof of the convergence of the spectral radius of the offspring operator under a certain truncation.
\end{abstract}


\begin{keyword}
\kwd{Random graphs}
\kwd{Preferential attachment model}
\kwd{Typical distance}
\end{keyword}

\end{frontmatter}
\tableofcontents


\section{Introduction and main result}

\invisible{\RvdH{- In terms of notation, do we wish to simplify $\PAn=\PAndmdel$ after the first section?\\
 -Abbreviate left- and rhs by l.h.s. and r.h.s.?\\
- Are there further parts that can be moved to the appendix? The paper is somewhat longish.\\
- It would be good also to anticipate a short and long version, where the short version will be submitted to the journal, the long one to the arxiv. I added the macros on the last line of the preamble that will allow us to do this.}}
\subsection{Introduction}
\label{sec-intro}
 \paragraph{Common properties of real-world networks} 
One of the primary objectives of random graph theory is to effectively model the diverse and complex networks present in the real world. Many of these networks share three common properties (see, e.g., \cite[Chapter 10]{Newm18}):
\begin{enumerate}
    \item \textit{Small-world phenomenon}: The shortest path length between a randomly selected pair of vertices in a real-world network is typically much smaller than the overall size of the network. A well-known example of this is the "six degrees of separation" paradigm, which posits that any two people in the world are connected through six or fewer social relationships (see, e.g., \cite{Watt03}).
    \item \textit{Power-law degree distribution}: The degree distribution in many real-world networks is close to a power law. Thus, while most vertices have a relatively small number of connections, a few vertices (often termed ``hubs") possess an extraordinarily high number of connections. 
    \item \textit{Giant  component}: Many real-world networks have a giant connected component that encompasses a positive proportion, if not all, of the network's vertices.
\end{enumerate}
\ch{The} preferential attachment model (PAM) has been developed as a probabilistic framework that captures these three essential features. Unlike traditional random graph models, such as the inhomogeneous random graph or the configuration model, the PAM is \textit{dynamic} and reflects the ongoing growth of networks over time. This model is grounded in the principle of cumulative advantage, meaning that vertices that already have a high degree of connectivity are more likely to attract new connections. As a result, PAMs have been proposed as models for a wide range of real-world networks, including sexual networks \cite{PAinSN,JonHan03}, the World-Wide Web \cite{AdaHub00,kbmm13}, networks that epidemics spread on \cite{MR2298278,Jones2003,Watt03}, and citation networks \cite{Csar06,WanYuYu08}.

As one of the central properties of real-world networks, the length of the shortest path between two uniformly chosen vertices in a network, conditioned on their being connected, has a long history of study. In random graph theory, this length is referred to as the \textit{typical distance}. Understanding \ch{the} typical distance is critical, as it has significant implications for various real-world phenomena. For example, in the context of epidemics, the typical distance serves as an indicator of the speed at which a disease can propagate from one individual to another \cite{Newm18}. 

\paragraph{Universality of the typical distance and the exponential growth parameter $\nu$} Numerous observations and proofs have established that the behavior of typical distances in sparse random graph models is closely linked to a spectral radius $\nu$ \footnote{$\nu$ is sometimes also called spectral norm, which is the same in the cases we care about.}. This spectral radius serves as a crucial indicator of the exponential growth of neighborhoods within the graph. This exponential growth has been shown for local limits of various sparse random graph models, including inhomogeneous random graphs 
\cite{BolJanRio07,ChuLu03,DerMonMor12,EskHofHoo06}, configuration models \cite{HofHooVan05a,HofHooZna04a} and PAMs with out-degree  $m\geq 2$ \cite{DerMonMor12,DomHofHoo10,Pitt94}.
More precisely,
\begin{enumerate}
\item when $\nu= \infty$, the typical distances grow sub-logarithmically with the network size; such network are often called \textit{ultra-small worlds};

    \item when $\nu< \infty$, the typical distances grow  logarithmically with
the network size\ch{;} such network are often called as \textit{small worlds}. 
\end{enumerate}

In the first case, for a network of size $n$, a sharp bound of $\frac{c}{\tau-2}\log \log n$ is established\footnote{Throughout this paper we use $\log$  to denote the natural logarithm when the base is not specified.}  for the typical distances in a wide class of random graphs where vertex degrees follow an asymptotic power-law distribution with exponent $\tau\in (2,3)$. Specifically, for PAMs, this setting applies when the out-degree \ch{$m$ satisfies} $m\geq 2$ and the fitness parameter $\delta\in (-m,0).$ \zhu{\ch{Here, the} fitness parameter acts as a weight in the probability that an edge is connected to a vertex in the PAM, \ch{where} a vertex \ch{has} weight $\delta$, and an edge weight 
$1$, \ch{and} connection \ch{occurs with a probability that is proportional to the weight of the vertex and its incident edges}; see \eqref{eq-def-edge-prob-pand} for an explicit formula.} \ch{The degree} power-law exponent \ch{is} given by $\tau=3+\delta/m$.\footnote{When $m=1$, for any $\delta>-1$, the PAMs are trees, and a different sharp bound of the order of $\log n$ was proved for their typical distances (see, e.g., \cite[Theorem 8.1]{Hofs24}).} The constant 
$c$ varies depending on the graph model:
For inhomogeneous random graphs and configuration models, $c=2$, while for PAMs, $c=4$
\cite{DerMonMor12,HofHooZna04a}. 
When $\delta=0$, the typical distance for PAM is around $\frac{\log n}{\log \log n}$ \cite{BolRio04b,Hofs24}.

In the second case, corresponding to $\delta>0$ for PAMs, a sharp bound of $\log_\nu n$ for the typical distance is established for both inhomogeneous random graphs and configuration models.  In $2010$, with $m\geq 2$, Dommers, the first author  and Hooghiemstra proved an upper bound $c_1\log n$ and a lower bound $c_2\log n$, for some constants $c_1<c_2$ in \cite{DomHofHoo10}, leaving the precise sharp bound as an open problem. Later, in $2024$, the first author  provided a lower bound of $\log_\nu{n}$ in \cite[Theorem 8.13]{Hofs24}, and conjectured this bound to be sharp.

In this article, we show that this conjecture is indeed correct, by proving that the typical distance in PAMs with out-degree $m\geq 2$ and fitness parameter $\delta>0$ is indeed \zhu{of order} $\log_\nu{n}$. Before formulating this result, we first formally define our PAMs.

\subsection{Model definition}\label{sec_mod-def}
There are several variations of PAMs, \zhu{and the model we primarily focus on in this article \ch{will be denoted by}  $\PAndmdel^{\sss (m,\delta)}$; \ch{see Remark \ref{rem-related-PAMs} for extensions to related PAMs}.} Here, $n$ represents the number of vertices in the graph. Here, $n$ represents the number of vertices in the graph,  $m$ denotes the number of out-edges from vertices other than $1$, and the fitness parameter $\delta$ impacts the law of the other endpoint of these edges.  For simplicity, we use $[n]=\cbc{1,2,\ldots,n}$ to represent the vertex set of $\PAndmdel^{\sss (m,\delta)}$.

\paragraph{Construction of $\PAndmdel^{\sss (m,\delta)}$} We start with $n=2$. In this case, the model consists of $2$ vertices labeled $1$ and $2$, with $m$ edges directed from $2$ to $1$, labeled $1,2,\ldots,m$. At time $n+1$, the vertex $n+1$ and $m$ out-edges from $n+1$  labeled by $1,2,\ldots, m$, are added to the graph. 
The probabilities that the other endpoints of these $m$ out-edges from $n+1$ are given by
\eqan{
\PP\bc{n+1\jconn{j+1}  i\mid \PAnjmdel^{\sss (m,\delta)}}=\frac{D_i(n,j)+\delta}{2(n-1)m+n\delta+j}~\forall~i\in [n],~j\in\cbc{0,\ldots,m-1},\label{eq-def-edge-prob-pand}
}
where $\PAnjmdel^{\sss (m,\delta)}$ is the graph after adding vertex $n+1$ and the first $j$ out-edges of vertex $n+1$ to $\PAndmdel^{\sss (m,\delta)}$, and $D_i(n,j)$ is the degree of $i$ in $\PAnjmdel^{\sss (m,\delta)}$. 

In the construction, vertex $i$ is added at time $i$. Hence, we often refer to  $i$ as the vertex's \textit{age}. Furthermore, we say \ch{that} a vertex $x$ is \textit{older}  than vertex $y$ if $x<y$, and \textit{younger} if $x>y$. 
It is worth mentioning that although PAMs are constructed as directed graphs, they are often treated as undirected graphs in many real-world applications, such as sexual networks and epidemics, which is also the perspective used in this paper.

\paragraph{Connectivity of $\PAndmdel^{\sss (m,\delta)}$} According to \eqref{eq-def-edge-prob-pand}, each vertex in $\PAndmdel^{\sss (m,\delta)}$, except for vertex $1$, is always linked to an older vertex. Therefore, each vertex belongs to the same connected component as vertex $1$. Consequently, $\PAndmdel^{\sss (m,\delta)}$ forms a connected graph.

\zhu{
 \begin{rem}[\ch{Related PAMs}]
 \label{rem-related-PAMs}
     Sometimes $\PAndmdel^{\sss (m,\delta)}$ is written as $\PAndmdel^{\sss (m,\delta)}(d)$ or $\PAndmdel^{\sss (D)}{ (m,\delta)}$ \cite{GarHazHofRay22,Hofs24}.  \ch{The name $\PAndmdel^{\sss (m,\delta)}(d)$ correctly suggests that} there are models like $\PAnamdel$, $\PAnbmdel$, and $\PAncmdel$. \ch{These} models only differ slightly in their starting states and edge probabilities. We
\longversion{defer the introduction of other PAMs to \Cref{sec-td-other-pams}.}\shortversion{refer to \cite[Appendix C]{HofZhu25long} for the introduction of other PAMs.}
 \end{rem}
}
\subsection{Main result}
The main result of this article is the following theorem, which identifies the asymptotics of the typical distance in $\PAndmdel^{\sss (m,\delta)}$:
\begin{theorem}[Convergence of the typical distances]
\label{thm-log-PA-delta>0}
Fix $m\geq 2$ and $\delta>0$, and consider $\PAndmdel^{\sss (m,\delta)}$. As $n\rightarrow \infty$, 
with $\Ver_1,\Ver_2$ uniformly distributed in $[n]$, 
\begin{align*}
    \frac{\gdist{\PAndmdel^{\sss (m,\delta)}}(\Ver_1, \Ver_2)}{\log_\nu{n}}\convp 1,
\end{align*}
where $\gdist{\PAndmdel^{\sss (m,\delta)}}(\Ver_1, \Ver_2)$ denotes the length of the shortest path between $\Ver_1$ and $\Ver_2$ in $\PAndmdel^{\sss (m,\delta)}$, and
	\eqn{\label{spectral_radius}
	\nu=2\frac{m(m+\delta)+\sqrt{m(m-1)(m+\delta)(m+1+\delta)}}{\delta}>1.
	}
\end{theorem}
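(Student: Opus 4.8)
The plan is to prove the two-sided bound $(1-\eta)\log_\nu n\le\gdist{\PAn}(\Ver_1,\Ver_2)\le(1+\eta)\log_\nu n$ \whp{} for every $\eta>0$, by first- and second-moment estimates on the number of self-avoiding paths joining $\Ver_1$ and $\Ver_2$. From the attachment rule \eqref{eq-def-edge-prob-pand} one derives that the probability that a prescribed sequence $(v_0,\dots,v_\ell)$ spans such a path is well-approximated by a product $\prod_{s=1}^\ell p_n(v_{s-1},v_s)$ of pair-connection factors, with $p_n(i,j)\asymp n^{-1}(i\wedge j)^{-\chi}(i\vee j)^{\chi-1}$ for a constant $\chi=\chi(m,\delta)\in(0,\tfrac12)$ encoding the head start of old, high-degree vertices. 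Summing this product over the internal vertices $v_1,\dots,v_{\ell-1}$ identifies the expected number of length-$\ell$ paths between fixed endpoints, to leading order in $\ell$, with the $\ell$-th power of an integral operator whose rescaled-age kernel converges to the mean-offspring kernel $\kappa$ of the local weak limit of $\PAn$ (the P\'olya point tree). Since the kernel $\kappa$ is singular at age $0$ (so the associated operator is unbounded), the moment estimates cannot be run against it directly; instead one works with the truncation $\kappa_\eps$ obtained by restricting to vertices of rescaled age $\ge\eps$ (equivalently, to bounded degrees), which is a compact operator with a genuine Perron eigenvalue $\nu_\eps$ and positive eigenfunction $u_\eps$. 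The key analytic ingredient --- the new one flagged in the abstract --- is that $\nu_\eps$ increases to a finite limit as $\eps\downarrow0$, and that this limit equals the constant $\nu$ of \eqref{spectral_radius}; I would establish this via a Mellin transform in the multiplicative age variable, which block-diagonalizes $\kappa$ and $\kappa_\eps$ into a one-parameter family of $2\times2$ matrices (the two coordinates being the ``older'' neighbours reached along the remaining out-edges and the ``younger'' ones reached along in-edges), so that the relevant eigenvalue is the larger root of a quadratic equalling \eqref{spectral_radius} in the limit.

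\textbf{Lower bound.} With $\ell_-=\lfloor(1-\eta)\log_\nu n\rfloor$, summing the path-probability bounds over all lengths $\ell\le\ell_-$ and all path shapes gives $\expec[\sum_{\ell\le\ell_-}N_\ell(\Ver_1,\Ver_2)]=O(\nu^{\ell_-}/n)=O(n^{-\eta})$, with the contribution of paths that enter the singular very-old part of the graph controlled separately. Markov's inequality then gives $\gdist{\PAn}(\Ver_1,\Ver_2)>\ell_-$ \whp; this is essentially \citeII{Theorem 8.13}, which I would either quote or re-derive in the present path-counting framework.

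\textbf{Upper bound.} Fix $\eta>0$, set $\ell_+=\lceil(1+\eta)\log_\nu n\rceil$, and use $\nu_\eps\to\nu$ to choose $\eps>0$ small enough that $\nu_\eps^{\ell_+}/n\to\infty$. Let $N^{(\eps)}_{\ell_+}(\Ver_1,\Ver_2)$ count the length-$\ell_+$ self-avoiding paths between $\Ver_1$ and $\Ver_2$ whose internal vertices all have age $\ge\eps n$, and apply the second-moment method. First, the mean: working with $\kappa_\eps$ one obtains $\expec[N^{(\eps)}_{\ell_+}(\Ver_1,\Ver_2)]\gtrsim u_\eps(\Ver_1/n)\,u_\eps(\Ver_2/n)\,\nu_\eps^{\ell_+}/n\to\infty$, which is exactly where $\nu_\eps\to\nu$ is used. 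Secondly, the variance: one must show $\expec[(N^{(\eps)}_{\ell_+})^2]\le(1+o(1))(\expec[N^{(\eps)}_{\ell_+}])^2$; this is the bulk of the work and proceeds by summing over ordered pairs of paths, decomposing according to the forest of shared sub-paths, and showing --- using that on the truncated set the kernel $p_n$ and all relevant degree moments are bounded, and that an overlap of length $r$ costs a factor of order $\nu_\eps^{-r}$ --- that vertex-disjoint pairs contribute $(\expec[N^{(\eps)}_{\ell_+}])^2(1+o(1))$ while all other pairs are of strictly lower order. Thirdly, the endpoints: since $\Ver_1,\Ver_2$ are uniform they may be atypically young, so one first exposes a bounded-depth neighbourhood of each endpoint and shows it reaches the age-$\ge\eps n$ set with probability $1-o(1)$ (equivalently, conditions at $o(1)$ cost on the endpoint ages lying in a bulk window $[\gamma n,n]$, with $\gamma\downarrow0$ afterwards), which increases $\ell_+$ only by $O(1)$. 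The Paley--Zygmund inequality then yields $\PP(N^{(\eps)}_{\ell_+}(\Ver_1,\Ver_2)>0)\to1$, hence $\gdist{\PAn}(\Ver_1,\Ver_2)\le\ell_++O(1)$ \whp. Combining with the lower bound and letting $\eta\downarrow0$ gives $\gdist{\PAn}(\Ver_1,\Ver_2)/\log_\nu n\convp1$.

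\textbf{Main obstacle.} I expect the hardest part to be the variance bound in the upper bound --- the combinatorial control of pairs of overlapping paths through the truncated core, carried out with constants sharp enough that nothing is lost on the scale $\nu^\ell$ --- together with the quantitative continuity $\nu_\eps\uparrow\nu$, whose Mellin-to-$2\times2$-matrix proof must be uniform enough to plug into both moment computations.
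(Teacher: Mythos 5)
Your overall architecture agrees with the paper's: express path-presence probabilities as products of pair factors $\asymp n^{-1}(i\wedge j)^{-(1-\chi)}(i\vee j)^{-\chi}$, identify the sum over internal vertices with a power of the offspring operator, run a first-moment bound for the lower bound, and run a second-moment bound against an age-truncated kernel for the upper bound, using that the truncated spectral radius approaches $\nu$. Two of the steps, however, are not supportable as you have stated them, and they are exactly the ones the paper treats as the novel obstacles.

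First, the Mellin-transform route to $\nu_\eps\to\nu$ does not work. The multiplicative change of variable (the isometry $f\mapsto\e^{-x/2}f(\e^{-x})$) does turn $\kappa$ into a kernel of convolution type $c_{st}\,\e^{-|x-y|}$, and the Fourier/Mellin transform diagonalizes a convolution on the line; but after the same change of variable the age truncation $\indic{x,y\in[3\eps,1-\eps]}$ becomes an indicator of a \emph{bounded interval} $[b_1(\eps),b_2(\eps)]$, and the restriction of a convolution operator to an interval is not diagonalized by Fourier/Mellin. The truncated operator is a transfer operator for a random walk confined to an interval, whose spectrum is not obtained by evaluating $\widehat{\tilde\kappa}(t)$, and since $\bfT_\kappa$ is neither self-adjoint nor compact there is no abstract minimax or perturbation principle to conclude $r(P\bfT_\kappa P)\to r(\bfT_\kappa)$ as the projection $P$ increases to the identity. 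The paper's resolution is to rewrite $\langle\mathbf{1},\bfT_{\kappa^\circ_\zeta}^k\mathbf{1}\rangle$, after this same change of variable, as a weighted survival probability of a random walk confined to $[b_1,b_2]$ (\Cref{lem-LD-interpretation}), decouple the sign-of-increment dependence by a change of measure, and lower-bound the confinement probability via a martingale and the optional stopping theorem (\Cref{sec_decoupling}). Nothing in your $2\times2$-block sketch substitutes for that step, because the block-diagonalization simply does not apply to $\kappa_\eps$.

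Second, the plain second-moment inequality you invoke does not close. Because $\PAndmdel$ is locally tree-like, the number of length-$\ell$ paths from $\Ver_1$ behaves like a non-degenerate random multiple $W_1\nu^\ell$ of its mean (a branching-process martingale limit), so $\Var(N_{\ell_+})/\expec[N_{\ell_+}]^2$ stays bounded away from zero uniformly in $n$; Paley--Zygmund then gives only a constant lower bound on $\PP(N_{\ell_+}>0)$, not $1-o(1)$, and your claim that non-disjoint pairs of paths are of ``strictly lower order'' is false — they contribute at the same order and encode precisely the $W_1W_2$ fluctuation. You gesture at ``exposing a bounded-depth neighbourhood of each endpoint,'' but you use it only to push the endpoints into the age window $[\eps n,n]$. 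What is actually required is to condition the whole second-moment computation on the edge-labeled $r$-neighborhoods of $\Ver_1,\Ver_2$ together with the associated $\psi$-weights, count paths between $\partial B_r(\Ver_1)$ and $\partial B_r(\Ver_2)$, and show the \emph{conditional} variance is at most $\Theta((3/2)^{-r}\zeta^{-4}c_\zeta^{-1})$ times the conditional first moment squared (\Cref{prop-path-count-PAMc-LB_simple,prop-path-count-PAMc-Var-UB}), a quantity that is small only in the iterated limit $n\to\infty$, then $\eta\downarrow0$, then $M\to\infty$, then $r\to\infty$. Relatedly, the product form $\prod_s p_n(v_{s-1},v_s)$ for path probabilities is not exact in $\PAndmdel$; the paper obtains an exact conditional product only after passing to the P\'olya-urn representation and conditioning on the $\psi$-variables (\Cref{sec_polya_pam}), and you would need a comparable device to control the variance at the required precision.
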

\Cref{thm-log-PA-delta>0} also holds for other versions of the PAM. We \longversion{defer the proofs of \Cref{thm-log-PA-delta>0} for $\PAnamdel$ and $\PAnbmdel$ to \Cref{sec-td-other-pams}, using the collapsing procedure in \cite{GarHazHofRay22,Hofs24}}\shortversion{refer to  \cite[Appendix C]{HofZhu25long} for the proofs for $\PAnamdel$ and $\PAnbmdel$}, while the result of $\PAncmdel$ follows directly from that of $\PAnamdel$, since $\PAncmdel$ can be reduced to $\PAnamdel$ (see, e.g., \cite[Section 8.2]{Hofs24}).


\subsection{Discussion}
\paragraph{Challenges in analyzing the typical distances for PAMs}

Since the typical distances for inhomogeneous random graphs and configuration models were resolved over a decade ago \cite{BolJanRio07,ChuLu03,DerMonMor12,EskHofHoo06,HofHooVan05a,HofHooZna04a}, it is natural to wonder why it took so much longer for PAMs in the logarithmic regime. Here we list some reasons:

\begin{itemize}
    \item[$\rhd$]\textit{Lack of independence in PAMs.} In inhomogeneous random graphs, given the vertex weights, the edge statuses are independent. In configuration models, given the vertex degrees, half-edges are paired uniformly at random. As a result, both types of random graphs exhibit either complete independence or manageable dependencies between edges, making them more tractable. In PAMs, however, only {\em conditional independence} exists, as proved by Berger et al.\ \cite{BerBorChaSab14}, \ch{making these models} less tractable since \ch{the dependencies  depend} on certain beta random variables \ch{that act as random vertex weights} (see \Cref{sec_polya_pam}). 
    \item[$\rhd$]\textit{Coupling to the local limit is much harder.} In rank-1 inhomogeneous random graphs and configuration models, if the variance of the degree distribution is finite, a common approach to computing typical distances is to couple the random graph with its local limit. Due to the independence or controllable dependencies, this coupling holds up to a depth of 
$(1/2+\eta)\log_\nu n$ for configuration models \cite[Section 3]{HofHooVan05a} and rank-1 inhomogeneous random graphs \cite[Sections 2 and 3]{EskHofHoo06}, where 
$n$ is the graph size and $\eta>0$ is sufficiently small. However, for PAMs, to the best of our knowledge, such sharp coupling results have not been proved, as they are challenging. 
\item[$\rhd$]\textit{Truncation technically challenging.} An alternative approach, namely, the path-counting technique used in this paper, can also be applied. In this method, often vertex weights in inhomogeneous random graphs, and vertex degrees in configuration models, are truncated \cite{Hofs24}. As a result, the focus shifts to determining a new exponential growth parameter, defined as the spectral radius of the offspring operator in the local limit of the corresponding graphs. After truncation, the graph retains its structure as an inhomogeneous random graph or a configuration model, respectively, simplifying the computation of this new exponential growth parameter.
However, for preferential attachment models (PAMs), truncating vertex ages alters the graph in such a way that it no longer is a PAM. Additionally, for PAMs, the offspring operator is neither compact nor self-adjoint, further complicating the calculation of its spectral radius after truncation. Indeed, even the spectral radius before truncation was only recently identified in \cite{hazra2023percolation}.
\end{itemize}

\paragraph{Diameter of PAMs.} The diameter of a graph is the largest distance between any pair of vertices in it, provided the two vertices are in the same connected component. \zhu{Once we have a good characterisation of the typical distances of PAMs, it is then an intriguing question to study their diameters. Since the diameter is the largest distance between any pair of vertices, the difference between typical distances and the diameter often comes from special vertices in the graph where the size of their neighborhoods grows more slowly than the rest. Based on this obervation, in}  \cite{CarGarHof19}, Caravenna,  Garavaglia and the first author proved that the diameter of PAMs with $m\geq 2$ and $\delta\in(-m,0)$ is approximately $(\frac{4}{\tau-2}+\frac{2}{\log m})\log \log n$, where again $\tau=3+\delta/m$. Compared to its typical distance, this additional $(2\log \log n/\log m)$ term arises from the occurrence of \textit{minimally connected vertices}, whose $(\log \log n/\log m)$-neighborhoods contain only vertices with degree $m$ (the root) or $m+1$ (other vertices). They also derived a similar diameter for the configuration model when vertex degrees follow an asymptotic power-law distribution with exponent $\tau\in (2,3)$.
Assuming $d_{\rm min}\geq 3$ as the minimal vertex degree, the authors showed that the diameter is approximately $(\frac{2}{\tau-2}+\frac{2}{\log (d_{{\rm min}}-1)})\log \log n$.

On the other hand, when $\nu\in (1,\infty)$, 
\begin{itemize}
    \item Fernholz and Ramachandran proved in \cite{FerRam07} that, for the configuration model, the diameter is approximately $\log_\nu n+c_1\indic{p_1>0}\log n+c_2\indic{p_1=0,p_2>0}\log n$, where $p_1$ and $p_2$ represent the proportions of vertices with degree $0$ and $1$, respectively, and $c_1$ and $c_2$ depend on the degree distribution. 
    \item Bollobás, Janson, and  Riordan proved in \cite{BolJanRio07} that, for the inhomogeneous random graph, the diameter is approximately $\log_\nu n+c_3\log n$, where $c_3$ depends on the so-called {\em dual} kernel of the Poisson multi-type branching process arising as its local limit. This dual kernel describes the local limit of connected components conditioned to be finite.
\end{itemize}
\zhu{Recently, \ch{relying in an essential way on} the results in this paper, Du, Gong, Li, and the second author proved that the diameter of PAMs with $m \geq 2$ and $\delta > 0$ is also approximately $\log_\nu n$, i.e., it agrees up to the first order with the typical distances \cite{du2026asymptotic}. This result is consistent with the behavior of the typical distance in the configuration model when $p_1 = p_2 = 0$. The core of the proof is to show that, whp, there exists some $R_n$ of the order of $(\log n)^{2/3}$ such that the $R_n$-neighborhood of every vertex in $G_n$ has size much larger than $\log n$. \ch{After that, our results can be used to show the sharp diameter bound.}

}
\smallskip

\paragraph{Open problem}This paper introduces an approach to studying graph distances in PAMs. However, there are variations of PAMs whose typical distances remain unknown:
\begin{itemize}
    \item \textit{The PAM with random out-\ch{degrees}.} For each vertex, its out-degree is no longer a fixed number $m$, but an i.i.d.\ random variable whose support is at least $-\delta$. Various aspects of this model are  investigated in \cite{DeiEskHofHoo09,GaovdV17,GarHazHofRay22}.

    \item \textit{The PAM with additive fitness.} In this model, each vertex has an i.i.d.\ fitness parameter. In \cite{Jor18}, Jordan examined the case where the fitness parameter is determined by the vertex's color, with its distribution influenced by the colors of the other endpoints of its out-edges. In \cite{Lo21,LoOr20}, first Lodewijks and Ortgiese, followed by Y.Y.\ Lo,  studied aspects of this model.

    \item \textit{Bernoulli PAM.} In this model, the new vertex $n+1$ is connected to {\em each} vertex $i\in [n]$ conditionally independently with probability $f(D_i^{\sss(\text{in})}(n))/n$ for some given function $f$, where $D_i^{
    \sss(\text{in})}(n)$ represents the in-degree of vertex $i$. Hence, the number of edges in this random graph is \textit{not} fixed. This model is investigated in \cite{DerMor09,DerMor11,DerMor13}.
\end{itemize}
\zhu{We believe that our results can be a foundation for the proof of these models.}


\paragraph{Organization} The structure of this paper is as follows: In \Cref{sec_proof_overview}, we give an overview of the proof of \Cref{thm-log-PA-delta>0}. The proofs are presented in Sections \ref{sec_lower_bound}--\ref{sec-second-moment-proof-thm-log-PA-delta>0}, using first- and second-moment methods on the number of paths of appropriate number of steps, and between appropriate sets of vertices. This transforms \ch{our} probability question into a \zhu{combinatorics} problem of computing the asymptotic number of such paths. 
In \Cref{sec_lower_bound}, we examine the upper bound on the expected number of paths between two uniformly chosen vertices, $\Ver_1$ and $\Ver_2$, and use this to establish the lower bound on the typical distances. 
\Cref{sec_converge_spectral_radius} focuses on demonstrating the convergence of the spectral radius of the offspring operator under truncation of the vertices' age, using a probabilistic interpretation of this functional analysis problem. This is one of the novel aspects that makes the proof for PAMs significantly more challenging than the proofs for inhomogeneous random graphs and configuration models, as the offspring operator of PAM is non-self-adjoint. 
With this convergence, in Section \ref{sec-first-moment-proof-thm-log-PA-delta>0}, we establish a lower bound on the expected number of paths connecting vertices at distance $r$ from $\Ver_1$ and vertices at distance $r$ from $\Ver_2$, conditionally on their $r$-neighborhoods.
Finally, in Section \ref{sec-second-moment-proof-thm-log-PA-delta>0}, we complete the proof by deriving an upper bound on the conditional variance given the two neighborhoods. 

\paragraph{Asymptotic symbols and abbreviations} In this paper, we use the standard Landau notation $O(1)$ to denote a \ch{function of $n$} whose absolute value has a uniform upper bound for all $n$, while $o(1)$ denotes a \ch{function of $n$ that} converges to $0$ as $n\to\infty$. From now on, we will abbreviate $\PAndmdel=\PAndmdel^{\sss (m,\delta)}$. We further write lhs and rhs for left- and right-hand side, and wrt for with respect to. Finally, for a sequence of events $(\ch{\mathcal{E}_n})_{n\geq 1}$, we write that $\mathcal{E}_n$ occurs  whp when $\prob(\mathcal{E}_n)\rightarrow 1.$

\section{Proof overview}\label{sec_proof_overview}
In this section, we give an overview of the proof of \Cref{thm-log-PA-delta>0}, which is divided into two parts: the upper and \ch{the lower bound}, that is,  for any $\vep\in(0,1)$, as $n\rightarrow \infty$,
\ch{we prove that}
\eqn{
	\label{conc-LB-dist-PAd}
	\prob\Big(\gdist{\PAndmdel}(\Ver_1, \Ver_2)\geq (1-\vep)\log_\nu{n}\Big)\rightarrow 1,
	}
 and
\eqn{
	\label{conc-UB-dist-PAd}
	\prob\Big(\gdist{\PAndmdel}(\Ver_1, \Ver_2)\leq (1+\vep)\log_\nu{n}\Big)\rightarrow 1.
	}
These two bounds are derived using path-counting techniques, specifically by calculating the expected number of paths between $\Ver_1$ and $\Ver_2$ for the upper and lower bounds, and bounding the variance for the upper bound on the typical distance. 

We note that \eqref{conc-LB-dist-PAd} shows that it is  unlikely to find a path of length at most $(1-\vep)\log_\nu{n}$ in $\PAndmdel$, while \eqref{conc-UB-dist-PAd} demonstrates that it is likely to find one with length at most $(1+\vep)\log_\nu{n}$. To convert results on the expectation into results on the probability, we apply first- and second-moment methods. 

This section is organized as follows: In \Cref{sec_polya_pam}, we introduce the Pólya urn representation of $\PAndmdel$, which allows us to calculate the expected
number of paths between any two vertices in \Cref{sec-edge-set-path-prob}, ultimately yielding the lower bound in \eqref{conc-LB-dist-PAd}. In \Cref{sec-log-upperbds-PA}, we explain how the second-moment method is used in the proof of the upper bound \eqref{conc-UB-dist-PAd}, and why the vertex ages along the path should be truncated appropriately. In \Cref{sec-proof-thm-2.1}, we prove \Cref{thm-log-PA-delta>0} based on the stated propositions.
Finally, in \Cref{sec_offspring_operator}, we explain how the spectral radius $\nu$ relates to typical distances.

\subsection{The Pólya urn representation of $\PAndmdel$}\label{sec_polya_pam}
One may notice from \eqref{eq-def-edge-prob-pand} that the probability of an edge connecting to a vertex depends on the current degree of that vertex. This dependence models the \textit{rich-get-richer} phenomenon in real-world networks, but it also makes the model quite involved. A less obvious property from \eqref{eq-def-edge-prob-pand} is that if we condition on the degree of vertex $i$ in $\PAndmdel$, the incoming edges of $i$ are chosen uniformly without replacement from all the out-edges of younger vertices,  which is similar in spirit to a Pólya urn.  This conditional independence is subtly embedded in \eqref{eq-def-edge-prob-pand}. Indeed, the degrees of the vertices are {\em not} independent of each other. An alternative description of $\PAndmdel$, with additional randomness, is such that edges are \ch{{\em conditionally} independent given the additional randomness, which can be interpreted as random vertex strengths}. This Pólya urn representation of $\PAndmdel$, originally proved in \cite{MR2298278}, provides this rather elegant construction. 
In this section, we introduce \ch{the} Pólya urn graph representation, as presented in \cite{GarHazHofRay22}.

\paragraph{Pólya urn graph $\PU$ with weights $(\psi_j)_{2\leq j\leq n}$} Let $\psi_1=1$ and $(\psi_j)_{2\leq j\leq n}$ be independent beta random
variables with certain parameters $\alpha_j$ and $\beta_j$. Define 
\eqan{
\varphi_j^{\sss(n)}=\psi_j\prod_{i=j+1}^n(1-\psi_i)\quad\text{and}\quad S_\ell^{\sss(n)}=\sum_{j=1}^\ell\varphi_j^{\sss(n)}=\prod_{i=\ell+1}^n(1-\psi_i).
}
Then, conditional on the weights $\psi_2,\ldots,\psi_n$, with $I_j^{\sss(n)}=[S_{j-1}^{\sss(n)},S_j^{\sss(n)})$ such that $|I_j^{\sss(n)}|=\varphi_j^{\sss(n)}$, the Pólya urn graph $\PU$  with weights $(\psi_j)_{2\leq j\leq n}$ is defined as follows:
\begin{enumerate}
    \item  Let $(U_{\ell,i})_{\ell\in[n],i\in[m]}$ be mutually independent random variables, uniformly distributed on $[0,1]$.

\item For each integer $\ell\in[2,n]$ and $i\in [m]$, with $I_j^{\sss(n)}$ being the interval containing $S_{\ell-1}^{\sss(n)} U_{\ell,i}$, add an edge labeled $i$ from $\ell$ to $j$. Note that $j\leq \ell-1$, which follows from the definition of $I_j^{\sss(n)}$.\label{it-def-pu-2}
\end{enumerate}

By \cite[Theorem 5.10]{Hofs24}, when we choose the weights $(\psi_j)_{2\leq j\leq n}$ such that $(\psi_j)_{2\leq j\leq n}$ are independent beta random
variables with parameters 
\begin{align}\label{eq-alpha-beta-pand}
    \alpha_j=\alpha=m+\delta\quad \text{and}\quad \beta_j=(2j-3)m+\delta(j-1), 
\end{align}
$\PU$ has the same distribution as the $\PAndmdel$ defined in \Cref{sec_mod-def}.
Furthermore, the construction of $\PU$ ensures that the edges in this graph are \textit{independent}, conditionally on $(\psi_j)_{2\leq j\leq n}$, which will turn out to be very convenient.
Consequently, in the remainder of this paper, unless specified otherwise, we work with this Pólya urn graph with the parameters as in \eqref{eq-alpha-beta-pand}.

\begin{rem}[Pólya urn graph allowing self-loops]\label{re-pama-spu}
Another Pólya urn graph, $\SPU$, is introduced in \cite{GarHazHofRay22}, where self-loops are allowed. This graph is defined in almost the same way as $\PU$, except that item \ref{it-def-pu-2} in the definition of $\PU$ is replaced by the following:
\begin{itemize}
    \item[$\rhd$] For each integer $\ell\in[2,n]$ and $i\in [m]$, with $I_j^{\sss(n)}$ being the interval containing $S_{\ell}^{\sss(n)} U_{\ell,i}$ (instead of $S_{\ell-1}^{\sss(n)} U_{\ell,i}$), we add an edge labeled $i$ from $\ell$ to $j$. Note that $j\leq \ell$, which follows from the definition of $I_j^{\sss(n)}$.
\end{itemize}
This Pólya urn graph $\SPU$ is used in  \longversion{\Cref{sec-td-other-pams}}\shortversion{ \cite[Appendix C]{HofZhu25long}} to derive the typical distances for $\PAnamdel$.
\end{rem}

\subsection{Edge set and path probabilities}\label{sec-edge-set-path-prob}
With the Pólya urn representation of $\PAndmdel$ in  \Cref{sec_polya_pam}, we aim to investigate the probability of the presence of a path in $\PAndmdel$ using the \textit{conditional independence} of the edges in the path. First, we introduce the notation for an edge in $\PAndmdel$.

\begin{definition}[Edge]
In this paper, for integers $1\leq y<x\leq n$ and $i\in[m]$, we denote our edges in terms of three coordinates, that is, we use $e=(x,i,y)$ to represent that the $i$th out-edge of $x$ is connected to $y$. To emphasize that $e$ is an edge from $x$ to $y$, we further denote  
\begin{align*}
    \bar{e}=x \quad\text{and}\quad  \ushort{e}=y
\end{align*}
for the younger and older endpoints of $e$, respectively.\end{definition}

Note that if $(x,i,y)$ forms an edge in $\PAndmdel$, then $(x,i,z)$ can not be present in the same graph, unless $y=z$. This property leads to the following definition of the edge set:
\begin{definition}[Edge set]\label{def:edge-set}
    For any $k\geq 0$, we call the set $$E=\cbc{(\ell_h,i_h,j_h) \in [n]\times [m]\times [n] :~h\in[k]}$$ an edge set if  $j_h\leq \ell_h-1$ for each $h\in [k]$ and $(\ell_h,i_h)\neq (\ell_{h'},i_{h'})$ for each $h'\neq h$.
\end{definition}
With the notation of edge sets, we next give a formal definition of a path in $\PAndmdel$:

\begin{definition}[Path and edge-labeled paths]\label{hd_def_labeled_self_avoiding_path}
For each non-negative $k$,
\begin{itemize}
    \item[$\rhd$] a $k$-step path $\vec\pi=(\pi_0,\pi_1,\ldots,\pi_k)\in [n]^{k+1}$ is a $(k+1)$-dimensional vector of vertices in $[n]$.
    We write $\vec\pi\subseteq\PAndmdel$ to indicate that there is an edge between $\pi_{h-1}$ and $\pi_h$ for each $h\in [k]$ in $\PAndmdel$;

    \item[$\rhd$] a $k$-step edge-labeled path $\vpie=\cbc{(\pi_{h-1}\vee \pi_h,i_h,\pi_{h-1}\wedge \pi_h):~h\in[k]}$
is a multiset of edges, while its underlying set is an edge set. We write $\vpie\subseteq\PAndmdel$ to indicate that $\pi_{h-1}$ and $\pi_h$ are connected by the $i$th out-edge of $\pi_{h-1}\vee \pi_h$  in $\PAndmdel$. 
\end{itemize}

We further say that 
\begin{itemize}
 \item[$\rhd$] both $\vec\pi$ and $\vpie$ are neighbor-avoiding if $\pi_{i-1}\neq \pi_i$ for each $i\in [k]$;
\item[$\rhd$] both $\vec\pi$ and $\vpie$ are self-avoiding if each vertex occurs at most once in the vector
 $(\pi_0,\ldots,\pi_k)$.
\end{itemize}
\end{definition}


Denote the law of $\PAndmdel$ conditionally on $(\psi_j)_{j\in[n]}$ by $\PP_n$. Then, by the Pólya urn representation of $\PAndmdel$ in \Cref{sec_polya_pam},
\eqan{\label{eq_con_prob_present}
\PP_n\bc{(\ell,i,j)\in \PAndmdel}=\PP_n\bc{S_{\ell-1}^{\sss(n)} U_{\ell,i}\in I_j^{\sss(n)}}=\frac{\varphi_j^{\sss(n)}}{S_{\ell-1}^{\sss(n)}}=\psi_j\prod_{i=j+1}^{\ell-1}(1-\psi_i).
}
Thus, loosely speaking, one can view $\varphi_j^{\sss(n)}$ as the weight of vertex $j$ in an inhomogeneous random graph, while $S_{\ell-1}^{\sss(n)}$ is the normalization constant. Note, however, that this is {\em not} an inhomogeneous random graph in the sense of Bollob\'as et al.\ \cite{BolJanRio07}, since each vertex has precisely $m$ out-edges.
\smallskip

Recall from \Cref{sec_polya_pam} that the uniform random variables $(U_{\ell,i})_{\ell\in[n],i\in[m]}$ are mutually independent. Hence, given an edge set $E=\cbc{(\ell_h,i_h,j_h)\in [n]\times [m]\times [n]:~h\in[k]}$, the edge probability in \eqref{eq_con_prob_present} yields that
{
\eqan{\label{eq_con_indpedent_present}
\PP_n(E \subseteq\PAndmdel)&=\prod_{h\in[k]} \PP_n\bc{S_{\ell_h-1}^{\sss(n)} U_{\ell_h,i_h}\in I_{j_h}^{\sss(n)}}=\prod_{s=2}^n \psi_s^{\psE }(1-\psi_s)^{\qsE },
}
}
where 
\eqan{\label{def_hd_pq_gen}\psE =\sum_{e\in E}\indic{s=\ushort{e}} \quad\text{and}\quad
\qsE =\sum_{e\in E}\indic{s\in (\ushort{e},\bar{e})} \quad\text{for $s\in [n]$},
}
and we use $\psi_1=1$ in the last equation of \eqref{eq_con_indpedent_present}. Specifically, for an edge-labeled self-avoiding path $\vpie$,
\begin{align}\label{eq_hd_lpath_probability}
    \PP_n\bc{\vpie\subseteq \PAndmdel}=\prod_{s=2}^n \psi_s^{p_s}(1-\psi_s)^{q_s},
\end{align}
where now 
\eqan{\label{def_hd_pq}p_s=p_s^{\sss \vec\pi}=\sum_{e\in \vpie}\indic{s=\ushort{e}}\quad \text{and}\quad
q_s=q_s^{\sss \vec\pi}=\sum_{e\in \vpie}\indic{s\in (\ushort{e},\bar{e})}.
}

The proofs of both the lower bound (\ref{conc-LB-dist-PAd}) and the upper bound (\ref{conc-UB-dist-PAd}) rely on (\ref{eq_hd_lpath_probability}). Indeed, if we  sum \eqref{eq_hd_lpath_probability} over all possible edge-labeled self-avoiding paths with $\pi_0=\Ver_1$ and $\pi_k=\Ver_2$, and take the expectation wrt $(\psi_j)_{j\in[n]}$, we can bound the probability of the distance between $\Ver_1$ and $\Ver_2$ being $k$ from above using a first-moment argument as follows: 
\begin{align}\label{eq-first-moment-po}
     \prob\big(\gdist{\PAndmdel}(\Ver_1, \Ver_2)=k\big)&\leq \Erw\Big[\sum_{\vpie\in \BESA_{\Ver_1,\Ver_2}^{\sss e,k}}\PP_n\bc{\vpie\subseteq \PAndmdel}\Big]\nn\\
     &=\frac{1}{n^2}\sum_{\nVer_1,\nVer_2\in[n]}\sum_{\vpie\in \BESA_{\nVer_1,\nVer_2}^{\sss e,k}}\Erw\Big[\prod_{s=2}^n \psi_s^{p_s}(1-\psi_s)^{q_s}\Big],
\end{align}
where $\BESA_{\nVer_1,\nVer_2}^{\sss e,k}$ is the set of all possible $k$-step edge-labeled self-avoiding path between $\nVer_1$ and $\nVer_2$ in \zhu{$[n]$}.
Thus, if the expectation in \eqref{eq-first-moment-po} grows exponentially in $k$ as $\nu^k$, then \eqref{conc-LB-dist-PAd} follows from \eqref{eq-first-moment-po} by summing over $k\leq (1-\vep)\log_\nu{n}$. 
Indeed, we will prove the following lower bound on the typical distances:

\begin{proposition}[Lower bound on the typical distances]\label{pro_lowerbound}
    Fix $m\geq 2$, $\delta>0$ and $\vep\in(0,1)$, and consider $\PAndmdel$. Then, for any $k\leq (1-\vep)\log_\nu{n}$, as $n\to\infty$,
    \begin{align*}
        \prob\big(\gdist{\PAndmdel}(\Ver_1, \Ver_2)\leq k\big)\to 0.
    \end{align*}
\end{proposition}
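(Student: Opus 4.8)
The plan is to prove \Cref{pro_lowerbound} by a first-moment (Markov) argument on the number of short edge-labeled self-avoiding paths joining $\Ver_1$ and $\Ver_2$. Decomposing according to the exact value of the distance and applying \eqref{eq-first-moment-po} term by term,
\[
\prob\big(\gdist{\PAndmdel}(\Ver_1,\Ver_2)\le k\big)\ \le\ \sum_{k'=0}^{k}A_n(k'),
\]
where $A_n(k')$ is the right-hand side of \eqref{eq-first-moment-po} with $k$ replaced by $k'$, that is, the expected number of $k'$-step edge-labeled self-avoiding paths between a uniformly chosen ordered pair of vertices. It thus suffices to produce a sequence $\eta_n\to0$ with $A_n(k')\le n^{-1}(\nu+\eta_n)^{k'}$ uniformly in $0\le k'\le k$: summing the geometric series and inserting $k\le(1-\vep)\log_\nu n$ then gives $\sum_{k'\le k}A_n(k')=O\big(n^{-1}(\nu+\eta_n)^{(1-\vep)\log_\nu n}\big)=O\big(n^{-\vep+o(1)}\big)\to0$.

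To estimate $A_n(k')$, fix $\vpie\in\BESA^{\sss e,k'}_{\nVer_1,\nVer_2}$. Since $(\psi_s)_{2\le s\le n}$ are independent, \eqref{eq_hd_lpath_probability} gives
\[
\Erw\Big[\prod_{s=2}^n\psi_s^{p_s}(1-\psi_s)^{q_s}\Big]=\prod_{s=2}^n\frac{B(\alpha+p_s,\beta_s+q_s)}{B(\alpha,\beta_s)},
\]
with $\alpha=m+\delta$ and $\beta_s\asymp(2m+\delta)s$ from \eqref{eq-alpha-beta-pand}; here $p_s\le 2$ for a self-avoiding path, $\sum_s p_s=k'$, and \eqref{def_hd_pq} exhibits the whole product as one over the $k'$ edges of $\vpie$, namely $\prod_{e\in\vpie}\Big(\psi_{\ushort{e}}\prod_{s\in(\ushort{e},\bar{e})}(1-\psi_s)\Big)$. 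Writing the Beta quotients through Gamma functions and estimating them, I would bound this expectation — and then the sum over the $\le m^{k'}$ edge-labellings and over the interior vertices of $\vpie$ — by the $k'$-fold iterated kernel of the \emph{offspring operator} $T_n$ of $\PAndmdel$: the finite-$n$ analogue of the offspring operator of the local limit (the P\'olya point tree), acting on functions on $[n]\times\{\rO,\rY\}$, the second factor recording whether a neighbour along the path is older or younger. Summing over $\nVer_1,\nVer_2$ this yields $A_n(k')\le n^{-2}(1+o(1))^{k'}\,\langle\mathbf{1},T_n^{k'}\mathbf{1}\rangle$, where $\langle\cdot,\cdot\rangle$ is the counting inner product on $[n]\times\{\rO,\rY\}$ and $\mathbf{1}$ the all-ones function.

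It then remains to show $\langle\mathbf{1},T_n^{k'}\mathbf{1}\rangle\le Cn(\nu+\eta_n)^{k'}$. I would do this by exhibiting a strictly positive \emph{super-harmonic} function $h_n$ on $[n]\times\{\rO,\rY\}$ — a discretisation of the Perron right-eigenfunction of the limiting operator, which is an explicit power of the rescaled age times a type-dependent constant — obeying $T_nh_n\le(\nu+\eta_n)h_n$ pointwise, together with $\mathbf{1}\le Ch_n$ and $\langle\mathbf{1},h_n\rangle\le C'n$ (the latter because $h_n$ is an integrable power of the age). Iterating and using $T_n\ge0$,
\[
\langle\mathbf{1},T_n^{k'}\mathbf{1}\rangle\ \le\ C\,\langle\mathbf{1},T_n^{k'}h_n\rangle\ \le\ C(\nu+\eta_n)^{k'}\langle\mathbf{1},h_n\rangle\ \le\ CC'n\,(\nu+\eta_n)^{k'},
\]
which with the previous step gives $A_n(k')\le n^{-1}\nu^{k'}(1+o(1))^{k'}$, as required (the few small values of $k'$ are handled directly, $A_n(k')=O(1/n)$). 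That the relevant eigenvalue of the offspring operator is exactly the $\nu$ of \eqref{spectral_radius} is known from \cite{hazra2023percolation}; note that for this \emph{lower} bound on the distance one works with the operator on the full age range, so the age-truncation and its spectral-radius convergence from \Cref{sec_converge_spectral_radius} are not needed here.

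The main obstacle is the middle step: recasting $\prod_s B(\alpha+p_s,\beta_s+q_s)/B(\alpha,\beta_s)$ as a bound by iterates of $T_n$ without picking up a spurious exponential factor $c^{k'}$ with $c>1$. Two points are delicate. First, the ``straddling'' exponents $q_s$ can be large, so the naive identity $\Erw[(1-\psi_s)^{q_s}]\approx(\Erw[1-\psi_s])^{q_s}$ fails, and a crude disentangling of the edges along the path would overshoot the true rate $\nu$; one must keep the exact Beta quotients, whose joint dependence on the whole path is precisely what makes $T_n$ non-local and non-self-adjoint. Second, for vertices of small age $s$ (the hubs, where $\beta_s$ is comparatively small and the continuum approximation of $T_n$ degrades) the pointwise super-harmonicity $T_nh_n\le(\nu+\eta_n)h_n$ must be verified directly, possibly after locally modifying $h_n$, and the integral estimates of the first point likewise need a separate treatment there. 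Ensuring that all accumulated corrections are of the form $(1+o(1))^{k'}$ rather than $c^{k'}$ is the crux of the estimate.
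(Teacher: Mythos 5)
Your high-level strategy — a first-moment argument via \eqref{eq-first-moment-po}, Beta-moment computations from \eqref{expctation_function_beta}, and a bound by iterates of an offspring operator — matches the paper's. But the technical implementation diverges, and your version has a genuine gap at exactly the step you flag as the crux.

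The paper does \emph{not} construct a discrete super-harmonic function $h_n$. Instead it introduces a cutoff $b=\lceil \log^3 n\rceil$, splits the first moment into paths whose interior vertices all have age $\geq b$ (\Cref{lem_hd_lower_part1}) and paths that hit $[b-1]$ (\Cref{lem_hd_lower_part2}), and handles the two pieces separately. For the truncated paths, it converts the discrete kernel products into Riemann sums that are bounded above by the continuum kernel $\kappa$ of the local limit (using the pointwise monotonicity \eqref{decrease-kappa}--\eqref{decrease-kappa-0}), arriving at $\langle\mathbf 1, \bfT_\kappa^{k}\mathbf 1\rangle$, and then simply applies the \emph{operator norm} bound $\|\bfT_\kappa\|=\nu$ from \Cref{pro_spectral_radius}: no super-harmonic function or finite-$n$ Perron eigenvector is needed for this direction. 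The second piece is bounded by the norm of $\bfT_\kappa$ applied to indicators of small-age sets, giving a negligible $n^{1/2}b^{1/2}\nu^{k/2}$ contribution.

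The gap in your proposal is that the accumulated correction factors along a path are not $(1+o(1))^{k'}$ uniformly over all paths. When the path touches a vertex of bounded age $s$, the per-vertex Beta-quotient correction is a genuine $\Theta(1)$ rather than $1+o(1)$; if the path could contain many such vertices the corrections would compound as $c^{k'}$ for some $c>1$ and the bound would be useless. This is precisely the issue the paper's $b$-cutoff is designed to resolve: under the restriction to ages in $[b,n]$ the total correction is $\Theta^{1+\log^2 n/b}$, which is $\Theta^{1+o(1)}$ only because $b=\lceil\log^3 n\rceil$; the complementary small-age event is then controlled by a distance-to-a-small-set argument. You correctly identify small ages as the danger and say the super-harmonicity ``must be verified directly, possibly after locally modifying $h_n$'' and that ensuring corrections stay $(1+o(1))^{k'}$ ``is the crux'' — but you leave both unresolved, so as written the proof does not close. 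If you want to keep the discrete super-harmonic route, you would still need some device (a cutoff of the kind the paper uses, or a modified $h_n$ with an explicit verified inequality at all ages $s\geq 2$) to keep the compounded errors under control; the cutoff is by far the cleaner option and also sets up the machinery that the upper-bound argument reuses.
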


The argument for (\ref{conc-UB-dist-PAd}) is considerably more complicated, and we will elaborate on it in the next section.

\subsection{Logarithmic upper bound on paths: Second-moment method}
\label{sec-log-upperbds-PA}
In this section, we describe  the proof strategy for the upper bound \eqref{conc-UB-dist-PAd}. Let $B_r^{\sss(G)}(x)$ be the $r$-neighborhood of a vertex $x$ in the graph $G$, and $\bar{B}_r^{\sss(G)}(x)$ be \ch{the} edge-labeled neighborhood \ch{of $x$}, i.e., the neighborhood that includes the labels of the edges. \zhu{Let $\partial B_r^{\sss(G)}(x)$ denote} the set of vertices at distance exactly equal to $r$ from $x$ in $G$.
\smallskip

Fix $G_n=\PAndmdel$ and \ch{let}  $k_\nu^\star=\rou{(1+\vep)\log_\nu{n}}$ \ch{denote} the target upper bound. In order to prove that $\gdist{\PAndmdel}(\Ver_1,\Ver_2)\leq k_\nu^\star$ holds whp, we use a conditional second-moment method on the number of paths of length $k_\nu^\star$, between $\partial B_r^{\sss(G_n)}(\Ver_1)$ and $\partial B_r^{\sss(G_n)}(\Ver_2)$, \ch{conditional} on $\bar{B}_r^{\sss(G_n)}(\Ver_1)$ and $\bar{B}_r^{\sss(G_n)}(\Ver_2)$. 
\smallskip

The reason that we consider a \textit{conditional} second-moment method is that there is inherent variability in $|\partial B_r^{\sss(G_n)}(\Ver_1)|$ and $|\partial B_r^{\sss(G_n)}(\Ver_2)|$, due to the variability of the degrees of the vertices close to $\Ver_1$ and $\Ver_2$, which influences the number of $k$-step paths between $\Ver_1$ and $\Ver_2$. As such, one cannot expect the number of paths between $\Ver_1$ and $\Ver_2$ to be sufficiently concentrated. However, this variability diminishes when we condition on $\partial B_r^{\sss(G_n)}(\Ver_1)$ and $\partial B_r^{\sss(G_n)}(\Ver_2)$ with $r$ large, in the same manner as the variability of the branching process generation size $Z_k$ of generation $k$, \ch{conditional} on $Z_r$, becomes small for large $r$ and even larger $k$. Therefore, we leverage the {\em local convergence} properties of $B_r^{\sss(G_n)}(\Ver_1)$ and $B_r^{\sss(G_n)}(\Ver_2)$, along with the appropriate {\em conditional second-moment method}, to show that $\gdist{\PAndmdel}(\Ver_1,\Ver_2)\leq k_\nu^\star+2r$ whp.
\smallskip

Let us now explain in more detail what such a conditional second-moment method looks like for $\PAndmdel$. Let $\mathscr{F}_{r}$ denote the sigma-algebra generated by $\bar{B}_r^{\sss(G_n)}(\Ver_1)$ and $\bar{B}_r^{\sss(G_n)}(\Ver_2)$, as well as $\psi_v$ for $v\in B_{r-1}^{\sss(G_n)}(\Ver_1)\cup B_{r-1}^{\sss(G_n)}(\Ver_2)$, that is,
	\eqn{
	\label{sigma-alg-SMM-PAM}
	\mathscr{F}_{r}=\sigma\Big(\bar{B}_r^{\sss(G_n)}(\Ver_1), \bar{B}_r^{\sss(G_n)}(\Ver_2), (\psi_v)_{v\in B_{r-1}^{\sss(G_n)}(\Ver_1)\cup B_{r-1}^{\sss(G_n)}(\Ver_2)}\Big).
	}
We let $\prob^r$ and $\expec^r$ be the conditional probability and expectation given $\mathscr{F}_{r}$. 
\smallskip

In \Cref{sec_polya_pam}, we have shown that edges are present independently, \ch{conditional} on $(\psi_v)_{v\in[n]},$ but they do depend sensitively on $(\psi_v)_{v\in[n]}.$ In particular, for the probability of an edge to $v$, we obtain a factor that is proportional to $\psi_v$ (recall \eqref{eq_con_prob_present}). Thus, if we were to condition on $(\psi_v)_{v\in B_{r}^{\sss(G_n)}(\Ver_1)\cup B_{r}^{\sss(G_n)}(\Ver_2)}$ rather than on $(\psi_v)_{v\in B_{r-1}^{\sss(G_n)}(\Ver_1)\cup B_{r-1}^{\sss(G_n)}(\Ver_2)}$, one could expect the conditional probability of $\vpie\subseteq \PAndmdel$ to depend on $\psi_{\pi_0}$ and $\psi_{\pi_k}$, with $\pi_0\in \partial B_{r}^{\sss(G_n)}(\Ver_1)$ and $\pi_k\in \partial B_{r}^{\sss(G_n)}(\Ver_2)$. When conditioning on $(\psi_v)_{v\in B_{r-1}^{\sss(G_n)}(\Ver_1)\cup B_{r-1}^{\sss(G_n)}(\Ver_2)}$, instead, the conditional probability of $\vpie\subseteq \PAndmdel$ becomes, to leading order, {\em deterministic}. This explains why we condition on the precise sigma-algebra in \eqref{sigma-alg-SMM-PAM}, particularly why we only condition on the weights in $(\psi_v)_{v\in B_{r-1}^{\sss(G_n)}(\Ver_1)\cup B_{r-1}^{\sss(G_n)}(\Ver_2)}$. 

To ensure the necessary concentration, we define the following good events on the pair of neighborhoods $\big(\bar{B}_r^{\sss(G_n)}(\Ver_1), \bar{B}_r^{\sss(G_n)}(\Ver_2)\big)$ and the weights $(\psi_v)_{v\in B_{r-1}^{\sss(G_n)}(\Ver_1)\cup B_{r-1}^{\sss(G_n)}(\Ver_2)}$ to exclude pathological behavior:
\begin{definition}[Good neighborhood pair]\label{ass_Br}
Given the tree truncation parameter $\eta$, the leaf truncation parameter $\zeta$, the size upper bound $M$ and the depth parameter $r$ such that $\eta,\zeta \in(0,1/10]$, and the integers $M,r\geq 1$, we call the neighborhood pair $\big(\bar{B}_r^{\sss(G_n)}(\Ver_1), \bar{B}_r^{\sss(G_n)}(\Ver_2)\big)$  {\rm good} if it is a pair of labeled trees $(\bar{\bf t}_1,\bar{\bf t}_2)$ satisfying the following properties:
\begin{enumerate}[label=(\alph*)]
    \item the vertices in trees $\bar{\bf t}_1$ and $\bar{\bf t}_2$ are labeled with distinct integers from $[\eta n,n]$;\label{it_pb_1}
        \item for each $i=1,2$, the number of vertices in $\bar{\bf t}_i$ is no more than $M$;\label{it_pb_2}
    \item for each $i=1,2$, at least $(3/2)^r$ leaves of $\bar{\bf t}_i$ are labeled in $[4{\zeta}n,(1-3{\zeta})n]$;\label{it_pb_3}
            \item for each $i=1,2$, the tree $\bar{\bf t}_i$ has a root $\varnothing_i$ where each leaf in $\bar{\bf t}_i$ has a distance $r$ from $\varnothing_i$;\label{it_pb_4}
        \item the edges in $\bar{\bf t}_1$ and $\bar{\bf t}_2$ are labeled in $[m]$;\label{it_pb_5}
            \item  for each $i=1,2$,  each non-leaf vertex $v$ in  $\bar{\bf t}_i$ and each $j\in[m]$, there is precisely one \zhu{edge-labeled} $j$ such that $v$ is the younger endpoint of this edge.\label{it_pb_6}
\end{enumerate}
\end{definition}
\begin{definition}[Good weights]\label{ass_Br_2}
We call the weights $(\psi_v)_{v\in B_{r-1}^{\sss(G_n)}(\Ver_1)\cup B_{r-1}^{\sss(G_n)}(\Ver_2)}$ {\rm good} when
\begin{align}\label{eq-ass-on-psi}
    1-\sum_{v\in B_{r-1}^{\sss(G_n)}(\Ver_1)\cup B_{r-1}^{\sss(G_n)}(\Ver_2)}\psi_v\geq \e^{-\frac{1}{\log^2 n}}.
\end{align}
\end{definition}
\begin{definition}[Good event $\GNW$]\label{def_gnw}
    Given $\Ver_1$, $\Ver_2$, $r$, $M$, $\eta$ and ${\zeta}$ in \Cref{ass_Br}, we define the {\rm good event} $\GNW$ as the event that both $\big(\bar{B}_r^{\sss(G_n)}(\Ver_1), \bar{B}_r^{\sss(G_n)}(\Ver_2)\big)$ and $(\psi_v)_{v\in B_{r-1}^{\sss(G_n)}(\Ver_1)\cup B_{r-1}^{\sss(G_n)}(\Ver_2)}$ are good.
\end{definition}

We note that \ref{it_pb_4}-\ref{it_pb_6} in \Cref{ass_Br} are necessary conditions for trees $\bar{\bf t}_1$ and $\bar{\bf t}_2$ to be the edge-labeled $r$-neighborhoods of $\Ver_1$ and $\Ver_2$. \zhu{\ch{These conditions} only require non-leaf vertices to have precisely $m$ older neighbors, but \ch{they do not impose sharp restrictions} on the number of younger neighbors. As a result, \ch{the restrictions do not rule out} the tree $\bar{\bf t}_1$ \ch{being a} proper subgraph of \ch{the} neighborhood of 
$o_1$. \ch{Below, we impose extra conditions to avoid additional younger neighbors.}} 

  By the \textit{local convergence} of $\PAndmdel$, which we will discuss in more detail in \Cref{sec_offspring_operator}, with $\Ver_1$ and $\Ver_2$ chosen uniformly at random from $[n]$, we claim that $\big(\bar{B}_r^{\sss(G_n)}(\Ver_1), \bar{B}_r^{\sss(G_n)}(\Ver_2)\big)$ and $(\psi_v)_{v\in B_{r-1}^{\sss(G_n)}(\Ver_1)\cup B_{r-1}^{\sss(G_n)}(\Ver_2)}$ in $\PAndmdel$ are good with probability close to 1:
\begin{lemma}[Good events have high probability]
\label{LEM_RE_BR}
Fix $m\geq 2$, $\delta>0$ and the leaf truncation parameter  ${\zeta}\in(0,1/10]$. With $\Ver_1,\Ver_2$ uniformly distributed on $[n]$,
\begin{align*}
\limsup_{r\to\infty}\limsup_{M\to\infty}\limsup_{\eta\searrow 0}\limsup_{n\to\infty} \PP\bc{\GNW}=1.
\end{align*}
\end{lemma}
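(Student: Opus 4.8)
The plan is to deal with each of the four limits in turn, from the inside out, handling the four defining properties of a good neighborhood pair plus the good-weights condition separately. Since $\Ver_1,\Ver_2$ are two independent uniform vertices, with high probability they are far apart and their $r$-neighborhoods are disjoint; conditionally on this we may analyze $\bar B_r^{\sss(G_n)}(\Ver_1)$ and $\bar B_r^{\sss(G_n)}(\Ver_2)$ essentially independently, so I would reduce to a statement about a single uniform vertex $\Ver$. The starting point is \emph{local convergence} of $\PAndmdel$ (the Pólya point tree limit, referenced in \Cref{sec_offspring_operator} and available from \cite{BerBorChaSab14,GarHazHofRay22,Hofs24}): the edge-labeled neighborhood $\bar B_r^{\sss(G_n)}(\Ver)$ converges in distribution to the $r$-truncation of the Pólya point tree $\bar B_r(\varnothing)$, and this already gives properties \ref{it_pb_4}-\ref{it_pb_6} deterministically (they hold for any genuine edge-labeled $r$-neighborhood) and makes \ref{it_pb_2}, the size bound $|\bar{\bf t}_i|\le M$, a matter of choosing $M$ large: since the limiting tree is a.s.\ finite at each fixed depth, $\limsup_{n\to\infty}\PP(|\bar B_r^{\sss(G_n)}(\Ver)|>M)$ can be made $o(1)$ as $M\to\infty$, which is exactly what the order of limits $\limsup_M$ after $\limsup_n$ allows.

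Next I would handle the age-truncation property \ref{it_pb_1}, that all labels lie in $[\eta n,n]$. In the Pólya point tree the ages of the offspring of a vertex of age $a$ are governed by densities that do not concentrate near $0$, so for fixed depth $r$ the probability that \emph{some} vertex in the (finite, size $\le M$) neighborhood has rescaled age below $\eta$ tends to $0$ as $\eta\searrow 0$, uniformly after first fixing $r$ and $M$; combined with local convergence and the size bound from the previous step, this gives $\limsup_{\eta\searrow 0}\limsup_{n\to\infty}\PP(\text{\ref{it_pb_1} fails})=0$ for fixed $r,M$. The property \ref{it_pb_3}, that at least $(3/2)^r$ leaves land in the bulk window $[4\zeta n,(1-3\zeta)n]$, is the place where the growth parameter enters: the Pólya point tree is supercritical with mean offspring strictly larger than, and here one needs, more than $3/2$ (this is where $\nu>1$, or rather the relevant mean-offspring inequality, is used), and by a branching-process law of large numbers the number of generation-$r$ vertices with rescaled age in a fixed bulk interval grows like (mean)$^r$ times a positive random variable, so it exceeds $(3/2)^r$ with probability $\to 1$ as $r\to\infty$. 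This is the only property that genuinely needs the outermost $\limsup_{r\to\infty}$, and it is where I expect the real work to be: one must control not just the \emph{total} generation size but the generation size \emph{restricted to the bulk age window}, and show it still grows at rate $>3/2$; this requires a quantitative lower bound on the bulk-restricted offspring mean of the Pólya point tree, together with a second-moment or martingale argument to turn the mean growth into an almost-sure/high-probability lower bound. I would isolate this as the main obstacle.

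Finally, the good-weights condition \eqref{eq-ass-on-psi}: with $\psi_v$ Beta$(\alpha_j,\beta_j)$ and $\beta_j\asymp j$, each $\psi_v$ for $v\in B_{r-1}^{\sss(G_n)}(\Ver)$ is of order $1/v=O(1/(\eta n))$, and there are at most $M$ of them, so $\sum_v \psi_v = O(M/(\eta n))$, which is $o(1/\log^2 n)$ and hence $1-\sum_v\psi_v\ge \e^{-1/\log^2 n}$ for $n$ large — this holds deterministically on the good-neighborhood event for $n$ large, so it adds nothing to the order-of-limits bookkeeping. Putting the pieces together: fix $\zeta$; choose $r$ so that \ref{it_pb_3} holds with probability $\ge 1-\vep/4$ in the limit (possible by the branching argument); then choose $M$ so that \ref{it_pb_2} fails with limiting probability $\le \vep/4$; then choose $\eta$ so that \ref{it_pb_1} fails with limiting probability $\le \vep/4$; the remaining properties \ref{it_pb_4}-\ref{it_pb_6} and the weight condition hold automatically for $n$ large; a union bound over the two neighborhoods and over the "neighborhoods disjoint" event gives $\liminf_{n\to\infty}\PP(\GNW)\ge 1-\vep$, and since $\vep>0$ is arbitrary the iterated $\limsup$ equals $1$.
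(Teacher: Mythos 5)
Your overall strategy — reduce to the local limit $\PPT$, handle the four inside-out limits one property at a time, and let \ref{it_pb_4}–\ref{it_pb_6} come for free as structural properties of an edge-labeled $r$-ball — is exactly the paper's strategy. The treatment of \ref{it_pb_2} via finiteness of the limiting $r$-ball, of \ref{it_pb_1} via non-concentration of ages near $0$, and the reduction to a single uniform vertex (the paper handles the disjointness of the two $r$-balls with a union bound that invokes the lower bound on typical distances, \Cref{pro_lowerbound}) are also in line with the paper, which cites \cite[Lemma 4.7]{GarHazHofRay22} for \ref{it_pb_1} and \ref{it_pb_2}.

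However, there are two genuine gaps. The serious one is the bulk-leaf count \ref{it_pb_3}, which you correctly identify as the hard part but then leave open: you propose a branching-process law of large numbers for the generation size \emph{restricted to the bulk window}, acknowledging this would need a quantitative bulk-offspring-mean bound plus a second-moment or martingale argument, and explicitly flag it as "the main obstacle." The paper closes this gap with a much more elementary and robust device: every $\rO$-labeled vertex (and the root) of $\PPT$ has \emph{deterministically} $m\geq 2$ children labeled $\rO$, so there are at least $2^{r-2}$ vertices at depth $r-2$ along $\rO$-only paths; conditionally on the root's age, each such vertex has a grandchild with rescaled age in $[5\zeta,1-4\zeta]$ with probability bounded below (one $\rY$-step into the Poisson tail near $1$, then one $\rO$-step down into the bulk), so the bulk-leaf count stochastically dominates a $\Bin(2^{r-2},p)$, and Chebyshev gives $\Bin(2^{r-2},p)>(3/2)^r$ with probability $\to 1$ as $r\to\infty$. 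This sidesteps entirely the need to control concentration of a bulk-restricted branching process — which, as you suspected, is delicate (the bulk-restricted process could a priori go extinct with positive probability, so a Kesten–Stigum-type limit is not immediately conclusive). Without this or an equivalent argument, your proof is incomplete.

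The second, minor gap is the good-weights condition \eqref{eq-ass-on-psi}: you assert $\psi_v=O(1/v)$ holds "deterministically," but $\psi_v$ is a $\Beta(\alpha,\beta_v)$ random variable with $\beta_v\asymp v$, so only its \emph{mean} is $O(1/v)$; with small but positive probability $\psi_v$ can be of order $1$. The paper fixes this with Markov's inequality applied to $\sum_{v}\psi_v$ over the at most $2M$ non-leaf vertices with labels $\geq \eta n$, showing $\log^2 n\cdot\sum_v\psi_v\to 0$ in probability. Your conclusion is right, but the justification as written is incorrect and needs this probabilistic step.
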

We further note that the order of the limits in \Cref{LEM_RE_BR} is important, as the rhs becomes $0$ when we exchange the two limits ${r\to\infty}$ and ${M\to\infty}$. \longversion{The proof of \Cref{LEM_RE_BR} is deferred to \Cref{sec-proof-ass-Br}.}\shortversion{We refer to \cite[Appendix A]{HofZhu25long} for the proof of \Cref{LEM_RE_BR}.}
\smallskip


We denote the number of all $k$-step self-avoiding edge-labeled paths between $\partial B_r^{\sss(G_n)}(\Ver_1)$ and $\partial B_r^{\sss(G_n)}(\Ver_2)$, whose vertices are restricted to $[{\zeta} n,n]$, by 
	\eqn{
	\label{Nnk-vep-PAM}
	N_{n,r}(k)=\sum_{\vpie\in \NESA^{\sss e,k,r}} \indic{\vpie\subseteq \PAndmdel},
	}
where $\NESA^{\sss e,k,r}$ is the set of all possible  $k$-step edge-labeled self-avoiding paths $\vpie$ with $\pi_0\in \partial B_r^{\sss(G_n)}(\Ver_1)$ and $\pi_k\in \partial B_r^{\sss(G_n)}(\Ver_2)$ in \zhu{$[n]$}, and we restrict $\pi_i$ such that $\pi_i\not\in B_r^{\sss(G_n)}(\Ver_1)\cup B_r^{\sss(G_n)}(\Ver_2)$ for $i\in[k-1]$ and $\pi_i\geq {\zeta} n$ for all $0\leq i\leq k$. 
\smallskip

Here, we only consider $\vpie$ such that $\pi_i\geq {\zeta} n$ for all $ 0\leq i\leq k$ to ensure the concentration of $N_{n,r}(k)$ for $k=k_\nu^\star$. The underlying reason is that vertices with lower indices tend to have higher degrees on average. Once a vertex with a very small index appears in one of the $k$-step self-avoiding paths between $\partial B_r^{\sss(G_n)}(\Ver_1)$ and $\partial B_r^{\sss(G_n)}(\Ver_2)$, it implies the presence of numerous other $k$-step self-avoiding paths between them, leading to a variance that can be too large.

\smallskip

In order to prove \eqref{conc-UB-dist-PAd}, we rely on Chebyshev's inequality, which  gives
\begin{align}\label{eq_secondmomentbound}
    \prob^r\big(N_{n,r}(k_\nu^\star)=0\big)\leq \frac{\Var^r(N_{n,r}(k_\nu^\star))}{\expec^r[N_{n,r}(k_\nu^\star)]^2},
\end{align}
where $\expec^r$ and $\Var^r$ denote the conditional expectation and variance given $\mathscr{F}_{r}$ (recall \eqref{sigma-alg-SMM-PAM}). If we can show that the rhs of \eqref{eq_secondmomentbound} is bounded by a small number $\vep_0$, then \eqref{eq_secondmomentbound} yields that $N_{n,r}(k_\nu^\star)>0$ with probability at least $1-\vep_0$. Therefore, we also have $\gdist{\PAndmdel}(\Ver_1, \Ver_2)\leq k_\nu^\star+2r$, with probability at least $1-\vep_0$. 

Often, to show that the rhs of \eqref{eq_secondmomentbound} is negligible, it is necessary to prove that the first moment converges to infinity. Before turning to the statement of this result, we first introduce an important notation in this article:
\begin{definition}[A variable with uniform positive upper and lower bound.]\label{def-Theta}
    We use $\Theta$ to denote a number in $[\Theta_\star,\Theta^\star]$, where $\Theta_\star\in (0,1)$ and $\Theta^\star\in (1,\infty)$ are certain fixed positive numbers that only depend on $m$ and $\delta$. By a slight abuse of notation, we allow bounds like $2\Theta=\Theta$, $\e^{\Theta}=\Theta$, $\Theta^{-1}=\Theta$ and $\Theta\cdot\Theta=\Theta$.
\end{definition}
In \Cref{sec-first-moment-proof-thm-log-PA-delta>0}, we prove the following lower bound on $\expec^r[N_{n,r}(k_\nu^\star)]$: 
\begin{proposition}[First moment is large \ch{on the good event}]
\label{prop-path-count-PAMc-LB_simple}
Fix $m\geq 2$, $\delta>0$ and $\vep\in(0,1)$, and consider $\PAndmdel$. There exists a $\zeta_0 \in(0,1/10]$ such that for $k_\nu^\star=\rou{(1+\vep)\log_{\nu}{n}}$, every ${\zeta}\in(0,\zeta_0]$,  $r,M\geq 1$,  $\eta\in(0,1/10]$, on the good event $\GNW$, for  $n\geq n_0$, where $n_0$    is an integer that depends only on $m,\delta,M,r,\vep$ and $\zeta$, 
	\eqn{
	\label{Nnk-PAMc-infty}
	\expec^r[N_{n,r}(k_\nu^\star)] \geq \Theta \bc{3/2}^{2r}{\zeta}^2c_{\zeta}n^{\vep/2},
	}
where $c_{\zeta}>0$ only depends on the leaf truncation parameter $\zeta$.
\end{proposition}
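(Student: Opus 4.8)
The plan is to bound $\expec^r[N_{n,r}(k_\nu^\star)]=\sum_{\vpie\in\NESA^{\sss e,k_\nu^\star,r}}\prob^r(\vpie\subseteq\PAndmdel)$ from below by restricting the sum to a large, tractable sub-collection of paths, evaluating each conditional path probability through the P\'olya-urn representation, and then invoking the convergence of the truncated spectral radius from \Cref{sec_converge_spectral_radius}. First I would simplify $\prob^r(\vpie\subseteq\PAndmdel)$. For $\vpie\in\NESA^{\sss e,k_\nu^\star,r}$ the interior vertices avoid $B_r^{\sss(G_n)}(\Ver_1)\cup B_r^{\sss(G_n)}(\Ver_2)$ and the endpoints lie on $\partial B_r^{\sss(G_n)}(\Ver_i)$, so the edges of $\vpie$ are, as a multiset, disjoint from those explored by $\mathscr{F}_r$; by the conditional edge-independence of the P\'olya-urn graph and \eqref{eq_hd_lpath_probability}--\eqref{def_hd_pq} one shows that, uniformly in $\vpie$, $\prob^r(\vpie\subseteq\PAndmdel)=(1+o(1))\prod_{s=2}^{n}\Erw[\psi_s^{p_s}(1-\psi_s)^{q_s}]=(1+o(1))\prod_{s=2}^{n}B(\alpha+p_s,\beta_s+q_s)/B(\alpha,\beta_s)$, with $p_s,q_s$ as in \eqref{def_hd_pq} --- the same quantity appearing in the unconditional bound \eqref{eq-first-moment-po}. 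Indeed, the weights $(\psi_v)_{v\in B_{r-1}^{\sss(G_n)}(\Ver_1)\cup B_{r-1}^{\sss(G_n)}(\Ver_2)}$ revealed by $\mathscr{F}_r$ satisfy $p_v=0$ and $q_v\le k_\nu^\star=O(\log n)$, so by the good-weights bound \eqref{eq-ass-on-psi} they jointly contribute at least $\e^{-k_\nu^\star/\log^2 n}=1-o(1)$, while the conditioning on the tree shape alters the law of the remaining weights only through a bounded factor (the details are supplied in \Cref{sec-first-moment-proof-thm-log-PA-delta>0}).

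Next I would use standard $\Gamma$-function estimates together with $\beta_s\asymp s$ from \eqref{eq-alpha-beta-pand} to obtain a clean lower bound $\prod_{s=2}^n B(\alpha+p_s,\beta_s+q_s)/B(\alpha,\beta_s)\ge\prod_{h=1}^{k_\nu^\star}\mathbf{g}(\pi_{h-1},\pi_h)$ for an explicit symmetric kernel $\mathbf{g}$: the $(1-\psi_s)$-contributions reorganize over the edges of $\vpie$ because the exponents $q_s$ add along the path, and a vertex that is the older endpoint of two path-edges automatically collects the corresponding square. Incorporating the $m$ possible edge labels at each step, this identifies a \emph{truncated path-counting operator} $\mathbf{M}_\zeta$ acting on functions on $[\zeta n,n]$, whose $k$-th power $\mathbf{M}_\zeta^{\,k}$ has entries bounding from below the expected number of edge-labeled $k$-step paths between two fixed vertices with all intermediate vertices of age $\ge\zeta$. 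By \Cref{sec_converge_spectral_radius}, the spectral radius of $\mathbf{M}_\zeta$ converges, as $n\to\infty$ and then $\zeta\searrow0$, to $\nu$ of \eqref{spectral_radius}, the intermediate limit $\nu_\zeta$ being the spectral radius of the truncation of the offspring operator to ages $\ge\zeta$.

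Then I would restrict and apply a Perron-type argument. On the good event $\GNW$, each of $\bar B_r^{\sss(G_n)}(\Ver_1)$ and $\bar B_r^{\sss(G_n)}(\Ver_2)$ has at least $(3/2)^r$ leaves with labels in $[4\zeta n,(1-3\zeta)n]$, and all vertices of the two neighborhoods are distinct; restrict the sum over $\NESA^{\sss e,k_\nu^\star,r}$ to paths whose endpoints are two such leaves, one in each neighborhood, and whose interior lies in $[\zeta n,n]$ off the at most $2M$ neighborhood vertices. Discarding the non-self-avoiding paths among these costs only a lower-order term by a routine first-moment estimate --- a repeated vertex forces a closed sub-walk contributing an extra factor $\Theta/n$, so that even after the $O((k_\nu^\star)^2)$ choices of where the repetition occurs these paths total $o(1)$ times the main term --- and the at most $2M$ forbidden interior vertices cost only a factor $(1-O(M/n))^{k_\nu^\star}=1-o(1)$. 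Hence, for a fixed admissible leaf pair $(\ell_1,\ell_2)$, the expected number of such length-$k_\nu^\star$ paths is at least $(1-o(1))(\mathbf{M}_\zeta^{\,k_\nu^\star})_{\ell_1\ell_2}$. Since $\mathbf{M}_\zeta$ has nonnegative entries and is irreducible (the graph is connected) and $k_\nu^\star\to\infty$, the convergence of \Cref{sec_converge_spectral_radius}, combined with a Perron-type argument, gives for $n\ge n_0(m,\delta,M,r,\vep,\zeta)$ the bound $(\mathbf{M}_\zeta^{\,k_\nu^\star})_{\ell_1\ell_2}\ge \zeta^2 c_\zeta\,\nu_\zeta^{\,k_\nu^\star}/n$, with $c_\zeta>0$ absorbing the $\zeta$-dependent endpoint weights and the $\zeta$-dependent number of steps needed to bring $\ell_1,\ell_2$ into the leading direction. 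Finally, $\nu_\zeta^{\,k_\nu^\star}/n=\Theta\,n^{(1+\vep)\log\nu_\zeta/\log\nu-1}$, and choosing $\zeta_0\in(0,1/10]$ so small that $\log\nu_\zeta/\log\nu\ge(1+\vep/2)/(1+\vep)$ for all $\zeta\le\zeta_0$ --- possible precisely because $\nu_\zeta\to\nu>1$ --- makes this at least $n^{\vep/2}$; summing over the $\ge(3/2)^{2r}$ admissible leaf pairs yields \eqref{Nnk-PAMc-infty}.

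The main obstacle is the Perron-type lower bound on $(\mathbf{M}_\zeta^{\,k})_{\ell_1\ell_2}$ and, underneath it, the convergence $\nu_\zeta\to\nu$: because the PAM offspring operator is neither self-adjoint nor compact, no spectral decomposition nor Krein--Rutman theorem is available, so one must show instead --- via the probabilistic (branching-process) interpretation carried out in \Cref{sec_converge_spectral_radius} --- that the truncated operator still possesses a genuine leading eigenvalue tending to $\nu$ together with a leading eigenfunction having positive overlap with functions supported away from age $0$. A secondary, bookkeeping-type difficulty is making the first step rigorous: controlling the conditional law of $\PAndmdel$ outside $B_r^{\sss(G_n)}(\Ver_i)$ given $\mathscr{F}_r$, and checking that the non-self-avoiding and forbidden-vertex corrections are genuinely of lower order for $k$ of order $\log n$.
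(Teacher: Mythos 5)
Your outline tracks the paper's (falling-factorial conditional path probability $\rightarrow$ kernel-product form $\rightarrow$ spectral input from Section~\ref{sec_converge_spectral_radius} $\rightarrow$ subtract avoidance corrections $\rightarrow$ spectral gap gives $n^{\vep/2}$), but the central step is filled in incorrectly, and the gap is real rather than a matter of presentation.

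First, you describe Section~\ref{sec_converge_spectral_radius} as delivering ``a genuine leading eigenvalue tending to $\nu$ together with a leading eigenfunction having positive overlap with functions supported away from age $0$,'' and then you invoke a ``Perron-type argument'' to get the entrywise lower bound $(\mathbf{M}_\zeta^{\,k_\nu^\star})_{\ell_1\ell_2}\ge \zeta^2 c_\zeta\nu_\zeta^{k_\nu^\star}/n$. But Proposition~\ref{pro_hd_spe_con} is \emph{not} a Perron eigenpair statement: it is the direct inner-product bound $\langle\mathbf 1,\bfT_{\kappa^\circ_\zeta}^{k}\mathbf 1\rangle\ge c_\zeta\uhvep^{k}$, established by a martingale/optional-stopping argument with no spectral decomposition at all. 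The Perron eigenfunction and its positivity appear only in the separate Appendix~B argument for Remark~\ref{RE-CON-SPE-RADIUS}, which is a corollary of Proposition~\ref{pro_hd_spe_con} rather than its content, and is not used in the proof of Proposition~\ref{prop-path-count-PAMc-LB_simple}. To make your route work you would still need to (i) reduce $\langle f_1,\bfT^k f_2\rangle$ for the indicator-type $f_1,f_2$ built from the leaves to the inner product of $\mathbf 1$ with itself, and this is exactly what the paper does via the two-step irreducibility estimate $\bfT_{\kappa_\zeta}^2 f(x,s)\ge c_{\srY\srO}^2\zeta\int f$ in \eqref{eq_hd_bound_T2f}, not a Perron-Frobenius/Krein--Rutman argument; and (ii) justify passing between a discrete $n\times n$ matrix $\mathbf M_\zeta$ and the continuous truncated operator.

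Point (ii) hides a second genuine obstruction you do not address. The paper never proves spectral convergence of the discrete matrix to the continuous operator; instead it bounds the discrete path sum \emph{below} by a continuous integral. Because $\kappa$ vanishes on the diagonal while the discrete path sum includes ``lazy'' steps $\pi_{i-1}=\pi_i$ after the Riemann-sum comparison, this naive sum-to-integral lower bound fails for $\kappa$ itself, and the paper must introduce the modified kernel $\nka$ of \eqref{kappa-PAM-fin-def-form-a-rep-new_term} with the extra diagonal term $c_{\srO\srY}\indic{x=y,t=\rO}/x$ to make the pointwise inequality \eqref{eq-decrease-kappa-total-gen} true, and then (in Lemma~\ref{lem_hd_sum_restrict_path-sum-gen-nka-compare} / Appendix~E) show that the self-loops this modification introduces are removable at a lower-order cost. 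Your proposal never confronts the diagonal issue, so as written the key inequality ``$\sum_{\vec\pi}\prod\kappa(\cdots)\ge n^{k+1}\langle f_1,\bfT^k f_2\rangle$'' is not available. The remaining steps --- removing repeated-vertex paths and paths hitting the $r$-neighborhoods as lower-order corrections, and the choice of $\zeta_0$ so that the truncated growth rate beats $\nu^{1-\vep/4}$ --- are correctly sketched and match the paper.
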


We next bound the conditional variance from above in \Cref{sec-second-moment-proof-thm-log-PA-delta>0}:
\begin{proposition}[Variance is negligible compared to the first moment squared]
\label{prop-path-count-PAMc-Var-UB}
Fix $m\geq 2$, $\delta>0$ and $\vep\in(0,1)$, and consider $\PAndmdel$. There exists a $\zeta_0 \in(0,1/10]$ such that for $k_\nu^\star=\rou{(1+\vep)\log_{\nu}{n}}$, every ${\zeta}\in(0,\zeta_0]$,  $r,M\geq 1$,  $\eta\in(0,1/10]$, on the good event $\GNW$, for  $n\geq n_0$, where $n_0$    is an integer that depends only on $m,\delta,M,r,\vep$ and $\zeta$, with $c_{\zeta}>0$ defined as in \Cref{prop-path-count-PAMc-LB_simple}, 
	\eqn{
	\Var^r(N_{n,r}(k_\nu^\star))\leq \Theta \bc{3/2}^{-r} {\zeta}^{-4} c_{\zeta}^{-1} \expec^r[N_{n,r}(k_\nu^\star)]^2.\nn
	}
\end{proposition}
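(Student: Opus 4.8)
The plan is to expand the conditional variance as a double sum over pairs of $k_\nu^\star$-step edge-labeled self-avoiding paths $(\vpie_1,\vpie_2)$ with endpoints in $\partial B_r^{\sss(G_n)}(\Ver_1)$ and $\partial B_r^{\sss(G_n)}(\Ver_2)$, and to organize the bound by the amount of overlap between the two paths. Writing $N_{n,r}(k_\nu^\star)=\sum_{\vpie}\indic{\vpie\subseteq \PAndmdel}$, we have
\begin{align*}
\Var^r(N_{n,r}(k_\nu^\star))=\sum_{\vpie_1,\vpie_2}\Big(\expec^r\big[\indic{\vpie_1\subseteq \PAndmdel}\indic{\vpie_2\subseteq \PAndmdel}\big]-\expec^r[\indic{\vpie_1\subseteq \PAndmdel}]\expec^r[\indic{\vpie_2\subseteq \PAndmdel}]\Big).
\end{align*}
If $\vpie_1$ and $\vpie_2$ are edge-disjoint, the conditional independence of edges given $(\psi_v)_{v\in[n]}$ makes the corresponding summand vanish \emph{after} also averaging over the unconditioned weights — here the key point is the choice of $\mathscr{F}_r$ in \eqref{sigma-alg-SMM-PAM}: conditioning only on $(\psi_v)_{v\in B_{r-1}}$ leaves the edge probabilities along the free part of the path essentially deterministic (by the good-weights event \eqref{eq-ass-on-psi} and the concentration of $S_\ell^{\sss(n)}$), so the covariance of two edge-disjoint paths is of lower order. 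Thus the dominant contribution comes from pairs of paths sharing at least one edge.

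Next I would decompose a pair of overlapping paths into their shared and non-shared segments. A self-avoiding pair $(\vpie_1,\vpie_2)$ that shares some edges decomposes $\vpie_1\cup\vpie_2$ into a shared sub-structure of some total length $\ell\geq 1$ together with the remaining ``free'' pieces; counting such configurations and applying the path-probability formula \eqref{eq_hd_lpath_probability} bounds the overlap contribution by (schematically) $\sum_{\ell\geq 1}\expec^r[N_{n,r}(k_\nu^\star)]^2\cdot (\text{cost of forcing an }\ell\text{-overlap})$. The cost of forcing a shared edge is controlled by the \emph{same} asymptotic path-counting estimates already developed for the first-moment lower bound in \Cref{prop-path-count-PAMc-LB_simple} (and the upper-bound counterparts from \Cref{sec_lower_bound}/\Cref{pro_lowerbound}): each forced shared edge between two otherwise-free paths replaces a factor growing like $\nu$ by a factor bounded by a $\Theta$, so an $\ell$-overlap is penalized geometrically. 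The restriction $\pi_i\geq \zeta n$ in the definition \eqref{Nnk-vep-PAM} of $N_{n,r}(k)$ is exactly what guarantees that no low-index (high-degree) vertex can inflate the number of paths through a shared vertex; this is where the factor $c_\zeta$ and the negative power ${\zeta}^{-4}$ enter. Summing the geometric series in $\ell$ and comparing against $\expec^r[N_{n,r}(k_\nu^\star)]^2$ gives a bound of the form $\Theta\,(3/2)^{-r}{\zeta}^{-4}c_\zeta^{-1}\expec^r[N_{n,r}(k_\nu^\star)]^2$; the $(3/2)^{-r}$ factor appears because the first moment squared already carries $(3/2)^{4r}$ (from \Cref{prop-path-count-PAMc-LB_simple}) while a shared segment that must begin in $\partial B_r$ loses one factor of the branching growth $(3/2)^r$ relative to an independent pair, and we keep $(3/2)^{-r}$ to express the gain. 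On the good event $\GNW$ this is where conditions \ref{it_pb_1}--\ref{it_pb_3} of \Cref{ass_Br} are used: the $[\eta n, n]$ labeling prevents pathologically old neighborhood vertices, the size bound $M$ controls $|\partial B_r|$ from above, and the $(3/2)^r$ leaves in $[4\zeta n,(1-3\zeta)n]$ provide enough ``good'' starting points for the free paths.

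I expect the main obstacle to be the overlap bookkeeping combined with the dependence on $(\psi_v)_{v\notin B_{r-1}}$: unlike in the configuration model or inhomogeneous random graph, even after conditioning on $\mathscr{F}_r$ the two paths interact through the shared $(\psi_v)$, so ``edge-disjoint'' does \emph{not} immediately mean ``conditionally independent'' — one must show the residual covariance coming from the common randomness $(\psi_v)_{v\geq}$ is itself lower order. The plan here is to use the explicit beta-parameter structure \eqref{eq-alpha-beta-pand}: $\Erw[\psi_s^{p_s}(1-\psi_s)^{q_s}]$ factorizes into Gamma-function ratios, and for two edge-disjoint paths the relevant joint moment $\Erw[\prod_s \psi_s^{p_s^{(1)}+p_s^{(2)}}(1-\psi_s)^{q_s^{(1)}+q_s^{(2)}}]$ differs from the product of individual expectations only by correction factors that are $1+O(1/s)$ per vertex, and these accumulate to at most a $\Theta$ overall because the path lengths are $O(\log n)$ and the vertices are spread over $[\zeta n, n]$ (so each correction is $1+O(1/(\zeta n))$ and there are only $O(\log n)$ of them). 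This reduces the edge-disjoint covariance to something negligible and isolates the genuinely combinatorial overlap term, which is then handled by the counting argument above together with the asymptotic estimates for the number of edge-labeled self-avoiding paths established in the proof of \Cref{prop-path-count-PAMc-LB_simple}.
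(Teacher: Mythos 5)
Your high-level plan — expand the conditional variance as a double sum over path pairs, note that the dominant contribution comes from overlapping pairs, and control the overlap term with a geometric-in-the-overlap penalty — is indeed the paper's strategy. But there are two concrete gaps in how you carry it out.

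First, and most seriously, you classify pairs by \emph{edge}-disjointness, claiming that for edge-disjoint pairs the joint $\psi$-moment differs from the product of individual moments ``only by correction factors that are $1+O(1/s)$ per vertex.'' This is false when the two paths share a vertex but no edge: at a shared vertex $s$ both exponents $p_s^{(1)}\geq 1$ and $p_s^{(2)}\geq 1$ can hold simultaneously, and then
\[
\frac{\Erw[\psi_s^{p_s^{(1)}+p_s^{(2)}}]}{\Erw[\psi_s^{p_s^{(1)}}]\,\Erw[\psi_s^{p_s^{(2)}}]} \;\approx\; \frac{(\alpha+1)}{\alpha}\;=\;\Theta,
\]
a genuine constant-factor discrepancy, not $1+O(1/s)$. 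So edge-disjointness alone does not make the covariance lower order. The paper's Case~(B) in \eqref{variance_decompose} requires $\vec\pi\cap\vec\rho=\emptyset$ (\emph{vertex}-disjointness); vertex-sharing but edge-disjoint pairs fall into Case~(C), where they are handled by the decomposition-size argument (here such a pair has decomposition size $u\geq 2$, and \Cref{lem-var_upper_part1_0} shows the contribution is $o(1)$ because each of the $u$ free segments of $\vrhoe\setminus\vpie$ carries an $n^{-1}$ factor from fixing its endpoints on $\vec\pi$). Your argument as written would classify these pairs as negligible for the wrong reason and would not survive closer scrutiny, even though the conclusion happens to be correct after the more careful $u\geq 2$ analysis.

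Second, the mechanism that produces the stated $(3/2)^{-r}\zeta^{-4}c_\zeta^{-1}$ bound is not ``each forced shared edge replaces a factor $\nu$ by a factor $\Theta$.'' The main term comes from decomposition size $u=1$ (a single replaced segment of $\vrhoe$), and the bound there is obtained by comparing the kernel-product sum over the replacement segment of length $\ell$ against the full-length sum that constitutes $\expec^r[N_{n,r}(k)]$, using the \emph{lower} bound on the truncated-kernel path sums (\Cref{lem_hd_sum_restrict_path-sum-gen-nka}), the endpoint-swapping comparison $\Theta\zeta^{-2}$ per end (\Cref{cor_easy-general-bound}, accounting for the $\zeta^{-4}$), and the $|F_1^-|\geq(3/2)^r$ property from the good event (accounting for $(3/2)^{-r}$). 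The resulting bound $\Theta(3/2)^{-r}\zeta^{-4}c_\zeta^{-1}\nu^{(\ell-k)(1-\vep/4)}\expec^r[N_{n,r}(k)]$ is then summed geometrically over $\ell$ and over the attachment position $j$. In short, the geometric decay you need is in the \emph{length deficit} $k-\ell$, not in a per-shared-edge penalty, and it requires the spectral-radius lower bound from \Cref{pro_hd_spe_con} (via \Cref{lem_hd_sum_restrict_path-sum-gen-nka}) precisely because the upper-bound path counts alone would lose a sublinear factor and could not close the comparison.
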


\subsection{Proof of main result \Cref{thm-log-PA-delta>0} subject to \Cref{pro_lowerbound,prop-path-count-PAMc-Var-UB}}\label{sec-proof-thm-2.1}
In this section, we prove \Cref{thm-log-PA-delta>0} using \Cref{pro_lowerbound,prop-path-count-PAMc-Var-UB}.

\begin{proof}[Proof of \Cref{thm-log-PA-delta>0} subject to \Cref{pro_lowerbound,prop-path-count-PAMc-Var-UB}]
We first note that \eqref{conc-LB-dist-PAd} follows directly from \Cref{pro_lowerbound}. To show that \eqref{conc-UB-dist-PAd} holds, we use the Chebyshev’s inequality, as mentioned in \Cref{sec-log-upperbds-PA}. 
\smallskip

For an event $A$, we use $\indicwo{A}$ to denote the indicator function of $A$. By \eqref{eq_secondmomentbound} and \Cref{prop-path-count-PAMc-Var-UB}, for any ${\zeta}\in(0,\zeta_0]$, 
\begin{align}\label{eq-limit-prob-nk}
    &\limsup_{r\to\infty}\limsup_{M\to\infty}\limsup_{\eta\searrow 0}\limsup_{n\to\infty}\prob^r\big(N_{n,r}(k_\nu^\star)=0\big)\indicwo{\GNW}\\
    &\qquad\leq \limsup_{r\to\infty}\limsup_{M\to\infty}\limsup_{\eta\searrow 0}\limsup_{n\to\infty}\frac{\Var^r(N_{n,r}(k_\nu^\star))}{\expec^r[N_{n,r}(k_\nu^\star)]^2}\indicwo{\GNW}\nn\\
    &\qquad\leq \Theta{\zeta}^{-4} c_{\zeta}^{-1}\limsup_{r\to\infty}\limsup_{M\to\infty}\limsup_{\eta\searrow 0}\limsup_{n\to\infty} \bc{3/2}^{-r} =0.\nn
\end{align}
On the other hand,  the law of total probability gives that
\begin{align}\label{eq-law-tp-decompose}
   \prob\big(N_{n,r}(k_\nu^\star)=0\big)\leq \Erw\brk{\prob^r\big(N_{n,r}(k_\nu^\star)=0\big)\indicwo{\GNW}}+\PP(\GNW^c).
\end{align} 
Then, by \zhu{the} dominated convergence theorem and \Cref{LEM_RE_BR}, we conclude from \eqref{eq-limit-prob-nk} and \eqref{eq-law-tp-decompose} that
\begin{align*}
&\limsup_{r\to\infty}\limsup_{n\to\infty}\prob\big(N_{n,r}(k_\nu^\star)=0\big)=\limsup_{r\to\infty}\limsup_{M\to\infty}\limsup_{\eta\searrow 0}\limsup_{n\to\infty}\prob\big(N_{n,r}(k_\nu^\star)=0\big) \\ 
&\qquad\leq  \Erw\Big[\limsup_{r\to\infty}\limsup_{M\to\infty}\limsup_{\eta\searrow 0}\limsup_{n\to\infty}\prob^r\big(N_{n,r}(k_\nu^\star)=0\big)\indicwo{\GNW}\Big]\\
&\qquad\quad+\limsup_{r\to\infty}\limsup_{M\to\infty}\limsup_{\eta\searrow 0}\limsup_{n\to\infty}\PP(\GNW^c)=0,
\end{align*}
where the first equation follows from the fact that $\prob\big(N_{n,r}(k_\nu^\star)=0\big)$ is independent of the size upper bound $M$ and the tree truncation parameter $\eta$, and the  inequality follows from Fatou's lemma.

Given $\vep\in(0,1)$, for any $\vep_0>0$, we can choose a sufficiently small $\eta$ and sufficiently large $M,r$ such that $\prob\big(N_{n,r}(k_\nu^\star)=0\big)<\vep_0$ for any $n>n_0$, where $n_0$ is defined in \Cref{prop-path-count-PAMc-Var-UB}. Then, for $n>n_0$,
\begin{align}\label{eq_dis-upper-with-vep_0}
    \PP\big(\gdist{\PAndmdel}(\Ver_1,\Ver_2)\leq (1+\vep)\log_\nu n+2r\big)\geq \prob\big(N_{n,r}(k_\nu^\star)\geq 1\big)\geq  1-\vep_0.
\end{align}
Rescaling $\vep$ in  \eqref{eq_dis-upper-with-vep_0} to be $\vep/2$ and letting $n$ be sufficiently large such that $(\vep/2)\log_\nu n\geq 2r$, we arrive at
\begin{align*}
    \PP\big(\gdist{\PAndmdel}(\Ver_1,\Ver_2)\leq (1+\vep)\log_\nu n\big)\geq 1-\vep_0.
\end{align*}
Consequently, \eqref{conc-UB-dist-PAd} follows since $\vep_0>0$ is arbitrary.

Finally, \Cref{thm-log-PA-delta>0} follows directly from the combination of \eqref{conc-LB-dist-PAd} and \eqref{conc-UB-dist-PAd}.
\end{proof}

\subsection{Exponential growth of the Pólya point tree}\label{sec_offspring_operator}
In \Cref{sec_polya_pam,sec-edge-set-path-prob,sec-log-upperbds-PA,sec-proof-thm-2.1}, 
we have outlined the proof strategy for \Cref{thm-log-PA-delta>0}. However, we did not explain how the term $\log_\nu n$ arises in the proof. In this section, we aim to provide some intuition for this, using the \textit{local limit} of PAMs.

\paragraph{Pólya point tree $\PPT$} Local convergence was first introduced by Benjamini and Schramm in \cite{BenSch01}, and independently by Aldous and Steele in \cite{AldSte04}, to describe the limiting local structure around a uniformly chosen vertex in a sequence of graphs. Specifically, if we examine the neighborhood of a uniformly chosen vertex in $\PAndmdel$, as $n$ tends to infinity, it will closely resemble the neighborhood of the root in the following recursively defined (rooted) Pólya point tree $\PPT$ \cite{GarHazHofRay22,Hofs24}:
\begin{enumerate}
\item The Pólya point tree $\PPT$ is a \textit{marked-rooted} tree arising from a multi-type branching process. To avoid confusion, we call the vertices in the $\PPT$ {\em nodes}, and reserve the word {\em vertex} for the \ch{vertices in the} graph $\PAndmdel$.
        \item Each \ch{node} in $\PPT$ is labeled by an index, using the \textit{Ulam–Harris word}. Specifically, the root is indexed by $\varnothing$; for the other nodes in the tree,  the index of the $i$th child of a node $\omega$  is recursively defined as $\omega i$.
        \item For each node $\omega$, there is a random variable $A_\omega$ in $[0,1]$, acting as its \textit{mark}. 
        Each non-root node $\omega i$, is further assigned the label $\rO$ when $A_{\omega i}<A_\omega$, and $\rY$ when $A_{\omega i}>A_\omega$. Each edge also has a \textit{mark} in $[m]$.

        \item Each node labeled $\rO$ or the root has precisely $m$ children labeled $\rY$. The $m$ edges connected to a node are marked uniformly at random without replacement from $[m]$, independent of everything else.
        
        \item Each node with label $\rY$ has precisely $m-1$ children labeled $\rY$. The $m-1$ edges connected to a node are marked uniformly at random without replacement from $[m]$, excluding the mark of the edge connecting this vertex to its parent, and independent of everything else.
        
        \item  The mark of the root $\varnothing$ is uniformly distributed in $[0,1]$. \zhu{For $\omega$, given \ch{its mark} $A_\omega$},  with $\chi=\frac{m+\delta}{2(m+\delta)}$,
        \begin{enumerate}
            \item the marks of its children $\omega i$ with label $\rO$ \ch{have} distribution  $U_{\omega i}^{1/\chi}A_\omega$, where $U_{\omega i} $ is uniformly distributed in $[0,1]$, independent of everything else;
            \item the sequence of its children with label $\rY$ is a sequence of the points of a Poisson point process on $[A_\omega,1]$ with intensity
            \begin{align*}
                \rho_\omega=\frac{\Gamma_\omega(1-\chi)}{x^{\chi}A_\omega^{1-\chi}},
            \end{align*}
            independent of everything else, where $\Gamma_\omega$ has a gamma distribution with shape $m+\delta$ and rate $1$ when $\omega$ is the root or labeled $\rY$, and shape $m+\delta+1$ and rate $1$ when $\omega$ is labeled $\rO$. These gamma random variables are also independent of each other and everything else.
        \end{enumerate}
         \end{enumerate}

\paragraph{Offspring operator of $\PPT$}
By the definition of the $\PPT$, the offspring distribution of each non-root \ch{node} is determined by its \ch{{\em type}, consisting of its} mark in $[0,1]$ and its label in $\cbc{\rO,\rY}$, according to the same law. 

In \cite[Section 5.4.3]{Hofs24}, the first author defined an integral kernel $\kappa\big((x,s), (y,t)\big)$ and
showed that $\int_0^y \kappa\big((x,s), (z,t)\big)\dint z$ represents the expected number
of children \ch{of type} $(z,t)$ such that $z\in[0,y]$ for a \ch{node of type} $(x,s)$ in $\PPT$, where
\eqn{
	\label{kappa-PAM-fin-def-form-a-rep}
	\kappa\big((x,s), (y,t)\big)=\frac{c_{st}(\indic{x>y, t=\srO}+\indic{x< y, t=\srY})}{(x\vee y)^{\chi}(x\wedge y)^{1-\chi}},
	}
while $\chi=(m+\delta)/(2m+\delta)\in (\tfrac{1}{2},1)$ and 
	\eqn{
	\label{cst-def-PAM-rep}
	c_{st}=
	\begin{cases}
	\frac{m(m+\delta)}{2m+\delta} &\text{ for }st=\rO\rO,\\
	\frac{m(m+1+\delta)}{2m+\delta} &\text{ for }st=\rO\rY,\\
	\frac{(m-1)(m+\delta)}{2m+\delta} &\text{ for }st=\rY\rO,\\
	\frac{m(m+\delta)}{2m+\delta} &\text{ for }st=\rY\rY.
	\end{cases}
	}

Given an integral kernel $K\colon \mathcal{S}\times \mathcal{S}\mapsto \R$, its {\em integral operator} $\bfT_K\colon L^2(\mathcal{S})\to L^2(\mathcal{S})$ is defined as
	\eqn{
	\label{T-kappa-def}
	(\bfT_K f)(x)=\int_{\mathcal{S}} K(x,y)f(y)\mu(\mathrm{d}y).
	}
The offspring operator of $\PPT$ is defined as the integral operator ${\bfT_\kappa}$ with the integral kernel $\kappa$. We now explain why ${\bfT_\kappa}$ is referred to as the offspring operator.

With ${\bf 1}_{([0,y],t)}(x,s)=\indic{x\in[0,y],s=t}$, \eqref{T-kappa-def} yields that
\begin{align*}
    (\bfT_\kappa {\bf 1}_{([0,y],t)})(x,s)=\int_0^y \kappa\big((x,s), (z,t)\big)\dint z.
\end{align*}
Furthermore, by induction, $(\bfT_\kappa^k {\bf 1}_{([0,y],t)})(x,s)$ represents the expected number
of \ch{nodes of type} $(z,t)$ such that $z\in[0,y]$ in the $k$th generation of a \ch{node of type} $(x,s)$ in $\PPT$. Consequently, with ${\bf 1}$ as the function identically equal to $1$, the expected number of vertices in the $k$th generation of a \ch{node} $v$, with a uniformly chosen mark in $[0,1]$ and a uniformly chosen label in $\cbc{\rO,\rY}$, is equal to $\frac{1}{2}\langle {\bf 1},\bfT_\kappa^k {\bf 1}\rangle$, 
where $\langle \cdot, \cdot\rangle$ denotes the $L^2(\rangeoy)$ inner product, \ch{and the factor $\tfrac{1}{2}$ arises from the uniform label in $\cbc{\rO,\rY}$.} Hence, loosely speaking,  the number of \ch{nodes} in the $k$th generation of the root $\varnothing$ in $\PPT$ is of the order of $\langle {\bf 1},\bfT_\kappa^k {\bf 1}\rangle$, as is the expected number of vertices at distance $k$ to a uniformly chosen vertex $v$ in $\PAndmdel$\longversion{ by \eqref{dis-pam-local-limit-0}}.

In \cite{hazra2023percolation}, Hazra, the first author  and Ray proved the following results on the $L^2(\rangeoy)$ operator norm and spectral radius of $\bfT_\kappa$:
\begin{proposition}[{\cite[Theorem 2.5 and Remark 5.1]{hazra2023percolation}}]\label{pro_spectral_radius}
As an operator in $L^2(\rangeoy)$, both the operator norm and the spectral radius of the offspring operator $\bfT_\kappa$ are equal to 
\eqn{\nu=2\frac{m(m+\delta)+\sqrt{m(m-1)(m+\delta)(m+1+\delta)}}{\delta}.\label{eq_value_nu}}
\end{proposition}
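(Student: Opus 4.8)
The plan is to exploit two structural features of the kernel $\kappa$ in \eqref{kappa-PAM-fin-def-form-a-rep}: it is homogeneous of degree $-1$ in the mark variable, and it depends on the marks only through $x\wedge y$, $x\vee y$, and the sign of $x-y$ (which is recorded by the labels). In logarithmic mark coordinates $\bfT_\kappa$ then becomes \emph{translation invariant} on a half-line, and one can diagonalise it by Fourier transform and read off its spectral data from a $2\times2$ matrix-valued symbol. Concretely, first I would write $f=(f_\rO,f_\rY)\in L^2(\rangeoy)$ and put $g_s(u)=\e^{-u/2}f_s(\e^{-u})$; the map $f\mapsto g$ is unitary from $L^2([0,1])$ onto $L^2([0,\infty))$ in each coordinate. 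Changing variables in \eqref{kappa-PAM-fin-def-form-a-rep} and setting $a:=\chi-\tfrac12=\tfrac{\delta}{2(2m+\delta)}>0$, the operator $\bfT_\kappa$ is unitarily equivalent to the operator on $L^2([0,\infty))^2$
\begin{align*}
(\widetilde{\bfT}g)_s(u)=c_{s\rO}\int_u^\infty\e^{-a(v-u)}g_\rO(v)\,\dint v+c_{s\rY}\int_0^u\e^{-a(u-v)}g_\rY(v)\,\dint v ,
\end{align*}
namely the compression to the half-line of convolution on $L^2(\R)^2$ with symbol $\widehat M(\xi)=C\cdot\mathrm{diag}\bigl((a-\imag\xi)^{-1},(a+\imag\xi)^{-1}\bigr)$, where $C=(c_{st})_{s,t\in\{\rO,\rY\}}$.

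To compute the spectral radius, note that $|a\pm\imag\xi|=\sqrt{a^2+\xi^2}=:\rho(\xi)$, so $\widehat M(\xi)=\rho(\xi)^{-1}C\,U(\xi)$ with $U(\xi)$ diagonal and unitary; hence the eigenvalues $\lambda_\pm(\xi)$ of $\widehat M(\xi)$ satisfy $\lambda_+(\xi)\lambda_-(\xi)=\det C\cdot\rho(\xi)^{-2}$ and $\lambda_+(\xi)+\lambda_-(\xi)=2c_{\rO\rO}\cos(\arctan(\xi/a))\,\rho(\xi)^{-1}$, using $c_{\rO\rO}=c_{\rY\rY}$. Together with $c_{\rO\rY}c_{\rY\rO}<c_{\rO\rO}^2$ — which holds for all $m\ge2$, $\delta>0$ and is equivalent to $\det C>0$ — an elementary estimate shows $|\lambda_+(\xi)|$ is maximised at $\xi=0$, where $\widehat M(0)=C/a$ and $\lambda_+(0)$ is the Perron root of $C$ divided by $a$; solving $\lambda^2-2c_{\rO\rO}\lambda+(c_{\rO\rO}^2-c_{\rO\rY}c_{\rY\rO})=0$ and inserting \eqref{cst-def-PAM-rep} gives $\lambda_+(0)=(c_{\rO\rO}+\sqrt{c_{\rO\rY}c_{\rY\rO}})/a=\nu$ as in \eqref{eq_value_nu}. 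By Wiener--Hopf (Gohberg--Krein) theory, the spectrum of $\bfT_\kappa$ is the union of the eigenvalue curves $\{\lambda_\pm(\xi):\xi\in\R\}$ and the region they enclose, counted with winding number; since all of this lies in $\{|z|\le\nu\}$ and $\nu$ is attained, $\rho_{\mathrm{spec}}(\bfT_\kappa)=\nu$. (The value $\nu$ is also visible directly: on $(0,\infty)$ the positive, non-$L^2$ function $h_s(x)=b_sx^{-1/2}$, with $\bm b$ the Perron eigenvector of $C$, satisfies $\bfT_\kappa h=\nu h$ pointwise, the exponent $\tfrac12$ being forced by square-integrability of $\kappa$.)

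Since $\|\bfT_\kappa\|\ge\rho_{\mathrm{spec}}(\bfT_\kappa)=\nu$ always, only $\|\bfT_\kappa\|\le\nu$ remains, and I expect this to be the main obstacle. Indeed $\bfT_\kappa$ is neither compact nor self-adjoint, so one cannot simply appeal to $\|\cdot\|=\rho_{\mathrm{spec}}$, and the cheap bound $\|\bfT_\kappa\|\le\sup_\xi\|\widehat M(\xi)\|=\|C\|_{\mathrm{op}}/a$ is \emph{strictly larger} than $\nu$ whenever $C$ is not symmetric, so the half-line structure must genuinely be exploited. The natural attempt is a Schur-type estimate built on the positive supersolution $h_s(x)=b_sx^{-1/2}$: performing the kernel integrals on $[0,1]$ rather than on $(0,\infty)$ produces an extra, \emph{non-positive} boundary term proportional to $x^{-(1-\chi)}$ (which lies in $L^2([0,1])$ since $1-\chi<\tfrac12$), so that $\bfT_\kappa h\le\nu h$, and likewise $\bfT_\kappa^{*}h'\le\nu h'$ for the corresponding left-Perron function $h'$, pointwise on $[0,1]$. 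Turning these pointwise supersolution inequalities into the operator-norm bound $\|\bfT_\kappa\|\le\nu$ is the delicate step: it requires reconciling the distinct left and right Perron eigenvectors of $C$, truncating the non-$L^2$ functions $h,h'$ near $0$ and $1$, and absorbing the resulting ($L^2$-bounded) boundary corrections. This is exactly what is carried out in \cite[Theorem~2.5 and Remark~5.1]{hazra2023percolation}, and I would follow that argument for the details; combined with the previous paragraph it gives $\|\bfT_\kappa\|=\rho_{\mathrm{spec}}(\bfT_\kappa)=\nu$.
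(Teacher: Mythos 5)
The paper does not prove \Cref{pro_spectral_radius}; it cites \cite[Theorem~2.5 and Remark~5.1]{hazra2023percolation} and uses the statement as a black box. So there is no ``paper's own proof'' here to compare against, and your sketch should be assessed on its own merits and against the cited source.

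With that caveat, your outline is substantively sound and, to my knowledge, consistent with how the result is actually obtained. The two key observations — that in logarithmic mark coordinates $\bfT_\kappa$ becomes a $2\times 2$ matrix Wiener--Hopf operator with symbol $\widehat M(\xi)=C\,\mathrm{diag}((a-\imag\xi)^{-1},(a+\imag\xi)^{-1})$, and that the spectral radius is read off from the Perron root of $C/a$ at $\xi=0$ — are correct; I verified the change of variables, the symbol, the identity $a=\chi-\tfrac12=\delta/(2(2m+\delta))$, and that $(c_{\rO\rO}+\sqrt{c_{\rO\rY}c_{\rY\rO}})/a$ reproduces \eqref{eq_value_nu}, and that $|\lambda_+(\xi)|$ is indeed maximised at $\xi=0$ (using $\det C=m(m+\delta)(1+\delta)/(2m+\delta)^2>0$). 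Two comments on precision: first, the statement you invoke about the spectrum of a matrix Wiener--Hopf operator (``union of eigenvalue curves plus the region enclosed with nonzero winding'') is the scalar picture; for $2\times 2$ symbols the correct statement goes through $\det(\widehat M(\cdot)-\lambda I)$ and partial indices, though your bound $\rho_{\mathrm{spec}}(\bfT_\kappa)\le\sup_\xi\max_\pm|\lambda_\pm(\xi)|=\nu$ survives and positivity of $\kappa$ guarantees $\nu$ is attained. Second, you are right that $\|\bfT_\kappa\|\le\nu$ is genuinely the hard step for a non-normal, non-compact operator; your description of the supersolution $h_s(x)=b_sx^{-1/2}$ on $[0,1]$ (exact eigenfunction on $(0,\infty)$, supersolution with an $L^2$ boundary defect $-c_{s\rY}b_\rY\,x^{-(1-\chi)}/a$ on $[0,1]$) is accurate, and you are honest that turning this into an operator-norm bound and reconciling the left/right Perron vectors is deferred to \cite{hazra2023percolation}.

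Incidentally, the log-coordinate isometry you use in your first paragraph is the \emph{same} one the paper deploys in \Cref{sec_isometry} (cf.\ \eqref{eq_hd_tkappae}), though there it is applied to the truncated operator $\bfT_{\kappa^\circ_{\zeta}}$ and serves a different purpose: to build the random-walk / martingale argument in the proof of \Cref{pro_hd_spe_con}, not to compute $\nu$. So your sketch is naturally aligned with the paper's toolbox even though the paper itself does not re-derive \Cref{pro_spectral_radius}.
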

Let $r(T)$ denote the spectral radius of \ch{an} operator $T$ \ch{from $L^2(\rangeoy)$ to $L^2(\rangeoy)$}.
Then $\nu=r(\bfT_\kappa)=\lim_{k\to\infty} \lVert \bfT_\kappa^k\rVert^{1/k}$. Heuristically, $\langle {\bf 1},\bfT_\kappa^k {\bf 1}\rangle$ should be of the same order as $\nu^k$, which is also true for the  expected number of the vertices at distance $k$ to a uniformly chosen vertex $v$ in $\PAndmdel$\longversion{ by \eqref{dis-pam-local-limit-0}}. This leads to the following realization:
\begin{itemize}
    \item[$\rhd$] If $k\leq (1-\vep)\log_\nu n$, then $\nu^k$ is negligible compared to the graph size $n$. Hence, it should be very unlikely that another uniformly chosen vertex is in the $k$-neighborhood of  $v$. 
    \item[$\rhd$] If $k\geq (1+\vep)\log_\nu n$, on the other hand, then $\nu^k$ is much larger than $n$. Hence, it can be expected that another uniformly chosen vertex is in the $k$-neighborhood of  $v$. 
\end{itemize}

The path-counting technique is a method to make this argument rigorous. In a tree, the number of $k$-step self-avoiding paths from a vertex $v$ is equal to the number of vertices at distance $k$ to $v$. Since $\PAndmdel$ is locally tree-like, it follows that, loosely speaking, when $k$ is not too large, the probability that there exists a $k$-step self-avoiding path between a uniformly chosen vertex $v$ and another uniformly chosen vertex $u$ should be close to the probability that the distance between $u$ and $v$ is equal to $k$.

\paragraph{Impact of the truncation on the local limit} Recall from \eqref{Nnk-vep-PAM} that in our path-counting approach, we only consider $k$-step edge-labeled paths $\vpie$ such that $\pi_i\geq {\zeta} n$ for all $0\leq i\leq k$. Hence, we can simply remove all vertices with an age less than ${\zeta}n$ from $\PAndmdel$. The impact of this vertex removal on the local limit is that the vertices with an age-mark less than $\zeta$ in $\PPT$ are also removed. To apply the previous intuition, we need to consider the spectral radius of the new \ch{truncated} offspring operator. Thus, to obtain \Cref{thm-log-PA-delta>0}, we are required to show that this new spectral radius converges to $\nu$ as $\zeta\searrow 0$. This property is proved
in \Cref{sec_converge_spectral_radius}, where a clever isometric transformation is applied to the truncated offspring operator, allowing us to solve this deep functional analysis problem \zhu{by studying a related random-walk problem}.

We note that  $\PPT$ is a rather involved multi-type branching process. \longversion{In fact, we only use it in the proof of \Cref{LEM_RE_BR} in \Cref{sec-proof-ass-Br}.}\shortversion{In fact, it is not used in this paper; nevertheless, it is required in the proof of \Cref{LEM_RE_BR} in \cite[Appendix A]{HofZhu25long}.} The integral kernel $\kappa$ also appears straightforwardly in the later path-counting arguments in \Cref{sec_lower_bound}. The purpose of introducing this local limit is to illustrate the idea behind the typical distance computation in random graphs with finite-variance degrees, and to provide an intuitive explanation of {how the $\log_\nu n$ asymptotics arises}.

\section{Analysis of the first moment and the proof of the lower bound}\label{sec_lower_bound}
In this section, we prove \Cref{pro_lowerbound}, the lower bound on the typical distances, using the first-moment argument. A proof that utilizes the negative correlation of edge connections in preferential attachment models is presented in \cite[Section 8.5]{Hofs24}. Instead, we employ the Pólya urn representation of $\PAndmdel$ for the proof, as many of the results presented here are subsequently used in proving the upper bound {on the typical distances} as well. \footnote{Indeed, there is an alternative proof in \cite[Section 8.5]{Hofs24}, from which our proof is adapted. However, there is an error in that proof when dealing with \eqref{eq_hd_si_prod_2}, which we address in this section. See \cite{vancorrigenda} for an alternative solution of this  problem.}

Recall from \eqref{eq_hd_lpath_probability} that for each edge-labeled self-avoiding path $\vpie$,
\begin{align}\label{eq_hd_lpath_probability-rep}
    \PP_n\bc{\vpie\subseteq \PAndmdel}=\prod_{s=2}^n \psi_s^{p_s}(1-\psi_s)^{q_s}.
\end{align}
For any $\nVer_1,\nVer_2\in[n]$, if $\gdist{\PAndmdel}(\nVer_1, \nVer_2)=k$, then there exists a $k$-step edge-labeled self-avoiding path between $\nVer_1$ and $\nVer_2$. Hence, by \eqref{eq_hd_lpath_probability-rep}, along with the independence of the beta random variables $(\psi_s)_{2\leq s\leq n}$, 
\begin{align}\label{eq_hd_pre_first-moment}
  \prob\big(\gdist{\PAndmdel}(\nVer_1, \nVer_2)=k\big)&\leq \sum_{\vpie\in \BESA_{\nVer_1,\nVer_2}^{\sss e, k}}\PP\bc{\vpie\subseteq \PAndmdel}\\
  &=\sum_{\vpie\in \BESA_{\nVer_1,\nVer_2}^{\sss e,k}}  \prod_{s=2}^n\Erw\big[ \psi_s^{p_s}(1-\psi_s)^{q_s}\big],\nn
\end{align}
where we recall that $\BESA_{\nVer_1,\nVer_2}^{\sss e,k}$ is the set of all possible $k$-step edge-labeled self-avoiding paths between $\nVer_1$ and $\nVer_2$ in \ch{$[n]$}.
To handle the expectation on the rhs of \eqref{eq_hd_pre_first-moment}, we apply \cite[Lemma 5.14]{Hofs24}, which states that, for any beta random variables $\psi$ with parameters $\alpha$ and $\beta$, and any non-negative integers $p$ and $q$, 
\begin{align}\label{expctation_function_beta}
    \Erw\big[\psi^p(1-\psi)^q\big]=\frac{(\alpha+p-1)_p(\beta+q-1)_q}{(\alpha+\beta+p+q-1)_{p+q}},
\end{align}
where $(x)_m=x(x-1)\ch{\cdots} (x-m+1)$ denotes the $m$th falling factorial of $x$. 

Hence, a straightforward idea would be to bound $\prob\big(\gdist{\PAndmdel}(\nVer_1, \nVer_2)= k\big)$ by using \eqref{eq_hd_pre_first-moment} and  \eqref{expctation_function_beta}, and then taking the average over all $\nVer_1,\nVer_2\in[n]$. However, due to the possible presence of very young vertices
in the path, this bound is difficult to control. To address this issue, we introduce a positive integer $\bdt =\lceil \log^3 n\rceil$, and exclude all the paths containing any vertex whose age is younger than $\bdt$. To account for these excluded paths, we observe that the existence of such a path implies that either the distance between $\nVer_1$ and the vertex set $[\bdt -1]$, or the distance between $\nVer_2$ and $[\bdt -1]$, is no more than $k/2$.
\smallskip

Let $\ESA_{u,v}^{\sss e,\ell}=\ESA_{u,v}^{\sss e,\ell,(b)}$ be the set of all possible $\ell$-step edge-labeled self-avoiding paths $\vpie$ such that $\pi_0=u$, $\pi_\ell=v$ and $\pi_i\in[\bdt ,n]$ for all $i\in[\ell-1]$. Then,  the above observation gives that
\begin{align}
    \sum_{\nVer_1,\nVer_2=\bdt }^n\prob\big(\gdist{\PAndmdel}(\nVer_1, \nVer_2)= k\big)\label{eq_hd_divide_lower_bound_probability}
    \leq &\sum_{\nVer_1,\nVer_2=\bdt }^n\sum_{\vpie\in \ESA_{\nVer_1,\nVer_2}^{\sss e,k}} \PP\bc{\vpie\subseteq \PAndmdel}\nn\\
    &+\sum_{\nVer_1,\nVer_2=\bdt }^n\sum_{i=1}^2\prob\big(\gdist{\PAndmdel}(\nVer_i, [ \bdt -1])\leq k/2\big),
\end{align}
and we handle the two summations on the rhs of \eqref{eq_hd_divide_lower_bound_probability} separately in the following two lemmas:
\begin{lemma}[Contribution from paths without very old vertices]\label{lem_hd_lower_part1}
For any integer $\bdt \geq 2$, any $\nVer_1,\nVer_2\in [\bdt ,n]$ and any $k\leq (1-\vep)\log_\nu n$,
\begin{align*}
&\sum_{\nVer_1,\nVer_2=\bdt }^n\sum_{\vpie\in \ESA_{\nVer_1,\nVer_2}^{\sss e,k}}\PP\bc{\vpie\subseteq \PAndmdel}\leq \Theta^{1+\frac{\log^2 n}{\bdt }}n\nu^k.
\end{align*}    
\end{lemma}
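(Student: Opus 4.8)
The plan is to evaluate the left‑hand side exactly via the P\'olya urn representation, bound it \emph{pathwise} by a product of weights that reproduces the integral kernel $\kappa$ of \eqref{kappa-PAM-fin-def-form-a-rep}--\eqref{cst-def-PAM-rep}, and then recognise the resulting sum as a quadratic form in a discretised offspring operator, whose operator norm is $\nu$ by \Cref{pro_spectral_radius}.

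\textbf{Step 1: reduction to vertex paths.} By \eqref{eq_hd_lpath_probability}--\eqref{def_hd_pq}, the quantity $\PP_n(\vpie\subseteq\PAndmdel)=\prod_{s=2}^n\psi_s^{p_s}(1-\psi_s)^{q_s}$, and hence its expectation over $(\psi_j)$, depends only on the underlying vertex path $\vec\pi=(\pi_0,\dots,\pi_k)$ and not on the edge labels: $p_s$ counts the edges of $\vec\pi$ whose older endpoint is $s$ (so $p_s\le2$ by self-avoidance), and $q_s$ the edges straddling $s$. The number of label assignments realising a fixed self-avoiding $\vec\pi$ equals $\prod_v (m)_{d_v}$, where $d_v\in\{0,1,2\}$ is the number of edges of which $v$ is the \emph{younger} endpoint and $(m)_d=m(m-1)\cdots(m-d+1)$, because the used out-edge labels at each vertex must be distinct and these choices are independent across vertices. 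Applying the Beta-moment identity \eqref{expctation_function_beta} with $\alpha_s=m+\delta$, $\beta_s=(2s-3)m+\delta(s-1)$, the left-hand side becomes $\sum_{\vec\pi}\big(\prod_v(m)_{d_v}\big)\prod_{s=2}^n\Erw[\psi_s^{p_s}(1-\psi_s)^{q_s}]$, summed over self-avoiding $\vec\pi$ with all coordinates in $[\bdt,n]$.

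\textbf{Step 2: the pathwise bound.} Mark $\vec\pi$ by $\sigma_0=\rO$ and, for $h\ge1$, $\sigma_h=\rO$ if $\pi_h<\pi_{h-1}$ and $\sigma_h=\rY$ otherwise; this is the marking a path carries in the P\'olya point tree. The goal is to establish
\[
\Big(\prod_v(m)_{d_v}\Big)\prod_{s=2}^n\Erw[\psi_s^{p_s}(1-\psi_s)^{q_s}]\ \le\ \Theta^{1+\frac{\log^2 n}{\bdt}}\prod_{h=1}^k\frac{c_{\sigma_{h-1}\sigma_h}}{(\pi_{h-1}\vee\pi_h)^{\chi}(\pi_{h-1}\wedge\pi_h)^{1-\chi}}.
\]
One first uses negative correlation, $\Erw[\psi_s^{p_s}(1-\psi_s)^{q_s}]\le\Erw[\psi_s^{p_s}]\,\Erw[(1-\psi_s)^{q_s}]$. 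For the $\psi$-part, $\Erw[\psi_s^2]\le(1+\tfrac1{m+\delta})(\Erw\psi_s)^2$ and $\Erw[\psi_s^{p_s}]=(\Erw\psi_s)^{p_s}$ when $p_s\le1$; distributing $\prod_v(m)_{d_v}$ over the edges (each local maximum producing one factor $m-1$, the others $m$), using $\alpha_{a_h}/(\alpha_{a_h}+\beta_{a_h})=(\chi/a_h)(1+O(1/a_h))$ with $a_h=\pi_{h-1}\wedge\pi_h$, and checking the four transition types $\rO\rO,\rO\rY,\rY\rO,\rY\rY$, the product of the ``younger-endpoint'' factors times the $\psi$-moments reproduces $\prod_h c_{\sigma_{h-1}\sigma_h}a_h^{-1}$, up to a bounded factor absorbed in $\Theta$ (the surplus $1+\tfrac1{m+\delta}$ from $\Erw[\psi_s^2]$ at a local minimum being exactly the ratio $c_{\rO\rY}/c_{\rO\rO}$). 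For the $(1-\psi)$-part, $\Erw[(1-\psi_s)^{q_s}]\le\big(\tfrac{\beta_s}{\alpha_s+\beta_s}\big)^{q_s}\exp\big(\tfrac{\alpha_s q_s^2}{2\beta_s^2}\big)$, and $\prod_h\prod_{s=a_h+1}^{b_h-1}\tfrac{\beta_s}{\alpha_s+\beta_s}\le\prod_h(a_h/b_h)^{\chi}(1+O(1/a_h))$ by a Gamma-function computation with $b_h=\pi_{h-1}\vee\pi_h$; multiplying the $\chi/a_h$ above by $(a_h/b_h)^{\chi}$ produces exactly $(\pi_{h-1}\wedge\pi_h)^{-(1-\chi)}(\pi_{h-1}\vee\pi_h)^{-\chi}$. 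Since every path vertex is $\ge\bdt$, every straddled $s$ exceeds $\bdt$, so $\sum_s\tfrac{\alpha_s q_s^2}{\beta_s^2}\le\Theta\,k^2\sum_{s>\bdt}s^{-2}\le\Theta\,k^2/\bdt$, which with $k\le(1-\vep)\log_\nu n$ gives the factor $\Theta^{\log^2 n/\bdt}$; the $O(1/a_h)$ errors over the $\le k$ edges ($a_h$ distinct up to multiplicity two and $\ge\bdt$) contribute at most a further $\exp(O(k/\bdt))=\Theta^{\log^2 n/\bdt}$. This estimate, which forces $k=O(\log n)$, is precisely where the exponent $1+\tfrac{\log^2 n}{\bdt}$ comes from.

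\textbf{Step 3: summing the paths via the offspring operator.} Since $c_{st}(x\vee y)^{-\chi}(x\wedge y)^{-(1-\chi)}$ is the kernel $\kappa$, Step~2 bounds the left-hand side by $\Theta^{1+\log^2 n/\bdt}$ times $\sum_{\vec\pi}\prod_{h=1}^k\kappa_n\big((\pi_{h-1},\sigma_{h-1}),(\pi_h,\sigma_h)\big)$ with $\kappa_n((u,s),(v,t))=\tfrac1n\kappa((u/n,s),(v/n,t))$, where we may now drop self-avoidance and sum over all walks $\vec\pi\in[\bdt,n]^{k+1}$ (this only increases the sum). Writing $T_n$ for the operator on $\ell^2(\{\bdt,\dots,n\}\times\{\rO,\rY\})$ with these entries, restricted by the mark constraint (``$t=\rO$, $v<u$'' or ``$t=\rY$, $v>u$''), the type $\sigma_h$ is forced by $\pi_{h-1},\pi_h$, and the walk sum equals $\sum_{\pi_0\in[\bdt,n]}(T_n^k\mathbf 1)(\pi_0,\rO)\le\sqrt n\,\|T_n^k\mathbf 1\|_{\ell^2}\le\sqrt n\,\|T_n\|^k\|\mathbf 1\|_{\ell^2}=\Theta\,n\,\|T_n\|^k$ by Cauchy--Schwarz. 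Finally, embedding $\ell^2$ into $L^2([0,1]\times\{\rO,\rY\})$ via step functions makes $T_n$ unitarily equivalent to the integral operator whose kernel is $\kappa$ evaluated at the \emph{rounded-up} arguments $(\lceil nx\rceil/n,\lceil ny\rceil/n)$ (subject to the rounded mark constraint); since $\kappa$ is non-increasing in each coordinate, this kernel is pointwise dominated by $\kappa$, so $\|T_n\|\le\|\bfT_\kappa\|=\nu$ by monotonicity of nonnegative-kernel operators and \Cref{pro_spectral_radius}. Altogether the left-hand side is at most $\Theta^{1+\log^2 n/\bdt}n\nu^k$, as claimed.

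\textbf{Where the difficulty lies.} The Beta-moment expansion and Step~3 are comparatively soft — in particular the rounding-up trick conveniently sidesteps the non-compactness of $\bfT_\kappa$, which obstructs only the matching \emph{lower} bound on the first moment proved later in the paper. The substance is Step~2: one must match the purely combinatorial label count $\prod_v(m)_{d_v}$ and the Beta-moment corrections — especially $\Erw[\psi_s^2]$ at local minima and the positive correlation among the straddling factors $(1-\psi_s)^{q_s}$ — \emph{precisely} against the four constants $c_{st}$, so that the per-edge weight is genuinely $\kappa$ and the exponential rate is exactly $\nu$, rather than some larger constant obtained, say, by using $\max_{st}c_{st}$ uniformly; the straddling correlation is simultaneously the source of the $\Theta^{\log^2 n/\bdt}$ loss.
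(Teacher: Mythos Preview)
Your proposal is correct and follows essentially the same route as the paper: reduce to Beta moments via the P\'olya urn representation, match the resulting per-edge weights exactly to the kernel $\kappa$ with the four constants $c_{st}$, and bound the path sum by $n\nu^k$ through the operator norm of $\bfT_\kappa$. The two technical differences are cosmetic: you decouple $\psi_s^{p_s}$ and $(1-\psi_s)^{q_s}$ via negative correlation (a clean shortcut), whereas the paper keeps the exact falling-factorial expression and Taylor-expands it; and you pass to the continuum by embedding the discrete operator $T_n$ isometrically and using kernel monotonicity under rounding up, whereas the paper bounds the sum pointwise by a Riemann integral (their \eqref{eq_hd_lower_trans_sum_int}) --- conceptually the same monotonicity trick.
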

\begin{lemma}[Contribution from paths with very old vertices]\label{lem_hd_lower_part2}
For any integer $\bdt \geq 2$, any $\nVer_1,\nVer_2\in [\bdt ,n]$ and any $k\leq (1-\vep)\log_\nu n$,
\begin{align*}
\sum_{i=1}^2\sum_{\nVer_1,\nVer_2=\bdt }^n\prob\big(\gdist{\PAndmdel}(\nVer_i, [ \bdt -1])\leq k/2\big)\leq \Theta^{1+\frac{\log^2 n}{\bdt }} n^{3/2}\bdt ^{1/2}\nu^{k/2}.
\end{align*}
\end{lemma}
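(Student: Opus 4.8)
The plan is to bound, for each fixed $i$, the event that $\nVer_i$ lies within distance $k/2$ of the very old vertices $[\bdt-1]$ by a first-moment count over short self-avoiding paths, and then to invoke the path-probability estimate behind \Cref{lem_hd_lower_part1}. First I would note that the summand for $i=1$ does not depend on $\nVer_2$ (and symmetrically for $i=2$), so the left-hand side is at most $2n\sum_{\nVer_1=\bdt}^n\prob\big(\gdist{\PAndmdel}(\nVer_1,[\bdt-1])\leq k/2\big)$. Next, if $\gdist{\PAndmdel}(\nVer_1,[\bdt-1])=\ell\leq k/2$, a geodesic from $\nVer_1$ to $[\bdt-1]$ is a self-avoiding path $(\pi_0,\ldots,\pi_\ell)$ with $\pi_0=\nVer_1$, $\pi_\ell=w$ for some $w\in[\bdt-1]$, and $\pi_i\geq\bdt$ for all $i<\ell$ by minimality of $\ell$; labelling its edges as in $\PAndmdel$ yields an element of $\ESA_{\nVer_1,w}^{\sss e,\ell}$, so a union bound gives
\begin{align*}
\prob\big(\gdist{\PAndmdel}(\nVer_1,[\bdt-1])\leq k/2\big)\leq\sum_{\ell=1}^{\lfloor k/2\rfloor}\sum_{w=1}^{\bdt-1}\sum_{\vpie\in\ESA_{\nVer_1,w}^{\sss e,\ell}}\PP\big(\vpie\subseteq\PAndmdel\big).
\end{align*}

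The key input I would use is the per-endpoint-pair estimate underlying \Cref{lem_hd_lower_part1}: for $\nVer_1\in[\bdt,n]$, $w\in[\bdt-1]$ and $\ell\geq1$,
\begin{align*}
\sum_{\vpie\in\ESA_{\nVer_1,w}^{\sss e,\ell}}\PP\big(\vpie\subseteq\PAndmdel\big)\leq\Theta^{1+\frac{\log^2 n}{\bdt}}\frac{\nu^\ell}{\sqrt{\nVer_1\,w}}.
\end{align*}
Substituting this, I would sum $\sum_{w=1}^{\bdt-1}w^{-1/2}\leq2\sqrt{\bdt}$, $\sum_{\nVer_1=\bdt}^n\nVer_1^{-1/2}\leq2\sqrt{n}$, and $\sum_{\ell\geq1}\nu^\ell\leq\tfrac{\nu}{\nu-1}\nu^{k/2}=\Theta\,\nu^{k/2}$ (using $\nu>1$), obtaining $\sum_{\nVer_1=\bdt}^n\prob(\gdist{\PAndmdel}(\nVer_1,[\bdt-1])\leq k/2)\leq\Theta^{1+\log^2 n/\bdt}\sqrt{n}\,\sqrt{\bdt}\,\nu^{k/2}$; multiplying by the $2n$ from the first step and absorbing the constant into $\Theta$ gives the asserted bound $\Theta^{1+\log^2 n/\bdt}n^{3/2}\bdt^{1/2}\nu^{k/2}$.

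The hard part will be justifying the per-pair estimate when $w$ is tiny — possibly $w=1$, where $\psi_1\equiv1$ and $\beta_1=-m$. I would argue that since every intermediate vertex of $\vpie\in\ESA_{\nVer_1,w}^{\sss e,\ell}$ lies in $[\bdt,n]$ and the path is self-avoiding, the minimal index $w=\min_i\pi_i$ is incident to a single path-edge and no path-edge straddles a vertex below $w$, so in the product $\prod_{s=2}^n\psi_s^{p_s}(1-\psi_s)^{q_s}$ of \eqref{eq_hd_lpath_probability-rep} the only index $s<\bdt$ with a nonzero exponent is $s=w$, with $p_w=1$ and $q_w=0$; by \eqref{expctation_function_beta} this contributes only the factor $\Erw[\psi_w]=\tfrac{m+\delta}{w(2m+\delta)-2m}\asymp 1/w$ (with $\Erw[\psi_1]=1$ at the boundary), after which the rest of the product is handled by exactly the computation used for \Cref{lem_hd_lower_part1}, producing $\Theta^{1+\log^2 n/\bdt}\nu^\ell/\sqrt{\nVer_1}$ and hence the per-pair bound. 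Everything else is a routine re-run of the proof of \Cref{lem_hd_lower_part1}, now with one endpoint free to range over $[\bdt-1]$ instead of $[\bdt,n]$.
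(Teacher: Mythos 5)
Your overall plan — reduce to a per-endpoint-pair estimate and then sum — is a genuinely different strategy from the paper's, and it runs into two concrete problems.

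\textbf{The per-pair estimate is not a consequence of the computation in Lemma~\ref{lem_hd_lower_part1}.} Your key claim is
\begin{align*}
\sum_{\vpie\in\ESA_{\nVer_1,w}^{\sss e,\ell}}\PP\big(\vpie\subseteq\PAndmdel\big)\leq\Theta^{1+\frac{\log^2 n}{\bdt}}\frac{\nu^\ell}{\sqrt{\nVer_1\,w}},
\end{align*}
which you justify by saying that the rest of the product ``is handled by exactly the computation used for \Cref{lem_hd_lower_part1}, producing $\nu^\ell/\sqrt{\nVer_1}$.'' But the computation in Lemma~\ref{lem_hd_lower_part1} is inherently aggregate: the culminating step \eqref{eq_hd_lower_part1-ded} bounds $\langle \mathbf{1}, \bfT_\kappa^k\mathbf{1}\rangle\leq \|\mathbf{1}\|\,\nu^k\,\|\mathbf{1}\|$, and the paper's Lemma~\ref{lem_hd_lower_part2} proof likewise bounds $\langle\mathbf{1},\bfT_\kappa^{k'}\mathbf{1}_{[0,\bdt/n]}\rangle$, getting the factor $\|\mathbf{1}_{[0,\bdt/n]}\|=\sqrt{2\bdt/n}$ precisely because one sums over $w\in[\bdt-1]$ \emph{before} applying the operator norm. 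If you instead freeze $\nVer_1$ and $w$ and apply the same $L^2$ norm bound to $\langle\mathbf{1}_{[(\nVer_1-1)/n,\nVer_1/n]},\bfT_\kappa^\ell\mathbf{1}_{[(w-1)/n,w/n]}\rangle$, both indicator norms are $\sqrt{2/n}$ independently of $\nVer_1$ and $w$, and the resulting per-pair bound is $\Theta\,\nu^\ell$ with no $1/\sqrt{\nVer_1 w}$ gain. Summing that over $\nVer_1\in[\bdt,n]$ and $w\in[\bdt-1]$ produces $\Theta\,n\,\bdt\,\nu^\ell$, off by a factor $\sqrt{n\bdt}$ from the required $\sqrt{n\bdt}\,\nu^\ell$. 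Your per-pair estimate is plausibly \emph{true} — one can reach it via the isometry of \Cref{sec_isometry}, under which the $\ell$-fold kernel equals $\sqrt{xy}^{-1}$ times the $\ell$-step Laplace-walk transition density, which is uniformly bounded — but that is a genuinely new pointwise iterated-kernel bound, not a re-run of Lemma~\ref{lem_hd_lower_part1}. As written, the key step of your proof is asserted, not proved.

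\textbf{The bookkeeping for indices below $\bdt$ is wrong.} You claim that ``the only index $s<\bdt$ with a nonzero exponent is $s=w$, with $p_w=1$ and $q_w=0$.'' This is false: the unique edge $e_0$ with $\ushort{e}_0=w$ has $\bar{e}_0=\pi_{\ell-1}\geq\bdt$, so $e_0$ straddles every $s\in(w,\bdt)$, giving $q_s^{\sss E}=1$ for all such $s$. The product $\prod_{s=w+1}^{\bdt-1}\Erw\big[1-\psi_s\big]\approx (w/\bdt)^\chi$ is not a $\Theta$-factor and is precisely what turns your claimed boundary contribution $\Erw[\psi_w]\asymp 1/w$ into the correct $w^{-(1-\chi)}$ appearing in the kernel-product formula. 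The bulk of the paper's proof of \Cref{lem_hd_lower_part2} — the reworked versions of \eqref{eq-meanvalue-ln(1+x)}, \eqref{eq_hd_si_prod_3}, and \eqref{eq_hd_si_prod_4} — is devoted exactly to controlling the contributions from these $s\in(w,\bdt)$ indices and the straddling edge, and is not a formality you can skip.
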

 \Cref{pro_lowerbound} then follows from \Cref{lem_hd_lower_part1,lem_hd_lower_part2} as we demonstrate next:
\begin{proof}[Proof of \Cref{pro_lowerbound} subject to \Cref{lem_hd_lower_part1,lem_hd_lower_part2}] 
We first recall that we set $\bdt =\lceil\log^3 n\rceil$. Note that $\gdist{\PAndmdel}(\nVer_1, \nVer_2)= \ell$ implies \ch{that} either \zhu{there} exists a path $\vpie\in \ESA_{\nVer_1,\nVer_2}^{\sss e,\ell}=\ESA_{\nVer_1,\nVer_2}^{\sss e,\ell,(b)}$, or the distance between $\cbc{\nVer_1,\nVer_2}$ and the vertex set $[\bdt -1]$ is no more than $\ell/2$.
    Then, by \eqref{eq_hd_divide_lower_bound_probability} and \Cref{lem_hd_lower_part1,lem_hd_lower_part2}, for any $\ell\leq (1-\vep)\log_\nu n$,
\begin{align}\label{eq_compute_pathp}
\sum_{\nVer_1,\nVer_2=\bdt }^n&\prob\big(\gdist{\PAndmdel}(\nVer_1, \nVer_2)= \ell\big)\nn\\
    &\leq \sum_{\nVer_1,\nVer_2=\bdt }^n\Big(\sum_{\vpie\in \ESA_{\nVer_1,\nVer_2}^{\sss e,\ell}} \PP\bc{\vpie\subseteq \PAndmdel}+\sum_{i=1}^2\prob\big(\gdist{\PAndmdel}(\nVer_i, [ \bdt -1])\leq \ell/2\big)\Big) \nn\\
    &\leq \Theta^{1+\frac{\log^2 n}{\bdt }}\bc{n\nu^\ell+n^{3/2}\bdt ^{1/2}\nu^{\ell/2}}=\Theta\bc{n\nu^\ell+n^{3/2}\bdt ^{1/2}\nu^{\ell/2}},
\end{align}
where the last equation holds because we can write $\Theta^{1+\frac{\log^2 n}{\bdt }}=\Theta^{1+\frac{\log^2 n}{\lceil\log^3 n\rceil}}=\Theta$ by \Cref{def-Theta}.

On the other hand, by the law of total probability,
\begin{align}\label{eq_compute_pathp-2}
  \prob\big(\gdist{\PAndmdel}(\Ver_1, \Ver_2)\leq k\big)
  \leq&\sum_{i=1}^2\PP\big(\Ver_i\leq \bdt -1\big)+n^{-2}\sum_{\ell=0}^k\sum_{\nVer_1,\nVer_2=\bdt }^n\prob\big(\gdist{\PAndmdel}(\nVer_1, \nVer_2)= \ell\big).
\end{align}
Combining \eqref{eq_compute_pathp-2} with \eqref{eq_compute_pathp} yields that
\begin{flalign}\label{eq-dist-sum-nu}
   \prob\big(\gdist{\PAndmdel}(\Ver_1, \Ver_2)\leq k\big)
   \leq 2\bdt /n+n^{-2}\Theta\Big(n\sum_{\ell=0}^k\nu^\ell+n^{3/2}\bdt ^{1/2}\sum_{\ell=0}^k\nu^{\ell/2}\Big)\nn\\
   = 2\bdt /n+\Theta\Big(n^{-1}\frac{\nu^{k+1}-1}{\nu^{k+1}-\nu^k}\nu^k+n^{-1/2}\bdt ^{1/2}\frac{\nu^{(k+1)/2}-1}{\nu^{(k+1)/2}-\nu^{k/2}}\nu^{k/2}\Big).
\end{flalign}
Recall from \eqref{spectral_radius} that $\nu>1$. Then, by the definition of $\Theta$ in \Cref{def-Theta}, we can safely absorb $(\nu^{k+1}-1)/(\nu^{k+1}-\nu^k)$ and $(\nu^{(k+1)/2}-1)/(\nu^{(k+1)/2}-\nu^{k/2})$ into  $\Theta$. Then, \eqref{eq-dist-sum-nu} can be simplified as
\begin{align}\label{eq-dist-sum-nu2}
   \prob\big(\gdist{\PAndmdel}(\Ver_1, \Ver_2)\leq k\big)
   \leq 2\bdt /n+\Theta\bc{n^{-1}\nu^k+n^{-1/2}\bdt ^{1/2}\nu^{k/2}}.
\end{align}
Since $\bdt =\lceil\log^3 n\rceil$, with $k\leq (1-\vep)\log_\nu n$, we conclude from \eqref{eq-dist-sum-nu2} that
\begin{align*}
   \prob\big(\gdist{\PAndmdel}(\Ver_1, \Ver_2)\leq k\big)
\leq2\bdt /n+\Theta\bc{n^{-\vep}+n^{-\vep/2}\bdt ^{1/2}}=o(1),
\end{align*}
as desired.
\end{proof}
In the remainder of this section, using the Pólya urn representation of $\PAndmdel$ in \Cref{sec_polya_pam}, we prove  \Cref{lem_hd_lower_part1} in \Cref{sec_trun_lowerbound}. By a similar argument, we prove \Cref{lem_hd_lower_part2} in \Cref{sec-con-small-age-first-moment}.

\subsection{Proof of \Cref{lem_hd_lower_part1}: The first moment under the truncation on $[\bdt ,n]$}\label{sec_trun_lowerbound}
In this section, we prove \Cref{lem_hd_lower_part1}. We first calculate and simplify the probability that an edge-labeled self-avoiding path occurs in $\PAndmdel$. Using this path probability, we then transform the summation in \Cref{lem_hd_lower_part1} into, first, an integral, and then an inner product. Finally, we use the operator norm of the offspring operator $\bfT_\kappa$ to bound this inner product from above.

\begin{proof}[Proof of \Cref{lem_hd_lower_part1}]
We first note from \eqref{eq-alpha-beta-pand}, \eqref{eq_hd_lpath_probability} and \eqref{expctation_function_beta} that
\begin{align}
 \PP\bc{\vpie\subseteq \PAndmdel}= \prod_{s=2}^n\frac{(\alpha+p_s-1)_{p_s}(\beta_s+q_s-1)_{q_s}}{(\alpha+\beta_s+p_s+q_s-1)_{p_s+q_s}},\label{eq_hd_lower_part0}
\end{align}
where $\alpha=m+\delta$ and $\beta_s=(2s-3)m+(s-1)\delta$.


To bound the rhs of \eqref{eq_hd_lower_part0}, we deal with the product $\prod_{s=2}^n(\alpha+p_s-1)_{p_s}$ first. Recall from \eqref{def_hd_pq} that
\eqan{p_s=p_s^{\sss \vec\pi}=\sum_{e\in \vpie}\indic{s=\ushort{e}}\quad \text{and}\quad
q_s=q_s^{\sss \vec\pi}=\sum_{e\in \vpie}\indic{s\in (\ushort{e},\bar{e})}.\nn
}
Hence, $p_s\in\cbc{0,1,2}$, $p_s=0$ for $s\notin\vec\pi$, and
 \eqan{(\alpha+p_s-1)_{p_s}=\begin{cases}
     1,\quad&p_s=0;\\
     m+\delta,\quad&p_s=1;\\
     (m+\delta)(m+\delta+1),\quad&p_s=2.
 \end{cases}}
Furthermore, $\sum_{s\in \vec\pi}p_s=\sum_{s\in \vec\pi}\sum_{e\in \vpie}\indic{s=\ushort{e}}=k$. Consequently, we compute $\prod_{s=2}^n(\alpha+p_s-1)_{p_s}$ as 
\begin{align}\label{eq_path-first-prod}
    \prod_{s=2}^n(\alpha+p_s-1)_{p_s}=(m+\delta)^{k-\sum_{s\in\vec\pi}\indic{p_s=2}}(m+\delta+1)^{\sum_{s\in\vec\pi}\indic{p_s=2}}.
\end{align}

In the study of PAMs, 
we often  label  some vertices according to a direction, as we did in constructing the Pólya point tree $\PPT$ in \Cref{sec_offspring_operator}. Given a $k$-step path $\vec\pi$ and $i\in[k]$, we set
\begin{align}\label{def_label_direction}
    q(x)=\begin{cases}
        \rO,\quad&\text{if }x\leq 0;\\
        \rY,\quad&\text{if }x>0;
    \end{cases}\quad\text{and}\quad \lb^\pi_i=q(\pi_i-\pi_{i-1}),
\end{align}
while vertex $\pi_0$ has no label\zhu{, since there is no $\pi_{-1}$}. 
 \zhu{However, in the \ch{summations or products} related to $\lb^\pi_i$, we do not want to write the term for $\pi_0$ separately, \ch{see e.g., \eqref{eq_hd_lower_trans_sum_int_j}}. To simplify the equations,} we sometimes use $\lb^\pi_0$ as a variable, with range in $\cbc{\rO,\rY}$. With the notation in \eqref{def_label_direction}, we let 
\begin{align}
    N_{\srO\srY}=\sum_{i\in[k-1]}\indic{(\lb^\pi_i,\lb^\pi_{i+1})=(\srO,\srY)}=\sum_{s\in\vec\pi}\indic{p_s=2}.\label{def_NOY}
\end{align}
Then, the combination of  \eqref{eq_path-first-prod} and \eqref{def_NOY} yields that
\begin{align}\label{eq_hd_prod_ap-1}
    \prod_{s=2}^n (\alpha+p_s-1)_{p_s}&=(m+\delta)^{k-N_{\srO\srY}}(m+1+\delta)^{N_{\srO\srY}}.
\end{align}

On the other hand, we \ch{can decompose} the remaining factors in \eqref{eq_hd_lower_part0} \ch{as}
\eqan{&\prod_{s=2}^n \frac{(\beta_s+q_s-1)_{q_s}}{(\alpha+\beta_s+p_s+q_s-1)_{p_s+q_s}}\nn\\
&\qquad=\prod_{s=2}^n \frac{1}{(\alpha+\beta_s+p_s+q_s-1)_{p_s}}\prod_{i=0}^{q_s-1}\bc{1-\frac{\alpha}{\alpha+\beta_s+i}}.\label{eq_hd_cal_1}}
Recall from \eqref{def_hd_pq_gen} that
\eqan{\psE =\sum_{e\in E}\indic{s=\ushort{e}} \quad\text{and}\quad
\qsE =\sum_{e\in E}\indic{s\in (\ushort{e},\bar{e})} \quad\text{for $s\in [n]$}.\nn
}
For a set $S$, $\abs{S}$ denotes the cardinality of $S$. For an edge set $E$, we denote the set of vertices appearing in $E$ by $V_E=\cup_{e\in E}\cbc{\ushort{e},\bar{e}}$. 
{For future use, and} to bound the rhs of \eqref{eq_hd_cal_1}, we extend the case from {a self-avoiding path $\vpie$ to good edge sets $E$, that are defined as follows:}
\begin{definition}[Good edge set]\label{ass_e}
Given positive integers $n$ and $\bdt$, we say that an edge set $E$ is a {\em good edge set}, if 
\begin{itemize}
    \item[$\rhd$] each vertex appears at most $4$ times in $E$, and thus $\psE\leq 4$ and $\qsE\leq 4s$;
    \item[$\rhd$] no vertex with age less than $\bdt$ is present in $V_E$, and thus $\qsE=0$ for $s<\bdt $;
    \item[$\rhd$] $\abs{E}\leq 4\log_\nu n$, and thus $\qsE\leq \abs{E}\leq 4\log_\nu n$.
\end{itemize}    
\end{definition}
We will next address the following general case of  \eqref{eq_hd_cal_1} with a good edge set $E$:
\eqan{\prod_{s=2}^n \frac{1}{(\alpha+\beta_s+\psE+\qsE-1)_{\psE}}\prod_{i=0}^{\qsE-1}\bc{1-\frac{\alpha}{\alpha+\beta_s+i}},\label{eq_hd_cal_1_gen}}
where $p_s=p_s^{\sss \vec\pi}$ and $q_s=q_s^{\sss \vec\pi}$ are replaced by $\psE $ and $\qsE $.
This generalization is utilized in the subsequent variance calculation, and the proofs for the path $\vpie$ and the good edge set $E$ are identical. 

Recall that $\alpha=m+\delta$ and $\beta_s=(2s-3)m+(s-1)\delta$. For the first product in \eqref{eq_hd_cal_1_gen}, since $(x)_0=1$ and $\psE=\sum_{e\in E}\indic{s=\ushort{e}} =0$ for $s\notin V_E$, 
\begin{flalign}
    \label{eq_hd_fi_prod-before}
\prod_{s=2}^n \frac{1}{(\alpha+\beta_s+\psE+\qsE-1)_{\psE}}=\prod_{s\in V_E}\frac{((2m+\delta)s)^{\psE}}{(\alpha+\beta_s+\psE+\qsE-1)_{\psE}}((2m+\delta)s)^{-\psE}\nn\\
=(2m+\delta)^{-\abs{E}}\prod_{s\in V_E} s^{-\sum_{e\in E}\indic{s=\ushort{e}}} \prod_{i=0}^{\psE-1}\bc{1+\frac{\qsE+i-2m}{(2m+\delta)s}}^{-1},
\end{flalign}
where we use that $\sum_{s\in V_E}\psE=\abs{E}$ in the last equation.

Since $E$ is a good edge set, \ch{we have that} $\psE\leq 4$ and $\qsE\leq \abs{E}\leq 4\log_\nu n$. Then, for $s\geq \bdt$,
\begin{align}\label{eq-before-meanvalue-ln(1+x)}
    \frac{\qsE+i-2m}{(2m+\delta)s}= O(1) \frac{\log n}{\bdt}.
\end{align}
Hence, applying the mean value theorem on $\log (1+x)$, 
\begin{align}\label{eq-meanvalue-ln(1+x)}
    \sum_{s\in V_E}\sum_{i=0}^{\psE-1}\log\bc{1+\frac{\qsE+i-2m}{(2m+\delta)s}}=O(1)\sum_{s\in V_E}\sum_{i=0}^{\psE-1}\frac{\qsE+i-2m}{(2m+\delta)s}=O(1)\frac{\log^2 n}{\bdt },
\end{align}
where the three $O(1)$s in \eqref{eq-before-meanvalue-ln(1+x)} and \eqref{eq-meanvalue-ln(1+x)} have uniform upper and lower bounds for all $n\geq 2$,\footnote{Recall that we allow $O(1)$ to be negative.} and these bounds depend only on $m$ and $\delta$. Hence, $\e^{O(1)}=\Theta$, where we recall from \Cref{def-Theta} that $\Theta$ is a number with positive uniform upper and lower bounds throughout the paper. 

On the other hand, 
\begin{align*}
    \prod_{s\in V_E} s^{-\sum_{e\in E}\indic{s=\ushort{e}}}=\prod_{e\in E}\ushort{e}^{-1}.
\end{align*}
Hence, we conclude from \eqref{eq_hd_fi_prod-before} and \eqref{eq-meanvalue-ln(1+x)} that
\eqan{\label{eq_hd_fi_prod}
&\prod_{s=2}^n \frac{1}{(\alpha+\beta_s+\psE+\qsE-1)_{\psE}}\\
&\qquad=(2m+\delta)^{-\abs{E}}\prod_{s\in V_E} s^{-\sum_{e\in E}\indic{s=\ushort{e}}} \prod_{i=0}^{\psE-1}\bc{1+\frac{\qsE+i-2m}{(2m+\delta)s}}^{-1}\nn\\
&\qquad=(2m+\delta)^{-\abs{E}}\Theta^{\frac{\log^2 n}{\bdt }}\prod_{e\in E}\ushort{e}^{-1}.\nn}


For the second product in \eqref{eq_hd_cal_1_gen}, as we did in \eqref{eq-meanvalue-ln(1+x)}, by applying Taylor's theorem with a second-order mean-value form of the remainder on $\log(1+x)$, 
\eqan{\label{eq_hd_si_prod_1}
\prod_{s=2}^n\prod_{i=0}^{\qsE-1}\bc{1-\frac{\alpha}{\alpha+\beta_s+i}}&=\exp\bigg(\sum_{s=2}^n\sum_{i=0}^{\qsE-1}\log\bc{1-\frac{\alpha}{\alpha+\beta_s+i}}\bigg)\nn\\
&=\exp\bigg(-\sum_{s=2}^n \sum_{i=0}^{\qsE-1}\frac{\alpha}{\alpha+\beta_s+i}+O(1)\sum_{s=2}^n  \frac{\qsE}{s^2}\bigg)\nn\\
&=\exp\bigg(-\sum_{s=2}^n \sum_{i=0}^{\qsE-1}\frac{\alpha}{\alpha+\beta_s+i}\bigg)\Theta^{\sum_{s=2}^n  \frac{\qsE}{s^2}}.
}
Applying the mean value theorem on $1/(1+x)$ to the double summation in \eqref{eq_hd_si_prod_1},
\begin{align}\label{eq_hd_si_prod_2}
    \sum_{s=2}^n \sum_{i=0}^{\qsE-1}\frac{\alpha}{\alpha+\beta_s+i}&=\sum_{s=2}^n \sum_{i=0}^{\qsE-1}\frac{\alpha}{(2m+\delta)s}\frac{1}{1+(i-2m)/((2m+\delta)s)}\nn\\
    &=\sum_{s=2}^n \sum_{i=0}^{\qsE-1}\frac{\alpha}{(2m+\delta)s}+O(1)\sum_{s=2}^n \sum_{i=0}^{\qsE-1}\frac{\qsE}{s^2}\nn\\
    &=\frac{\alpha}{2m+\delta}\sum_{s=2}^n \frac{\qsE}{s}+O(1)\sum_{s=2}^n \bc{\frac{\qsE}{s}}^2,
\end{align}
where again the $O(1)$ factors satisfy $\e^{O(1)}=\Theta$.
Further, for any positive integer $2\leq a\leq a'$,
\begin{align}\label{eq_ln-int-s^-1}
    \sum_{s=a+1}^{a'} \frac{1}{s}\leq\log\frac{a'}{a}=\sum_{s=a}^{a'-1}\int_{s}^{s+1} \frac{1}{t}\dint t \leq\sum_{s=a}^{a'-1} \frac{1}{s}.
\end{align}
Then, since $\qsE\leq \abs{E}\leq 4\log_\nu n$, $s\geq \bdt $ for $s\in V_E$ and $\qsE=0$ for $s< \bdt $, \eqref{eq_ln-int-s^-1} yields that
\begin{align}\label{eq_hd_si_prod_3}
    \sum_{s=2}^n \frac{\qsE}{s}=\sum_{e\in E}\sum_{s\in (\ushort{e},\bar{e})}\frac{1}{s}=\sum_{e\in E}\log\bc{\frac{\bar{e}}{\ushort{e}}}+O(1)\frac{\log n}{\bdt },
\end{align}
and
\begin{align}\label{eq_hd_si_prod_4}
    \sum_{s=2}^n \frac{\qsE}{s^2}\leq \sum_{s=2}^n \bc{\frac{\qsE}{s}}^2\leq \sum_{s= \bdt }^\infty \frac{16\log_\nu^2 n}{s(s-1)}= \frac{16\log_\nu^2 n}{\bdt -1}=O(1)\frac{\log^2 n}{\bdt},
\end{align}
and the factor $O(1)$ in \eqref{eq_hd_si_prod_3} satisfies $\e^{O(1)}=\Theta$. We thus conclude from the combination of (\ref{eq_hd_si_prod_1})-(\ref{eq_hd_si_prod_4}) that
\begin{align}\label{eq_hd_si_prod}
    \prod_{s=2}^n\prod_{i=0}^{\qsE-1}\bc{1-\frac{\alpha}{\alpha+\beta_s+i}}=\Theta^{\frac{\log^2 n}{\bdt }}\prod_{e\in E}\bc{\frac{\ushort{e}}{\bar{e}}}^\chi,
\end{align}
where 
\begin{align*}
    \chi=\frac{\alpha}{2m+\delta}=\frac{m+\delta}{2m+\delta}.
\end{align*}
Consequently, by \eqref{eq_hd_fi_prod} and \eqref{eq_hd_si_prod},
\begin{align}\label{eq_hd_cal_1_E}
    \prod_{s=2}^n \frac{(\beta_s+\qsE-1)_{\qsE}}{(\alpha+\beta_s+\psE+\qsE-1)_{\psE+\qsE}}=\Theta^{\frac{\log^2 n}{\bdt }}(2m+\delta)^{-\abs{E}}\prod_{e\in E}\frac{1}{\ushort{e}^{1-\chi}\bar{e}^{\chi}},
\end{align}
where we recall \Cref{def-Theta} and we use $\Theta\cdot\Theta=\Theta$.
Specifically, for $E=\vpie\in \ESA_{\nVer_1,\nVer_2}^{\sss e,k}$ with $\nVer_1,\nVer_2\geq \bdt$,
\begin{align}\label{eq_hd_cal_1_pi}
    &\prod_{s=2}^n \frac{(\beta_s+q_s-1)_{q_s}}{(\alpha+\beta_s+p_s+q_s-1)_{p_s+q_s}}\nn\\
    &\qquad=\Theta^{\frac{\log^2 n}{\bdt }}(2m+\delta)^{-k}\prod_{i\in[k]}\frac{1}{(\pi_{i-1}\wedge\pi_i)^{1-\chi}(\pi_{i-1}\vee\pi_i)^{\chi}}.
\end{align}
Hence, by \eqref{eq_hd_lower_part0}, \eqref{eq_hd_prod_ap-1} and  \eqref{eq_hd_cal_1_pi},
\begin{align}\label{eq_hd_prob_one_path}
    \PP&\bc{\vpie\subseteq \PAndmdel}=\prod_{s=2}^n\frac{(\alpha+p_s-1)_{p_s}(\beta_s+q_s-1)_{q_s}}{(\alpha+\beta_s+p_s+q_s-1)_{p_s+q_s}}\\
    &= \Theta^{\frac{\log^2 n}{\bdt }}\bc{\frac{m+\delta}{2m+\delta}}^{k-N_{\srO\srY}}\bc{\frac{m+1+\delta}{2m+\delta}}^{N_{\srO\srY}}\prod_{i\in[k]}\frac{1}{(\pi_{i-1}\wedge\pi_i)^{1-\chi}(\pi_{i-1}\vee\pi_i)^\chi }.\nn
\end{align}

Let $\SA_{u,v}^{\sss \ell}=\SA_{u,v}^{\sss \ell,(b)}$ be the set of all possible $\ell$-step self-avoiding paths $\vec\pi$ such that $\pi_0=u$, $\pi_\ell=v$ and  $\pi_i\in[\bdt ,n]$ for all $i\in[\ell-1]$. Compared to edge-labeled self-avoiding paths in $\ESA_{\nVer_1,\nVer_2}^{\sss e,k}$, the elements in $\SA_{\nVer_1,\nVer_2}^{\sss k}$  are paths without edge labels.
We next consider the number of possible $k$-step edge-labeled self-avoiding paths $\vpie\in\ESA_{\nVer_1,\nVer_2}^{\sss e,k}$, given  $\vec\pi\in \SA_{\nVer_1,\nVer_2}^{\sss k}$, and we fix the edge-labels in $\vec\pi$ one by one. 

For each $0\leq \ell\leq k-2$, since $\vec\pi$ is self-avoiding, when we have fixed the edge labels of edges in the subpath $(\pi_0,\pi_1,\ldots,\pi_\ell)$, the number of choices for the labeled edge between $\pi_\ell$ and $\pi_{\ell+1}$ is equal to $m-1$ when $\lb^\pi_\ell=\rY$ and $\lb^\pi_{\ell+1}=\rO$, and $m$ otherwise. Indeed, when $\lb^\pi_\ell=\rY$ and $\lb^\pi_{\ell+1}=\rO$, the edges between $\pi_\ell$ and $\pi_{\ell+1}$, and between $\pi_\ell$ and $\pi_{\ell-1}$ are both out-edges of $\pi_\ell$. Since the out-degree of $\pi_\ell$ is equal to $m$ and we have used one for the edge between $\pi_\ell$ and $\pi_{\ell-1}$, the number of choices between $\pi_\ell$ and $\pi_{\ell+1}$ is equal to $m-1$. Consequently,  for 
\begin{align}
    N_{\srY\srO}=\sum_{i\in[k-1]}\indic{(\lb^\pi_i,\lb^\pi_{i+1})=(\rY,\rO)},\label{def_NYO}
\end{align}
the number of possible edge-labeled self-avoiding paths $\vpie$ given  $\vec\pi$ is equal to $m^{k-N_{\srY\srO}}(m-1)^{N_{\srY\srO}}$. Therefore, by \eqref{eq_hd_prob_one_path},
\begin{align}\label{eq_sum_no_con_part1}
\sum_{\vpie\in \ESA_{\nVer_1,\nVer_2}^{\sss e,k}}\PP\bc{\vpie\subseteq \PAndmdel}&=\Theta^{\frac{\log^2 n}{\bdt }}\sum_{\vec\pi\in \SA_{\nVer_1,\nVer_2}^{\sss k}}m^{k-N_{\srY\srO}}(m-1)^{N_{\srY\srO}}\bc{\frac{m+\delta}{2m+\delta}}^{k-N_{\srO\srY}}\nn\\
    &\quad\times\bc{\frac{m+1+\delta}{2m+\delta}}^{N_{\srO\srY}}\prod_{i\in[k]}\frac{1}{(\pi_{i-1}\wedge\pi_i)^{1-\chi}(\pi_{i-1}\vee\pi_i)^\chi }\\
    &= \Theta^{\frac{\log^2 n}{\bdt }}n^{-k}\sum_{\vec\pi\in \SA_{\nVer_1,\nVer_2}^{\sss k}}m^{k-N_{\srY\srO}}(m-1)^{N_{\srY\srO}}\bc{\frac{m+\delta}{2m+\delta}}^{k-N_{\srO\srY}}\nn\\
    &\quad\times\bc{\frac{m+1+\delta}{2m+\delta}}^{N_{\srO\srY}}\prod_{i\in[k]}\frac{1}{(\frac{\pi_{i-1}\wedge\pi_i}{n})^{1-\chi}(\frac{\pi_{i-1}\vee\pi_i}{n})^\chi }\nn.
\end{align}

Recall the definitions of $\kappa$ in \eqref{kappa-PAM-fin-def-form-a-rep} and $c_{st}$ in \eqref{cst-def-PAM-rep}. Then, for $2\leq i\leq k$, also using \eqref{def_label_direction},
\begin{align}\label{eq_sum_no_con_part2}
    \frac{1}{(\frac{\pi_{i-1}\wedge\pi_i}{n})^{1-\chi}(\frac{\pi_{i-1}\vee\pi_i}{n})^\chi }=\frac{\kappa\big((\pi_{i-1}/n,\lb^\pi_{i-1}),(\pi_{i}/n,\lb^\pi_i)\big)}{c_{\lb^\pi_{i-1}\lb^\pi_{i}}},
\end{align}
while, by \eqref{def_NOY} and \eqref{def_NYO},
\begin{align}\label{eq_sum_no_con_part3}
    \prod_{i=2}^k c_{\lb^\pi_{i-1},\lb^\pi_{i}}&=c_{\srY\srO}^{N_{\srY\srO}}c_{\srO\srY}^{N_{\srO\srY}}c_{\srO\srO}^{k-1-N_{\srY\srO}-N_{\srO\srY}}\\
    &=c_{\srO\srO}^{-1}m^{k-N_{\srY\srO}}(m-1)^{N_{\srY\srO}}\bc{\frac{m+\delta}{2m+\delta}}^{k-N_{\srO\srY}}\bc{\frac{m+1+\delta}{2m+\delta}}^{N_{\srO\srY}},\nn
\end{align}
where we use $c_{\srO\srO}=c_{\srY\srY}$ in the first equation. 
Hence, with $\pi_0=\nVer_1$ and $\pi_k=\nVer_2$, by \eqref{eq_sum_no_con_part1}-\eqref{eq_sum_no_con_part3}, 
\begin{align}\label{eq_hd_lower_form_prod_kappa-old}
    &\sum_{\vpie\in \ESA_{\nVer_1,\nVer_2}^{\sss e,k}} \PP\bc{\vpie\subseteq \PAndmdel}= \sum_{\vpie\in \ESA_{\nVer_1,\nVer_2}^{\sss e,k}}\prod_{s=2}^n\frac{(\alpha+p_s-1)_{p_s}(\beta_s+q_s-1)_{q_s}}{(\alpha+\beta_s+p_s+q_s-1)_{p_s+q_s}}\\
    &= \Theta^{1+\frac{\log^2 n}{\bdt }} n^{-k}\sum_{\vec\pi\in \SA_{\nVer_1,\nVer_2}^{\sss k}}\frac{1}{(\frac{\pi_{0}\wedge\pi_1}{n})^{1-\chi}(\frac{\pi_{0}\vee\pi_1}{n})^\chi }\prod_{i=2}^k\kappa\big((\pi_{i-1}/n,\lb^\pi_{i-1}),(\pi_{i}/n,\lb^\pi_i) \big).\nn
\end{align}
Furthermore, by \eqref{kappa-PAM-fin-def-form-a-rep},
\begin{align}\label{eq_hd_lower_form_prod_kappa-old2}
\sum_{\lb^\pi_0\in\cbc{\srO,\srY}}\kappa\big((\pi_0/n,\lb^\pi_0),(\pi_1/n,\lb^\pi_1)\big)=\frac{\Theta}{(\frac{\pi_{0}\wedge\pi_1}{n})^{1-\chi}(\frac{\pi_{0}\vee\pi_1}{n})^\chi }.
\end{align}
Let $\NA_{u,v}^{\sss \ell}=\NA_{u,v}^{\sss \ell,(b)}$ be the set of all possible $\ell$-step neighbor-avoiding paths $\vec\pi$ such that $\pi_0=u$, $\pi_\ell=v$,  $\pi_i\in[\bdt ,n]$ for all $i\in[\ell-1]$ and $\pi_i\neq \pi_{i-1}$ for all $i\in[\ell]$. Since $\SA_{\nVer_1,\nVer_2}^{\sss k}\subseteq \NA_{\nVer_1,\nVer_2}^{\sss k}$, we conclude from \eqref{eq_hd_lower_form_prod_kappa-old} and \eqref{eq_hd_lower_form_prod_kappa-old2} that
\begin{align}\label{eq_hd_lower_form_prod_kappa}
    \sum_{\vpie\in \ESA_{\nVer_1,\nVer_2}^{\sss e,k}} &\PP\bc{\vpie\subseteq \PAndmdel}\\
    &\leq \Theta^{1+\frac{\log^2 n}{\bdt }}n^{-k}\sum_{\vec\pi\in \NA_{\nVer_1,\nVer_2}^{\sss k}}\sum_{\lb^\pi_0\in\cbc{\srO,\srY}}\prod_{i=1}^k\kappa\big((\pi_{i-1}/n,\lb^\pi_{i-1}),(\pi_{i}/n,\lb^\pi_i)\big)\nn.
\end{align}

For $x_0,x_1,\ldots,x_k\in (0,1]$, let $\rmq_i=q(x_i-x_{i-1})$ for $i\in [k]$ (recall the definition of $q(\cdot)$ in  \eqref{def_label_direction}).
Given $\vec\pi\in  \NA_{\nVer_1,\nVer_2}^{\sss k}$, when $x_j\in((\pi_j-1)/n,\pi_j/n]$ $(0\leq j\leq k)$, $\rmq_i=\lb^\pi_i$ for all $i\in [k]$. \zhu{Since $(x\vee y)^{-\chi}(x\wedge y)^{-(1-\chi)}$ is non-increasing for all positive $x,y$, 
it follows from \eqref{kappa-PAM-fin-def-form-a-rep} that, for any $x_j\in((\pi_j-1)/n,\pi_j/n]$,}
\begin{align*}
    \kappa\big((\pi_{i-1}/n,\lb^\pi_{i-1}),(\pi_{i}/n,\lb^\pi_i)\big)\leq \kappa\big((x_{i-1},\rmq_{i-1}),(x_i,\rmq_i)\big).
\end{align*}
Consequently, for $i\in [2,k]$,
\begin{align}\label{decrease-kappa}
    \kappa\big((\pi_{i-1}/n,\lb^\pi_{i-1}),(\pi_{i}/n,\lb^\pi_i)\big)\leq &n\int_{(\pi_i-1)/n}^{\pi_i/n}\kappa\big((x_{i-1},\rmq_{i-1}),(x_i,\rmq_i)\big)\dint x_i.
\end{align}
If we further define $\rmq_0=\lb^\pi_0$, then
\begin{align}\label{decrease-kappa-0}
        \kappa\big((\pi_0/n,\lb^\pi_0),(\pi_1/n,\lb^\pi_1)\big)\leq &n^2\int_{(\pi_0-1)/n}^{\pi_0/n}\int_{(\pi_1-1)/n}^{\pi_1/n}\kappa\big((x_0,\rmq_0),(x_1,\rmq_1)\big)\dint x_0\dint x_1.
\end{align}
Hence, for all $\nVer_1,\nVer_2\in[n]$, we can bound the summation of the kernel product in \eqref{eq_hd_lower_form_prod_kappa} from above by a multiple integral as
 \begin{align}
&\sum_{\vec\pi\in \NA_{\nVer_1,\nVer_2}^{\sss k}}\sum_{\lb^\pi_0\in\cbc{\srO,\srY}}\prod_{i=1}^k\kappa\big((\pi_{i-1}/n,\lb^\pi_{i-1}),(\pi_{i}/n,\lb^\pi_i)\big)\label{eq_hd_lower_trans_sum_int}\\
    \leq n^{k+1}&\sum_{\rmq_0\in\cbc{\srO,\srY}}\int_{(\nVer_1-1)/n}^{\nVer_1/n}\int_0^1\cdots \int_0^1\int_{(\nVer_2-1)/n}^{\nVer_2/n} \prod_{i=1}^k\kappa\big((x_{i-1},\rmq_{i-1}),(\zhu{x_i},\rmq_i)\big)\dint x_0\cdots \dint x_k.\nn
\end{align}
Combining \eqref{eq_hd_lower_form_prod_kappa} and \eqref{eq_hd_lower_trans_sum_int} gives that
\begin{align}\label{eq-1_add-round_1}
&\sum_{\nVer_1,\nVer_2=\bdt }^n\sum_{\vpie\in \ESA_{\nVer_1,\nVer_2}^{\sss e,k}}\PP\bc{\vpie\subseteq \PAndmdel}\\
&\qquad\leq \Theta^{1+\frac{\log^2 n}{\bdt }}n\sum_{\rmq_0\in\cbc{\srO,\srY}}\int_0^1\cdots \int_0^1 \prod_{i=1}^k\kappa\big((x_{i-1},\rmq_{i-1}),(x_{i},\rmq_i)\big)\dint x_0\cdots \dint x_k.\nn
\end{align}
On the other hand, we note from \eqref{kappa-PAM-fin-def-form-a-rep} and \eqref{T-kappa-def} that, for $q(x)$ defined as in \eqref{def_label_direction},
\begin{align}\label{eq-new-Tkappa}
    (\bfT_{\kappa}^\ell f)(x_0,s_0)=&\int_{0}^1 \cdots \int_0^1 f(x_\ell,q(x_\ell-x_{\ell-1}))\kappa\big((x_0,s_0),(x_1,q(x_1-x_0))\big)\\
    &\times\prod_{i=2}^\ell\kappa\big((x_{i-1},q(x_{i-1}-x_{i-2})),(x_i,q(x_i-x_{i-1}))\big)\dint x_1\cdots \zhu{\dint} x_\ell\nn,
\end{align}
Then, with $\langle\ ,\ \rangle$ the inner product on $[0,1]\times\cbc{\rO,\rY}$ and ${\bf 1}$ the function that is identical to $1$, \eqref{eq-new-Tkappa} yields that
\begin{align}\label{eq-new-Tkappa-2}
\sum_{\rmq_0\in\cbc{\srO,\srY}}\int_{0}^{1}\cdots \int_0^1 \prod_{i=1}^k\kappa\big((x_{i-1},\rmq_{i-1}),(x_{i},\rmq_i)\big)\dint x_0\cdots \dint x_k=\langle {\bf 1}, \bfT_\kappa^{k} {\bf 1}\rangle.
\end{align}
Hence, we conclude from the combination of  \Cref{pro_spectral_radius}, \eqref{eq-1_add-round_1} and \eqref{eq-new-Tkappa-2} that, for $k\leq (1-\vep)\log_\nu n$, 
\begin{align}\label{eq_hd_lower_part1-ded}
&\sum_{\nVer_1,\nVer_2=\bdt }^n\sum_{\vpie\in \ESA_{\nVer_1,\nVer_2}^{\sss e,k}}\PP\bc{\vpie\subseteq \PAndmdel}\\
    &\qquad\leq \Theta^{1+\frac{\log^2 n}{\bdt }}n\langle {\bf 1}, \bfT_\kappa^{k} {\bf 1}\rangle\leq \Theta^{1+\frac{\log^2 n}{\bdt }}n\norm{\bf 1}\norm{\bfT_\kappa}^k \norm{\bf 1}\leq \Theta^{1+\frac{\log^2 n}{\bdt }}n\nu^k,\nn
\end{align}
as desired.
\end{proof}

Recall that $\zeta$ is the leaf truncation parameter from \Cref{ass_Br}. Later in this paper, during the proof, we will rely on a crude path upper bound for $\bdt =\lceil \zeta n\rceil$ as follows:
\begin{lemma}[Crude upper bound on the  kernel products summation]\label{lem-gen-sum-product-kappa}
    Let $\bdt =\lceil \zeta n\rceil$. For any integers $n\geq \zeta^{-2}$, $\nVer_1,\nVer_2\in [\zeta n,n]$, any $\lb_0^\pi\in\cbc{\rO,\rY}$, and any $\ell\geq 1$, 
    \begin{align}\label{eq-gen-sum-product-kappa}
         \sum_{\vec\pi\in\NA_{\nVer_1,\nVer_2}^{\sss \ell}}\prod_{i=1}^{\ell}\kappa\big((\pi_{i-1}/n,\lb^\pi_{i-1}),(\pi_{i}/n,\lb^\pi_i)\big)\leq \Theta\zeta^{-2}\nu^{\ell}n^{\ell-1}.
    \end{align}
\end{lemma}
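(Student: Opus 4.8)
The plan is to peel off the first and last kernel factors from each product and estimate them crudely, using only $\nVer_1,\nVer_2\ge\zeta n$, which is what produces the prefactor $\zeta^{-2}$; the remaining interior product is then handled by the operator-norm identity $\lVert\bfT_\kappa\rVert=\nu$ from \Cref{pro_spectral_radius}, reusing the monotonicity-to-integral reduction already carried out around \eqref{decrease-kappa}--\eqref{eq-new-Tkappa-2}.

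First observe that if $x,y\in[\zeta,1]$ then $(x\vee y)^\chi(x\wedge y)^{1-\chi}\ge\zeta$, so $\kappa((x,s),(y,t))\le\bar c/\zeta$ with $\bar c:=\max_{s,t}c_{st}=c_{\srO\srY}$, a constant depending only on $m,\delta$. Every vertex on a path in $\NA_{\nVer_1,\nVer_2}^{\sss\ell}$ has index at least $\zeta n$ --- the endpoints by hypothesis, the interior vertices because $\bdt=\lceil\zeta n\rceil\ge\zeta n$ --- so each factor $\kappa((\pi_{i-1}/n,\lb^\pi_{i-1}),(\pi_i/n,\lb^\pi_i))$ is at most $\bar c/\zeta$. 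For $\ell=1$ the left-hand side of \eqref{eq-gen-sum-product-kappa} is a sum of at most one term, hence $\le\bar c/\zeta\le(\bar c/\nu)\zeta^{-2}\nu$; for $\ell=2$ it is $\le(\bar c/\zeta)^2\sum_{\pi_1\in[\bdt,n]}1\le(\bar c^2/\nu^2)\zeta^{-2}\nu^2n$. Both are of the asserted form, so assume $\ell\ge 3$ from now on.

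For $\ell\ge 3$, bound the first factor $\kappa((\nVer_1/n,\lb^\pi_0),(\pi_1/n,\lb^\pi_1))$ and the last factor $\kappa((\pi_{\ell-1}/n,\lb^\pi_{\ell-1}),(\nVer_2/n,\lb^\pi_\ell))$ by $\bar c/\zeta$ each and pull the resulting constant $\bar c^2\zeta^{-2}$ out of the sum; it remains to prove that $\sum_{\pi_1,\dots,\pi_{\ell-1}\in[\bdt,n]}\prod_{i=2}^{\ell-1}\kappa((\pi_{i-1}/n,\lb^\pi_{i-1}),(\pi_i/n,\lb^\pi_i))\le 2\nu^{\ell-2}n^{\ell-1}$. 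Here one proceeds exactly as in the proof of \Cref{lem_hd_lower_part1}: $\kappa$ is non-increasing in each of its two spatial arguments on the region where the relevant indicator does not vanish, and for a neighbor-avoiding path $\lb^\pi_i=q(\pi_i-\pi_{i-1})=q(x_i-x_{i-1})$ whenever $x_j\in((\pi_j-1)/n,\pi_j/n]$ for all $j$ (as noted just before \eqref{decrease-kappa}). Integrating the pointwise bound $\prod_{i=2}^{\ell-1}\kappa((\pi_{i-1}/n,\cdot),(\pi_i/n,\cdot))\le\prod_{i=2}^{\ell-1}\kappa((x_{i-1},\rmq_{i-1}),(x_i,\rmq_i))$, with $\rmq_i=q(x_i-x_{i-1})$, over the cube $\prod_{j=1}^{\ell-1}((\pi_j-1)/n,\pi_j/n]$ of volume $n^{-(\ell-1)}$, and then summing over $\pi_1,\dots,\pi_{\ell-1}\in[\bdt,n]$, merges the disjoint cubes into $((\bdt-1)/n,1]^{\ell-1}\subseteq[0,1]^{\ell-1}$; after dominating the unknown label $\lb^\pi_1$ by summing over both values of $\rmq_1$, what remains is $n^{\ell-1}\langle{\bf 1},\bfT_\kappa^{\ell-2}{\bf 1}\rangle$ by \eqref{eq-new-Tkappa}--\eqref{eq-new-Tkappa-2}. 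Cauchy--Schwarz and \Cref{pro_spectral_radius} give $\langle{\bf 1},\bfT_\kappa^{\ell-2}{\bf 1}\rangle\le\lVert{\bf 1}\rVert^2\lVert\bfT_\kappa\rVert^{\ell-2}=2\nu^{\ell-2}$, which is the claimed bound. Combining the three estimates yields $\sum_{\vec\pi\in\NA_{\nVer_1,\nVer_2}^{\sss\ell}}\prod_{i=1}^\ell\kappa(\cdots)\le2\bar c^2\zeta^{-2}\nu^{\ell-2}n^{\ell-1}=\frac{2\bar c^2}{\nu^2}\zeta^{-2}\nu^\ell n^{\ell-1}$, and $2\bar c^2/\nu^2$, which depends only on $m,\delta$, is absorbed into $\Theta$.

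The one place where care is needed is the counting of powers of $n$: the two endpoint factors must be kept as pointwise constants $\le\bar c/\zeta$, and only the $\ell-2$ interior factors --- involving the $\ell-1$ interior variables --- are converted to an integral, so that the conversion factor is exactly $n^{\ell-1}$. Replacing the endpoint factors as well by integrals over length $1/n$ intervals and applying Cauchy--Schwarz with the corresponding localized indicators would only give $\Theta\,n^{\ell}\nu^\ell$, which is too weak once $n\gg\zeta^{-2}$. The remaining ingredients (monotonicity of $\kappa$, Tonelli for interchanging sums and integrals, and the identification with a power of $\bfT_\kappa$) are routine and mirror the proof of \Cref{lem_hd_lower_part1}.
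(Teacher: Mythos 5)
Your proof is correct and follows essentially the same route as the paper's (in Appendix D): peel off the first and last kernel factors, bound each by $\Theta\zeta^{-1}$ using $\nVer_1,\nVer_2\geq\zeta n$, convert the $\ell-2$ interior factors over $\ell-1$ interior variables to an integral (picking up $n^{\ell-1}$), identify the result with $\langle{\bf 1},\bfT_\kappa^{\ell-2}{\bf 1}\rangle$, and close with $\lVert\bfT_\kappa\rVert=\nu$ from \Cref{pro_spectral_radius}. The only cosmetic difference is that you treat $\ell=2$ as a separate case, whereas the paper folds it into the $\ell\geq 2$ argument by letting the interior product be empty; both are fine.
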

The proof of \Cref{lem-gen-sum-product-kappa} is analogous to that of \Cref{lem_hd_lower_part1}, except that we do not sum over all $\nVer_1$ and $\nVer_2$ between $\bdt$ and $n$. Instead, we bound the first and last factors in the products by $\Theta \zeta^{-1}$ using \eqref{kappa-PAM-fin-def-form-a-rep}. \longversion{We defer its proof to \Cref{sec-path-upper-bound}} \shortversion{We refer to \cite[Appendix D]{HofZhu25long} for its proof}.
\subsection{Proof of \Cref{lem_hd_lower_part2}: Contribution of paths with small age vertices}\label{sec-con-small-age-first-moment}
In this section, we prove \Cref{lem_hd_lower_part2}, again using the path probability \eqref{eq_hd_lpath_probability-rep}. This proof is quite similar to the proof of \Cref{lem_hd_lower_part1}.

\begin{proof}[Proof of \Cref{lem_hd_lower_part2}]
Given an integer $a_1\in[\bdt,n]$,  we  note that the distance between $\nVer_1$ and $[\bdt -1]$ is no more than $k/2$ only if there exists an edge-labeled path $\vpie\in \ESA_{\nVer_1,j}^{\sss e,k'}$ such that $\vpie\subseteq\PAndmdel$ for some integer $k'\leq k/2$ and $j\in[\bdt-1]$. Hence,
\begin{align}\label{eq_hd_dist_decom}
    \prob\big(\gdist{\PAndmdel}(\nVer_1, [ \bdt -1])\leq k/2\big)\leq \sum_{k'\leq k/2}\sum_{j=1}^{\bdt -1}\sum_{\vpie\in \ESA_{\nVer_1,j}^{\sss e,k'}}\PP\bc{\vpie\subseteq \PAndmdel}.
\end{align}
Then, a straightforward idea would be to use \eqref{eq_hd_lower_form_prod_kappa} and \eqref{eq_hd_lower_trans_sum_int} to bound the rhs of \eqref{eq_hd_dist_decom} from above. However, for the edge-labeled paths considered in \Cref{sec_trun_lowerbound}, all vertices in these paths have ages no less than $\bdt$, while in \eqref{eq_hd_dist_decom}, the paths have one endpoint $j\in[\bdt -1]$.  Nevertheless, we note that $\eqref{eq_hd_lower_trans_sum_int}$ only relies on \eqref{decrease-kappa} and \eqref{decrease-kappa-0}, neither of which depends on the lower bound of the vertex ages in the path. Hence, by the same argument used in deriving \eqref{eq_hd_lower_trans_sum_int},
 \begin{align}
&\sum_{\vec\pi \in \ESA_{\nVer_1,j}^{\sss k'}}\sum_{\lb^\pi_0\in\cbc{\srO,\srY}}\prod_{i=1}^{k'}\kappa\big((\pi_{i-1}/n,\lb^\pi_{i-1}),(\pi_{i}/n,\lb^\pi_i)\big)\label{eq_hd_lower_trans_sum_int_j}\\
    \leq n^{k+1}&\sum_{\rmq_0\in\cbc{\srO,\srY}}\int_{(\nVer_1-1)/n}^{\nVer_1/n}\int_0^1\cdots \int_0^1\int_{(j-1)/n}^{j/n} \prod_{i=1}^{k'}\kappa\big((x_{i-1},\rmq_{i-1}),(\pi_{i}/n,\rmq_i)\big)\dint x_0\cdots \dint x_{k'}.\nn
\end{align}
To handle \eqref{eq_hd_lower_form_prod_kappa}, we trace back its proof in \Cref{sec_trun_lowerbound}. There are only three places, namely
\eqref{eq-meanvalue-ln(1+x)}, \eqref{eq_hd_si_prod_3} and \eqref{eq_hd_si_prod_4}, where we use the fact that  all $s\in\vec\pi$ are no less than $\bdt$. Let  $E$ be the edge set of a path $\vec\pi\in \ESA_{\nVer_1,j}^{\sss k'}$. Then, there is precisely one edge $e_0\in E$ such that $s_0=\ushort{e}_0$ is the only vertex in $V_E$ with age less than $\bdt$. By the definition of $\psE$ and $\qsE$ in \eqref{def_hd_pq_gen}, $p_{s_0}^{\sss E}=1$, $q_{s_0}^{\sss E}=0$ and $\qsE\leq 1$ for $s<\bdt$. Furthermore,  $\psE\leq 2$ and $\qsE\leq \min\cbc{2s,\abs{E}}\leq\min\cbc{2s,\log_{\nu} n}$.
Consequently, \ch{analogously} to \eqref{eq-meanvalue-ln(1+x)}, 
\begin{align*}
    &\sum_{s\in V_E}\sum_{i=0}^{\psE-1}\log\bc{1+\frac{\qsE+i-2m}{(2m+\delta)s}}\\
    &\qquad=\log\bc{1-\frac{2m}{(2m+\delta)s_0}}+O(1)\sum_{s\in V_E\backslash\cbc{s_0}}\sum_{i=0}^{\psE-1}\frac{\qsE+i-2m}{(2m+\delta)s}\\
    &\qquad=\log \Theta +O(1)\frac{\log^2 n}{\bdt }.
\end{align*}
\ch{Analogously} to \eqref{eq_hd_si_prod_3}, by \eqref{eq_ln-int-s^-1},
\begin{align*}
    \sum_{s=2}^n \frac{\qsE}{s}=\sum_{e\in E\backslash\cbc{e_0}}\sum_{s\in (\ushort{e},\bar{e})}\frac{1}{s}+\sum_{s\in (\ushort{e}_0,\bar{e_0})}\frac{1}{s}=\sum_{e\in E}\log\bc{\frac{\bar{e}}{\ushort{e}}}+O(1)\bc{\frac{\log n}{\bdt }+1}.
\end{align*}
Analogous to \eqref{eq_hd_si_prod_4}, since $\qsE\leq 1$ for $s\leq \bdt $,
\begin{align*}
    \sum_{s=2}^n \frac{\qsE}{s^2}\leq \sum_{s=2}^n \bc{\frac{\qsE}{s}}^2\leq \sum_{s= \bdt }^\infty \frac{16\log_\nu^2 n}{s^2}+\sum_{s=2}^{\bdt -1}\frac{1}{s^2}\leq O(1)\frac{\log^2 n}{\bdt}+1.
\end{align*}
Further, the three $O(1)$ factors in \zhu{the} above have uniform upper and lower bounds for all $n\geq 2$.
\smallskip

In summary, the occurrence of a unique edge $e_0\in E$ such that $s_0=\ushort{e}_0<b$ gives rise to an extra $\exp(\log \Theta+O(1))\Theta=\Theta$ factor on the rhs of \eqref{eq_hd_lower_form_prod_kappa}, which can be absorbed into the factor $\Theta^{1+\frac{\log^2 n}{\bdt }}$ (recall \Cref{def-Theta}). Then, combined with \eqref{eq_hd_lower_trans_sum_int_j}, we still arrive at 
\begin{align}\label{eq-path-prob-upper-a1-j}
\sum_{\vpie\in \ESA_{\nVer_1,j}^{\sss e,k'}} \PP\bc{\vpie\subseteq \PAndmdel}
    \leq &\Theta^{1+\frac{\log^2 n}{\bdt }}n\sum_{\rmq_0\in\cbc{\srO,\srY}}\int_{(\nVer_1-1)/n}^{\nVer_1/n}\int_0^1\cdots \int_0^1\int_{(j-1)/n}^{j/n}\nn\\
    &\times\prod_{i=1}^{k'}\kappa\big((x_{i-1},\rmq_{i-1}),(\pi_{i}/n,\rmq_i)\big)\dint x_0\cdots \dint x_{k'}.
\end{align}

Let ${\bf 1}_{[0,a]}(x,s)$ be the function on $[0,1]\times\cbc{\rO,\rY}$ that equals $1$ when $x\in[0,a]$ and $0$ when $x>a$. Then, under the $L^2$ norm we have $\lVert{\bf 1}_{[0,a]}\rVert=\sqrt{2a}$. Hence, by \eqref{eq-new-Tkappa}, \eqref{eq-path-prob-upper-a1-j} and \Cref{pro_spectral_radius},
\begin{align}\label{eq_hd_upperbound_dist_case2}
    \sum_{\nVer_1=\bdt }^n\sum_{j=1}^{\bdt -1}\sum_{\vpie\in \ESA_{\nVer_1,j}^{\sss e,k'}} \PP\bc{\vpie\subseteq \PAndmdel}&\leq \Theta^{1+\frac{\log^2 n}{\bdt }}n\langle {\bf 1},\bfT_{\kappa}^{k'}{\bf 1}_{[0,\bdt /n]}\rangle\\
    \leq \Theta^{1+\frac{\log^2 n}{\bdt }}n \norm{\bf 1}&\norm{\bfT_{\kappa}}^{k'}\norm{{\bf 1}_{[0,\bdt /n]}} = \Theta^{1+\frac{\log^2 n}{\bdt }} n^{1/2}\bdt ^{1/2} \nu^{k'}.\nn
\end{align}
We next sum \eqref{eq_hd_upperbound_dist_case2} over all $k'\leq k/2$. Then, by \eqref{eq_hd_dist_decom},
\begin{align}\label{eq_hd_final_dist_case2}
\sum_{\nVer_1=\bdt }^n\prob\big(\gdist{\PAndmdel}(\nVer_1, [ \bdt -1])\leq k/2\big)&\leq \Theta^{1+\frac{\log^2 n}{\bdt }} n^{1/2}\bdt ^{1/2} \sum_{k'=1}^{\lfloor k/2\rfloor}\nu^{k'}\\
&\leq \Theta^{1+\frac{\log^2 n}{\bdt }} n^{1/2}\bdt ^{1/2}\nu^{k/2},\nn
\end{align}
where in the last equation we use $\nu> 1$ from \Cref{pro_spectral_radius} and the same argument used in deriving \eqref{eq-dist-sum-nu2}.

Analogously, 
\begin{align}\label{eq_hd_final_dist_case2_2}
\sum_{\nVer_2=\bdt }^n\prob\big(\gdist{\PAndmdel}(\nVer_2, [ \bdt -1])\leq k/2\big)\leq \Theta^{1+\frac{\log^2 n}{\bdt }} n^{1/2}\bdt ^{1/2}\nu^{k/2},
\end{align}
and \Cref{lem_hd_lower_part2} follows directly from the combination of \eqref{eq_hd_final_dist_case2} and \eqref{eq_hd_final_dist_case2_2}, as desired.\end{proof}

\section{Spectral radius of the truncated offspring operator}\label{sec_converge_spectral_radius}
As we \zhu{have} discussed in \Cref{sec_offspring_operator}, the age restriction on the paths we have counted \zhu{suggests considering the offspring distribution} after removing all the vertices with age less than $\zeta n$. In fact, we \zhu{have} performed a similar \ch{restriction to} $[\bdt , n]$ in \Cref{sec_trun_lowerbound} with $\bdt=\ceil{\log^3 n}$ when proving the lower bound \eqref{conc-LB-dist-PAd}. However, in that case, only  an {\em upper bound} on the path existence probability was needed. Consequently, this truncation is ignored in \eqref{eq_hd_lower_trans_sum_int}.  When 
proving \Cref{prop-path-count-PAMc-LB_simple}, if we follow the approach in \Cref{sec_trun_lowerbound},   we  \ch{instead} need a {\em lower bound} on the path existence probability to show that $\Erw\brk{N_{n,r}(k)}$ tends to infinity. Then, two main problems arise:
\begin{itemize}
 \item[$\rhd$] The offspring operator $\bfT_\kappa$ should be {truncated to involve only vertices with rescaled ages at least $\zeta$; and}
 
\item[$\rhd$] \ch{we} can not easily bound the inner product $\langle {\bf 1}, \bfT_\kappa^k {\bf 1}\rangle$ from {\em below} by using the spectral norm for a general operator $\bfT_\kappa$,  as we did in \eqref{eq_hd_lower_part1-ded}. Instead, using the spectral radius could be more appropriate, since $r(\bfT_\kappa)=\lim_{k\to \infty}\lVert\bfT_\kappa^k\rVert^{1/k}$. {However, it is far from obvious that the spectral radius of the truncated operator is close to that of the untruncated one, since the operator is not compact, nor self-adjoint.}
\end{itemize}
Hence, to obtain \eqref{conc-UB-dist-PAd}, it is necessary to show that the inner products, or perhaps the spectral \zhu{\ch{radii}} of the offspring operators before and after truncation, are close.
Due to some technical reasons that will become clear in later sections, the kernel that we consider is the following  restriction of $\kappa$ on $[3{\zeta},1-{\zeta}]$:
 \eqn{
	\label{hatkappa-vepn-PA}
	\kappa^\circ_{\zeta}((x,s),(y,t))=\indic{x,y\in [3{\zeta},1-{\zeta}]} \kappa\big((x,s),(y,t)\big).
	}

In this section, we relate $\bfT_{\kappa^\circ_{\zeta}}$ to $\bfT_\kappa$. {The following is our main result}:
\begin{proposition}[Lower bound for the inner product]\label{pro_hd_spe_con}
There exists a $\zeta_0\in (0,1/10]$ such that, for any ${\zeta}\in(0,\zeta_0]$, there exist $\uhvep\in (0,\nu]$ and $c_{\zeta}\in (0,1)$ such that $\lim_{{\zeta}\searrow 0}\uhvep=\nu$ and
\begin{align*}
    \langle {\bf 1}, \bfT_{\kappa^\circ_{\zeta}}^{k}{\bf 1}\rangle\geq c_{\zeta}\uhvep^k\quad\text{for any $k\geq 0$.}
\end{align*}
\end{proposition}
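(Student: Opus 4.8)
The plan is to reduce the functional-analytic statement—a lower bound on $\langle\mathbf{1},\bfT_{\kappa^\circ_{\zeta}}^k\mathbf{1}\rangle$—to a statement about the exponential growth rate of a (truncated) multi-type branching process, and then to control that growth rate using the probabilistic description of the Pólya point tree. First I would exploit positivity: since $\kappa^\circ_{\zeta}\ge 0$, the operator $\bfT_{\kappa^\circ_{\zeta}}$ is positivity-preserving, so $\langle\mathbf{1},\bfT_{\kappa^\circ_{\zeta}}^k\mathbf{1}\rangle$ is, up to the factor $\tfrac12$ from the label average, exactly the expected number of nodes at generation $k$ in the branching process whose offspring kernel is $\kappa^\circ_{\zeta}$ — i.e. the Pólya point tree with all nodes of age-mark outside $[3\zeta,1-\zeta]$ deleted (and their descendants pruned). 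Denote this expected generation size by $Z_k^{\zeta}$. The goal becomes: $Z_k^{\zeta}\ge c_\zeta\,\underline\nu_\zeta^{\,k}$ for all $k$, with $\underline\nu_\zeta\to\nu$ as $\zeta\searrow 0$, where $\underline\nu_\zeta$ will be the spectral radius $r(\bfT_{\kappa^\circ_{\zeta}})$.

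The heart of the argument is the convergence $r(\bfT_{\kappa^\circ_{\zeta}})\to r(\bfT_\kappa)=\nu$. Here I would use the isometric transformation alluded to in the paper's overview: conjugating $\bfT_\kappa$ by the substitution $x=\e^{-u}$ (so $L^2([0,1])\cong L^2([0,\infty))$ with a suitable weight) turns the kernel $\kappa((x,s),(y,t))=c_{st}(x\vee y)^{-\chi}(x\wedge y)^{-(1+\chi')}$, $\chi'=1-\chi$, into a convolution-type kernel in the variable $u-v$ on the half-line, and the truncation $x,y\in[3\zeta,1-\zeta]$ becomes restriction to a bounded interval $u,v\in[\log\tfrac{1}{1-\zeta},\log\tfrac{1}{3\zeta}]$ whose length tends to $\infty$ as $\zeta\searrow 0$. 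For such Wiener–Hopf / convolution operators the spectral radius on a long finite interval converges to that on the whole line, and a clean way to see this is the martingale argument: the growth rate of the associated two-type branching random walk (with displacement $u-v$ and the matrix $(c_{st})$) is governed by the minimum over $\theta$ of the matrix $\Phi(\theta)$ with entries $c_{st}\int \e^{-\theta(u-v)}(\cdots)$, and a change of measure by the corresponding Perron eigenfunction (a martingale) shows that the truncated process, run on an interval of length $L$, survives and grows at rate $r_L\to\nu$ with an explicit error like $\nu(1-O(\e^{-cL}))$; translating back, $L=L(\zeta)\to\infty$ gives $\underline\nu_\zeta:=r(\bfT_{\kappa^\circ_{\zeta}})\to\nu$.

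Once $\underline\nu_\zeta=r(\bfT_{\kappa^\circ_{\zeta}})$ is pinned down, I would extract the uniform-in-$k$ lower bound $Z_k^{\zeta}\ge c_\zeta\underline\nu_\zeta^k$ as follows. Because the truncated kernel has bounded, strictly positive entries on the compact square $[3\zeta,1-\zeta]^2\times\{\rO,\rY\}^2$ (away from the diagonal the kernel is bounded above and below, and the one-step operator maps $\mathbf{1}$ to a function bounded above and below on $[3\zeta,1-\zeta]$), the operator $\bfT_{\kappa^\circ_{\zeta}}$ restricted to $L^2([3\zeta,1-\zeta]\times\{\rO,\rY\})$ is a positive, irreducible, compact (Hilbert–Schmidt) operator, so by Krein–Rutman it has a strictly positive principal eigenfunction $h_\zeta$ with eigenvalue $\underline\nu_\zeta$, bounded above and below by constants depending only on $\zeta$ (and $m,\delta$). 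Then $\langle\mathbf{1},\bfT_{\kappa^\circ_{\zeta}}^k\mathbf{1}\rangle\ge \|h_\zeta\|_\infty^{-1}\langle\mathbf{1},\bfT_{\kappa^\circ_{\zeta}}^k h_\zeta\rangle=\|h_\zeta\|_\infty^{-1}\underline\nu_\zeta^k\langle\mathbf{1},h_\zeta\rangle$, and $\langle\mathbf{1},h_\zeta\rangle\ge \inf h_\zeta\cdot(\text{measure of }[3\zeta,1-\zeta]\times\{\rO,\rY\})>0$, giving the claim with $c_\zeta=\langle\mathbf{1},h_\zeta\rangle/\|h_\zeta\|_\infty\in(0,1)$ (shrinking $c_\zeta$ if needed). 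One must check $c_\zeta>0$ is allowed to depend on $\zeta$ only — which it does — and that $\zeta_0$ can be chosen so small that $\underline\nu_\zeta\in(0,\nu]$ and the compactness/irreducibility hold; both are automatic once $\zeta<1/10$.

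The main obstacle I anticipate is the second step: proving $r(\bfT_{\kappa^\circ_{\zeta}})\to\nu$ for a \emph{non-self-adjoint, non-compact-in-the-limit} operator. The difficulty is that spectral radii are not continuous under truncation in general, so one genuinely needs the structural input — the convolution form after the logarithmic change of variables, plus a Perron/martingale argument on the associated branching random walk — rather than a soft compactness or monotonicity argument. Getting the error rate $\nu-\underline\nu_\zeta = O(\zeta^{c})$ (or even just $\to 0$) requires carefully tracking how the finite-window Perron eigenvalue of the kernel matrix $(c_{st}\,\e^{-\theta|u-v|}\cdots)$ approaches its whole-line counterpart $\nu$ as the window grows, and ensuring the optimal $\theta$ (which for $\kappa$ is $\theta=\chi-\tfrac12$ or similar, chosen so the exponent is balanced) stays in the admissible range. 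Everything else — positivity, Krein–Rutman on the compact truncation, and the inner-product sandwich — is routine.
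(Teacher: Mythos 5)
Your high-level plan is sound and identifies the same two pillars that the paper uses: the logarithmic change of variables turning $\kappa$ into a convolution-type kernel on a bounded interval, and a martingale / random-walk argument to control the truncation. However, you take a structurally different route and leave the genuinely hard step at the level of a sketch that does not match the paper's actual construction.

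The paper does \emph{not} first establish $r(\bfT_{\kappa^\circ_{\zeta}})\to\nu$ and then invoke Krein--Rutman. It proves the inner-product inequality directly: after the isometry, it rewrites $\langle\mathbf 1,\bfT_{\tilde\kappa^\circ_{\zeta}}^k\mathbf 1\rangle$ as $\Theta(b_2-b_1)(2c_{\rO\rO})^k\,\expec\big[r^{N(k)}\indic{S_i\in[b_1,b_2]\ \forall i\le k}\big]$ with $r=\sqrt{c_{\rO\rY}c_{\rY\rO}}/c_{\rO\rO}$, where $N(k)$ is the number of sign flips of the step sequence, and the $(c_{st})$ matrix has been converted into the scalar weight $r^{N(k)}$ using $c_{\rO\rO}=c_{\rY\rY}$. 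It then removes the $r^{N(k)}$ factor by a discrete tilting of the sign process, collapses consecutive same-sign increments into a single i.i.d.\ exponential increment, and applies optional stopping to a bounded martingale built from those truncated increments. The spectral radius convergence (\Cref{RE-CON-SPE-RADIUS}) is derived \emph{afterward} as a corollary in the appendix, using essentially the Krein--Rutman sandwich you describe. So your ordering inverts the paper's: you would need to prove the spectral-radius convergence first, and in practice the only way to lower-bound $r(\bfT_{\kappa^\circ_{\zeta}})$ near $\nu$ is to prove a bound of the shape $\langle f,\bfT_{\kappa^\circ_{\zeta}}^k g\rangle\ge c\,\underline\nu^k$ anyway, which makes the Krein--Rutman step redundant (though not wrong) once that work is done. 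Note also that $\bfT_{\kappa^\circ_{\zeta}}$ itself is not irreducible because a one-step transition from $(x,s)$ lands in $(y,\rO)$ only for $y<x$ and in $(y,\rY)$ only for $y>x$; the paper works with $\bfT_{\kappa^\circ_{\zeta}}^2$, and your Krein--Rutman application would need the same fix.

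The substantive gap is in your ``martingale / Perron eigenvalue of the matrix $\Phi(\theta)$'' sketch. You are reaching for a Biggins-type spinal change of measure for a multi-type branching random walk with absorption, and while such tools exist, the paper's mechanism for neutralising the two-type matrix structure is quite different and more elementary: the matrix product $\prod c_{q_{i-1},q_i}$ collapses to $c_{\rO\rO}^k\,r^{N(k)}$ precisely because $c_{\rO\rO}=c_{\rY\rY}$, and the $r^{N(k)}$ is then traded for a bias on the sign process. Your sketch does not explain how to handle the type structure, and the claimed decay $\nu-\underline\nu_\zeta=O(\e^{-cL})$ is too strong: the paper obtains $\underline\nu_\zeta=\nu(9/13)^{1/\sqrt{b_2}}\,\PP(|B_1|\le c)$, which approaches $\nu$ only polynomially in $b_2^{-1/2}=O((\log(1/\zeta))^{-1/2})$. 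That weaker rate suffices, since only $\underline\nu_\zeta\to\nu$ is needed, but the precise route to \emph{any} explicit, uniform-in-$k$ lower bound on the survival probability of the truncated walk is exactly where the work is, and your proposal leaves it unproved.
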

\begin{rem}[The convergence of the spectral radius]\label{RE-CON-SPE-RADIUS}
\Cref{pro_hd_spe_con} yields that the spectral radius of $\bfT_{\kappa^\circ_{\zeta}}$ converges to $\nu$ as $\zeta\searrow 0$; \longversion{see \Cref{sec-app-conv-nu-kappa-hvep} for a proof.}\shortversion{we refer to \cite[Appendix B]{HofZhu25long} for a proof.} 
\end{rem}

    By \Cref{RE-CON-SPE-RADIUS}, one would expect that the proof of \Cref{pro_hd_spe_con} relies heavily on the application of extensive functional analysis techniques. Unfortunately, the operator $\bfT_\kappa$ is neither \ch{self-adjoint nor  compact}, making it difficult to control changes in its spectral radius under small perturbations through this approach. Indeed, our proof is purely probabilistic.  The remainder of this section focuses on this probabilistic proof and is organized as follows:  In \Cref{sec_isometry}, we use an isometry to obtain a large deviation-type interpretation for $\langle {\bf 1}, \bfT_{\tilde\kappa^\circ_{\zeta}}^k {\bf 1}\rangle$ through a random walk. In \Cref{sec_decoupling},
we simplify this interpretation by a measure transformation, and complete the proof of \eqref{pro_hd_spe_con} \zhu{by studying a related random walk}.

\subsection{Large deviations interpretation through a convenient isometry}\label{sec_isometry}
In \cite[Section 16.1]{BolJanRio07}, Bollobás, Janson and Riordan proved a sharp bound for the operator norm of a particular inhomogeneous random graph          called the {\em uniformly grown random graph}. For this, they introduce an isometry $f\mapsto \e^{-x/2} f(\e^{-x})$ mapping $L^2(0,1)$ to $L^2(0,\infty)$ to calculate the norm of the offspring operator. 
Recall the definition of $\kappa$ in \eqref{kappa-PAM-fin-def-form-a-rep}. In terms of the same isometry under a rescaling, for $x\neq y$, we define 
	\eqan{
	\tilde \kappa^\circ_{\zeta}((x,s),(y,t))&=\e^{-x/(2\chi-1)} \kappa((\e^{-x/(\chi-\tfrac{1}{2})},s),(\e^{-y/(\chi-\tfrac{1}{2})},t))\e^{-y/(2\chi-1)}\indic{b_1\leq x,y\leq b_2}\nn\\
	&=c_{st}(\indic{x<y, t=\srO}+\indic{x>y, t=\srY})\e^{-|x-y|}\indic{b_1\leq x,y\leq b_2},\label{eq_hd_tkappae}
	}
where $b_1=b_1({\zeta})=(\chi-\tfrac{1}{2})\log(1/(1-{\zeta}))$ and $b_2=b_2({\zeta})=(\chi-\tfrac{1}{2})\log(1/(3{\zeta}))$. 
{Note that}
\begin{align}\label{eq_hd_inner_isometry-ori}
    \langle {\bf 1}, \bfT_{\kappa^\circ_{\zeta}}^{k}{\bf 1}\rangle=&\sum_{\substack{q_i\in\cbc{\srO,\srY},\\\forall i\in[0,k]}}\int_0^1\cdots\int_0^1\prod_{i\in[k]}\kappa\big((y_{i-1},q_{i-1}),(y_i,q_i)\big)\nn\\
    &\qquad \times\indic{y_i\in[3{\zeta},1-{\zeta}]~\forall 0\leq i\leq k}\dint y_0\cdots\dint y_k.
\end{align}
Then, combining \eqref{eq_hd_tkappae} and \eqref{eq_hd_inner_isometry-ori} with the substitutions $y_i=\e^{-x_i/(\chi-\frac{1}{2})}$ gives that
\begin{align}\label{eq_hd_inner_isometry}
    \langle {\bf 1}, \bfT_{\kappa^\circ_{\zeta}}^{k}{\bf 1}\rangle=&\sum_{\substack{q_i\in\cbc{\srO,\srY},\\\forall i\in[0,k]}}\int_0^\infty\cdots\int_0^\infty\prod_{i\in[k]}\kappa\big((\e^{-x_{i-1}/(\chi-\frac{1}{2})},q_{i-1}),(\e^{-x_i/(\chi-\frac{1}{2})},q_i)\big)\\
    &\qquad\times\indic{x_i\in[b_1,b_2]~\forall 0\leq i\leq k}\dint \e^{-x_0/(\chi-\frac{1}{2})}\cdots\dint \e^{-x_k/(\chi-\frac{1}{2})}\nn\\
=&\bc{\frac{2}{2\chi-1}}^{k+1}\sum_{\substack{q_i\in\cbc{\srO,\srY},\\\forall i\in[0,k]}}\int_0^\infty\cdots\int_0^\infty\e^{-(x_0+x_k)/(2\chi-1)}\nn\\
&\qquad \times\prod_{i\in[k]}\tilde\kappa^\circ_{\zeta}\big((x_{i-1},q_{i-1}),(x_i,q_i)\big)\dint x_0\cdots\dint x_k\nn\\
\geq& 3{\zeta}\bc{\frac{2}{2\chi-1}}^{k+1}\langle {\bf 1}, \bfT_{\tilde\kappa^\circ_{\zeta}}^{k}{\bf 1}\rangle,\nn
\end{align}
where we use $\e^{-x/(\chi-\frac{1}{2})}\geq 3{\zeta}$ for $x\leq b_2$ in the last inequality. 

We now provide a \zhu{probabilistic} large-deviation interpretation for $\langle {\bf 1}, \bfT_{\tilde\kappa^\circ_{\zeta}}^{k}{\bf 1}\rangle$. For this, we define the random walk
	\eqn{
	S_\ell=\sum_{i=0}^\ell X_i,\nn
	}
 where $(X_i)_{i\geq 0}$ are independent random variables with probability density on $\mathbb{R}$ given by
\eqn{
	\label{density-X}
    f_{X_i}(x)=\begin{cases}
    \frac{1}{b_2-b_1}\indic{x\in[b_1,b_2]},\quad&i=0;\\
        \frac{1}{2}\e^{-\abs{x}},\quad&i\geq 1.
    \end{cases}
}

\zhu{Recall $q(x)$ from \eqref{def_label_direction} that
    \begin{align*}
    q(x)=\begin{cases}
        \rO,\quad&x\leq 0,\\
        \rY,\quad&\ch{x>0},
    \end{cases} 
\end{align*}}
and let $Q_0$ be a random label with equal probability to be $\rO$ and $\rY$, and independent of $(X_i)_{i\geq 0}$. Hence, $\zhu{q(-X_i)}$ represents the sign of $X_i$. 
Let 
\begin{align}
    \label{N(k)-def}N(k)=\abs{\cbc{i\in[\zhu{k-1}]}:~\zhu{q(-X_i)\neq q(-X_{i+1})}},
\end{align}
that is, the number of sign flips in $(X_i)_{i\in[k]}$. Then, $N(k)$ follows a binomial distribution with $k-1$ trials and success probability $1/2$.  

Using the notation above, our  large-deviation interpretation is as follows:
\begin{lemma}[Large-deviation interpretation]
\label{lem-LD-interpretation}
For all $k$, with $S_\ell$ and $N(k)$ defined in \eqref{N(k)-def},
	\eqn{
	\langle {\bf 1}, \bfT_{\tilde\kappa^\circ_{\zeta}}^{k}{\bf 1}\rangle
	=\Theta(b_2-b_1) (2c_{\srO\srO})^k \expec\Big[\Big(\frac{c_{\srO\srY}c_{\srY\srO}}{c_{\srO\srO}^2}\Big)^{N(k)/2}
	\indic{b_1\leq S_i\leq b_2~\forall i\in[k]}\Big].\label{eq_hd_inner_ldp}
	}
\end{lemma}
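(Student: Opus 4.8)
The goal is to rewrite the iterated integral $\langle \mathbf{1}, \bfT_{\tilde\kappa^\circ_{\zeta}}^{k}\mathbf{1}\rangle$ from \eqref{eq_hd_inner_isometry} as an expectation over the random walk $(S_\ell)$. First I would expand the inner product fully: by \eqref{eq-new-Tkappa}, with the convention $q_i=\bar q(x_i-x_{i-1})$ for $i\in[k]$ and $q_0$ free,
\begin{align*}
\langle \mathbf{1}, \bfT_{\tilde\kappa^\circ_{\zeta}}^{k}\mathbf{1}\rangle
=\sum_{q_0\in\{\srO,\srY\}}\int_{b_1}^{b_2}\!\!\cdots\!\int_{b_1}^{b_2}\ \prod_{i\in[k]}\tilde\kappa^\circ_{\zeta}\big((x_{i-1},q_{i-1}),(x_i,q_i)\big)\,\dint x_0\cdots\dint x_k,
\end{align*}
where the indicator $\indic{b_1\le x_j\le b_2}$ has been absorbed into the limits of integration (this is exactly the restriction built into $\tilde\kappa^\circ_{\zeta}$ in \eqref{eq_hd_tkappae}). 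Now plug in the explicit form \eqref{eq_hd_tkappae}: each factor equals $c_{q_{i-1}q_i}\,\e^{-|x_i-x_{i-1}|}$, where the label pair $(q_{i-1},q_i)$ is itself determined by the signs $\bar q(x_i-x_{i-1})$ for $i\ge 1$ (and by the free choice of $q_0$). The key observation is that the product $\prod_{i\in[k]} c_{q_{i-1}q_i}$ depends on the sign sequence only through the number of sign changes: since $c_{\srO\srO}=c_{\srY\srY}$, a ``no-flip'' step contributes $c_{\srO\srO}$, and each flip contributes either $c_{\srO\srY}$ or $c_{\srY\srO}$ depending on its direction. Because flips alternate direction, in a run of $N(k)$ flips there are $\lceil N(k)/2\rceil$ of one type and $\lfloor N(k)/2\rfloor$ of the other, so $\prod c_{q_{i-1}q_i} = c_{\srO\srO}^{\,k-N(k)}\,(c_{\srO\srY}c_{\srY\srO})^{N(k)/2}\cdot\Theta$, the $\Theta$ absorbing the $O(1)$ discrepancy from the ceiling/floor (which is bounded since $c_{\srO\srY}/c_{\srY\srO}\in[\Theta_\star,\Theta^\star]$) and the dependence on $q_0$ and on which endpoint starts the alternation.

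**Converting to the walk.** Next I would recognize $\prod_{i\in[k]}\e^{-|x_i-x_{i-1}|}$ together with $\mathrm dx_1\cdots\mathrm dx_k$ as $2^k$ times the joint density of increments $X_i = x_i-x_{i-1}$ with density $\tfrac12\e^{-|x|}$, and $\mathrm dx_0/(b_2-b_1)$ as the density of $X_0$ uniform on $[b_1,b_2]$; the sum over $q_0\in\{\srO,\srY\}$ contributes the factor absorbed into $\Theta$ (matching the independent label $Q_0$). Then $x_i = S_i$, the indicator that all $x_j\in[b_1,b_2]$ becomes $\indic{b_1\le S_i\le b_2\ \forall i\in[k]}$ (note $S_0=X_0\in[b_1,b_2]$ automatically), and the number of sign flips in $(X_i)_{i\in[k]}$ is exactly $N(k)$ as defined in \eqref{N(k)-def}. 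Collecting the prefactors: $(b_2-b_1)$ from the uniform normalization of $X_0$, $2^k$ from the exponential densities, and $c_{\srO\srO}^{\,k}$ factored out of $c_{\srO\srO}^{\,k-N(k)}(c_{\srO\srY}c_{\srY\srO})^{N(k)/2} = c_{\srO\srO}^{\,k}\big(c_{\srO\srY}c_{\srY\srO}/c_{\srO\srO}^2\big)^{N(k)/2}$, yields precisely the right-hand side of \eqref{eq_hd_inner_ldp}.

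**Main obstacle.** The only genuinely delicate point is the bookkeeping of the constant factor $\prod_{i} c_{q_{i-1}q_i}$ as a function of the sign pattern of the increments, and in particular justifying that the deviation from the clean formula $c_{\srO\srO}^{\,k-N(k)}(c_{\srO\srY}c_{\srY\srO})^{N(k)/2}$ — caused by the parity of $N(k)$, the initial label $q_0$, and whether the first flip is an $\srO\srY$ or $\srY\srO$ transition — is uniformly bounded above and below by constants depending only on $m,\delta$, hence absorbable into $\Theta$. This is a finite-case check: there are at most four ``boundary'' configurations, each contributing a multiplicative factor in $\{1, c_{\srO\srY}/c_{\srY\srO}, c_{\srY\srO}/c_{\srO\srY}\}$, all of which lie in $[\Theta_\star,\Theta^\star]$ by \eqref{cst-def-PAM-rep}. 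Everything else — the change of variables, the identification of densities, the indicator — is routine.
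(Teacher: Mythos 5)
Your proposal is correct and takes essentially the same approach as the paper's own proof: expand the inner product as an iterated integral, change variables to the increments, reinterpret the integral as an expectation against the random-walk law of $(S_\ell)$, and then control the product $\prod_i c_{q_{i-1}q_i}$ via the observation that consecutive sign flips alternate between $\srO\srY$ and $\srY\srO$. Where the paper formalizes this last step by introducing the counts $\bar N_{st}$, using $\bar N_{\srO\srO}+\bar N_{\srO\srY}+\bar N_{\srY\srO}+\bar N_{\srY\srY}=k$ and $|\bar N_{\srY\srO}-\bar N_{\srO\srY}|\le 1$, you appeal directly to the ceiling/floor split of $N(k)$ into the two flip types; these are the same bookkeeping argument stated at different levels of explicitness, and your identification of this bookkeeping as the "main obstacle" is exactly right.
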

\begin{proof}
By \eqref{eq_hd_tkappae}, we compute that
    \zhu{\begin{align}
        \langle {\bf 1}, \bfT_{\tilde\kappa^\circ_{\zeta}}^{k}{\bf 1}\rangle=&\sum_{q_0\in\cbc{\srO,\srY}}\int_{-\infty}^{\infty}\cdots\int_{-\infty}^{\infty}\e^{-\sum_{i\in[k]}\abs{y_i-y_{i-1}}}c_{q_0,q(y_0-y_1)}\prod_{i=1}^{k-1} c_{q(y_{i-1}-y_i),q(y_i-y_{i+1})}\nn\\
    &\qquad\times\indic{b_1\leq y_i\leq b_2~\forall 0\leq i\leq k}\dint y_0\cdots\dint y_k\nn\\
    =&\sum_{q_0\in\cbc{\srO,\srY}}\int_{-\infty}^{\infty}\cdots\int_{-\infty}^{\infty}\e^{-\sum_{i\in[k]}\abs{x_i}}c_{q_0,q(-x_1)}\prod_{i=1}^{k-1} c_{q(-x_i),q(-x_{i+1})}\nn\\
    &\qquad\times\indic{b_1\leq \sum_{j=0}^i x_j\leq b_2~\forall 0\leq i\leq k}\dint x_0\cdots\dint x_k.\label{eq_ldp_re_1}
    \end{align}}
where in the last equation we use the change of variables by setting $x_0=y_0$ and $x_i=y_i-y_{i-1}$ for $i\in[k]$. To calculate the number of $c_{st}$ appearing in the above integral for each $s,t\in\cbc{\rO,\rY}$, we define
\zhu{\begin{align*}
    \bar{N}_{st}=\indic{Q_0=s,q(-X_1)=t}+\sum_{i=2}^k \indic{q(-X_{i-1})=s,q(-X_i)=t}\qquad \forall s,t\in\cbc{\rO,\rY}.
\end{align*}}
Then, $\bar{N}_{st}$ denotes the number of $(s,t)$ pairs appearing in \zhu{$\big((q(-X_{i-1}),q(-X_i))\big)_{i\in[k]}$}, where \zhu{$q(-X_0)=Q_0$}. 
By the definition of $N(k)$ {in \eqref{N(k)-def},}
\zhu{\begin{align}
N(k)=\bar{N}_{\srO\srY}+\bar{N}_{\srY\srO}-\indic{Q_0\neq q(-X_1)}.\label{relation_bk_n}
\end{align}}

Recall \eqref{density-X} and that $Q_0\in\cbc{\rO,\rY}$ with equal probability. Then, by \eqref{eq_ldp_re_1} and the definition of the expectation, 
\zhu{\begin{align}\langle {\bf 1}, \bfT_{\tilde\kappa^\circ_{\zeta}}^{k}{\bf 1}\rangle
    =&2^{k+1}(b_2-b_1)\Erw\Big[c_{Q_0,q(-X_1)}\prod_{i=1}^{k-1} c_{q(-X_i),q(-X_{i+1})}\indic{b_1\leq \sum_{j=0}^i x_j\leq b_2~\forall 0\leq i\leq k}\Big]\nn\\
=&2^{k+1}(b_2-b_1)\Erw\brk{c_{\srO\srO}^{\bar{N}_{\srO\srO}}c_{\srY\srY}^{\bar{N}_{\srY\srY}}c_{\srO\srY}^{\bar{N}_{\srO\srY}}c_{\srY\srO}^{\bar{N}_{\srY\srO}}\indic{b_1\leq \sum_{j=0}^i x_j\leq b_2~\forall 0\leq i\leq k}}\label{eq_hd_inner_ldp_0}\\
=&2^{k+1}c_{\srO\srO}^k(b_2-b_1)\Erw\bigg[\bc{\frac{c_{\srO\srY}}{c_{\srO\srO}}}^{\bar{N}_{\srO\srY}}\bc{\frac{c_{\srY\srO}}{c_{\srO\srO}}}^{\bar{N}_{\srY\srO}}\indic{b_1\leq S_i\leq b_2~\forall i\in[k]}\bigg],\nn
\end{align}}
where we recall $c_{\srO\srO}=c_{\srY\srY}$ from \eqref{cst-def-PAM-rep}, and we use the facts that $\bar{N}_{\srO\srO}+\bar{N}_{\srO\srY}+\bar{N}_{\srY\srO}+\bar{N}_{\srY\srY}=k$ and $S_0=X_0\in[b_1,b_2]$ in the last equation. We further note that the factor $2^{k+1}$ in \eqref{eq_hd_inner_ldp_0} comes from the combination of the density function $\frac{1}{2}\e^{-|x|}$ of $X_i$ for $i\in[k]$ and the two choices for $Q_0$, while the factor $b_2-b_1$ arises \zhu{from} the density function of $X_0$.

On the other hand, for the sequence \zhu{$Q_0,q(-X_1),\ldots,q(-X_k)$}, $\bar{N}_{\srY\srO}$ counts the number of flips from $\rY$ to $\rO$ in this sequence, and $\bar{N}_{\srO\srY}$ counts the number of flips from $\rO$ to $\rY$. Furthermore, if  a flip from $\rY$ to $\rO$ has already been observed, the sequence must flip back to $\rY$ before  it can flip again from $\rY$ to $\rO$.
Consequently, $	|\bar{N}_{\srY\srO}-\bar{N}_{\srO\srY}|\leq 1$. Therefore, by \eqref{relation_bk_n},
\begin{align*}
    \frac{N(k)-1}{2}\leq\bar{N}_{\srO\srY},\bar{N}_{\srY\srO}\leq \frac{N(k)+2}{2},
\end{align*}
and we thus conclude from \eqref{eq_hd_inner_ldp_0} that
	\eqn{
	\langle {\bf 1}, \bfT_{\kappa_{{\zeta}}}^{k}{\bf 1}\rangle
	=\Theta(b_2-b_1) (2c_{\srO\srO})^k \expec\Big[\Big(\frac{c_{\srO\srY}c_{\srY\srO}}{c_{\srO\srO}^2}\Big)^{N(k)/2}
	\indic{b_1\leq S_i\leq b_2~\forall i\in[k]}\Big],\nn
	}
as desired.
\end{proof}
By \eqref{lem-LD-interpretation}, the proof of \Cref{pro_hd_spe_con} is reduced to bounding the expectation in \eqref{eq_hd_inner_ldp} from below. We complete this proof in the next section.

\subsection{Proof of \Cref{pro_hd_spe_con}: \zhu{\ch{R}andom walk analysis}}\label{sec_decoupling}
In this section, we prove \Cref{pro_hd_spe_con}. 
This proof is organized as follows:
First, we   simplify the expectation in \Cref{lem-LD-interpretation} by eliminating the term $(c_{\srO\srY}c_{\srY\srO}/c_{\srO\srO}^2)^{N(k)/2}$. Then, we transform the computation of this expectation into \zhu{estimating the probability that a random walk does not exit a given \ch{region}}. Finally, we complete the proof \zhu{using the symmetry and independence of the random walk's increments}.
\paragraph{Simplification of the expectation}
We note that, excluding the term $(c_{\srO\srY}c_{\srY\srO}/c_{\srO\srO}^2)^{N(k)/2}$, the expectation in \eqref{eq_hd_inner_ldp} is simply the probability that the random walk $S_\ell$ stays within $[b_1,b_2]$ for all $0\leq \ell\leq k$, meaning that  the time at which $S_\ell$ hits $\RR\backslash [b_1,b_2]$ is greater than $k$. To eliminate the term $(c_{\srO\srY}c_{\srY\srO}/c_{\srO\srO}^2)^{N(k)/2}$ in the expectation, we construct new random walks on $\RR$  and apply a measure transformation.

With $r=\frac{\sqrt{c_{\srO\srY}c_{\srY\srO}}}{c_{\srO\srO}}$, by the law of total probability, the expectation in \eqref{eq_hd_inner_ldp} {equals}
\begin{align}\label{eq_simp_exp_1}
    \Erw\brk{r^{N(k)}\indic{b_1\leq S_i\leq b_2~\forall i\in[k]}}
    =&\sum_{a=0}^{k-1}\PP\bc{N(k)=a}r^a \Erw\brk{\indic{b_1\leq S_i\leq b_2~\forall i\in[k]}\mid N(k)=a}\nn\\
    =&\sum_{a=0}^{k-1}\binom{k-1}{a}\bc{\frac{1}{2}}^{k-1} r^a \Erw\brk{\indic{b_1\leq S_i\leq b_2~\forall i\in[k]}\mid N(k)=a}\\
    =&\bc{\frac{1+r}{2}}^{k-1}\sum_{a=0}^{k-1}\binom{k-1}{a} \bc{\frac{r}{1+r}}^a\bc{\frac{1}{1+r}}^{k-1-a}\nn\\
    &\qquad\times\Erw\brk{\indic{b_1\leq S_i\leq b_2~\forall i\in[k]}\mid N(k)=a},\nn
\end{align}
where, in the second equation, we use the fact that $N(k)$ follows a binomial distribution with $k-1$ trials and success probability $1/2$.

Note that $\binom{k-1}{a} \big(\frac{r}{1+r}\big)^a\big(\frac{1}{1+r}\big)^{k-1-a}$ $(0\leq a\leq k-1)$ is {the probability mass function of a binomial with $k-1$ trials and success \ch{probability} $r/(1+r)$.}{}
We next construct a sequence of random variables $(A_i)_{i\geq 1}$ such that the number of sign flips \ch{in} the first \ch{$i$} elements \ch{has the same} distribution \ch{as $A_i$ for all $i\in [k]$}. 

To construct $(A_i)_{i\geq 1}$, we first let $(Y_i,Z_i)_{i\geq 1}$ be independent random variables \ch{that are} independent of everything else, such that
\begin{itemize}
    \item[$\rhd$] for any $i\geq 1$, $f_{Y_i}(x)=\e^{-x}\indic{x\geq 0}$;
    \item[$\rhd$] $Z_1$ takes the values $1$ and $-1$ with equal probability;
    \item[$\rhd$] for any $i\geq 2$, $Z_i$ takes \ch{the values} $-1$ with probability $r/(1+r)$, and $1$ with probability $1/(1+r)$.
\end{itemize}
Then, for any $\ell\geq 1$, let
\begin{align}
A_\ell=Y_\ell\prod_{i=1}^\ell Z_i, \quad\text{and}\quad T_\ell=X_0+\sum_{i\in[\ell]}A_i.\label{def_Tl}
\end{align}

We use $\sgn(x)$ to denote the sign function of a real number $x$ such that 
\begin{align*}
\sgn(x)=\begin{cases}
    1 ,\quad&x\geq 0;\\
    -1,\quad&x<0.
\end{cases}   
\end{align*}
Define $\hat{N}(k)$ as the number of $i\in[k-1]$ such that the signs of $A_i$ and $A_{i+1}$ are different, that is, $Z_{i+1}=-1$. Then, conditionally on $\hat{N}(k)=N(k)=a$, the two sets 
\[\cbc{i\in[k-1]:~\sgn\bc{\ch{X_i}}=\sgn\bc{\ch{X_{i+1}}}}
\quad\text{and}\quad \cbc{i\in[k-1]:~\sgn\bc{\ch{A_i}}=\sgn\bc{\ch{A_{i+1}}}}\]
are chosen uniformly at random from all the subsets of $[k-1]$ with size $a$, and independent of $(A_1,X_1)$ and $(\abs{A_i},\abs{X_i})_{i\geq 2}$. 

We further note that $A_1$ and $X_1$ have the same distribution, while $\abs{A_i}$ and $\abs{X_i}$ have the same distribution for $i\geq 2$, independently of $(A_1,X_1)$ and $X_0$. Then,
conditionally on $\hat{N}(k)=N(k)=a$, $(X_i)_{i\in[k]}$ and $(A_i)_{i\in[k]}$ have the same distribution, independent of $X_0$. Hence,
\begin{align*}
    \Erw\brk{\indic{b_1\leq S_i\leq b_2~\forall i\in[k]}\mid N(k)=a}=\Erw\brk{\indic{b_1\leq T_i\leq b_2~\forall i\in[k]}\mid \hat{N}(k)=a}.
\end{align*}
Since $\hat{N}(k)$ follows a binomial distribution with $k-1$ trials and success probability $r/(1+r)$, we conclude that
\begin{align}\label{eq_simp_exp_2}
    &\sum_{a=0}^{k-1}\binom{k-1}{a} \bc{\frac{r}{1+r}}^a\bc{\frac{1}{1+r}}^{k-1-a} \Erw\brk{\indic{b_1\leq S_i\leq b_2~\forall i\in[k]}\mid N(k)=a}\\
    &\qquad=\sum_{a=0}^{k-1}\PP(\hat{N}(k)=a) \Erw\brk{\indic{b_1\leq T_i\leq b_2~\forall i\in[k]}\mid \hat{N}(k)=a}\nn\\
    &\qquad=\PP\bc{b_1\leq T_i\leq b_2~\forall i\in[k]}.\nn
\end{align}
Combining \eqref{eq_simp_exp_1} and \eqref{eq_simp_exp_2} gives that
\begin{align}\label{eq_simp_exp_3}
    \Erw\brk{r^{N(k)}\indic{b_1\leq S_i\leq b_2~\forall i\in[k]}}
    =\bc{\frac{1+r}{2}}^{k-1}\PP\bc{b_1\leq T_i\leq b_2~\forall i\in[k]}.
\end{align}

\paragraph{Random walk with independent increments}According to \eqref{def_Tl},  $T_i$ in \eqref{eq_simp_exp_3} is a random walk with initial value $X_0$ and \textit{dependent} increments $A_i$, where the dependence arises from the signs of  $(A_i)_{i\geq 1}$. \zhu{To make} the expectation of the increments to be independent of the past, we construct a new random walk by summing all the increments $A_i$ between two sign changes.

Define the stopping times $(\tau_i)_{i\geq0}$ recursively such that 
\begin{align}\label{def_taus}
    \tau_0=1\quad\text{and}\quad \tau_i=\inf_{j > \tau_{i-1}}\cbc{\sgn(A_j)\neq \sgn(A_{j-1})}.
\end{align}
For any $\ell\geq 1$, let
\begin{align}
    B_\ell=\sum_{i=\tau_{\ell-1}}^{\tau_\ell-1}A_i\quad\text{and}\quad R_\ell=X_0+\sum_{i=1}^\ell B_i.\label{def_RWr}
\end{align}
{By}{} construction all the $A_j$ have the same sign for $j\in[\tau_{i-1}, \tau_i-1]$. Hence, the sequence
\begin{align*}
T_{\tau_{i-1}-1},T_{\tau_{i-1}},\ldots,T_{\tau_i-1}  
\end{align*}
is either an increasing or a decreasing sequence. Since $R_{i-1}=T_{\tau_{i-1}-1}$ and $R_i=T_{\tau_i-1}$, for any integer $j\in[\tau_{i-1}, \tau_i-1]$,
\begin{align*}
    \min\cbc{R_{i-1},R_i}\leq T_j\leq \max\cbc{R_{i-1},R_i}.
\end{align*}
We further note \ch{from} \eqref{def_taus} that  $\tau_i\geq i+1$, and thus conclude that 
\begin{align}\label{subset_relation_sequence_R_T}
    \cbc{b_1\leq R_i\leq b_2~\forall 0\leq i\leq k}\subseteq \cbc{b_1\leq T_i\leq b_2~\forall i\in[k]}.
\end{align}
 Consequently, by \eqref{eq_hd_inner_ldp}, \eqref{eq_simp_exp_3} and \eqref{subset_relation_sequence_R_T},
\begin{align}\label{eq_lower_inter_1Tk1}
\langle {\bf 1}, \bfT_{\tilde\kappa^\circ_{\zeta}}^{k}{\bf 1}\rangle
    &=\Theta (b_2-b_1) (2c_{\srO\srO})^k\bc{\frac{1+r}{2}}^{k-1}\PP\bc{b_1\leq T_i\leq b_2~\forall i\in[k]}\\
    &\geq \Theta (b_2-b_1) (c_{\srO\srO}+\sqrt{c_{\srO\srY}c_{\srY\srO}})^k\PP\bc{b_1\leq R_i\leq b_2~\forall 0\leq i\leq k}.\nn
\end{align}
Recall the spectral radius $\nu$ from \eqref{spectral_radius} and  $c_{st}$ from \eqref{cst-def-PAM-rep}. We thus can rewrite $\nu$ as
\begin{align*}
\nu=\frac{2}{2\chi-1}(c_{\srO\srO}+\sqrt{c_{\srO\srY}c_{\srY\srO}}).    
\end{align*}
We further recall from \eqref{eq_hd_inner_isometry} that 
\begin{align*}
   \langle {\bf 1}, \bfT_{\kappa^\circ_{\zeta}}^{k}{\bf 1}\rangle&\geq 3{\zeta}\bc{\frac{2}{2\chi-1}}^{k+1}\langle {\bf 1}, \bfT_{\tilde\kappa^\circ_{\zeta}}^{k}{\bf 1}\rangle.
\end{align*}
Then, \eqref{eq_lower_inter_1Tk1} yields that
\begin{align}\label{hd_eq_8_new}
   \langle {\bf 1}, \bfT_{\kappa^\circ_{\zeta}}^{k}{\bf 1}\rangle&\geq \Theta (b_2-b_1){\zeta}\nu^k\PP\bc{b_1\leq R_i\leq b_2~\forall 0\leq i\leq k}.
\end{align}
Hence, to prove \Cref{pro_hd_spe_con}, we are left to bound $\PP\bc{b_1\leq R_i\leq b_2~\forall 0\leq i\leq k}$ from below.

\paragraph{\zhu{Symmetry and independence of increments}}\zhu{To handle the event $\cbc{b_1\leq R_i\leq b_2~\forall 0\leq i\leq k}$, we use the symmetry and independence of increments.} We first note that there exists a sufficiently small $\zeta_0\in (0,1/10]$ such that, for any $\zeta\in (0,\zeta_0]$,
\begin{align*}
    b_2=(\chi-\tfrac{1}{2})\log(1/(3{\zeta}))>4(\chi-\tfrac{1}{2})\log(1/(1-{\zeta}))=4b_1,
\end{align*}
and $c=\min\big\{b_2/128,\sqrt{b_2}/64\big\}>(r+1)/r$. Then, $b_1<b_2/2-c<b_2/2+c<b_2$. 

By \eqref{def_RWr}, $(\abs{B_i})_{i\geq 1}$ are i.i.d.\ random variables. Moreover, with $\imag$ as the  imaginary unit, the characteristic function of $\abs{B_1}$ can be written as 
\begin{align*}
\Erw\brk{\e^{\imag t\abs{B_1}}}=&\sum_{u=1}^\infty \PP\bc{\tau_1-\tau_0=u}\bc{\Erw\brk{\e^{\imag tY_1}}}^u\\
    =&\sum_{u=1}^\infty \bc{\frac{1}{1+r}}^{u-1}\frac{r}{1+r}\bc{\frac{1}{1-\imag t}}^u=\frac{1}{1-\frac{r+1}{r}\imag t}.
\end{align*}
We note that $1/(1-\frac{r+1}{r}\imag t)$ is the characteristic function of the exponential distribution with a mean of $(1+r)/r$. 
Therefore, $\abs{B_i}$ follows this distribution. Next, for each integer $i\geq 1$, we construct a non-negative bounded random variable $C_i$ which is distributed as $(-1)^{i-1}Z_1 B_i$, conditional on $\abs{B_i}\leq c$.

Let $(C_i)_{i\geq 1}$ be i.i.d. random variables on $\RR$, independent of everything else, with probability density function
\begin{align*}
    f_{C_i}(x)=\frac{r}{r+1}\frac{\e^{-\frac{r}{r+1}x}}{1-\e^{-\frac{rc}{r+1}}}\indic{x\in[0,c]}.
\end{align*}

We note that $C_i$ has an exponential distribution with mean $(1+r)/r$, truncated to the interval $[0,c]$, and thus $\Erw\brk{C_i}<c$. Then, {conditionally}{} on the good event $\mathcal{E}_k=\cbc{\abs{B_i}\leq c~\forall i\in[k]}$, $\abs{B_i}$ is distributed as $C_i$, while $\sgn(B_i)=\sgn(A_{\tau_{i-1}})=(-1)^{i-1}Z_1$ by \eqref{def_RWr}. Hence, the random variables $(B_i)_{i\in[k]}$ under $\mathcal{E}_k$ are jointly distributed as $((-1)^{i-1}Z_1 C_i)_{i\in[k]}$. 
\zhu{Let $\tilde{X}_\ell = X_0 + \sum_{j\in[\ell]} (-1)^{j-1}Z_1C_j$. Then
\begin{align}\label{revise_hd_eq_7-1}
 \mathbb{P}\big(b_1\leq R_i\leq b_2~\forall 0\leq i\leq k\big)
 &\geq\mathbb{P}\bigg(X_0+\sum_{i\in[\ell]} B_i\in[b_1,b_2]~\forall 0\leq\ell\leq k~\Big\lvert~ \mathcal{E}_k\bigg)\mathbb{P}\big(\mathcal{E}_k\big)\nn\\
 &=\mathbb{P}\bigg(X_0+\sum_{i\in[\ell]} (-1)^{i-1}Z_1C_i\in[b_1,b_2]~\forall 0\leq\ell\leq k\bigg)\mathbb{P}\big(\mathcal{E}_k\big)\\
 &=\mathbb{P}\bigg(\tilde{X}_\ell\in[b_1,b_2]~\forall 0\leq\ell\leq k\bigg)\mathbb{P}\big(|B_1|\leq c\big)^k.\nn
\end{align}

Fix an even integer $L=2\lfloor(b_2-b_1)/(12c)\rfloor$ and $d=\lceil k/L\rceil$. We consider the event $\ch{\mathcal{M}}_j$ that
\begin{align*}
X_0,X_L,X_{2L},\ldots,X_{jL}\in\bigg[\frac{2b_1+b_2}{3},\frac{b_1+2b_2}{3}\bigg].
\end{align*}
We claim that $\ch{\mathcal{M}}_d$ indicates that $\tilde{X}_\ell\in[b_1,b_2]$ for all $0\leq\ell\leq k$. Indeed, since $C_i\in[0,c]$, for any integer $\ell\in[jL,(j+1)L]$,
\begin{align}\label{revise_hd_eq_7-2}
|X_{\ell}-X_{jL}|=\bigg|\sum_{i=jL+1}^{\ell} (-1)^{i-1}Z_1C_i\bigg|\leq cL\leq \frac{b_2-b_1}{6}.   
\end{align}
Consequently, if $\ch{\mathcal{M}}_d$ holds, for any integer $\ell\in[0,dL]$,
\begin{align*}
    b_1\leq X_{\lfloor \ell/L\rfloor L}-|X_\ell-X_{\lfloor \ell/L\rfloor L}|\leq X_\ell\leq X_{\lfloor \ell/L\rfloor L}+|X_\ell-X_{\lfloor \ell/L\rfloor L}|\leq b_2.
\end{align*}
Hence,
\begin{align}\label{revise_hd_eq_7-3}
    \mathbb{P}\big(\tilde{X}_\ell\in[b_1,b_2]~\forall 0\leq\ell\leq k\big)\geq\mathbb{P}(M_d).
\end{align}

On the other hand, since $\ch{(C_i)}_{i\geq 1}$ are i.i.d.\ random variables and $L$ is an even integer, $X_{(j+1)L}-X_{jL}=\sum_{i=jL+1}^{(j+1)L} (-1)^{i-1}Z_1C_i$ is symmetrically distributed about $0$. As a consequence, \eqref{revise_hd_eq_7-2} yields that
\begin{align*}
    \PP\bc{{X_{(j+1)L}-X_{jL}}\in\brk{0,\frac{b_2-b_1}{6}}}=\PP\bc{\ch{X_{(j+1)L}-X_{jL}}\in\brk{\frac{b_1-b_2}{6},0}}\geq \frac{1}{2}.
\end{align*}
Given $X_{jL} \in \big[(2b_1+b_2)/3,(b_1+2b_2)/3\big]$, since $\ch{(C_i)}_{i\geq 1}$ are i.i.d.\ random variables,
\begin{align*}
&\PP\bc{X_{(j+1)L} \in \brk{\frac{2b_1+b_2}{3},\frac{b_1+2b_2}{3}}\;\Big|\; X_{jL}}\\
    &\qquad\geq \PP\bc{{X_{(j+1)L}-X_{jL}}\in\brk{0,\frac{b_2-b_1}{6}}}\indic{X_{jL}\leq \frac{b_1+b_2}{2}}\\
    &\qquad\quad+\PP\bc{{X_{(j+1)L}-X_{jL}}\in\brk{\frac{b_1-b_2}{6},0}}\indic{X_{jL}> \frac{b_1+b_2}{2}}\geq \frac{1}{2}.
\end{align*}
As a consequence,
\begin{align*}
    \PP\bc{\ch{\mathcal{M}}_{j+1}\mid \ch{\mathcal{M}}_j}&\geq \inf_{X_{jL} \in \big[(2b_1+b_2)/3,(b_1+2b_2)/3\big]}\PP\bc{X_{(j+1)L} \in \brk{\frac{2b_1+b_2}{3},\frac{b_1+2b_2}{3}}\;\Big|\; X_{jL}}\\
    &\geq \frac{1}{2}.
\end{align*}
Hence,
\begin{align}\label{revise_hd_eq_7-4}
    \mathbb{P}(\ch{\mathcal{M}}_d)&=\mathbb{P}(\ch{\mathcal{M}}_0)\prod_{i=1}^d\mathbb{P}(\ch{\mathcal{M}}_i\mid \ch{\mathcal{M}}_{i-1})\nn\\
    &\geq \frac{1}{2^d}\,\mathbb{P}\Big(X_0 \in \Big[\frac{2b_1+b_2}{3},\frac{b_1+2b_2}{3}\Big]\Big)=\frac{1}{3\cdot 2^d}.
\end{align}
Combining \eqref{hd_eq_8_new}, \eqref{revise_hd_eq_7-1}, \eqref{revise_hd_eq_7-3} and \eqref{revise_hd_eq_7-4} yields that
\begin{align}\label{revise_hd_eq_7-5}
    \langle \mathbf{1},\mathbf{T}_{\kappa^\circ_{\zeta}}^{k}\mathbf{1}\rangle\geq \Theta (b_2-b_1)\zeta \, 2^{-6ck/(b_2-b_1)}\nu^k\,\mathbb{P}(|B_1|\leq c)^k.
\end{align}
Let $\Theta_{\star,0}$ be a positive lower bound of all $\Theta$ in this section. Let $\uhvep = 2^{-6c/(b_2-b_1)}\nu\,\mathbb{P}(|B_1|\leq c)$ and $c_{\zeta}=\min\{\Theta_{\star,0} (b_2-b_1)\zeta,1\}\in(0,1]$. We note that $\lim_{\zeta\searrow0}c/(b_2-b_1)=0$ and $\lim_{\zeta\searrow0}c=\infty$, which also implies that $\lim_{\zeta\searrow0}\mathbb{P}(|B_1|\leq c)=1$. Hence,
$\lim_{\zeta\searrow0}\uhvep=\nu$.
\Cref{pro_hd_spe_con} then follows directly from \eqref{revise_hd_eq_7-5}, as desired.\qed
}

\section{Conditional lower bounds on the expected number of $k$-step paths}
\label{sec-first-moment-proof-thm-log-PA-delta>0}
\Cref{pro_hd_spe_con} establishes the relationship between the original graph and its truncated version. Utilizing this relationship, we examine the first moment of the number of 
$k$-step paths in this section, and use this to prove \Cref{prop-path-count-PAMc-LB_simple}.

The proof of \Cref{prop-path-count-PAMc-LB_simple} {is the most technically challenging in this paper, since our path-counting formulas are better suited for upper than for lower bounds. The proof follows that}{} of \Cref{lem_hd_lower_part1}, and is organized as follows: First, in \Cref{sec_fall_fact_path_prob}, we express the conditional path probability $\prob^r(\vpie\subseteq \PAndmdel)$ in a suitable form, similar to \eqref{eq_hd_lower_part0}. Next, in \Cref{sec-rew-pc-bound-kappa}, we count the number of possible edge-labeled self-avoiding paths $\vpie$ given $\vec\pi$, which gives a sum of products of kernels of the path probability as in \eqref{eq_hd_lower_form_prod_kappa-old}. Then, given a lower bound on this sum, we complete the proof of \Cref{prop-path-count-PAMc-LB_simple} in \Cref{sec-given-com-dif-self-con-proof}. \Cref{sub-sec-modify-kappa-lower-bound} proves a weaker version of this lower bound, where certain avoidance constraints are ignored. This lower bound is a key ingredient in the full proof, and it is also where \Cref{pro_hd_spe_con} is used. \longversion{The remainder of its proof is deferred to \Cref{app-E}.}\shortversion{We refer to \cite[Appendix E]{HofZhu25long} for the remainder of its proof.} 

\invisible{These proof parts are organized as follows. In \Cref{sec-path-upper-bound}, we first give a crude upper bound that will later be used to bound error terms. In \Cref{sub-sec-modify-kappa-lower-bound}, we start by lower bounding path counts that do not have the self-avoidance constraint, for which we slightly modify the kernel $\kappa$. In \Cref{eq_hd_sum_restrict_path-sum-gen-1-2}, we deal with the self-loops that have been artificially added, while in \Cref{eq_hd_sum_restrict_path-sum-gen-1-1} we deal with back longer cycles and intersections.
}

\subsection{Falling factorial form of path probability}\label{sec_fall_fact_path_prob}
In \eqref{eq_hd_lower_part0}, we have expressed $\prob(\vpie\subseteq \PAndmdel)$ for $\vpie\in \ESA_{\nVer_1,\nVer_2}^{\sss e,k}$ in terms of falling factorials, which allowed us to handle the subsequent path-counting upper bounds in \Cref{sec_trun_lowerbound}. We now adapt these arguments from upper to lower bounds. Recall that
\begin{align*}
    N_{n,r}(k)=\sum_{\vpie\in \NESA^{\sss e,k,r}} \indic{\vpie\subseteq \PAndmdel}, 
\end{align*}
where $\NESA^{\sss e,k,r}$ is the set of all possible  $k$-step edge-labeled self-avoiding paths $\vpie$ with $\pi_0\in \partial B_r^{\sss(G_n)}(\Ver_1)$ and $\pi_k\in \partial B_r^{\sss(G_n)}(\Ver_2)$ in \ch{$[n]$}, and we restrict to $\pi_i\not\in B_r^{\sss(G_n)}(\Ver_1)\cup B_r^{\sss(G_n)}(\Ver_2)$ for $i\in[k-1]$ and $\pi_i\geq {\zeta} n$ for all $0\leq i\leq k$. 
Given $\Ver_1$ and $\Ver_2$ and $\nVer_1\not\in B_{r-1}^{\sss(G_n)}(\Ver_1)\cup B_r^{\sss(G_n)}(\Ver_2)$ and $\nVer_2\not\in B_r^{\sss(G_n)}(\Ver_1)\cup B_{r-1}^{\sss(G_n)}(\Ver_2)$, 
let $\TESA_{\nVer_1,\nVer_2}^{\sss e,k}$ be the set of paths $\vpie$ with $\pi_0=\nVer_1$ and $\pi_\ell=\nVer_2$, subject to the same avoidance constraints as $\NESA^{\sss e,k,r}$. Then,
    \begin{align*}
         \NESA^{\sss e,k,r}=\bigcup_{\substack{\nVer_1\in \partial B_r^{\sss(G_n)}(\Ver_1),\\\nVer_2\in \partial B_r^{\sss(G_n)}(\Ver_2)}}\TESA_{\nVer_1,\nVer_2}^{\sss e,k}.
    \end{align*}

Let $\TSA_{u,v}^{\sss k}$ be the set of all possible self-avoiding paths $\vec\pi$ of length $k$ in \ch{$[n]$}, where $\pi_0=u$ and $\pi_k=v$, subject to the same restrictions as $\TESA_{u,v}^{\sss e, k}$. 

Recall the good event $\GNW$ from \Cref{def_gnw}, the definitions of $p_s$ and $q_s$ in \eqref{def_hd_pq}, and the definition of $\mathscr{F}_{r}$ in \eqref{sigma-alg-SMM-PAM}. In this section, we give the following bound on path probabilities:
\begin{lemma}[Falling factorial form of $\prob^r(\vpie\subseteq \PAndmdel)$]\label{lem_prob_pie_cond}
    For any integers $\nVer_1,\nVer_2\in[\zeta n,n]$ such that $\nVer_1\not\in B_{r-1}^{\sss(G_n)}(\Ver_1)\cup B_r^{\sss(G_n)}(\Ver_2)$ and $\nVer_2\not\in B_r^{\sss(G_n)}(\Ver_1)\cup B_{r-1}^{\sss(G_n)}(\Ver_2)$, and any $\vpie \in \TESA_{\nVer_1,\nVer_2}^{\sss e,k}$, with $k\leq 2\log_\nu n$, on the good event $\GNW$,
\eqan{\label{eq_hd_prod_simp_pq}
	\prob^r(\vpie\subseteq \PAndmdel) =\Theta^{\frac{1}{\log n}}\prod_{s=2}^n
	\frac{(\alpha+p_s-1)_{p_s}(\beta_s+q_s-1)_{q_s}}{(\alpha+\beta_s+p_s+q_s-1)_{p_s+q_s}},
	}
where $\prob^r$ is  the conditional probability given $\mathscr{F}_{r}$.
\end{lemma}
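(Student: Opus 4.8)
\textbf{Proof proposal for \Cref{lem_prob_pie_cond}.}

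The plan is to adapt the unconditional computation \eqref{eq_hd_lpath_probability}–\eqref{eq_hd_lower_part0} to the conditional probability $\prob^r$. Recall that, given $(\psi_v)_{v\in[n]}$, edges are independent with $\PP_n(\vpie\subseteq\PAndmdel)=\prod_{s=2}^n\psi_s^{p_s}(1-\psi_s)^{q_s}$. Conditioning on $\mathscr{F}_{r}$ means conditioning on the edge-labeled neighborhoods $\bar B_r^{\sss(G_n)}(\Ver_i)$ and on the weights $(\psi_v)_{v\in B_{r-1}^{\sss(G_n)}(\Ver_1)\cup B_{r-1}^{\sss(G_n)}(\Ver_2)}$. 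The first step is to isolate the randomness that still matters: since $\vpie\in\TESA_{\nVer_1,\nVer_2}^{\sss e,k}$ has all internal vertices outside $B_r^{\sss(G_n)}(\Ver_1)\cup B_r^{\sss(G_n)}(\Ver_2)$, and its endpoints $\nVer_1,\nVer_2$ lie on $\partial B_r$, the event $\{\vpie\subseteq\PAndmdel\}$ involves only edges that are \emph{not} recorded in $\mathscr{F}_{r}$ (this uses that $\nVer_1\notin B_{r-1}(\Ver_1)$, so the out-edges of $\nVer_1$ and incoming edges from its strictly-younger neighbours are not yet revealed). Hence, conditionally on $\mathscr{F}_{r}$, we still have a Pólya-urn-type product formula, but now the weights $(\psi_v)$ split into the \emph{known} ones — those in $B_{r-1}(\Ver_1)\cup B_{r-1}(\Ver_2)$, which form a vanishing fraction by the good-weights condition \eqref{eq-ass-on-psi} — and the \emph{unknown} ones, over which we must still take expectations. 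The good-event bookkeeping from \Cref{ass_Br}\ref{it_pb_1} ensures all vertices involved have age at least $\eta n$, so in particular $p_s=q_s=0$ for all small $s$.

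The second step is to show that the known weights contribute only a $\Theta^{1/\log n}$ factor. The vertices in $B_{r-1}(\Ver_1)\cup B_{r-1}(\Ver_2)$ that can appear among the $s$ with $q_s>0$ (i.e. lie strictly between the two endpoints of some edge of $\vpie$) contribute factors $(1-\psi_v)^{q_v}$. By \eqref{eq-ass-on-psi} we have $\sum_{v\in B_{r-1}(\Ver_1)\cup B_{r-1}(\Ver_2)}\psi_v\le 1-\e^{-1/\log^2 n}\le 1/\log^2 n$, and since $q_v\le k\le 2\log_\nu n$, the product $\prod_v (1-\psi_v)^{q_v}\ge (1-\sum_v\psi_v)^{2\log_\nu n}\ge \e^{-O(1)/\log n}=\Theta^{1/\log n}$, and it is trivially $\le 1$. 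No endpoint of $\vpie$ lies in $B_{r-1}$, so there is no $\psi_v^{p_v}$ factor with $v$ in the known set; this is exactly the reason for conditioning on the $(r-1)$-level weights rather than the $r$-level weights, as explained in \Cref{sec-log-upperbds-PA}. Thus, up to the admissible error $\Theta^{1/\log n}$,
\begin{align*}
\prob^r(\vpie\subseteq\PAndmdel)=\Theta^{\frac{1}{\log n}}\,\Erw\Big[\prod_{s\notin B_{r-1}(\Ver_1)\cup B_{r-1}(\Ver_2)}\psi_s^{p_s}(1-\psi_s)^{q_s}\Big],
\end{align*}
where the expectation is over the remaining independent beta variables.

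The third step is to evaluate this expectation by \eqref{expctation_function_beta}, exactly as in \eqref{eq_hd_lower_part0}: using independence of the $(\psi_s)$ and the moment formula $\Erw[\psi_s^{p_s}(1-\psi_s)^{q_s}]=(\alpha+p_s-1)_{p_s}(\beta_s+q_s-1)_{q_s}/(\alpha+\beta_s+p_s+q_s-1)_{p_s+q_s}$, the product over the unknown $s$ becomes $\prod_{s\notin B_{r-1}(\Ver_1)\cup B_{r-1}(\Ver_2)}$ of these ratios. Finally, to turn this into a product over \emph{all} $s=2,\dots,n$ as claimed in \eqref{eq_hd_prod_simp_pq}, one multiplies and divides by the corresponding ratios for $s\in B_{r-1}(\Ver_1)\cup B_{r-1}(\Ver_2)$; for those $s$ one has $p_s=0$ and $q_s\le k$, so each ratio $(\beta_s+q_s-1)_{q_s}/(\alpha+\beta_s+q_s-1)_{q_s}=\prod_{i=0}^{q_s-1}(1-\alpha/(\alpha+\beta_s+i))$ lies in $[\Theta^{q_s/\beta_s},1]$, and since $\beta_s\asymp s\ge\eta n$ while $q_s\le 2\log_\nu n$, the total correction is again $\Theta^{O(\log n/n)}=\Theta^{1/\log n}$ (in fact much smaller). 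Absorbing this into the existing $\Theta^{1/\log n}$ gives \eqref{eq_hd_prod_simp_pq}.

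The main obstacle is the second step: carefully verifying that the event $\{\vpie\subseteq\PAndmdel\}$ is genuinely independent of $\mathscr{F}_{r}$ in the right sense — i.e.\ that conditioning on the revealed neighborhoods does not reveal the status of any edge of $\vpie$ nor bias the distribution of the relevant $(\psi_s)$ beyond the quantified $\Theta^{1/\log n}$ — which requires a clean description, via the Pólya-urn construction of \Cref{sec_polya_pam}, of precisely which $U_{\ell,i}$ and $\psi_j$ are measurable with respect to $\mathscr{F}_{r}$. Once the filtration structure is pinned down, the remaining estimates are the same elementary beta-moment and $\log(1+x)$ manipulations already used in \Cref{sec_trun_lowerbound}.
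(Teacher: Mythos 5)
Your plan matches the shape of the paper's proof, but there is a genuine gap exactly where you flag it as the ``main obstacle,'' and your proposed way around it (a filtration-measurability analysis) is not how the paper closes it. In step 1 you assert that the event $\{\vpie\subseteq\PAndmdel\}$ ``involves only edges that are not recorded in $\mathscr{F}_r$'' and conclude that, given $\mathscr{F}_r$ and the weights, its probability is still $\prod_s\psi_s^{p_s}(1-\psi_s)^{q_s}$ up to the known-weight factors. That is not so: conditioning on $\bar B_r^{\sss(G_n)}(\Ver_i)=\bar{\bf t}_i$ conditions not only on the presence of the tree edges but also on the \emph{absence} of any other incoming edge to the non-leaf vertices $V^{\sss\circ}(\bar{\bf t}_1)\cup V^{\sss\circ}(\bar{\bf t}_2)$. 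Each out-edge $e$ in $\vpie$ is therefore conditioned to avoid this set, which multiplies its probability by
$$\Big[1-\sum_{\substack{v\in V^{\sss\circ}(\bar{\bf t}_1)\cup V^{\sss\circ}(\bar{\bf t}_2),\\ v<\bar e}}\psi_v\prod_{s\in(v,\bar e)}(1-\psi_s)\Big]^{-1}\geq 1.$$
Your step 2 only controls the known $(1-\psi_v)^{q_v}$ factors for $v$ in $B_{r-1}$; that is a legitimate $\Theta^{1/\log n}$ consideration, but it is a \emph{different} contribution from the no-further-edges bias above, which is the factor the paper isolates and the actual source of the stated $\Theta^{1/\log n}$. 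As written, your formula after step 2 silently drops this bias.

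The paper does not chase which $U_{\ell,i}$ and $\psi_j$ are $\mathscr{F}_r$-measurable; instead it computes $\prob_n\big(E\subseteq\PAndmdel,\ \bar B_r^{\sss(G_n)}(\Ver_1)=\bar{\bf t}_1,\ \bar B_r^{\sss(G_n)}(\Ver_2)=\bar{\bf t}_2\big)$ explicitly — including a ``no further incoming edges'' product over every out-edge outside $\bar{\bf t}_1\cup\bar{\bf t}_2\cup E$ — then does the same with $E=\emptyset$, and divides. The ratio exposes precisely the bias factor above over the edges of $E$, and the good-weights condition \eqref{eq-ass-on-psi} bounds its product between $1$ and $\e^{|E|/\log^2 n}=\Theta^{1/\log n}$. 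Only after that is the conditional expectation over the remaining beta variables taken and \eqref{expctation_function_beta} applied. If you want to salvage your approach, you would need to write out the joint law of $E$ and the two neighborhoods in the Pólya-urn picture anyway, at which point you recover the paper's ratio computation; the ``clean filtration description'' you defer to is not a shortcut around it.
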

We note that \eqref{eq_hd_prod_simp_pq} is quite similar to \eqref{eq_hd_lower_part0}, except that in \eqref{eq_hd_prod_simp_pq}, we condition on
\begin{align*}
\mathscr{F}_{r}=\sigma\Big(\bar{B}_r^{\sss(G_n)}(\Ver_1), \bar{B}_r^{\sss(G_n)}(\Ver_2), (\psi_v)_{v\in B_{r-1}^{\sss(G_n)}(\Ver_1)\cup B_{r-1}^{\sss(G_n)}(\Ver_2)}\Big),
\end{align*}
which results in an extra factor of $\Theta^{1/\log n}$.
To obtain \eqref{eq_hd_prod_simp_pq}, we first calculate the probability of 
$
\{E\subseteq \PAndmdel,\bar{B}_r^{\sss(G_n)} (\Ver_1) =\bar{{\bf t}}_1,  \bar{B}_r^{\sss(G_n)}(\Ver_2)=\bar{{\bf t}}_2\}$
for an edge set $E$, given $(\psi_v)_{v\in[n]}$. Then, we take the ratio of this probability with $E=\vpie$ {and}{} the probability with $E=\emptyset$. Finally, the target conditional probability in \eqref{eq_hd_prod_simp_pq}  follows from taking the conditional expectation given $\mathscr{F}_{r}$. 


\begin{proof}
Let $V(\bar{\bf t})$ be the vertex set of \ch{the} tree $\bar{\bf t}$ and $V^{\sss\circ}(\bar{\bf t})$ be the set of \zhu{non-leaf} vertices in $\bar{\bf t}$.
With $\prob_n$ the conditional law given $(\psi_v)_{v\in[n]}$, we consider
\begin{align*}
    \prob_n(E\subseteq \PAndmdel,\bar{B}_r^{\sss(G_n)}(\Ver_1)=\bar{{\bf t}}_1, \bar{B}_r^{\sss(G_n)}(\Ver_2)=\bar{{\bf t}}_2)
\end{align*}
for $(\bar{\bf t}_1,\bar{\bf t}_1)$ and $(\psi_v)_{v\in V^{\sss\circ}(\bar{{\bf t}}_1)\cup V^{\sss\circ}(\bar{{\bf t}}_2)}$ that are good, and edge set $E=\cbc{(\ell_h,i_h,j_h):~h\in [k']}$ such that
\begin{itemize}
    \item[$\rhd$] $E$ is a good edge set (see \Cref{ass_e}), and thus $k'\leq 4\log_\nu n$;
    \item[$\rhd$] $V_E\cap \big(V^{\sss\circ}(\bar{{\bf t}}_1)\cup V^{\sss\circ}(\bar{{\bf t}}_2)\big)=\emptyset$;
    \item[$\rhd$] $E\cap\big( \bar{\bf t}_1\cup\bar{\bf t}_2\big)=\emptyset$;
\end{itemize}
 We further note that the empty set $\emptyset$, or each $\vpie\in \TESA_{\nVer_1,\nVer_2}^{\sss e,k}$, is an edge set satisfying the above properties.

As in \eqref{def_hd_pq}, we define
\begin{align}\label{def_hd_p'-q'}
    p_s'=\sum_{e\in \bar{{\bf t}}_1\cup\bar{{\bf t}}_2}\indic{s=\ushort{e}}\quad\text{and}\quad
    q_s'=\sum_{e\in  \bar{{\bf t}}_1\cup\bar{{\bf t}}_2}\indic{s\in (\ushort{e},\bar{e})}.
\end{align}
If $\bar{B}_r^{\sss(G_n)}(\Ver_1)=\bar{{\bf t}}_1$, $\bar{B}_r^{\sss(G_n)}(\Ver_2)=\bar{{\bf t}}_2$ and $E\subseteq\PAndmdel$, then
\begin{itemize}[leftmargin=20pt]
    \item[$\rhd$] vertex $\Ver_i$ equals $\varnothing_i$, the root of tree ${\bf t}_i$, for $i=1,2$. As $\Ver_1$ and $\Ver_2$ are uniformly distributed within $[n]$, $\PP_n(\Ver_1=\varnothing_1,\Ver_2=\varnothing_2)=n^{-2}$;
    \item[$\rhd$] all the edges in $\bar{{\bf t}}_1$, $\bar{{\bf t}}_2$ and $E$ are present. By \eqref{eq_con_indpedent_present}, 
    \begin{align*}
       \PP_n(E\cup\bar{\bf t}_1\cup\bar{\bf t}_2\subseteq\PAndmdel\mid\Ver_1=\varnothing_1,\Ver_2=\varnothing_2)= \prod_{s=2}^n \psi_s^{\psE+p_s'} (1-\psi_s)^{\qsE+q_s'},
    \end{align*}
where we recall the definitions of $\psE$ and $\qsE$ from \eqref{def_hd_pq_gen};
    
    \item[$\rhd$] as all neighbors of the vertices in $V^{\sss\circ}(\bar{{\bf t}}_1)\cup V^{\sss\circ}(\bar{{\bf t}}_2)$ are present in $\bar{{\bf t}}_1\cup \bar{{\bf t}}_2$, no further incoming edges to the vertices in $V^{\sss\circ}(\bar{{\bf t}}_1)\cup V^{\sss\circ}(\bar{{\bf t}}_2)$ are allowed. By \eqref{eq_con_prob_present}, for each $u(j)$ not present in $ \bar{{\bf t}}_1\cup \bar{{\bf t}}_2\cup E$,
    \begin{align*}
        \PP_n\Big(u(j)\not\in V^{\sss\circ}(\bar{{\bf t}}_1)\cup V^{\sss\circ}(\bar{{\bf t}}_2)\Big)&=1-\sum_{\substack{v\in V^{\sss\circ}(\bar{{\bf t}}_1)\cup V^{\sss\circ}(\bar{{\bf t}}_2),\\v<u}}\PP_n((u,j,v)\in\PAndmdel)\\
        &=1-\sum_{\substack{v\in V^{\sss\circ}(\bar{{\bf t}}_1)\cup V^{\sss\circ}(\bar{{\bf t}}_2),\\v<u}}\psi_v\prod_{s\in(v,u)}(1-\psi_s),
    \end{align*}
    where $u(j)$ is \zhu{the} vertex receiving the $j$th out-edge of vertex $u$.
    We further note that both ends of edges in $ \bar{{\bf t}}_1\cup \bar{{\bf t}}_2\cup E$ are fixed, and, therefore, they should be excluded from this no-further-edge probability calculation. 
    Consequently, the conditional independence of edges yields that
    \begin{align*}
        &\PP_n(E\subseteq \PAndmdel,\bar{B}_r^{\sss(G_n)}(\Ver_i)=\bar{{\bf t}}_i, ~i=1,2\mid E\cup\bar{\bf t}_1\cup\bar{\bf t}_2\subseteq\PAndmdel,\Ver_i=\varnothing_i, i=1,2)\\
        &\qquad=\prod_{u(j)\not\in \bar{{\bf t}}_1\cup \bar{{\bf t}}_2\cup E}\PP_n\Big(u(j)\not\in V^{\sss\circ}(\bar{{\bf t}}_1)\cup V^{\sss\circ}(\bar{{\bf t}}_2)\Big)\\
        &\qquad=\prod_{u(j)\not\in \bar{{\bf t}}_1\cup \bar{{\bf t}}_2\cup E}\Big[1-\sum_{\substack{v\in V^{\sss\circ}(\bar{{\bf t}}_1)\cup V^{\sss\circ}(\bar{{\bf t}}_2),\\v<u}}\psi_v\prod_{s\in(v,u)}(1-\psi_s)\Big].
    \end{align*} 
\end{itemize}
Hence, we conclude that
\begin{align}\label{Pn-two-neighborhood-PA-E}
    &\prob_n(E\subseteq \PAndmdel,\bar{B}_r^{\sss(G_n)}(\Ver_1)=\bar{{\bf t}}_1, \bar{B}_r^{\sss(G_n)}(\Ver_2)=\bar{{\bf t}}_2)\\
    =&\frac{1}{n^2}\prod_{s=2}^n\psi_s^{\psE+p_s'} (1-\psi_s)^{\qsE+q_s'}  \prod_{u(j)\not\in \bar{{\bf t}}_1\cup \bar{{\bf t}}_2\cup E}\Big[1-\sum_{\substack{v\in V^{\sss\circ}(\bar{{\bf t}}_1)\cup V^{\sss\circ}(\bar{{\bf t}}_2),\\v<u}}\psi_v\prod_{s\in(v,u)}(1-\psi_s)\Big].\nn
\end{align}
Specifically, for $E=\varnothing$,  \eqref{Pn-two-neighborhood-PA-E} gives that
	\eqan{
	\label{Pn-two-neighborhood-PA-rep}
	&\prob_n(\bar{B}_r^{\sss(G_n)}(\Ver_1)=\bar{{\bf t}}_1, \bar{B}_r^{\sss(G_n)}(\Ver_2)=\bar{{\bf t}}_2)\\
	&\quad=\frac{1}{n^2}\prod_{s=2}^n \psi_s^{p_s'} (1-\psi_s)^{q_s'} \prod_{u(j)\not\in \bar{{\bf t}}_1\cup \bar{{\bf t}}_2}\Big[1-\sum_{\substack{v\in V^{\sss\circ}(\bar{{\bf t}}_1)\cup V^{\sss\circ}(\bar{{\bf t}}_2),\\v<u}}\psi_v\prod_{s\in(v,u)}(1-\psi_s)\Big].\nn
	}
The proof of \eqref{Pn-two-neighborhood-PA-rep} can also be \zhu{found} in \cite[(5.27)]{GarHazHofRay22}.

Consequently, dividing \eqref{Pn-two-neighborhood-PA-E} by \eqref{Pn-two-neighborhood-PA-rep}, we compute that
\begin{align}\label{eq_hd_lpath_probability_cond_r}
    &\PP(E\subseteq \PAndmdel\mid \bar{B}_r^{\sss(G_n)}(\Ver_1)=\bar{{\bf t}}_1, \bar{B}_r^{\sss(G_n)}(\Ver_2)=\bar{{\bf t}}_2,(\psi_v)_{v\in[n]})\\
    &\qquad= \prod_{s=2}^n\psi_s^{\psE} (1-\psi_s)^{\qsE}  \prod_{u(j)\in E}\Big[1-\sum_{\substack{v\in V^{\sss\circ}(\bar{{\bf t}}_1)\cup V^{\sss\circ}(\bar{{\bf t}}_2),\\v<u}}\psi_v\prod_{s\in(v,u)}(1-\psi_s)\Big]^{-1}.\nn
\end{align}

Comparing \eqref{eq_hd_lpath_probability_cond_r} with the formula for $\PP_n(\vpie\subseteq \PAndmdel)$ in \eqref{eq_hd_lpath_probability}, we have an extra term
\begin{align*}
\prod_{u(j)\in E}\Big[1-\sum_{\substack{v\in V^{\sss\circ}(\bar{{\bf t}}_1)\cup V^{\sss\circ}(\bar{{\bf t}}_2),\\v<u}}\psi_v\prod_{s\in(v,u)}(1-\psi_s)\Big]^{-1},
\end{align*}
which arises from the conditioning on $\mathscr{F}_r$. By \Cref{ass_Br_2}, on the good event $\GNW$, this term can be bounded as 
\begin{align}\label{eq_product_1-psi}
    1&\leq \prod_{u(j)\in E}\Big[1-\sum_{\substack{v\in V^{\sss\circ}(\bar{{\bf t}}_1)\cup V^{\sss\circ}(\bar{{\bf t}}_2),\\v<u}}\psi_v\prod_{s\in(v,u)}(1-\psi_s)\Big]^{-1}\\
    &\leq \Big(1-\sum_{v\in V^{\sss\circ}(\bar{{\bf t}}_1)\cup V^{\sss\circ}(\bar{{\bf t}}_2) }\psi_v\Big)^{\abs{E}}\leq \e^{\abs{E}/\log^2 n} = \Theta^{\frac{1}{\log n}}\nn.
\end{align}

Hence, taking the conditional expectation to \eqref{eq_hd_lpath_probability_cond_r} given $\mathscr{F}_{r}$, we conclude from \eqref{expctation_function_beta} and \eqref{eq_product_1-psi} that, on the good event $\GNW$,
\begin{align}\label{eq_E-in-PAM-cond}
    \PP^r(E\subseteq \PAndmdel)&= \Theta^{\frac{1}{\log n}}\prod_{s=2}^n
	\frac{(\alpha+\psE-1)_{\psE}(\beta_s+\qsE-1)_{\qsE}}{(\alpha+\beta_s+\psE+\qsE-1)_{\psE+\qsE}},
\end{align}
Since $\vpie\in \TESA_{\nVer_1,\nVer_2}^{\sss e,k}$,  \eqref{eq_hd_prod_simp_pq} follows directly from taking $E$ to be $\vpie$.
\end{proof}

\subsection{Rewriting path-counting bounds in terms of $\kappa$}\label{sec-rew-pc-bound-kappa}
In this section, we aim to compute the sum of $\prob^r(\vpie\subseteq \PAndmdel)$  over  all paths $\vpie$ in $ \TESA_{\nVer_1,\nVer_2}^{\sss e,k}$. With \eqref{eq_hd_prod_simp_pq} in hand, the problem is then reduced to summing the products on  the rhs of \eqref{eq_hd_prod_simp_pq}, and we use the approaches in \Cref{sec_trun_lowerbound} to address this.  

Given a path $\vec\pi$, recall that we assign a label to each vertex in this path  in \eqref{def_label_direction} as 
\begin{align}\label{def_label_direction-r}
    \lb^\pi_i=q(\pi_i-\pi_{i-1})=\begin{cases}
        \rO,\quad&\text{if }\pi_i\leq \pi_{i-1};\\
        \rY,\quad&\text{if }\pi_i>\pi_{i-1};
    \end{cases}
\end{align}
while  we use $\lb^\pi_0$ as a variable, not previously defined, with range in $\cbc{\rO,\rY}$. 
Often, the value of $\lb^\pi_0$ is not important, because, as shown in \eqref{kappa-PAM-fin-def-form-a-rep}, for distinct $x,y\in [0,1]$,
\begin{align}\label{eq-change-label-add-theta1}
    \kappa\big((x,\rO),(y,q(y-x))\big)=\Theta \kappa\big((x,\rY),(y,q(y-x))\big)=\Theta\frac{1}{(x\vee y)^{\chi}(x\wedge y)^{1-\chi}},
\end{align}
i.e., the value of $\lb^\pi_0$ only affects $\kappa\big((x,\lb^\pi_0),(y,q(y-x))\big)$ by a $\Theta $ factor.

The target of this section is the proof of the  following lemma:
\begin{lemma}[Kernel products summation  form of the path probability]\label{lem_hd_sum_restrict_path}
For any $n\geq \zeta^{-2}$, $k\leq 2\log_\nu n$, $\lb^\pi_0\in\cbc{\rO,\rY}$, and integers $\nVer_1,\nVer_2\in[\zeta n,n]$ such that $\nVer_1\not\in B_{r-1}^{\sss(G_n)}(\Ver_1)\cup B_r^{\sss(G_n)}(\Ver_2)$ and $\nVer_2\not\in B_r^{\sss(G_n)}(\Ver_1)\cup B_{r-1}^{\sss(G_n)}(\Ver_2)$,
\begin{align}\label{eq_hd_sum_restrict_path}
    &\sum_{\vpie\in \TESA_{\nVer_1,\nVer_2}^{\sss e,k}}\prod_{s=2}^n\frac{(\alpha+p_s-1)_{p_s}(\beta_s+q_s-1)_{q_s}}{(\alpha+\beta_s+p_s+q_s-1)_{p_s+q_s}}\\
        &\qquad=\Theta n^{-k}\sum_{\vec\pi\in \TSA_{\nVer_1,\nVer_2}^{\sss k}}\prod_{i=1}^k\kappa\big((\pi_{i-1}/n,\lb^\pi_{i-1}),(\pi_{i}/n,\lb^\pi_i)\big).\nn
\end{align}
\end{lemma}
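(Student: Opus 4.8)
The plan is to combine \Cref{lem_prob_pie_cond} with the path-counting arithmetic already established in the proof of \Cref{lem_hd_lower_part1}. First I would apply \Cref{lem_prob_pie_cond}: for every $\vpie\in\TESA_{\nVer_1,\nVer_2}^{\sss e,k}$ on the good event $\GNW$ we have $\prob^r(\vpie\subseteq\PAndmdel)=\Theta^{1/\log n}\prod_{s=2}^n(\alpha+p_s-1)_{p_s}(\beta_s+q_s-1)_{q_s}/(\alpha+\beta_s+p_s+q_s-1)_{p_s+q_s}$, so summing the falling-factorial products on the lhs of \eqref{eq_hd_sum_restrict_path} is, up to a factor $\Theta^{1/\log n}$, exactly $\sum_{\vpie\in\TESA_{\nVer_1,\nVer_2}^{\sss e,k}}\prob^r(\vpie\subseteq\PAndmdel)$. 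Since $k\le 2\log_\nu n$, the edge set of each such path is a good edge set in the sense of \Cref{ass_e} once we take $\bdt=\lceil\zeta n\rceil$ (all vertices lie in $[\zeta n,n]$, each vertex is used at most twice, and $|E|=k\le 2\log_\nu n\le 4\log_\nu n$), so the identities \eqref{eq_hd_prod_ap-1}, \eqref{eq_hd_cal_1_pi}, hence \eqref{eq_hd_prob_one_path}, all apply verbatim with $\bdt=\lceil\zeta n\rceil$, producing
\begin{align*}
\prod_{s=2}^n\frac{(\alpha+p_s-1)_{p_s}(\beta_s+q_s-1)_{q_s}}{(\alpha+\beta_s+p_s+q_s-1)_{p_s+q_s}}=\Theta^{\frac{\log^2 n}{\zeta n}}\Big(\frac{m+\delta}{2m+\delta}\Big)^{k-N_{\srO\srY}}\Big(\frac{m+1+\delta}{2m+\delta}\Big)^{N_{\srO\srY}}\prod_{i\in[k]}\frac{1}{(\pi_{i-1}\wedge\pi_i)^{1-\chi}(\pi_{i-1}\vee\pi_i)^{\chi}},
\end{align*}
and since $\bdt=\lceil\zeta n\rceil$ makes $\Theta^{(\log^2 n)/(\zeta n)}=\Theta$, this error is harmless.

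Next I would convert the sum over edge-labeled paths to a sum over unlabeled paths. The combinatorial count of edge-labelings given $\vec\pi$ — namely $m^{k-N_{\srY\srO}}(m-1)^{N_{\srY\srO}}$ — is a purely local fact about $\vec\pi$ (each interior step has $m$ or $m-1$ admissible labels depending on the $(\rY,\rO)$ pattern), and it holds just as well for paths in $\TESA_{\nVer_1,\nVer_2}^{\sss e,k}$ as for those in $\ESA_{\nVer_1,\nVer_2}^{\sss e,k}$, because self-avoidance is all that was used. Then the factorization \eqref{eq_sum_no_con_part2}--\eqref{eq_sum_no_con_part3} rewrites the product $m^{k-N_{\srY\srO}}(m-1)^{N_{\srY\srO}}(\tfrac{m+\delta}{2m+\delta})^{k-N_{\srO\srY}}(\tfrac{m+1+\delta}{2m+\delta})^{N_{\srO\srY}}\prod_i (\pi_{i-1}\wedge\pi_i)^{-(1-\chi)}(\pi_{i-1}\vee\pi_i)^{-\chi}$ exactly as $\Theta n^{-k}\prod_{i=1}^k\kappa\big((\pi_{i-1}/n,\lb^\pi_{i-1}),(\pi_i/n,\lb^\pi_i)\big)$, where the leading factor $\kappa$ for $i=1$ absorbs the first $(\pi_0\wedge\pi_1)^{-(1-\chi)}(\pi_0\vee\pi_1)^{-\chi}$ up to a $\Theta$ coming from \eqref{eq-change-label-add-theta1} (the value of $\lb^\pi_0$ only matters up to $\Theta$). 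Summing over $\vec\pi\in\TSA_{\nVer_1,\nVer_2}^{\sss k}$ and noting that $\Theta^{1/\log n}=\Theta$ yields \eqref{eq_hd_sum_restrict_path}.

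The one genuine point to check, and the main obstacle, is that all of this must be an \emph{exact equality} up to $\Theta$, not merely an upper bound — in \Cref{lem_hd_lower_part1} we passed from $\SA_{\nVer_1,\nVer_2}^{\sss k}$ to $\NA_{\nVer_1,\nVer_2}^{\sss k}$ in \eqref{eq_hd_lower_form_prod_kappa} and bounded $\kappa$ by an integral, steps that lose a direction of the estimate. Here I would avoid those steps entirely: the sum stays over $\TSA_{\nVer_1,\nVer_2}^{\sss k}$ on both sides, and every manipulation used (\eqref{eq_hd_prod_ap-1}, \eqref{eq_path-first-prod}, \eqref{eq_hd_cal_1_pi}, \eqref{eq_sum_no_con_part1}--\eqref{eq_sum_no_con_part3}) is an identity modulo $\Theta$-factors, so no directionality is lost. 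I would also double-check that the avoidance constraints defining $\TESA^{\sss e,k}$ versus $\ESA^{\sss e,k}$ (avoiding $B_r(\Ver_1)\cup B_r(\Ver_2)$ rather than just $[\bdt-1]$) play no role: the edge-labeling count, the falling-factorial formula, and the $\kappa$-factorization are all path-by-path statements that never reference which vertices are forbidden, so restricting the index set of the sum on both sides is automatic. This gives the claimed equality.
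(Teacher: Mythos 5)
Your proof follows essentially the same route as the paper's: apply the falling-factorial-to-kernel identity \eqref{eq_hd_prob_one_path} with $\bdt=\lceil\zeta n\rceil$, observe $\Theta^{\log^2 n/\bdt}=\Theta$ for $n\geq\zeta^{-2}$, and count edge-labelings path by path. Two remarks. First, the appeal to \Cref{lem_prob_pie_cond} in your opening paragraph is not actually needed: \Cref{lem_hd_sum_restrict_path} is a purely combinatorial identity with no $\prob^r$ on either side, and the paper proves it without invoking the conditional probability at all.

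Second, and more substantively, there is a gap in the edge-labeling count. You assert that the count $m^{k-N_{\srY\srO}}(m-1)^{N_{\srY\srO}}$ is exactly the same for $\vpie\in\TESA_{\nVer_1,\nVer_2}^{\sss e,k}$ as for $\vpie\in\ESA_{\nVer_1,\nVer_2}^{\sss e,k}$, ``because self-avoidance is all that was used,'' and later that the avoidance constraints ``never reference which vertices are forbidden.'' This is not quite right. For a $\TESA$-path, the endpoints $\pi_0\in\partial B_r^{\sss(G_n)}(\Ver_1)$ and $\pi_k\in\partial B_r^{\sss(G_n)}(\Ver_2)$ each carry one edge already pinned down by the respective $r$-neighborhood tree. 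If, say, $\pi_0$'s parent in $\bar{\bf t}_1$ is older than $\pi_0$, then one of $\pi_0$'s $m$ out-edge labels is already consumed; when $\pi_1<\pi_0$ (so $e_1$ is also an out-edge of $\pi_0$) the number of admissible labels for $e_1$ is $m-1$ rather than $m$. The analogous phenomenon can occur at $\pi_k$. So the exact count can differ from $m^{k-N_{\srY\srO}}(m-1)^{N_{\srY\srO}}$ by a factor of up to $(m/(m-1))^2$, depending on the data of $\bar{\bf t}_1$ and $\bar{\bf t}_2$, not just on $\vec\pi$ itself. The paper resolves this explicitly by observing $m=\Theta(m-1)$ and absorbing the boundary discrepancy into the $\Theta$ factor; you need to do the same. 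Your final identity is still correct once this is done, but the statement that the count is ``a purely local fact about $\vec\pi$'' and transfers ``automatically'' is what the paper's extra paragraph about $\ell=0$ and $\ell=k-1$ is there to repair.
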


\begin{proof}
Recall $N_{\srO\srY}=\sum_{i\in[k-1]}\indic{(\lb^\pi_i,\lb^\pi_{i+1})=(\rO,\rY)}=\sum_{s\in\vec\pi}\indic{p_s=2}$ from \eqref{def_NOY}. Further, with $k\leq 2\log_\nu n$ and $\bdt =\lceil \zeta n\rceil$, it is proved in \eqref{eq_hd_prob_one_path} that, for any $\vpie\in \TESA_{\nVer_1,\nVer_2}^{\sss e,k}\subseteq \ESA_{\nVer_1,\nVer_2}^{\sss e,k,(b)}$,
\begin{align}\label{eq_hd_prob_one_path-rep}
    &\prod_{s=2}^n\frac{(\alpha+p_s-1)_{p_s}(\beta_s+q_s-1)_{q_s}}{(\alpha+\beta_s+p_s+q_s-1)_{p_s+q_s}}\\
    &\ \ = \Theta^{\frac{\log^2 n}{\bdt }}\bc{\frac{m+\delta}{2m+\delta}}^{k-N_{\srO\srY}}\bc{\frac{m+1+\delta}{2m+\delta}}^{N_{\srO\srY}}\prod_{i\in[k]}\frac{1}{(\pi_{i-1}\wedge\pi_i)^{1-\chi}(\pi_{i-1}\vee\pi_i)^\chi }.\nn
\end{align}
Then, \ch{$\frac{\log^2 n}{\bdt }\leq \frac{\log^2 n}{\sqrt{n}}$ since $n\geq \zeta^{-2}$}. Hence, $\frac{\log^2 n}{\bdt }$ has a uniform upper bound $\sup_{n\geq 1}\frac{\log^2 n}{\sqrt{n}}$ for all $n\geq \zeta^{-2}$. Consequently, the definition of $\Theta$ in \Cref{def-Theta} yields that
\begin{align}\label{eq-b-stay-theta}
\Theta^{\frac{\log^2 n}{\bdt }}=\Theta. 
\end{align}

Fix $\bar{\bf t}_1$, $\bar{\bf t}_2$ and the path $\vec{\pi}$. We consider the number of possible edge-labeled self-avoiding paths $\vpie$ given  $\vec\pi\in \TESA_{\nVer_1,\nVer_2}^{\sss k}$.
As in \Cref{sec_trun_lowerbound}, for each $1\leq \ell\leq k-2$, since $\vec\pi$ is self-avoiding, when we have fixed the edge-labels of edges in the subpath $(\pi_0,\pi_1,\ldots,\pi_\ell)$, the number of  choices for the labeled edge between $\pi_\ell$ and $\pi_{\ell+1}$ is equal to $m-1$ when $\lb^\pi_\ell=\rY$ and $\lb^\pi_{\ell+1}=\rO$, and $m$ otherwise.

On the other hand, there are two special cases, namely $\ell=0$ and $\ell=k-1$, which may depend on the boundary conditions. Here the factors can be $m$ or $m-1$, but since $m=\Theta (m-1)$, we can simply replace this by a factor $\Theta.$

\invisible{
\begin{itemize}
    \item[$\rhd$] When $\ell=0$, the number of  edge choices between $\pi_0$ and $\pi_1$ is equal to $m$, unless $\pi_0\in \partial B_r^{\sss(G_n)}(\Ver_1)$, $n(\pi_0)<\pi_0$, and $\lb(\pi_1)=\rO$, where $n(\pi_0)$ is the unique neighbor of $\pi_0$ in $\bar{\bf t}_1$. In the latter case, an out-edge from $\pi_0$ is connected to $n(\pi_0)$, and the number of  edge choices is equal to $m-1$. 
    \item[$\rhd$] When $\ell=k-1$, the number of  edge choices between $\pi_{k-1}$ and $\pi_k$ is equal to $m-1$ when $\lb(\pi_{k-1})=\rY$ and $\lb(\pi_k)=\rO$. Furthermore, if $\pi_k\in \partial B_r^{\sss(G_n)}(\Ver_2)$, $\lb(\pi_k)=\rY$, and $\pi_k>n(\pi_k)$, the number of choices is also $m-1$, where $n(\pi_k)$ is the unique neighbor of $\pi_k$ in $\bar{\bf t}_2$, since in this case, an out-edge of $\pi_k$ is connected to $n(\pi_k)$. In all other cases, the number of edge choices is equal to $m$.
\end{itemize}
\RvdH{Since $m=\Theta (m-1)$ can we not remove the above discussion?}}


Recall from \eqref{def_NYO} that $N_{\srY\srO}=\sum_{i\in[k-1]}\indic{(\lb^\pi_i,\lb^\pi_{i+1})=(\rY,\rO)}$. \invisible{Following the argument above, the number of possible edge-labeled self-avoiding paths $\vpie$, given $\vec\pi$, is equal to
\begin{align*}
    m^{k-N_{\srY\srO}-h}(m-1)^{N_{\srY\srO}+h}=\Theta m^{k-N_{\srY\srO}}(m-1)^{N_{\srY\srO}},
\end{align*}
where
\begin{align*}
    h=\indic{\pi_0\in \partial B_r^{\sss(G_n)}(\Ver_1),n(\pi_0)<\pi_0,\lb(\pi_1)=\rO}+\indic{\pi_k\in \partial B_r^{\sss(G_n)}(\Ver_2),\lb(\pi_k)=\rY,\pi_k>n(\pi_k)}.
\end{align*}}
Then, as in \eqref{eq_hd_lower_form_prod_kappa-old}, by \eqref{eq_sum_no_con_part2}, \eqref{eq_sum_no_con_part3} and \eqref{eq_hd_prob_one_path-rep},
\begin{align*}
&\sum_{\vpie\in \TESA_{\nVer_1,\nVer_2}^{\sss e,k}}\prod_{s=2}^n\frac{(\alpha+p_s-1)_{p_s}(\beta_s+q_s-1)_{q_s}}{(\alpha+\beta_s+p_s+q_s-1)_{p_s+q_s}}\\
&\qquad=\sum_{\vec\pi\in \TSA_{\nVer_1,\nVer_2}^{\sss k}}\Theta m^{k-N_{\srY\srO}}(m-1)^{N_{\srY\srO}}\bc{\frac{m+\delta}{2m+\delta}}^{k-N_{\srO\srY}}\bc{\frac{m+1+\delta}{2m+\delta}}^{N_{\srO\srY}}\nn\\
    &\qquad\qquad\times\prod_{i\in[k]}\frac{1}{(\pi_{i-1}\wedge\pi_i)^{1-\chi}(\pi_{i-1}\vee\pi_i)^\chi }\nn\\
    &\qquad= \Theta n^{-k}\sum_{\vec\pi\in \TSA_{\nVer_1,\nVer_2}^{\sss k}}\frac{1}{(\frac{\pi_{0}\wedge\pi_1}{n})^{1-\chi}(\frac{\pi_{0}\vee\pi_1}{n})^\chi }\prod_{i=2}^k\kappa\big((\pi_{i-1}/n,\lb^\pi_{i-1}),(\pi_{i}/n,\lb^\pi_i\big).\nn
\end{align*}
Consequently, by \eqref{eq-change-label-add-theta1}, regardless of the choice of $\lb^\pi_0$, 
\begin{align*}
    &\sum_{\vpie\in \TESA_{\nVer_1,\nVer_2}^{\sss e,k}}\prod_{s=2}^n\frac{(\alpha+p_s-1)_{p_s}(\beta_s+q_s-1)_{q_s}}{(\alpha+\beta_s+p_s+q_s-1)_{p_s+q_s}}\\
    &\qquad= \Theta n^{-k}\sum_{\vec\pi\in \TSA_{\nVer_1,\nVer_2}^{\sss k}}\prod_{i=1}^k\kappa\big((\pi_{i-1}/n,\lb^\pi_{i-1}),(\pi_{i}/n,\lb^\pi_i)\big),
\end{align*}
as desired.
\end{proof}

\subsection{Proof of \Cref{prop-path-count-PAMc-LB_simple} subject to path lower bounds}\label{sec-given-com-dif-self-con-proof}


With \Cref{lem_prob_pie_cond,lem_hd_sum_restrict_path} in hand, the only remaining part needed to prove \Cref{prop-path-count-PAMc-LB_simple} \zhu{are analogous inequalities
to those in} \eqref{eq_hd_lower_trans_sum_int} and \eqref{eq-new-Tkappa-2}, now providing lower bounds on the rhs of \eqref{eq_hd_sum_restrict_path}. However,
due to several avoidance restrictions on $\TSA_{\nVer_1,\nVer_2}^{\sss k}$, 
there is no straightforward way to bound this summation from below by a comparable integral. In this section, we first state this lower bound, and then provide the proof of \Cref{prop-path-count-PAMc-LB_simple} subject to it.
\begin{lemma}[Lower bound on the  kernel products over self-avoiding paths]\label{lem_hd_sum_restrict_path-sum-gen}
Fix $m\geq 2$ and $\delta>0$, as well as $r,M$ and $\zeta$, and consider $\PAndmdel$. Then,  for any $\vep\in(0,1)$, there exists a $\zeta_0\in (0,1/10]$ such that
for any ${\zeta}\in (0,\zeta_0]$, there exists an $n_0\geq \zeta^{-2}$,  depending only on $m,\delta,M,r,\vep$ and $\zeta$, {such that the following holds:}

For any $n\geq n_0$ and any $\lb_0^\pi\in\cbc{\rO,\rY}$, with $1\leq k\leq 2\log_\nu n$,
 $c_{\zeta}$ in \Cref{pro_hd_spe_con} and $F_1,F_2$ such that
\begin{enumerate}
    \item $F_1\subseteq [{\zeta}n,n]\backslash \big(B_{r-1}^{\sss(G_n)}(\Ver_1)\cup B_r^{\sss(G_n)}(\Ver_2)\big)$ and 
$F_2\subseteq [{\zeta}n,n]\backslash \big(B_{r-1}^{\sss(G_n)}(\Ver_2)\cup B_r^{\sss(G_n)}(\Ver_1)\big)$,
\item $F_1\cap F_2=\varnothing$,
\end{enumerate}
on the event $\GNW$, the following inequality holds:
\zhu{\begin{align}\label{eq_hd_sum_restrict_path-sum-gen-1-2-change}
&\sum_{\substack{\nVer_1\in F_1,\\\nVer_2\in F_2}}\sum_{\vec\pi\in \TSA_{\nVer_1,\nVer_2}^{\sss k}}\prod_{i=1}^k\kappa\big((\pi_{i-1}/n,\lb^\pi_{i-1}),(\pi_{i}/n,\lb^\pi_i)\big)\\
    &\qquad\geq \Theta \abs{F_1}\abs{F_2}{\zeta}^2c_{\zeta}\nu^{k(1-\vep/4)}n^{k-1}.\nn
\end{align}}
\end{lemma}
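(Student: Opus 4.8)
\textbf{Proof proposal for Lemma~\ref{lem_hd_sum_restrict_path-sum-gen}.}
The plan is to reduce the constrained sum on the left-hand side of \eqref{eq_hd_sum_restrict_path-sum-gen-1-2-change} to the unconstrained inner product $\langle {\bf 1}, \bfT_{\kappa^\circ_{\zeta}}^{k}{\bf 1}\rangle$ controlled by \Cref{pro_hd_spe_con}, at the cost of a $\nu^{-\vep k/4}$ factor absorbing all error terms. First I would pass from the discrete sum $\sum_{\nVer_1\in F_1,\nVer_2\in F_2}\sum_{\vec\pi\in\TSA_{\nVer_1,\nVer_2}^{\sss k}}$ to a continuous integral. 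Since $\kappa$ in \eqref{kappa-PAM-fin-def-form-a-rep} behaves like $\Theta$ times a product of powers, the value of $\kappa$ at $(\pi_i/n,\lb^\pi_i)$ and its value integrated over the box $((\pi_i-1)/n,\pi_i/n]$ differ only by a multiplicative $\Theta$ per coordinate, uniformly over vertices with rescaled age in $[\zeta,1]$; this is the reverse of the inequalities \eqref{decrease-kappa}--\eqref{decrease-kappa-0} used in \Cref{lem_hd_lower_part1}, and it holds because $x\mapsto x^{-\chi}$ and $x\mapsto x^{\chi-1}$ have bounded logarithmic derivative on $[\zeta,1]$. Restricting the rescaled variables to $[3\zeta,1-\zeta]$ (so that the kernel $\kappa$ may be replaced by $\kappa^\circ_\zeta$ from \eqref{hatkappa-vepn-PA}), we obtain a lower bound of the shape $\Theta\, n^{k-1}$ times $\int_{F_1^-/n}\int_{F_2^-/n}\big(\bfT_{\kappa^\circ_\zeta}^k\big)$-type integrand, i.e. roughly $\Theta |F_1^-|\,|F_2^-|\, n^{k-1}\,\langle g_1, \bfT_{\kappa^\circ_\zeta}^{k}g_2\rangle / (|F_1^-|\,|F_2^-|/n^2)$ where $g_i$ are indicators of the scaled sets; the key point is that $F_i^-\subseteq [4\zeta,1-3\zeta]\cdot n$ ensures the endpoints lie strictly inside the truncation window.

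The second step is to remove the avoidance constraints defining $\TSA_{\nVer_1,\nVer_2}^{\sss k}$ — namely that the interior vertices avoid $B_r^{\sss(G_n)}(\Ver_1)\cup B_r^{\sss(G_n)}(\Ver_2)$ and that the path is self-avoiding. Here I would use the crude upper bound of \Cref{lem-gen-sum-product-kappa}: the sum of kernel products over all neighbor-avoiding $\ell$-step paths between two fixed vertices is at most $\Theta\zeta^{-2}\nu^\ell n^{\ell-1}$. A path that violates self-avoidance or hits one of the (bounded-size, say $\leq M$) neighborhoods can be decomposed at the first bad vertex into two subpaths, and summing over the (at most $M + k$) possible bad vertices and the split point, the total contribution of bad paths is bounded by $\Theta\, k\,(M+k)\,\zeta^{-2}\,\nu^k n^{k-1}\cdot(\text{correction})$. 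Since $k\leq 2\log_\nu n$, this is only polylogarithmically large relative to $n^{k-1}\nu^k$, whereas the main term carries a genuine factor $|F_1^-|\,|F_2^-|/n^2 \asymp c_\zeta \zeta^2$ times $\nu^k$ — so after dividing through, the bad paths are negligible compared to the main term provided we have the slack $\nu^{\vep k /4}$. Concretely: main term $\gtrsim \Theta |F_1^-||F_2^-|\zeta^2 c_\zeta \nu^k n^{k-1}$ by \Cref{pro_hd_spe_con} (using $\uhvep\geq \nu^{1-\vep/4}$ for $\zeta$ small, which is where $\lim_{\zeta\searrow 0}\uhvep = \nu$ enters), and the bad contribution is $o$ of this once $n\geq n_0(m,\delta,M,r,\vep,\zeta)$.

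The third step is bookkeeping: one must check that after discretizing one can still recognize the integral as $c_\zeta \uhvep^k$ up to $\Theta$ and powers of $\zeta$ — this uses that $\bfT_{\kappa^\circ_\zeta}$ has a strictly positive kernel on $[3\zeta,1-\zeta]^2$ so that $\langle g_1,\bfT_{\kappa^\circ_\zeta}^k g_2\rangle \geq \Theta \langle {\bf 1}_{[4\zeta,1-3\zeta]}, \bfT_{\kappa^\circ_\zeta}^k {\bf 1}_{[4\zeta,1-3\zeta]}\rangle \geq \Theta\,\langle {\bf 1}, \bfT_{\kappa^\circ_\zeta}^k {\bf 1}\rangle$, the last step losing only a $\Theta$ because $\kappa$ is bounded above and below on the compact window (one step of the operator can be used to spread mass from $[3\zeta,1-\zeta]$ into $[4\zeta,1-3\zeta]$). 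Combining with \Cref{pro_hd_spe_con} gives $\langle {\bf 1}, \bfT_{\kappa^\circ_\zeta}^k {\bf 1}\rangle\geq c_\zeta\uhvep^k\geq c_\zeta\nu^{k(1-\vep/4)}$. I expect the main obstacle to be the second step: carefully organizing the inclusion--exclusion / first-bad-vertex decomposition so that the error is genuinely dominated, since the neighborhoods $B_r^{\sss(G_n)}(\Ver_i)$ are random and one must ensure on $\GNW$ that their sizes are $\leq M$ and their vertices lie in $[\eta n, n]$ — the avoidance of these at most $2M$ vertices costs, per excluded vertex, a sum that by \Cref{lem-gen-sum-product-kappa} is at worst $\Theta \zeta^{-2}\nu^k n^{k-1}$, and one needs the count of ways to be only polynomial in $k$ (hence polylogarithmic in $n$) so that the $n^{\vep k/4}\to\infty$ slack wins. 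The self-avoidance removal is the standard lace-type argument and should go through identically to the configuration-model case once the kernel-product bound \Cref{lem-gen-sum-product-kappa} is available.
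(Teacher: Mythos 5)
Your high-level skeleton — lower bound the unconstrained path sum via continuous integrals, appeal to \Cref{pro_hd_spe_con} through the isometry, then remove the bad paths (self-loops, longer cycles, neighborhood intersections) and absorb them with the slack $\nu^{k\vep/4}$ using the crude bound of \Cref{lem-gen-sum-product-kappa} — is the right one and matches the paper's decomposition into \Cref{lem_hd_sum_restrict_path-sum-gen-nka} and \Cref{lem_hd_sum_restrict_path-sum-gen-nka-compare}. You also correctly identify where the slack $\vep/4$ is used. However there are three concrete gaps.

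First, the discrete-to-continuous step cannot afford a ``multiplicative $\Theta$ per coordinate'' as you propose, since that would accumulate to $\Theta^k$ and $k\asymp\log n$, so $\Theta^k$ is not $\Theta$. What is needed, and what the paper does, is an exact pointwise domination $\kappa(\pi_{i-1}/n,\pi_i/n)\geq\kappa(x_{i-1},x_i)$ for all $x_j\in[\pi_j/n,(\pi_j+1)/n)$, which follows from the monotonicity of $y\mapsto y^{-\chi}$, $y\mapsto y^{\chi-1}$, with zero loss per step. This only works when $\pi_{i-1}\neq\pi_i$; when $\pi_{i-1}=\pi_i$ the left side is $\kappa$ evaluated on the diagonal, which is $0$, while the integrand in the box is strictly positive. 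Because you want the unconstrained integral $\int\prod\kappa\,dx$ over the whole cube (so the inclusion-exclusion below is clean), you must either explicitly modify the kernel along the diagonal — this is precisely the purpose of $\hat\kappa$ in \eqref{kappa-PAM-fin-def-form-a-rep-new_term}, and \Cref{re-kappa-old-new} explains why the modification is invisible on neighbor-avoiding paths — or carry an extra indicator $\indic{\lfloor nx_i\rfloor\neq\lfloor nx_{i-1}\rfloor}$ through the integral and separately show its complement has negligible mass. Your write-up mentions neither, and without one of them the first step does not compile into a lower bound.

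Second, your parenthetical ``one step of the operator can be used to spread mass from $[3\zeta,1-\zeta]$ into $[4\zeta,1-3\zeta]$'' is false: $\kappa((x,s),(y,t))$ vanishes when $x>y,\,t=\rY$ or $x<y,\,t=\rO$, so the kernel is not bounded below on the compact window and $\bfT_{\kappa_\zeta}$ is not irreducible in a single step. From a point $(x,\rY)$ with $x$ near $1-\zeta$ you cannot reach $(y,\rY)$ with $y<x$ in one step. The paper makes this explicit in the remark preceding \eqref{eq_hd_bound_T2f} and uses $\bfT_{\kappa_\zeta}^2$, which costs two of the $k$ steps and produces the $\zeta^2$ factor (and, together with the endpoint integrals, the $|F_1^-||F_2^-|$ prefactor). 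Your step three would stall here.

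Third, you deal with the ``first bad vertex'' decomposition informally and refer to it as ``lace-type''; this part is essentially what the paper does in Appendix E but you should be aware that the counting must keep track of whether the bad feature is a self-loop, a longer cycle (so two split points, at most $k^2$ ways), or a hit on $B_r^{\sss(G_n)}(\Ver_1)\cup B_r^{\sss(G_n)}(\Ver_2)$ (at most $M\cdot k$ ways, using $|B_r|\le M$ on $\GNW$), and the pieces in each factorization must be \emph{neighbor-avoiding} so that \Cref{lem-gen-sum-product-kappa} applies. The resulting bad contribution is of order $\zeta^{-O(1)}M\,k^2\nu^k n^{k-2}|F_1||F_2|$, which loses one factor of $n$ against the main term; after dividing, and using $|F_i|\le\max\{(3/2)^{-r}M,(1-7\zeta)^{-1}\}|F_i^-|$, the ratio is $O(\operatorname{polylog}(n)\cdot n^{\vep/2-1})$ and vanishes, which is what your ``slack wins'' remark is asserting. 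That part of your sketch is sound.
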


\invisible{\RvdH{We choose $r$ first, and only then $M$, right? This is not clear in the quantors of the above proposition.}
\zhu{In the lemma, 
$M$ and 
$r$ are fixed numbers. We do not take the limit here.}}


Given \Cref{lem_hd_sum_restrict_path-sum-gen}, we are in the position to prove \Cref{prop-path-count-PAMc-LB_simple}:
\begin{proof}[Proof of \Cref{prop-path-count-PAMc-LB_simple} subject to \Cref{lem_hd_sum_restrict_path-sum-gen}]
By \Cref{lem_prob_pie_cond,lem_hd_sum_restrict_path}, for $k\leq 2\log_\nu n$, $n\geq \zeta^{-2}$ and any $\lb^\pi_0\in\cbc{\rO,\rY}$, on the good event $\GNW$,
\begin{align*}
&\sum_{\vpie\in \TESA_{\nVer_1,\nVer_2}^{\sss e,k}}\prob^r(\vpie\subseteq \PAndmdel)=\Theta n^{-k}\sum_{\vec\pi\in \TSA_{\nVer_1,\nVer_2}^{\sss k}}\prod_{i=1}^k\kappa\big((\pi_{i-1}/n,\lb^\pi_{i-1}),(\pi_{i}/n,\lb^\pi_i)\big).\nn
\end{align*}

Recall \zhu{that} $\NESA^{\sss e,k,r}$ is the set of all possible  $k$-step edge-labeled self-avoiding paths $\vpie$ with $\pi_0\in \partial B_r^{\sss(G_n)}(\Ver_1)$ and $\pi_k\in \partial B_r^{\sss(G_n)}(\Ver_2)$ in \zhu{$[n]$}, and we restrict to $\pi_i\not\in B_r^{\sss(G_n)}(\Ver_1)\cup B_r^{\sss(G_n)}(\Ver_2)$ for $i\in[k-1]$ and $\pi_i\geq {\zeta} n$ for all $0\leq i\leq k$. 
Then, 
\begin{align}\label{eq_hd_lowerbound_r}
    \expec^r[N_{n,r}(k)]&=\sum_{\vpie\in \NESA^{\sss e,k,r}}\prob^r(\vpie\subseteq \PAndmdel)\nn\\
    &=\sum_{\substack{\nVer_1\in \partial B_r^{\sss(G_n)}(\Ver_1)\cap [{\zeta}n,n],\\\nVer_2\in \partial B_r^{\sss(G_n)}(\Ver_2)\cap [{\zeta}n,n]}}\sum_{\vpie\in \TESA_{\nVer_1,\nVer_2}^{\sss e,k}}\prob^r(\vpie\subseteq \PAndmdel) \\
 &=\Theta n^{-k}\sum_{\substack{\nVer_1\in \partial B_r^{\sss(G_n)}(\Ver_1)\cap [{\zeta}n,n],\\\nVer_2\in \partial B_r^{\sss(G_n)}(\Ver_2)\cap [{\zeta}n,n]}}\sum_{\vec\pi\in \TSA_{\nVer_1,\nVer_2}^{\sss k}}\prod_{i=1}^k\kappa\big((\pi_{i-1}/n,\lb^\pi_{i-1}),(\pi_{i}/n,\lb^\pi_i)\big).\nn
\end{align}
\zhu{Note from \Cref{ass_Br} that, on the good event $\GNW$, 
$$\abs{\partial B_r^{\sss(G_n)}(\Ver_i)\cap [{\zeta}n,n]}\geq (3/2)^r \text{ for $i=1,2$.}$$
By \Cref{lem_hd_sum_restrict_path-sum-gen}, with $F_i=\partial B_r^{\sss(G_n)}(\Ver_i)\cap [{\zeta}n,n]$ $(i=1,2)$ and $n_0$ defined as in \Cref{lem_hd_sum_restrict_path-sum-gen}, for \ch{any} $n\geq n_0$, on the good event $\GNW$, we compute that}
\begin{align*}
    \expec^r[N_{n,r}(k)]\geq\Theta \bc{3/2}^{2r}{\zeta}^2c_{\zeta}\nu^{k(1-\vep/4)}n^{-1}.
\end{align*}
For $k=k_\nu^\star=\rou{(1+\vep)\log_\nu n}$ and $\vep,\zeta\in(0,1)$, as $n\to\infty$,
\begin{align}\label{eq-certify-sharp-prop-nu_vep}
    \expec^r[N_{n,r}(k_\nu^\star)]\geq\Theta \bc{3/2}^{2r}{\zeta}^2c_{\zeta}n^{\vep/2}  \to \infty,
\end{align}
as desired.
\end{proof}

\invisible{The organization of the remainder of this section is as follows: In \Cref{sec-path-upper-bound}, we provide a crude upper bound on the kernel products, which helps us bound the aforementioned added terms. In \Cref{sub-sec-modify-kappa-lower-bound}, we slightly modify the integral kernel 
$\kappa$ and provide a lower bound on the summation without the avoidance {constraints}. In \Cref{sec-proof-eq_hd_sum_restrict_path-sum-gen-1-2}, we reintroduce the neighbor-avoidance constraint and prove \eqref{eq_hd_sum_restrict_path-sum-gen-1-2}. Finally, in \Cref{sec-proof-eq_hd_sum_restrict_path-sum-gen-1-1}, we reintroduce the self-avoidance and non-intersection constraints with the 
$r$-neighborhoods of $\Ver_1$ and $\Ver_2$, and prove \eqref{eq_hd_sum_restrict_path-sum-gen-1-1}.}


\subsection{Modification of $\kappa$ and lower bound without avoidance restrictions}\label{sub-sec-modify-kappa-lower-bound}
In this section, we state and prove a weaker version of \Cref{lem_hd_sum_restrict_path-sum-gen}, which provides lower bounds on the kernel products without avoidance constraints. This result plays a crucial role in the proof of  \Cref{lem_hd_sum_restrict_path-sum-gen}.

Recall from \eqref{kappa-PAM-fin-def-form-a-rep} that 
\eqn{
	\kappa\big((x,s), (y,t)\big)=\frac{c_{st}(\indic{x>y, t=\srO}+\indic{x< y, t=\srY})}{(x\vee y)^{\chi}(x\wedge y)^{1-\chi}}.\nn
	}
To obtain a straightforward lower bound on the rhs of \eqref{eq_hd_sum_restrict_path} without avoidance restrictions, we add an extra term to $\kappa$ as
\eqn{
	\nka\big((x,s), (y,t)\big)=\frac{c_{st}(\indic{x>y, t=\srO}+\indic{x< y, t=\srY})}{(x\vee y)^{\chi}(x\wedge y)^{1-\chi}}+\frac{c_{\srO\srY}\indic{x=y, t=\srO}}{x}.\label{kappa-PAM-fin-def-form-a-rep-new_term}
	}
We add this term, since $\kappa((x,s),(x,t))=0$ and we will rely on pointwise bounds (see \eqref{eq-decrease-kappa-total-gen} below). The factor $c_{\srO\srY}$ appears, since this is the largest coefficient, while we only add it for $t=\rO$ since we define $\lb^{\pi}_i=\rO$ when $\pi_i=\pi_{i-1}$.

Then, for each $x,y\in [0,1]$,
\begin{align}\label{eq-change-label-add-theta1-new}
    \nka\big((x,\rO),(y,q(y-x))\big)=\Theta \nka\big((x,\rY),(y,q(y-x))\big)=\Theta\frac{1}{(x\vee y)^{\chi}(x\wedge y)^{1-\chi}}.
\end{align}
Hence, like $\kappa$, the value of $\lb^\pi_0$ only affects the value of $\nka\big((x,\lb^\pi_0),(y,q(y-x))\big)$ by a $\Theta $ factor.
\begin{rem}\label{re-kappa-old-new}
We note that $\nka\big((x,s), (y,t)\big)\neq \kappa\big((x,s), (y,t)\big)$ only if $x=y$.
Hence,
$
    \nka\big((\pi_{i-1}/n,\lb^\pi_{i-1}),(\pi_{i}/n,\lb^\pi_i)\big)=\kappa\big((\pi_{i-1}/n,\lb^\pi_{i-1}),(\pi_{i}/n,\lb^\pi_i)\big)
$
for $\vec\pi\in \NA_{\nVer_1,\nVer_2}^{\sss k}$.
\end{rem}
Let $\NP_{u,v}^{\sss k}=\cbc{(\pi_0,\ldots,\pi_k)\in [n]^{k+1}:~\pi_0=u,\pi_k=v,\pi_i\geq \zeta n~\forall i\in[k-1]}$ be the set of 
$k$-step {paths}{} between $u,v$ without any avoidance constraint. 
With the new integral kernel $\nka$, we prove the following weaker version of  \Cref{lem_hd_sum_restrict_path-sum-gen}, where self-loops in the paths are allowed:
\begin{lemma}[Lower bound on the  kernel products without avoidance constraints]\label{lem_hd_sum_restrict_path-sum-gen-nka}
\zhu{With} the notation in \Cref{lem_hd_sum_restrict_path-sum-gen}, for $n\geq \zeta^{-1}$, on the good event $\GNW$,
\begin{align}\label{eq_prod_kappa_need_in_var-F12}
    &\sum_{\substack{\nVer_1\in F_1,\\\nVer_2\in F_2}}\sum_{\vec\pi\in \NP_{\nVer_1,\nVer_2}^{\sss k}}\prod_{i=1}^k\nka\big((\pi_{i-1}/n,\lb^\pi_{i-1}),(\pi_{i}/n,\lb^\pi_i)\big)\\
    &\qquad\geq \Theta \abs{F_1}\abs{F_2}{\zeta}^2 c_{\zeta}\nu^{k(1-\vep/4)}n^{k-1}.\nn
\end{align}
\end{lemma}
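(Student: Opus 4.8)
\textbf{Proof proposal for Lemma~\ref{lem_hd_sum_restrict_path-sum-gen-nka}.}
The plan is to reduce the double sum over $\NP_{\nVer_1,\nVer_2}^{\sss k}$ (with both endpoints in $F_1\times F_2$ and no avoidance constraints) to the inner product $\langle {\bf 1}, \bfT_{\kappa^\circ_{\zeta}}^{k}{\bf 1}\rangle$ controlled from below by \Cref{pro_hd_spe_con}. First I would pass from sums to integrals: for a lattice path $\vec\pi$ with $\pi_i\geq \zeta n$, the product $\prod_{i=1}^k\nka((\pi_{i-1}/n,\lb^\pi_{i-1}),(\pi_i/n,\lb^\pi_i))$ should be compared to the integral of $\prod_{i=1}^k\nka((x_{i-1},\rmq_{i-1}),(x_i,\rmq_i))$ over the box $\prod_i[(\pi_i-1)/n,\pi_i/n]$, where now $\rmq_i = q(x_i-x_{i-1})$. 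The subtlety (and the reason $\nka$ was introduced with the extra $\indic{x=y}$ term in \eqref{kappa-PAM-fin-def-form-a-rep-new_term}) is that $\kappa$ vanishes on the diagonal, so a naive pointwise lower bound $\nka((\pi_{i-1}/n,\cdot),(\pi_i/n,\cdot))\geq n^2\int\int \nka((x_{i-1},\cdot),(x_i,\cdot))\,dx$ would fail when $\pi_{i-1}=\pi_i$ or when the labels induced by the continuous variables disagree with $\lb^\pi_i$. I would handle this by a careful case analysis: when $\pi_{i-1}\neq \pi_i$ the induced labels $q(x_i-x_{i-1})$ agree with $\lb^\pi_i$ on the relevant cells and, using monotonicity of $(x\vee y)^{-\chi}(x\wedge y)^{-(1-\chi)}$ together with the lower bound $\pi_i\geq\zeta n$, one gets a pointwise lower bound up to a $\Theta$ factor (this is the mirror image of \eqref{decrease-kappa}); when $\pi_{i-1}=\pi_i$ the extra diagonal term in $\nka$ provides exactly the needed $c_{\srO\srY}/x$, and one can absorb the discretization error again into $\Theta$ since $x\geq 3\zeta$.

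Concretely, I expect the chain of inequalities to run: the lhs of \eqref{eq_prod_kappa_need_in_var-F12} is at least $\Theta\, n^{k-1}$ times
\[
\sum_{\rmq_0}\int_{A_1}\int_{[3\zeta,1-\zeta]}\!\!\cdots\!\!\int_{[3\zeta,1-\zeta]}\int_{A_2}\prod_{i=1}^k \nka\big((x_{i-1},\rmq_{i-1}),(x_i,\rmq_i)\big)\,dx_0\cdots dx_k,
\]
where $A_j$ is a union of the rescaled cells corresponding to $F_j^-$, so $|A_j|\geq |F_j^-|/n$. Here I restrict the interior variables to $[3\zeta,1-\zeta]$ (legitimate since it only shrinks the domain) precisely to match the truncated kernel $\kappa^\circ_{\zeta}$ of \eqref{hatkappa-vepn-PA}. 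On $[3\zeta,1-\zeta]^{k+1}$ the diagonal has measure zero, so $\nka=\kappa^\circ_{\zeta}$ there and $q(x_i-x_{i-1})$ is almost surely well-defined; the resulting multi-integral is bounded below by $\Theta\, |A_1|\,|A_2|$ times $\langle {\bf 1}, \bfT_{\kappa^\circ_{\zeta}}^{k}{\bf 1}\rangle$ (one uses ${\bf 1}\geq \indic{[3\zeta,1-\zeta]}$, the formula \eqref{eq-new-Tkappa}, and the fact that restricting the two end-intervals to $A_1,A_2$ only loses a factor comparable to $|A_j|$ because the kernel is bounded below uniformly on $[3\zeta,1-\zeta]^2$ — this is where the third hypothesis on $F_j$, namely $|F_j|\leq \Theta_\zeta |F_j^-|$, gets used to replace $|F_j^-|$-sized integration domains by $|A_j|$ without genuine loss). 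Finally \Cref{pro_hd_spe_con} gives $\langle {\bf 1}, \bfT_{\kappa^\circ_{\zeta}}^{k}{\bf 1}\rangle\geq c_\zeta\, \uhvep^{\,k}$, and since $\lim_{\zeta\searrow 0}\uhvep=\nu>1$ we may choose $\zeta_0$ small enough (depending on $\vep$) that $\uhvep\geq \nu^{1-\vep/4}$ for all $\zeta\leq\zeta_0$. Collecting the $\Theta$ factors, the $n^{k-1}$, the two $|F_j^-|/n$'s (one $n^{-1}$ cancels against $n^{k-1}$? — no: we have $n^{k+1}$ from the $k+1$ integration cells times $n^{-2}$ from $dx_0\,dx_k$ being over cells of size $1/n$, wait let me recount) — the bookkeeping of powers of $n$ needs care but is routine: the $k+1$ cells each contribute a factor $n$ relative to the integral, i.e. lhs $\geq \Theta n^{k+1}\times(\text{integral})$, the integral is $\geq \Theta(|F_1^-|/n)(|F_2^-|/n)c_\zeta\nu^{k(1-\vep/4)}$, giving lhs $\geq \Theta |F_1^-||F_2^-| n^{k-1} c_\zeta\nu^{k(1-\vep/4)}$, and the missing $\zeta^2$ is the uniform lower bound on the kernel on $[3\zeta,1-\zeta]^2$ raised to a bounded power, which is $\Theta\zeta^2$ after accounting for the two end-segments — matching the claimed bound \eqref{eq_prod_kappa_need_in_var-F12}.

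\textbf{Main obstacle.} The hard part will be the diagonal/label matching in the sum-to-integral step: one must verify that for \emph{every} lattice path — including those that repeatedly sit on the diagonal $\pi_{i-1}=\pi_i$ or straddle it — the product of discretized $\nka$-values dominates (up to a uniform $\Theta$) the corresponding cell-integral of $\nka$ with the \emph{continuously-induced} labels, because on cells the continuous label $q(x_i-x_{i-1})$ may differ from the discrete label $\lb^\pi_i$ at the cell boundaries. The remedy is exactly the design of $\nka$: adding $c_{\srO\srY}\indic{x=y,t=\srO}/x$ with the \emph{largest} coefficient $c_{\srO\srY}$ and with label $\rO$ (consistent with the convention $\lb^\pi_i=\rO$ when $\pi_i=\pi_{i-1}$) guarantees that the discretized value is never smaller than the cell-integrand regardless of how the continuous labels fall, since replacing any $c_{st}$ by $c_{\srO\srY}$ only increases the kernel and the diagonal term covers the one case $\kappa$ misses. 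Making this uniform over all $2^k$ label patterns and all path shapes, with the error absorbed into a single $\Theta$ independent of $k\leq 2\log_\nu n$, is the delicate bookkeeping; everything downstream (restriction to $[3\zeta,1-\zeta]$, invoking \eqref{eq-new-Tkappa}, applying \Cref{pro_hd_spe_con}, choosing $\zeta_0$) is then routine.
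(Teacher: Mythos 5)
Your skeleton (pointwise domination of the cell-integral by the discretized $\nka$, then passing to the operator form via \eqref{eq-new-Tkappa}, then \Cref{pro_hd_spe_con}) is the same as the paper's, and your discussion of why $\nka$ rather than $\kappa$ is needed on the diagonal is correct. However, the step where you localize the endpoint integrals to $A_1,A_2$ has a genuine gap: you justify it by claiming the kernel ``is bounded below uniformly on $[3\zeta,1-\zeta]^2$,'' but this is false. By \eqref{kappa-PAM-fin-def-form-a-rep}, $\kappa\big((x,s),(y,t)\big)$ vanishes on half of $[3\zeta,1-\zeta]^2$ (it is zero whenever $x>y$ and $t=\rY$, or $x<y$ and $t=\rO$), so a single application of $\bfT_{\kappa^\circ_\zeta}$ does not spread mass from an arbitrary set $A_1\subset[3\zeta,1-\zeta]$ to an arbitrary $A_2$ with a uniform lower bound. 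In particular the $\zeta^2$ factor is \emph{not} ``the uniform kernel lower bound raised to a bounded power''; the kernel has no such lower bound, and your accounting here collapses.

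The missing idea is the irreducibility of $\bfT_{\kappa_\zeta}^2$. The paper first bounds $\nka\geq\kappa_\zeta$ with $\kappa_\zeta$ truncated to the \emph{larger} interval $[2\zeta,1]$, and then shows (see \eqref{eq_hd_bound_T2f}) that for any $(x,s)\in[3\zeta,1-\zeta]\times\{\rO,\rY\}$ and $f\geq 0$, $\bfT_{\kappa_\zeta}^2 f(x,s)\geq c_{\srY\srO}^2\zeta\int_{3\zeta}^{1-\zeta}\sum_t f(\cdot,t)$, by routing through the staging strips $[2\zeta,3\zeta]$ (go down with label $\rO$) or $[1-\zeta,1]$ (go up with label $\rY$) and back; an analogous bound holds for $(\bfT_{\kappa_\zeta}^\ast)^2$. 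This is where each factor of $\zeta$ originates, and it is precisely why the paper uses two different truncation windows ($[2\zeta,1]$ for the operator, $[3\zeta,1-\zeta]$ for the core): your proposal of restricting all interior variables to $[3\zeta,1-\zeta]$ from the outset removes exactly the room needed for this two-step mixing. Pre- and post-composing with $\bfT_{\kappa_\zeta}^2$ also costs four powers of $k$, so the case $1\leq k\leq 3$ must be treated separately by a trivial bound, which your proposal omits. Once these two points are repaired, the rest of your calculation (the $n$-power bookkeeping, the use of hypothesis $|F_i|\leq\Theta_\zeta|F_i^-|$, and the choice of $\zeta_0$ via $\uhvep\to\nu$) agrees with the paper.
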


\begin{proof}
Recall that
\begin{align*}
    q(x)=\begin{cases}
        \rO,\quad&\text{if }x\leq 0;\\
        \rY,\quad&\text{if }x>0;
    \end{cases}\quad\text{and}\quad \lb^\pi_i=q(\pi_i-\pi_{i-1}),
\end{align*}
from \eqref{def_label_direction}. Given an arbitrary path $\vec\pi=(\pi_i)_{0\leq i\leq k}\in[\zeta n,n]^{k+1}$, let
\begin{itemize}
    \item[$\rhd$] $x_j\in [\pi_j/n,(\pi_j+1)/n)$ for $0\leq j\leq k$;
    \item[$\rhd$] $\rmq_i=q(x_i-x_{i-1})$ for $i\in[k]$;
    \item[$\rhd$] $\rmq_0=\lb^\pi_0$ be an arbitrary label in $\cbc{\rO,\rY}$.
\end{itemize}
Then, $\lb^\pi_i=\rmq_i$ for each $i\in [k]$ such that $\pi_{i-1}\neq \pi_{i}$. Further, if $\lb^\pi_i\neq \rmq_i$, then $i\neq 0$ and $\pi_{i-1}= \pi_{i}$. Hence, $\lb^\pi_i=\rO$ and   $\rmq_i=\rY$.

With the definition of $\nka$ in \eqref{kappa-PAM-fin-def-form-a-rep-new_term}, we claim that\zhu{, for $x_j\in [\pi_j/n,(\pi_j+1)/n)$ \ch{with} $0\leq j\leq k$,}
\begin{align}\label{eq-decrease-kappa-total-gen}
    \nka\big((\pi_{i-1}/n,\lb^\pi_{i-1}),(\pi_{i}/n,\lb^\pi_{i})\big)\geq &\nka\big((x_{i-1},\rmq_{i-1}),(x_i,\rmq_i)\big),
\end{align}
while \eqref{eq-decrease-kappa-total-gen} does not \zhu{hold} for $\kappa$ when $\pi_{i-1}=\pi_i$ and $x_{i-1}\neq x_i$. This is why we add this extra term in $\nka$.

We first note that, as $x_i\in [\pi_i/n,(\pi_i+1)/n)$, for $0\leq i\leq k$,
\begin{align}\label{eq-decrease-kappa-no-c}
    \frac{1}{(\frac{\pi_{i-1}}{n}\vee \frac{\pi_i}{n})^{\chi}(\frac{\pi_{i-1}}{n}\wedge \frac{\pi_i}{n})^{1-\chi}}\geq \frac{1}{(x_{i-1}\vee x_i)^{\chi}(x_{i-1}\wedge x_i)^{1-\chi}}.
\end{align}
Then, \eqref{eq-decrease-kappa-total-gen} follows by the following observations:
\begin{itemize}
    \item[$\rhd$] If $\lb^\pi_{i-1}=\rmq_{i-1}$ and $\lb^\pi_i=\rmq_i$, when $\pi_{i-1}\neq \pi_i$, then \eqref{eq-decrease-kappa-total-gen} follows directly from \eqref{kappa-PAM-fin-def-form-a-rep-new_term} and \eqref{eq-decrease-kappa-no-c}; when $\pi_{i-1}= \pi_i$, then \eqref{eq-decrease-kappa-total-gen} follows from \eqref{kappa-PAM-fin-def-form-a-rep-new_term}, \eqref{eq-decrease-kappa-no-c} and {the fact that}{} $c_{\srO\srY}=\max_{s,t\in\cbc{\srO,\srY}}c_{st}$ by \eqref{cst-def-PAM-rep}.

\item[$\rhd$] If $\lb^\pi_{i-1}\neq\rmq_{i-1}$, then $\lb^\pi_{i-1}=\rO$ and $\rmq_{i-1}=\rY$. As $c_{\srO s}\geq c_{\srY t}$ for any $s,t\in\cbc{\rO,\rY}$, we conclude from \eqref{kappa-PAM-fin-def-form-a-rep-new_term} and \eqref{eq-decrease-kappa-no-c} that \eqref{eq-decrease-kappa-total-gen} holds.
\item[$\rhd$] If $\lb^\pi_{i-1}=\rmq_{i-1}$ and $\lb^\pi_i\neq\rmq_i$, then  $\pi_{i-1}=\pi_i$. Hence, by \eqref{kappa-PAM-fin-def-form-a-rep-new_term},
\begin{align*}
    \nka\big((\pi_{i-1}/n,\lb^\pi_{i-1}),(\pi_{i}/n,\lb^\pi_{i})\big)=\frac{c_{\srO\srY}}{\frac{\pi_i}{n}}\geq &\nka\big((x_{i-1},\rmq_{i-1}),(x_i,\rmq_i)\big).
\end{align*}
\end{itemize}
Consequently, \eqref{eq-decrease-kappa-total-gen} holds. 
{Therefore,}{} for any $i\in[2,k]$,
\begin{align*}
    \nka\big((\pi_{i-1}/n,\lb^\pi_{i-1}),(\pi_{i}/n,\lb^\pi_{i})\big)\geq &n\int_{\pi_{i}/n}^{((\pi_{i}+1)/n)\wedge 1}\nka\big((x_{i-1},\rmq_{i-1}),(x_i,\rmq_i)\big)\dint x_i,
\end{align*}
and
\begin{align*}
    \nka\big((\pi_0/n,\lb^\pi_0),(\pi_1/n,\lb^\pi_1)\big)\geq &n^2\int_{\pi_0/n}^{((\pi_0+1)/n)\wedge 1}\int_{\pi_1/n}^{((\pi_1+1)/n)\wedge 1}\nka\big((x_0,\rmq_0),(x_1,\rmq_1)\big)\dint x_0 \dint x_1.
\end{align*}
Therefore, for $n\geq \zeta^{-1}$, with $\nVer_1,\nVer_2\geq{\zeta} n$, 
\begin{align*}
&\sum_{\vec\pi\in \NP_{\nVer_1,\nVer_2}^{\sss k}}\prod_{i=1}^k\nka\big((\pi_{i-1}/n,\lb^\pi_{i-1}),(\pi_{i}/n,\lb^\pi_i)\big)\nn\\
    &\geq  n^{k+1}\int_{\lceil n\zeta\rceil/n}^1\cdots \int_{\lceil n\zeta\rceil/n}^1  \indic{\lfloor nx_0\rfloor=\nVer_1}\indic{\lfloor nx_k\rfloor=\nVer_2}\prod_{i=1}^k\nka\big((x_{i-1},\rmq_{i-1}),(x_{i},\rmq_i)\big)\dint x_0\cdots \dint x_k\nn\\
    &\geq  n^{k+1}\int_{2{\zeta}}^1\cdots \int_{2{\zeta}}^1 \indic{\lfloor nx_0\rfloor=\nVer_1}\indic{\lfloor nx_k\rfloor=\nVer_2}\prod_{i=1}^k\nka\big((x_{i-1},\rmq_{i-1}),(x_{i},\rmq_i)\big)\dint x_0\cdots \dint x_k,
\end{align*}
By \eqref{eq-change-label-add-theta1-new}, the value of $\lb^\pi_0$ \ch{only} affects $\nka\big((x_0,\rmq_0),(x_1,\rmq_1)\big)$ by a $\Theta $ factor. Hence,
\begin{flalign}\label{eq-lower-bound-sum-np-kernel}
\sum_{\vec\pi\in \NP_{\nVer_1,\nVer_2}^{\sss k}}\prod_{i=1}^k\nka\big((\pi_{i-1}/n,\lb^\pi_{i-1}),(\pi_{i}/n,\lb^\pi_i)\big)
    \geq \Theta n^{k+1}\sum_{\rmq_0\in\cbc{\srO,\srY}}\int_{2{\zeta}}^1\cdots \int_{2{\zeta}}^1 \indic{\lfloor nx_0\rfloor=\nVer_1}\nn\\
    \times\indic{\lfloor nx_k\rfloor=\nVer_2}\prod_{i=1}^k\nka\big((x_{i-1},\rmq_{i-1}),(x_{i},\rmq_i)\big)\dint x_0\cdots \dint x_k.
\end{flalign}

Let $\kappa_{{\zeta}}((x,s),(y,t))=\indic{x,y\in [2{\zeta},1]} \kappa\big((x,s),(y,t)\big)$.
Then,  $\nka\geq \kappa_{\zeta}$. Furthermore, with \eqref{eq-new-Tkappa} for $\bfT_{\kappa_{\zeta}}$, we conclude from \eqref{eq-lower-bound-sum-np-kernel} that
\begin{align}
\label{eq_hd_r-prod-trans-int-new0}
&\sum_{\vec\pi\in \NP_{\nVer_1,\nVer_2}^{\sss k}}\prod_{i=1}^k\nka\big((\pi_{i-1}/n,\lb^\pi_{i-1}),(\pi_{i}/n,\lb^\pi_i)\big)\nn\\
&\qquad\geq \Theta n^{k+1}\langle \indic{\lfloor nx_0\rfloor=\nVer_1}, \bfT_{\kappa_{\zeta}}^k \indic{\lfloor nx_k\rfloor=\nVer_2}\rangle.
\end{align}

\zhu{\ch{On} the other hand, recall that $\NP_{u,v}^{\sss k}$ is the set of 
$k$-step paths between $u,v$ without any avoidance constraint. Then
\begin{align}
    &\sum_{\vec\pi\in \NP_{\nVer_1,\nVer_2}^{\sss k}}\prod_{i=1}^k\nka\big((\pi_{i-1}/n,\lb^\pi_{i-1}),(\pi_{i}/n,\lb^\pi_i)\big)\nn\\
    &\qquad=\sum_{\substack{\pi_0=\nVer_1,\pi_k=\nVer_2\\\pi_j\in [\eta n,n],j\in [k-1]}}\prod_{i=1}^k\nka\big((\pi_{i-1}/n,\lb^\pi_{i-1}),(\pi_{i}/n,\lb^\pi_i)\big).\label{eq_hd_r-prod-trans-int-new1}
\end{align}

Hence, changing the value of $\nVer_1$ and $\nVer_2$ will only affect the first two and the last term in the production $\prod_{i=1}^k\nka\big((\pi_{i-1}/n,\lb^\pi_{i-1}),(\pi_{i}/n,\lb^\pi_i)\big)$. 

Given $\nVer_1,\nVer_2,\nVer_1',\nVer_2'\in [\zeta n,n]$, let $\vec\pi'$ be the path that \ch{replaces} the starting and ending points of $\vec\pi$ from $(\nVer_1,\nVer_2)$ to $(\nVer_1',\nVer_2')$. Then $\lb^{\pi}_i=\lb^{\pi'}_{i}$ for $i\in[2,n-1]$.
Consequently, \eqref{kappa-PAM-fin-def-form-a-rep-new_term} yields that, for $i\in\cbc{0,n-1}$,
\begin{align}\label{eq_hd_r-prod-trans-int-new2}
    &\nka\big((\pi_i/n,\lb^{\pi}_i),(\pi_{i+1}/n,\lb^\pi_{i+1})\big)\geq \Theta\zeta\nka\big((\pi_i'/n,\lb^{\pi'}_i),(\pi_{i+1}'/n,\lb^{\pi_{i+1}'})\big),
\end{align}
and
\begin{align}\label{eq_hd_r-prod-trans-int-new3}
    &\nka\big((\pi_1/n,\lb^{\pi}_1),(\pi_2/n,\lb^\pi_2)\big)\geq \Theta\nka\big((\pi_1'/n,\lb^{\pi'}_1),(\pi_2'/n,\lb^{\pi'}_2)\big),
\end{align}
while, \ch{for }$i\in [2,n-2]$,
\begin{align}\label{eq_hd_r-prod-trans-int-new4}
    &\nka\big((\pi_i/n,\lb^{\pi}_i),(\pi_{i+1}/n,\lb^\pi_{i+1})\big)=\nka\big((\pi_i'/n,\lb^{\pi'}_i),(\pi_{i+1}'/n,\lb^{\pi'}_{i+1})\big).
\end{align}
As a consequence, for $f_1(x)=\sum_{\nVer_1'=\ceil{\zeta n}}^n\indic{\lfloor nx\rfloor=\nVer_1'}$ and $f_2(x)=\sum_{\nVer_2'=\ceil{\zeta n}}^n\indic{\lfloor nx\rfloor=\nVer_2'}$, the combination of \eqref{eq_hd_r-prod-trans-int-new0}-\eqref{eq_hd_r-prod-trans-int-new4} yields that
\begin{align*}
    &\sum_{\vec\pi\in \NP_{\nVer_1,\nVer_2}^{\sss k}}\prod_{i=1}^k\nka\big((\pi_{i-1}/n,\lb^\pi_{i-1}),(\pi_{i}/n,\lb^\pi_i)\big)\\
    \geq& \frac{\Theta\zeta^2}{n^2} \sum_{\nVer_1',\nVer_2'=\ceil{\zeta n}}^n\sum_{\vec\pi\in \NP_{\nVer_1',\nVer_2'}^{\sss k}}\prod_{i=1}^k\nka\big((\pi_{i-1}/n,\lb^\pi_{i-1}),(\pi_{i}/n,\lb^\pi_i)\big)\\
    \geq&\Theta\zeta^2 n^{k-1}\left\langle \sum_{\nVer_1'=\ceil{\zeta n}}^n\indic{\lfloor nx_0\rfloor=\nVer_1'}, \bfT_{\kappa_{\zeta}}^k \sum_{\nVer_2'=\ceil{\zeta n}}^n\indic{\lfloor nx_k\rfloor=\nVer_2'}\right\rangle =\Theta\zeta^2 n^{k-1}\langle f_1, \bfT_{\kappa_{\zeta}}^k f_2\rangle
\end{align*}
Recall 
	$\kappa^\circ_{\zeta}$ from \eqref{hatkappa-vepn-PA}.
Note that $f_1(x),f_2(x)\geq \indic{x\in[3\zeta,1-\zeta]}$ for sufficiently large $n$. Hence,
\begin{align*}
    \langle f_1, \bfT_{\kappa_{\zeta}}^k f_2\rangle
    \geq\langle \indic{x\in[3\zeta,1-\zeta]}, \bfT_{\kappa^\circ_{\zeta}}^k \indic{x\in[3\zeta,1-\zeta]}\rangle=\langle {\bf 1}, \bfT_{\kappa^\circ_{\zeta}}^k {\bf 1}\rangle.
\end{align*}
By \Cref{pro_hd_spe_con}, $\langle {\bf 1}, \bfT_{\kappa_{{\zeta}}^\circ}^{k} {\bf 1}\rangle\geq c_{\zeta}\uhvep^{k}$ for some $c_{\zeta}\in(0,1]$ and $\lim_{\zeta\searrow 0}\uhvep=\nu>1$. Hence, for any $\vep\in(0,1)$, there exists a $\zeta_0\in(0,1/10]$ such that, for any ${\zeta}\in(0,{\zeta_0}]$, $\uhvep\geq \nu^{1-\vep/4}$. For this $\zeta$, 
\begin{align*}
    \langle {\bf 1}, \bfT_{\kappa_{{\zeta}}^\circ}^{k} {\bf 1}\rangle\geq c_{\zeta}\uhvep^{k}\geq c_{\zeta}\nu^{(k)(1-\vep/4)} = \Theta  c_{\zeta}\nu^{k(1-\vep/4)}.
\end{align*}
Hence, we conclude that
\begin{align*}
    \sum_{\vec\pi\in \NP_{\nVer_1,\nVer_2}^{\sss k}}\prod_{i=1}^k\nka\big((\pi_{i-1}/n,\lb^\pi_{i-1}),(\pi_{i}/n,\lb^\pi_i)\big)\geq \Theta\zeta^2 n^{k-1}c_{\zeta}\nu^{k(1-\vep/4)}.
\end{align*}
Summing $\nVer_1$ over $F_1$ and $\nVer_2$ over $F_2$ leads to
\begin{align*}
\sum_{\substack{\nVer_1\in F_1,\\\nVer_2\in F_2}}\sum_{\vec\pi\in \NP_{\nVer_1,\nVer_2}^{\sss k}}\prod_{i=1}^k\nka\big((\pi_{i-1}/n,\lb^\pi_{i-1}),(\pi_{i}/n,\lb^\pi_i)\big)\geq \Theta\abs{F_1}\abs{F_2}\zeta^2 n^{k-1}c_{\zeta}\nu^{k(1-\vep/4)},
\end{align*}
as desired.
}

\end{proof}
Using \Cref{lem_hd_sum_restrict_path-sum-gen-nka},  we can obtain the following stronger version of \Cref{lem_hd_sum_restrict_path-sum-gen}:
\begin{lemma}[Comparison of lower bounds on the  kernel products]\label{lem_hd_sum_restrict_path-sum-gen-nka-compare}
With the notation in \Cref{lem_hd_sum_restrict_path-sum-gen}, on the good event $\GNW$,
\begin{align}\label{eq_hd_sum_restrict_path-sum-gen-1-2-new-main}
    &\sum_{\substack{\nVer_1\in F_1,\\\nVer_2\in F_2}}\sum_{\vec\pi\in \TSA_{\nVer_1,\nVer_2}^{\sss k}}\prod_{i=1}^k\kappa\big((\pi_{i-1}/n,\lb^\pi_{i-1}),(\pi_{i}/n,\lb^\pi_i)\big)\\
        &\qquad\geq\frac{1}{4}\sum_{\substack{\nVer_1\in F_1,\\\nVer_2\in F_2}}\sum_{\vec\pi\in \NP_{\nVer_1,\nVer_2}^{\sss k}}\prod_{i=1}^k\nka\big((\pi_{i-1}/n,\lb^\pi_{i-1}),(\pi_{i}/n,\lb^\pi_i)\big).\nn
    \end{align}    
\end{lemma}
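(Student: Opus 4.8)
The plan is to prove \eqref{eq_hd_sum_restrict_path-sum-gen-1-2-new-main} by writing the unrestricted right-hand side as an \emph{exact} sum of the restricted left-hand side and three nonnegative error terms, and then showing, with the help of the crude upper bound \Cref{lem-gen-sum-product-kappa} and the lower bound \Cref{lem_hd_sum_restrict_path-sum-gen-nka}, that the total error is at most $\tfrac34$ of the right-hand side. Since $\nka=\kappa$ on every neighbour-avoiding step by \Cref{re-kappa-old-new}, splitting $\NP_{\nVer_1,\nVer_2}^{\sss k}$ according to whether a path is neighbour-avoiding, then self-avoiding, then interior-disjoint from $B_r^{\sss(G_n)}(\Ver_1)\cup B_r^{\sss(G_n)}(\Ver_2)$ gives, for every $\nVer_1\in F_1$ and $\nVer_2\in F_2$, the exact identity
\begin{align*}
\sum_{\vec\pi\in \NP_{\nVer_1,\nVer_2}^{\sss k}}\prod_{i=1}^k\nka\big((\pi_{i-1}/n,\lb^\pi_{i-1}),(\pi_{i}/n,\lb^\pi_i)\big)
&=\sum_{\vec\pi\in \TSA_{\nVer_1,\nVer_2}^{\sss k}}\prod_{i=1}^k\kappa\big((\pi_{i-1}/n,\lb^\pi_{i-1}),(\pi_{i}/n,\lb^\pi_i)\big)\\
&\quad+\Delta^{\rm lp}_{\nVer_1,\nVer_2}+\Delta^{\rm cy}_{\nVer_1,\nVer_2}+\Delta^{\rm nb}_{\nVer_1,\nVer_2},
\end{align*}
where $\Delta^{\rm lp}$ collects the $\nka$-contribution of the paths with a self-loop ($\pi_{h-1}=\pi_h$ for some $h$), $\Delta^{\rm cy}$ the $\kappa$-contribution of the neighbour-avoiding paths that are not self-avoiding, and $\Delta^{\rm nb}$ the $\kappa$-contribution of the self-avoiding paths whose interior meets $W:=\big(B_r^{\sss(G_n)}(\Ver_1)\cup B_r^{\sss(G_n)}(\Ver_2)\big)\cap[{\zeta} n,n]$; on $\GNW$ one has $\abs{W}\leq 2M$ because each $r$-neighbourhood has at most $M$ vertices. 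Summing over $F_1,F_2$, it then suffices to bound each of the three error sums by $\tfrac14$ of $\sum_{\nVer_1\in F_1,\nVer_2\in F_2}\sum_{\vec\pi\in \NP_{\nVer_1,\nVer_2}^{\sss k}}\prod_{i=1}^k\nka(\cdots)$.

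Each error sum is controlled by cutting a path at its offending vertex and applying \Cref{lem-gen-sum-product-kappa} to the resulting neighbour-avoiding pieces, which is legitimate since that bound is uniform over the initial label. For $\Delta^{\rm lp}$, deleting the $j$ loop steps leaves a neighbour-avoiding skeleton in $\NA_{\nVer_1,\nVer_2}^{\sss k-j}$, the skeleton together with the set of loop positions determines the path uniquely, and each loop step contributes $\nka\big((w/n,\cdot),(w/n,\rO)\big)=c_{\srO\srY}n/w\leq\Theta{\zeta}^{-1}$ as $w\geq{\zeta}n$; hence, using \Cref{lem-gen-sum-product-kappa} and $(1+\Theta{\zeta}^{-1}/(\nu n))^k-1\leq 2k\Theta{\zeta}^{-1}/(\nu n)$ for $n$ large, $\sum_{\nVer_1\in F_1,\nVer_2\in F_2}\Delta^{\rm lp}_{\nVer_1,\nVer_2}\leq\Theta k\abs{F_1}\abs{F_2}{\zeta}^{-3}\nu^{k}n^{k-2}$. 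For $\Delta^{\rm cy}$, fixing the first pair of times $i<j$ with $\pi_i=\pi_j=:w$ (so $j\geq i+2$ and $w\geq{\zeta}n$) splits the path into neighbour-avoiding pieces of lengths $i$, $j-i$ and $k-j$ from $\nVer_1$ to $w$, from $w$ to $w$ and from $w$ to $\nVer_2$; applying \Cref{lem-gen-sum-product-kappa} to each and summing over $w\in[{\zeta}n,n]$ and over $i<j$ gives $\sum_{\nVer_1\in F_1,\nVer_2\in F_2}\Delta^{\rm cy}_{\nVer_1,\nVer_2}\leq\Theta k^2\abs{F_1}\abs{F_2}{\zeta}^{-6}\nu^{k}n^{k-2}$. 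For $\Delta^{\rm nb}$, fixing the first interior time $i$ with $w:=\pi_i\in W$ splits a self-avoiding path into neighbour-avoiding pieces of lengths $i$ and $k-i$ through $w$; since $\abs{W}\leq2M$ on $\GNW$, \Cref{lem-gen-sum-product-kappa} yields $\sum_{\nVer_1\in F_1,\nVer_2\in F_2}\Delta^{\rm nb}_{\nVer_1,\nVer_2}\leq\Theta Mk\abs{F_1}\abs{F_2}{\zeta}^{-4}\nu^{k}n^{k-2}$. The degenerate cases ($i=0$, $j=k$, or a piece of length $0$) either force $\nVer_1=\nVer_2$ and hence vanish by $F_1\cap F_2=\varnothing$, or are treated the same way and contribute only smaller terms.

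It remains to divide these three bounds by the lower bound of \Cref{lem_hd_sum_restrict_path-sum-gen-nka}, namely $\sum_{\nVer_1\in F_1,\nVer_2\in F_2}\sum_{\vec\pi\in \NP_{\nVer_1,\nVer_2}^{\sss k}}\prod_{i=1}^k\nka(\cdots)\geq\Theta\abs{F^-_1}\abs{F^-_2}{\zeta}^2c_{\zeta}\nu^{k(1-\vep/4)}n^{k-1}$, valid for $n\geq{\zeta}^{-1}$ on $\GNW$. Using the third hypothesis of \Cref{lem_hd_sum_restrict_path-sum-gen} in the form $\abs{F_i}\leq\max\{(3/2)^{-r}M,(1-7{\zeta})^{-1}\}\abs{F^-_i}$, and $k\leq2\log_\nu n$ so that $\nu^{k\vep/4}\leq n^{\vep/2}$, each ratio is at most a constant depending only on $m,\delta,M,r,\vep,{\zeta}$ times $k^2n^{\vep/2-1}$, which tends to $0$ as $n\to\infty$; hence there is $n_0\geq{\zeta}^{-2}$, depending only on $m,\delta,M,r,\vep,{\zeta}$, such that for $n\geq n_0$ each error sum is at most $\tfrac14$ of $\sum_{\nVer_1\in F_1,\nVer_2\in F_2}\sum_{\vec\pi\in \NP_{\nVer_1,\nVer_2}^{\sss k}}\prod_{i=1}^k\nka(\cdots)$, which is exactly \eqref{eq_hd_sum_restrict_path-sum-gen-1-2-new-main}. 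I expect the main obstacle to be the cyclic term $\Delta^{\rm cy}$: one must enumerate all ways in which a neighbour-avoiding path can repeat a vertex and check that every such repetition forces a return to an earlier vertex, which after summing over that vertex supplies the decisive factor $n^{-1}$ relative to the main term; some care is also needed to ensure that the pieces produced by cutting stay within the scope of \Cref{lem-gen-sum-product-kappa} (neighbour-avoiding, with endpoints and interior vertices in $[{\zeta}n,n]$), and that the count of loop placements in $\Delta^{\rm lp}$ is exact rather than an overcount.
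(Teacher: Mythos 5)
Your proposal is correct and follows essentially the same route as the paper's proof in Appendix E: decompose $\NP$ into the restricted class $\TSA$ plus the contributions of self-loops, longer cycles, and intersections with the $r$-neighborhoods, cut paths at offending vertices, control each piece with the crude upper bound of \Cref{lem-gen-sum-product-kappa}, and divide by the lower bound from \Cref{lem_hd_sum_restrict_path-sum-gen-nka}, obtaining precisely the three estimates $\Theta k\abs{F_1}\abs{F_2}\zeta^{-3}\nu^k n^{k-2}$, $\Theta k^2\abs{F_1}\abs{F_2}\zeta^{-6}\nu^k n^{k-2}$, and $\Theta Mk\abs{F_1}\abs{F_2}\zeta^{-4}\nu^k n^{k-2}$ that the paper derives. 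The only difference is presentational: the paper chains two factor-$\tfrac12$ inequalities (first $\NA$ versus $\NP$, then $\TSA$ versus $\NA$) to reach $\tfrac14$, while you bound all three error terms simultaneously against the $\NP$-sum; the arithmetic and the $n_0$ threshold argument are the same.
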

We emphasize \Cref{lem_hd_sum_restrict_path-sum-gen-nka-compare} here since \eqref{eq_hd_sum_restrict_path-sum-gen-1-2-new-main} is also used in \Cref{sec-second-moment-proof-thm-log-PA-delta>0} to bound the variance of 
$N_{n,r}(k_\nu^\star)$ from above (see the proof of \Cref{lem-u=1}). 
\begin{proof}[\shortversion{Sketch of proof}\longversion{Proof of \Cref{lem_hd_sum_restrict_path-sum-gen-nka-compare}}]
\longversion{We defer the proof to \Cref{app-E}.}
\shortversion{The proof is given in \cite[Appendix E]{HofZhu25long}. Here we provide some intuition. 
The main point is that our sums over paths have several avoidance constraints. The analogous lower bound in \Cref{lem_hd_sum_restrict_path-sum-gen-nka} does not have these constraints. We thus first ignore the constraints, and, using \Cref{lem_hd_sum_restrict_path-sum-gen-nka} and inclusion-exclusion, show that the contributions where the constraints are violated is not large. 
}
\end{proof}
\begin{proof}[Proof of \Cref{lem_hd_sum_restrict_path-sum-gen}]
\Cref{lem_hd_sum_restrict_path-sum-gen} follows directly from \Cref{lem_hd_sum_restrict_path-sum-gen-nka,lem_hd_sum_restrict_path-sum-gen-nka-compare}.
\end{proof}





\section{Upper bounding the conditional variance}
\label{sec-second-moment-proof-thm-log-PA-delta>0}
In this section, we prove \Cref{prop-path-count-PAMc-Var-UB}, that is, we show that for sufficiently large $n$, on the good event $\GNW$, with $k_\nu^{\star}=\rou{(1+\vep)\log_\nu n}$, 
\eqn{
	\label{Nnk-Var-PAMc}
	\Var^r(N_{n,r}(k_\nu^{\star}))\leq \Theta \bc{3/2}^{-r} {\zeta}^{-4} c_{\zeta}^{-1} \expec^r[N_{n,r}(k_\nu^\star)]^2.
	}

\subsection{Strategy of proof: Division into $3$ cases, and analysis of Case \ref{it_var_1}}
By the definition of the variance, we can write $\Var^r(N_{n,r}(k_\nu^{\star}))$ as 
{\small
\begin{align}\label{variance_decompose}
    \Var^r&(N_{n,r}(k_\nu^{\star}))=\sum_{\vpie,\vrhoe\in \NESA^{\sss e,k_\nu^{\star},r}} \Big(\prob^r(\vpie, \vrhoe\subseteq \PAndmdel)-\prob^r(\vpie\subseteq \PAndmdel)\prob^r(\vrhoe\subseteq \PAndmdel)\Big).
\end{align}
}
The proof of \eqref{Nnk-Var-PAMc} is based on \eqref{variance_decompose} through the following three partitions of the relation between $\vpie$ and $\vrhoe$:
\begin{enumerate}[label= (\Alph*)]
    \item $\vpie=\vrhoe$;\label{it_var_1}
    \item  $\pi_i\neq\rho_j$ for any $0\leq i,j\leq k$, which we denote as $\vec\pi\cap\vec\rho=\emptyset$;\label{it_var_2}
    \item $\vec\pi\cap\vec\rho\neq\emptyset$ and $\vpie\neq\vrhoe$.\label{it_var_3}
\end{enumerate}

Thanks to \Cref{prop-path-count-PAMc-LB_simple}, Case \ref{it_var_1} is the simplest one among the three cases. Indeed, on the good event $\GNW$, by \Cref{prop-path-count-PAMc-LB_simple}, for sufficiently large $n$, $\expec^r[N_{n,r}(k_\nu^{\star})]\geq \Theta (3/2)^{2r}{\zeta}^2c_{\zeta}n^{\vep/2}$. Then, uniformly in the choices of $\Ver_1$ and $\Ver_2$,
\begin{align}\label{eq_con-case-A}
    \sum_{\substack{\vpie\in \NESA^{\sss e,k_\nu^\star,r},\\\vpie=\vrhoe}}\prob^r(\vpie, \vrhoe\subseteq \PAndmdel)&={\expec^r[N_{n,r}(k_\nu^{\star})]}=o(1)\expec^r[N_{n,r}(k_\nu^{\star})]^2.
\end{align}
Thus, the contribution of Case \ref{it_var_1} to \eqref{variance_decompose} is no more than $o(1)\expec^r[N_{n,r}(k_\nu^{\star})]^2$.
\smallskip

We next bound the contribution of Cases \ref{it_var_2}
and \ref{it_var_3} to \eqref{variance_decompose} in their order of appearance. For Case \ref{it_var_2}, we prove the following lemma in \Cref{sec-con-case-disjoint}:
\begin{lemma}[Contribution from disjoint $\vec\pi$ and $\vec\rho$]\label{lem-con-case-disjoint}
For any $k\leq 2\log_\nu n$, on the good event $\GNW$, {and uniformly in the choices of $\Ver_1$ and $\Ver_2$,}{}
\begin{align*}
\sum_{\substack{\vpie,\vrhoe\in \NESA^{\sss e,k,r},\\\vec\pi\cap\vec\rho=\emptyset}} &\Big(\prob^r(\vpie, \vrhoe\subseteq \PAndmdel)-\prob^r(\vpie\subseteq \PAndmdel)\prob^r(\vrhoe\subseteq \PAndmdel)\Big)=o(1) \expec^r[N_{n,r}(k)]^2.   
\end{align*}
\end{lemma}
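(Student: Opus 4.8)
The plan is to show that disjoint path pairs contribute a \emph{negligible} correction to the variance, essentially because the conditional dependence between two vertex-disjoint edge-labeled paths is weak. The starting point is the path-probability formula \eqref{eq_E-in-PAM-cond}: for a good edge set $E$, on $\GNW$, $\prob^r(E\subseteq \PAndmdel)=\Theta^{1/\log n}\prod_{s=2}^n \frac{(\alpha+\psE-1)_{\psE}(\beta_s+\qsE-1)_{\qsE}}{(\alpha+\beta_s+\psE+\qsE-1)_{\psE+\qsE}}$. Applying this with $E=\vpie\cup\vrhoe$ (which is a good edge set when $\vpie,\vrhoe\in\NESA^{\sss e,k,r}$ are disjoint, since $2k\leq 4\log_\nu n$ and all vertices have age $\geq\zeta n$), and comparing with the product of the formulas for $E=\vpie$ and $E=\vrhoe$ separately, I would isolate the \emph{overlap factor}
\begin{align*}
\frac{\prob^r(\vpie,\vrhoe\subseteq\PAndmdel)}{\prob^r(\vpie\subseteq\PAndmdel)\prob^r(\vrhoe\subseteq\PAndmdel)}
=\Theta^{1/\log n}\prod_{s=2}^n \frac{(\ast)}{(\ast)},
\end{align*}
where the ratio of falling factorials is $1$ except at the $s\in V_{\vpie}\cap\cbc{s:\ q_s^{\sss\vrhoe}>0}$ and symmetric terms, i.e.\ vertices that are an endpoint of one path and lie strictly between the endpoints of an edge of the other path, together with vertices lying under edges of both. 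Since $\vpie$ and $\vrhoe$ are vertex-disjoint, $p_s^{\sss\vpie}$ and $p_s^{\sss\vrhoe}$ never overlap; the only coupling comes through the $q$'s and through cross-terms $p^{\sss\vpie}_s q^{\sss\vrhoe}_s$. A mean-value / Taylor estimate analogous to \eqref{eq-meanvalue-ln(1+x)}--\eqref{eq_hd_si_prod_4} — now using that every relevant $s$ satisfies $s\geq\zeta n$, so each factor is $1+O(1/(\zeta n))$ and there are $O(k^2)=O(\log^2 n)$ of them — should give that the overlap factor is $1+O(\log^2 n/(\zeta n))=1+o(1)$, uniformly over disjoint pairs in $\NESA^{\sss e,k,r}$.

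Given this, write $\prob^r(\vpie,\vrhoe\subseteq\PAndmdel)-\prob^r(\vpie\subseteq\PAndmdel)\prob^r(\vrhoe\subseteq\PAndmdel)=\big(1+o(1)-1\big)\prob^r(\vpie\subseteq\PAndmdel)\prob^r(\vrhoe\subseteq\PAndmdel)=o(1)\,\prob^r(\vpie\subseteq\PAndmdel)\prob^r(\vrhoe\subseteq\PAndmdel)$, and hence
\begin{align*}
\sum_{\substack{\vpie,\vrhoe\in \NESA^{\sss e,k,r},\\\vec\pi\cap\vec\rho=\emptyset}} \Big(\prob^r(\vpie, \vrhoe\subseteq \PAndmdel)-\prob^r(\vpie\subseteq \PAndmdel)\prob^r(\vrhoe\subseteq \PAndmdel)\Big)
\leq o(1)\Big(\sum_{\vpie\in \NESA^{\sss e,k,r}}\prob^r(\vpie\subseteq\PAndmdel)\Big)^2
= o(1)\,\expec^r[N_{n,r}(k)]^2,
\end{align*}
where the bound on the pair sum by the square of the single sum just drops the disjointness constraint and uses non-negativity of each $\prob^r(\vpie\subseteq\PAndmdel)$. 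One subtlety: the $o(1)$ in the overlap factor could a priori be negative (the overlap can decrease the joint probability relative to independence — indeed one expects negative correlation), so I would state the estimate as $|{\rm overlap}-1|=o(1)$ and take absolute values before summing, which is harmless since we only need an upper bound on the variance contribution.

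The main obstacle is the uniform control of the overlap factor: one must verify carefully that, even though $V_{\vpie}\cap V_{\vrhoe}=\emptyset$, the exponents $\psE,\qsE$ for $E=\vpie\cup\vrhoe$ differ from the sums $p_s^{\sss\vpie}+p_s^{\sss\vrhoe}$, $q_s^{\sss\vpie}+q_s^{\sss\vrhoe}$ only through the interaction of one path's endpoints with the other path's edge-intervals, that all such $s$ are $\geq\zeta n$, and that the number of falling-factorial mismatches is $O(\log^2 n)$ so that the product of the $1+O(1/(\zeta n))$ corrections is $1+o(1)$. This is a direct but bookkeeping-heavy adaptation of the computations already carried out in \Cref{sec_trun_lowerbound} (in particular \eqref{eq_hd_si_prod_1}--\eqref{eq_hd_si_prod_4}) and \Cref{sec_fall_fact_path_prob}; the key new input is merely that the truncation $\pi_i,\rho_j\geq\zeta n$ plays here the role that $\bdt=\lceil\log^3 n\rceil$ played there, giving the sharper $O(\log^2 n/(\zeta n))$ bound instead of $O(\log^2 n/\bdt)$. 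Once this uniform estimate is in hand, \Cref{lem-con-case-disjoint} follows immediately as above.
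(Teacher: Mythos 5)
Your proposal follows the same route as the paper: isolate the ``overlap factor'' from the falling-factorial product formula \eqref{eq_E-in-PAM-cond} for $E=\vpie\cup\vrhoe$ versus $E=\vpie$ and $E=\vrhoe$ separately, show it equals $1+o(1)$ uniformly via a Taylor estimate using $s\geq\zeta n$, and then drop the disjointness constraint in the pair sum. This is exactly what the paper's Lemma~\ref{lem-var-disjoint-paths} does, and your outline is correct.

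One remark on the bookkeeping claim ``$O(k^2)$ factors each $1+O(1/(\zeta n))$'': read literally as a count of values of $s$ where the per-$s$ ratio differs from $1$, this is too small --- since $q_s^{\sss\vpie}$ and $q_s^{\sss\vrhoe}$ can both be positive for a macroscopic range of $s$, the number of nontrivial $s$ is in general $\Theta(n)$, not $O(k^2)$. The estimate survives because the log-correction per $s$ picks up a $1/s^2$ decay (for the $q_s^{\sss\vpie}q_s^{\sss\vrhoe}$ cross-term): after expanding $q_s^{\sss\vpie}q_s^{\sss\vrhoe}$ into a double sum over edge pairs and summing over $s\in[\zeta n,n]$, each of the $O(k^2)$ edge-edge pairs contributes $O(1/(\zeta n))$, giving $O(k^2/(\zeta n))$ in total, which is what your final display asserts. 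So your accounting is sound if one views the $O(k^2)$ as a count of cross-term interactions rather than of values of $s$. The paper instead bounds the per-$s$ ratio by $\Theta^{\zeta^{-2}k^2n^{-2}}$ uniformly and multiplies over all $n$ values of $s$, arriving at the same $\Theta^{\zeta^{-2}k^2n^{-1}}$. Your absolute-value remark is also fine but unnecessary: for the variance upper bound you only need the one-sided inequality $\prob^r(\vpie,\vrhoe\subseteq\PAndmdel)\leq(1+o(1))\prob^r(\vpie\subseteq\PAndmdel)\prob^r(\vrhoe\subseteq\PAndmdel)$, which is precisely what the paper proves.
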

For Case \ref{it_var_3}, we prove the following lemma in \Cref{sec-path-decompose-u,sec-con-case-intersect}:
\begin{lemma}[Contribution from distinct but intersecting $\pi$ and $\rho$]\label{lem-con-case-intersect}
For any $\vep\in (0,1)$, sufficiently large $n$ and $k\leq (1+\vep)\log_\nu n$, on the good event $\GNW$, uniformly in the choices of $\Ver_1$ and $\Ver_2$,
    \begin{align*}
    &\sum_{\vpie\in \NESA^{\sss e,k,r}} \sum_{\substack{\vrhoe\in \NESA^{\sss e,k,r},\\\vec\pi\cap\vec\rho\neq\emptyset,\\\vpie\neq\vrhoe}}\prob^r(\vpie, \vrhoe\subseteq \PAndmdel) \leq \Theta \bc{3/2}^{-r} {\zeta}^{-4} c_{\zeta}^{-1} \expec^r[N_{n,r}(k)]^2.
\end{align*}
\end{lemma}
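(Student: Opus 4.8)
\textbf{Proof proposal for Lemma~\ref{lem-con-case-intersect} (contribution from distinct but intersecting paths).}

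The plan is to decompose an intersecting pair $(\vpie,\vrhoe)$ by analyzing the ``overlap structure'' of $\vec\pi$ and $\vec\rho$, reducing the bound to products of path-counting estimates that we can control via \Cref{lem_hd_sum_restrict_path-sum-gen}, \Cref{lem-gen-sum-product-kappa}, and \Cref{pro_hd_spe_con}. Concretely, since $\vec\pi\cap\vec\rho\neq\emptyset$ but the edge-labeled paths differ, the set of indices where the two paths meet determines a (random) partition of each path into alternating ``shared'' and ``private'' subpaths. First I would set up notation for this: write $\vec\pi$ as a concatenation of maximal segments that are either (i) traversed identically by both paths (common segments), or (ii) disjoint from $\vec\rho$ except at endpoints (private segments of $\vec\pi$), and similarly for $\vec\rho$. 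Using conditional independence of edges given $(\psi_v)_{v\in[n]}$ from \Cref{sec_polya_pam}, together with \Cref{lem_prob_pie_cond}, the probability $\prob^r(\vpie,\vrhoe\subseteq \PAndmdel)$ factorizes (up to the usual $\Theta^{1/\log n}$ and the $\Theta^{\frac{\log^2 n}{\bdt}}=\Theta$ corrections, recall \eqref{eq-b-stay-theta}) into a product of falling-factorial/kernel contributions over the segments — the shared segment being counted only once. This is the step that mirrors the edge-set calculus in \eqref{eq_con_indpedent_present}--\eqref{eq_hd_cal_1_E}, now applied to the union edge set $\vpie\cup\vrhoe$, and the key algebraic point is that $p_s^{\vpie\cup\vrhoe}\le 4$ and $q_s^{\vpie\cup\vrhoe}\le 4k$, so the union is a good edge set in the sense of \Cref{ass_e} and the estimates of \Cref{sec_trun_lowerbound} apply verbatim.

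Next I would sum over all such overlap structures. Fix the number $j\ge 1$ of shared edges and the number of ``excursions'' (maximal common segments) $\ell\ge 1$. The shared part has total length $j$ and contributes, crudely, a kernel product bounded by $\Theta\zeta^{-2}\nu^j n^{j-1}$ via \Cref{lem-gen-sum-product-kappa}; the private parts of $\vec\pi$ have total length $k-j$ split into at most $\ell+1$ subpaths with free endpoints on $[\zeta n,n]$ and, crucially, must avoid the shared vertices and the $r$-neighborhoods. Applying the crude upper bound \Cref{lem-gen-sum-product-kappa} to each private segment (summing over its free interior endpoints), the private part of $\vec\pi$ contributes at most $\big(\Theta\zeta^{-2}\big)^{\ell+1}\nu^{k-j}n^{k-j-\ell}\cdot(\text{choices of excursion positions})$, and the same for $\vec\rho$. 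The number of ways to choose the positions of the $\ell$ excursions along a path of length $k$ is at most $\binom{k}{2\ell}^2\le k^{4\ell}$, polynomial in $\log n$ since $k\le (1+\vep)\log_\nu n$. Multiplying the shared and the two private contributions, the powers of $n$ combine to $n^{(j-1)+(k-j-\ell)+(k-j-\ell)}=n^{2k-j-2\ell-1}$, while the powers of $\nu$ combine to $\nu^{j+2(k-j)}=\nu^{2k-j}$. Comparing with $\expec^r[N_{n,r}(k)]^2$, which by \Cref{prop-path-count-PAMc-LB_simple} is at least $\Theta(3/2)^{4r}\zeta^4 c_\zeta^2 n^{2\vep/2}=\Theta(3/2)^{4r}\zeta^4 c_\zeta^2 n^{\vep}$ — more precisely one wants the matching upper bound for $\expec^r[N_{n,r}(k)]$ of order $\Theta n^{-1}\nu^k\cdot(\text{neighborhood factors})$ obtained by rerunning the first-moment computation — the ratio is of order $n^{-j-2\ell+1}\nu^{-j}$ times polynomial corrections times the $\zeta$- and $r$-dependent constants. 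Since $j\ge 1$ and $\ell\ge 1$, each such term carries a genuine power $n^{-j-2\ell+1}\le n^{-2}$, which beats the $\op(1)$ polylog factors; summing the geometric-type series over $j\ge 1$, $\ell\ge 1$ gives a bound of the form $\Theta\,n^{-1}\,\zeta^{-4}c_\zeta^{-1}\expec^r[N_{n,r}(k)]^2$, and absorbing the $(3/2)^{-r}$ comes from the fact that one of the two private path-counts can be upgraded from the crude \Cref{lem-gen-sum-product-kappa} bound to the sharper \Cref{lem_hd_sum_restrict_path-sum-gen}-type bound on $\partial B_r$, which produces the $(3/2)^{-r}$ gain relative to $\expec^r[N_{n,r}(k)]^2$. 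I would organize this bookkeeping so that the dominant term is the one with $j=\ell=1$ (a single shared stretch) — all others are smaller by further powers of $n$.

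The main obstacle I anticipate is twofold. First, the combinatorial description of the overlap of two self-avoiding paths that share edges but are not equal is genuinely intricate: one must be careful that ``private'' segments of $\vec\pi$ and of $\vec\rho$ can themselves interleave in complicated ways along the line $[\zeta n,n]$, that excursions can have length $1$ (a single shared vertex rather than a shared edge) which needs separate handling, and that the endpoints on $\partial B_r^{\sss(G_n)}(\Ver_1)$ and $\partial B_r^{\sss(G_n)}(\Ver_2)$ may or may not lie on the shared part. Getting the indicator bookkeeping exactly right — so that the union edge set is counted without double-counting and the avoidance constraints are respected — is where the real work lies, and it is the analogue in the variance bound of the path-decomposition flagged in \Cref{sec-path-decompose-u}. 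Second, extracting the $(3/2)^{-r}$ factor (rather than merely $o(1)$) requires that at least one private segment be forced to start or end genuinely at the boundary sphere of size $\ge (3/2)^r$, so that dividing by $\expec^r[N_{n,r}(k)]^2\gtrsim (3/2)^{4r}$ leaves a surplus $(3/2)^{-r}$; this forces one to track which path's endpoint sits on which neighborhood and to apply the sharp lower bound \Cref{lem_hd_sum_restrict_path-sum-gen} in the denominator while using only the crude upper bound \Cref{lem-gen-sum-product-kappa} in the numerator. Once these two points are handled, the summation over $(j,\ell)$ and over positions is a routine (if tedious) geometric estimate, and the stated bound $\Theta(3/2)^{-r}\zeta^{-4}c_\zeta^{-1}\expec^r[N_{n,r}(k)]^2$ follows.
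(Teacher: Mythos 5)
Your decomposition into shared and private segments is in the right spirit — it mirrors the paper's notion of the ``decomposition size'' $u$ of $\vrhoe$ with respect to $\vpie$ — and your treatment of the multi-excursion case (the paper's $u\geq 2$) by crude power counting in $n$ would indeed work there: with two or more excursions one picks up an extra $n^{-u}$ and the polylog factors are harmless. However, there is a concrete gap in the single-excursion case, which is the dominant one and the source of the $(3/2)^{-r}$.

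Your plan is to bound the excursion contribution from above by the crude \Cref{lem-gen-sum-product-kappa} (which yields a factor $\Theta\zeta^{-2}\nu^\ell n^{\ell-1}$ via the \emph{operator norm} $\|\bfT_\kappa\|=\nu$), and then divide by the lower bound $\expec^r[N_{n,r}(k)]^2\gtrsim \Theta(3/2)^{4r}\zeta^4 c_\zeta^2\nu^{2k(1-\vep/4)}n^{-2}$ coming from \Cref{prop-path-count-PAMc-LB_simple} and \Cref{pro_hd_spe_con}, expecting a surplus $(3/2)^{-r}$. But the upper bound on the excursion carries the full $\nu^\ell$ (operator norm), while the lower bound on $\expec^r$ only gives $\nu^{k(1-\vep/4)}$ (truncated spectral radius). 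The residual factor $\nu^{k\vep/4}$ is a genuine power of $n$ — for $k=k_\nu^\star$ it is $\approx n^{\vep(1+\vep)/4}$ — and it swamps the computation: the ratio you produce is not $\Theta(3/2)^{-r}$ but diverges polynomially. This is exactly the issue the paper flags before the proof of \Cref{lem-u=1}: ``we use the spectral norm to derive the upper bound and the (truncated) spectral radius to derive the lower bound, resulting in the loss of a sublinear factor.'' So your claim that the geometric summation ``gives a bound of the form $\Theta n^{-1}\zeta^{-4}c_\zeta^{-1}\expec^r[N_{n,r}(k)]^2$'' is not correct — the stated bound has a $(3/2)^{-r}$, not a vanishing power of $n$, and it does not vanish as $n\to\infty$ for fixed $r$.

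What is missing is the direct comparison that replaces the upper/lower mismatch. The paper does not bound the excursion and $\expec^r$ separately; instead (Lemma~\ref{cor_easy-general-bound} plus \eqref{eq:decompose-rho-xi}) it writes $\expec^r[N_{n,r}(k)]$ itself as a sum over paths, cuts each such path at position $k-\ell$ into $\vec\xi(1)$ and $\vec\xi(2)$, and shows that the sum of kernel products over the excursion $\vec\rho(1)\in\NP_{\pi_j,a}^{\ell}$ is at most $\Theta\zeta^{-2}$ times the corresponding sum over $\vec\xi(2)\in\NP_{b,\nVer_2}^{\ell}$ with $\nVer_2$ ranging over $\partial B_r\cap[\zeta n,n]$. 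The factor $(3/2)^{-r}$ then comes from the fact that the excursion's ``hitting'' endpoint is a single vertex $\pi_j$ while the matching subpath of $\expec^r$ has $\geq(3/2)^r$ possible endpoints. This like-with-like comparison eliminates the $\nu^{k\vep/4}$ gap because the same kernel power appears on both sides and cancels. Finally, the remaining stretch $\vec\xi(1)$ is lower bounded using \Cref{lem_hd_sum_restrict_path-sum-gen-nka} (with $F_2=[\zeta n,n]$), and a geometric sum over the cut position $\ell$ is finite because $\nu^{(\ell-k)(1-\vep/4)}$ decays. Without this rearrangement, the $u=1$ case cannot be closed by the strategy you describe.
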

Given \Cref{lem-con-case-disjoint,lem-con-case-intersect}, \Cref{prop-path-count-PAMc-Var-UB} follows {straightforwardly}:
\begin{proof}[Proof of \Cref{prop-path-count-PAMc-Var-UB} subject to \Cref{lem-con-case-disjoint,lem-con-case-intersect}]
    We can bound the rhs of \eqref{variance_decompose} as
\begin{align}\label{eq-decompose-var-upperbound}
&\sum_{\vpie,\vrhoe\in \NESA^{\sss e,k_\nu^{\star},r}} \Big(\prob^r(\vpie, \vrhoe\subseteq \PAndmdel)-\prob^r(\vpie\subseteq \PAndmdel)\prob^r(\vrhoe\subseteq \PAndmdel)\Big)\\
&\qquad\leq \sum_{\substack{\vpie\in \NESA^{\sss e,k_\nu^\star,r},\\\vpie=\vrhoe}}\prob^r(\vpie, \vrhoe\subseteq \PAndmdel)+\sum_{\vpie\in \NESA^{\sss e,k,r}} \sum_{\substack{\vrhoe\in \NESA^{\sss e,k,r},\\\vec\pi\cap\vec\rho\neq\emptyset,\\\vpie\neq\vrhoe}}\prob^r(\vpie, \vrhoe\subseteq \PAndmdel)\nn\\
&\qquad\quad+\sum_{\substack{\vpie,\vrhoe\in \NESA^{\sss e,k_\nu^{\star},r},\\\vec\pi\cap\vec\rho=\emptyset}} \Big(\prob^r(\vpie, \vrhoe\subseteq \PAndmdel)-\prob^r(\vpie\subseteq \PAndmdel)\prob^r(\vrhoe\subseteq \PAndmdel)\Big).\nn
\end{align}
Hence, we conclude from \eqref{variance_decompose} to \eqref{eq-decompose-var-upperbound} and \Cref{lem-con-case-disjoint,lem-con-case-intersect} that, for sufficiently large $n$ that only depends on $m,\delta,M,r,\vep$ and $\zeta$, on the good event $\GNW$,
\begin{align*}
    \Var^r(N_{n,r}(k_\nu^{\star})) \leq\Theta \bc{3/2}^{-r} {\zeta}^{-4} c_{\zeta}^{-1} \expec^r[N_{n,r}(k_\nu^\star)]^2,
\end{align*}
as desired.
\end{proof}

\subsection{Analysis of Case \ref{it_var_2}: Falling factorial form of path probabilities}\label{sec-con-case-disjoint}
In Case \ref{it_var_2}, $\vec\pi\cap\vec\rho=\emptyset$.  Since the dependence between two edges with distinct endpoints in $\PAndmdel$ is very weak, we believe that in this case, the probability that both $\vpie$ and $\vrhoe$ occur in $\PAndmdel$ is close to the product of the probabilities of their individual occurrences. To prove \Cref{lem-con-case-disjoint}, we first formalize this intuition by proving the following weak dependence estimate:
\begin{lemma}[Decomposition of probability  for disjoint $\vec\pi$ and $\vec\rho$]\label{lem-var-disjoint-paths}
For any $k\leq 2\log_\nu n$, on the good event $\GNW$, with  $\vpie,\vrhoe\in \NESA^{\sss e,k,r}$ and $\vec\pi\cap\vec\rho=\emptyset$,
\zhu{
 \begin{align}
 \label{con-var-disjoint-paths}
    \prob^r(\vpie,\vrhoe\subseteq \PAndmdel)\leq \Theta^{\frac{1}{\log n}+{\zeta}^{-2} k^2 n^{-1}}\prob^r(\vpie\subseteq \PAndmdel)\prob^r(\vrhoe\subseteq \PAndmdel).
\end{align}   }
\end{lemma}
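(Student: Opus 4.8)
\textbf{Proof plan for Lemma~\ref{lem-var-disjoint-paths}.} The plan is to reuse the computation of \eqref{eq_hd_lpath_probability_cond_r} but now applied to the edge set $E=\vpie\cup\vrhoe$ instead of $E=\vpie$. Since $\vec\pi\cap\vec\rho=\varnothing$, the union $\vpie\cup\vrhoe$ is again a good edge set (each vertex is used at most twice, so at most four times in total; its size is at most $2k_\nu^\star\le 4\log_\nu n$; and all its vertices have age at least $\zeta n$), so \Cref{lem_prob_pie_cond} applies verbatim. Writing $p_s^{\sss\vec\pi},q_s^{\sss\vec\pi}$ and $p_s^{\sss\vec\rho},q_s^{\sss\vec\rho}$ for the exponents of $\psi_s,1-\psi_s$ coming from $\vpie$ and $\vrhoe$ respectively, and $p_s=p_s^{\sss\vec\pi}+p_s^{\sss\vec\rho}$, $q_s=q_s^{\sss\vec\pi}+q_s^{\sss\vec\rho}$ for the union, the three applications of \eqref{eq_hd_prod_simp_pq} give, on the good event $\GNW$,
\begin{align*}
\frac{\prob^r(\vpie,\vrhoe\subseteq \PAndmdel)}{\prob^r(\vpie\subseteq \PAndmdel)\prob^r(\vrhoe\subseteq \PAndmdel)}
=\Theta^{\frac{1}{\log n}}\prod_{s=2}^n
\frac{(\alpha+p_s-1)_{p_s}}{(\alpha+p_s^{\sss\vec\pi}-1)_{p_s^{\sss\vec\pi}}(\alpha+p_s^{\sss\vec\rho}-1)_{p_s^{\sss\vec\rho}}}
\cdot\frac{(\beta_s+q_s-1)_{q_s}}{(\beta_s+q_s^{\sss\vec\pi}-1)_{q_s^{\sss\vec\pi}}(\beta_s+q_s^{\sss\vec\rho}-1)_{q_s^{\sss\vec\rho}}}
\cdot\frac{(\alpha+\beta_s+p_s^{\sss\vec\pi}+q_s^{\sss\vec\pi}-1)_{p_s^{\sss\vec\pi}+q_s^{\sss\vec\pi}}(\alpha+\beta_s+p_s^{\sss\vec\rho}+q_s^{\sss\vec\rho}-1)_{p_s^{\sss\vec\rho}+q_s^{\sss\vec\rho}}}{(\alpha+\beta_s+p_s+q_s-1)_{p_s+q_s}}.
\end{align*}
So the task reduces to showing that this product of ratios of falling factorials is $\Theta^{{\zeta}^{-2}k^2n^{-1}}$.

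The key point is that the product telescopes to something that is $1$ to leading order, with corrections controlled by inverse powers of $s$. Indeed, for each factor a falling factorial $(x+a-1)_a$ with $a\le 4$ is a product of at most $4$ terms, each within $O(1)k/s$ of $\beta_s\asymp s$ (for the $q$-type factors) or $O(1)$ of $\alpha$ (for the $p$-type factors). The $p$-type ratio is bounded above and below by constants depending only on $m,\delta$ since $p_s,p_s^{\sss\vec\pi},p_s^{\sss\vec\rho}\in\{0,1,2\}$; it is nontrivially different from $1$ only at vertices where both $\vec\pi$ and $\vec\rho$ make a turn, and there are at most $k$ such vertices, contributing a factor $\Theta^{|V_{\vpie}\cap V_{\vrhoe,\text{turn}}|}$ which is absorbed because this set is empty when $\vec\pi\cap\vec\rho=\varnothing$. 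The genuine work is in the $q$-type and denominator factors: writing each ratio as $\exp$ of a sum of logarithms and applying the mean value theorem to $\log(1+x)$ exactly as in \eqref{eq_hd_si_prod_1}--\eqref{eq_hd_si_prod_4}, the leading $\log(1+O(q_s/s))$ terms cancel between numerator and denominator (this is the statement that the "inhomogeneous random graph" part is multiplicative), and what remains is a sum over $s$ of terms of size $O(1)(q_s/s)^2$. Using $\qsE\le \min\{s,\,k\}$ together with $q_s=0$ for $s<\zeta n$, one gets $\sum_{s}(q_s/s)^2\le k\sum_{s\ge \zeta n}k/s^2\le \Theta k^2(\zeta n)^{-1}=\Theta\zeta^{-1}k^2n^{-1}$; since $\zeta\le 1$, this is at most $\zeta^{-2}k^2n^{-1}$, giving the claimed $\Theta^{\zeta^{-2}k^2n^{-1}}$.

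\textbf{Main obstacle.} The delicate part is not any single estimate but making precise the cancellation of the leading-order terms in the $\log$-expansion: one must verify that
\[
\prod_{s}\frac{(\beta_s+q_s-1)_{q_s}}{(\beta_s+q_s^{\sss\vec\pi}-1)_{q_s^{\sss\vec\pi}}(\beta_s+q_s^{\sss\vec\rho}-1)_{q_s^{\sss\vec\rho}}}\cdot\frac{(\alpha+\beta_s+p_s^{\sss\vec\pi}+q_s^{\sss\vec\pi}-1)_{p_s^{\sss\vec\pi}+q_s^{\sss\vec\pi}}(\alpha+\beta_s+p_s^{\sss\vec\rho}+q_s^{\sss\vec\rho}-1)_{p_s^{\sss\vec\rho}+q_s^{\sss\vec\rho}}}{(\alpha+\beta_s+p_s+q_s-1)_{p_s+q_s}}
\]
equals $\prod_{e\in\vpie\cup\vrhoe}(\ushort{e}/\bar e)^{\chi}\big/\big(\prod_{e\in\vpie}(\ushort{e}/\bar e)^{\chi}\prod_{e\in\vrhoe}(\ushort{e}/\bar e)^{\chi}\big)=1$ up to multiplicative $\Theta^{\zeta^{-2}k^2n^{-1}}$, i.e.\ the $\kappa$-kernel bookkeeping of \Cref{lem_hd_lower_part1} is additive over disjoint edge sets. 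This follows because \eqref{eq_hd_cal_1_E} expresses $\prod_s(\beta_s+\qsE-1)_{\qsE}/(\alpha+\beta_s+\psE+\qsE-1)_{\psE+\qsE}$ as $\Theta^{\log^2n/\bdt}(2m+\delta)^{-|E|}\prod_{e\in E}\ushort{e}^{-(1-\chi)}\bar e^{-\chi}$, which is \emph{exactly} multiplicative in $E$ up to the error factor; here we take $\bdt=\lceil\zeta n\rceil$ so $\log^2 n/\bdt = O(\zeta^{-1}n^{-1}\log^2 n)$, and since $k\le 2\log_\nu n$ forces $k^2\gtrsim\log^2 n$ only up to constants we may as well bound $\log^2 n/\bdt\le \Theta\zeta^{-2}k^2 n^{-1}$ after enlarging the constant — alternatively, one reruns the estimates \eqref{eq-meanvalue-ln(1+x)}, \eqref{eq_hd_si_prod_3}, \eqref{eq_hd_si_prod_4} keeping the true $k$-dependence rather than replacing $|E|$ by its crude bound $4\log_\nu n$, which is what produces the sharper $k^2$ instead of $\log^2 n$. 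Once this is in place, combining the $p$-type boundedness, the $q$-type/denominator cancellation, and the absorption of the turn-vertex and boundary corrections into $\Theta^{1/\log n}$ and $\Theta^{\zeta^{-2}k^2n^{-1}}$ yields \eqref{con-var-disjoint-paths}.
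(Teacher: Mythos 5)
Your proposal is correct in outline and reaches the lemma, but it takes a different route from the paper's. The paper applies \eqref{eq_E-in-PAM-cond-rep} to $E=\vpie\cup\vrhoe$, $E=\vpie$, $E=\vrhoe$ and then proves the per-$s$ inequality \eqref{eq_bound_falling_decomposition} directly, observing that since $\vec\pi\cap\vec\rho=\emptyset$ at least one of $\pspi,\psrho$ vanishes at each $s$, and then uses a per-$j$ comparison that first \emph{drops} $\pspi$ from the denominator (to get an upper bound) and then compares falling factorials, yielding a per-$s$ error $\Theta^{\zeta^{-2}k^2n^{-2}}$, multiplied over $n$ values of $s$. You instead observe that each of the three conditional probabilities can be written, via \eqref{eq_hd_cal_1_E} (applied to a general good edge set, not just a path), as an error factor times an explicitly multiplicative quantity $(2m+\delta)^{-|E|}\prod_{e\in E}\ushort{e}^{-(1-\chi)}\bar e^{-\chi}$, and that the multiplicative parts cancel exactly over disjoint edge sets, leaving only the three error factors. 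Both are legitimate; yours has the appeal of making the "$\kappa$-kernel" structure do the cancellation for you rather than re-deriving it per $s$.

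Two points you should tighten. First, of your two suggested fixes to upgrade $\Theta^{\log^2 n/\bdt}$ to $\Theta^{\zeta^{-2}k^2n^{-1}}$, the shortcut "since $k\leq2\log_\nu n$ forces $k^2\gtrsim\log^2 n$ up to constants" is false: the lemma is stated for all $k\leq 2\log_\nu n$, including bounded $k$, and $k^2\asymp\log^2 n$ does not hold in that range. Only your alternative (rerun \eqref{eq-meanvalue-ln(1+x)}, \eqref{eq_hd_si_prod_3}, \eqref{eq_hd_si_prod_4} keeping $|E|$ rather than the crude bound $4\log_\nu n$, yielding errors of order $|E|^2/\bdt\leq\zeta^{-1}k^2n^{-1}$) is valid; you should commit to that one. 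Second, the "$p$-type" bookkeeping is simpler than your exposition suggests: since $\vec\pi\cap\vec\rho=\emptyset$, at most one of $\pspi,\psrho$ is nonzero for each $s$, so $(\alpha+p_s-1)_{p_s}=(\alpha+\pspi-1)_{\pspi}(\alpha+\psrho-1)_{\psrho}$ exactly, with no "turn vertex" analysis needed. Also, your intermediate Taylor-expansion story ("leading $\log(1+O(q_s/s))$ terms cancel, only $(q_s/s)^2$ remains") is imprecise: the $\prod_i(1+\frac{\qsE+i-2m}{(2m+\delta)s})^{-1}$ piece from \eqref{eq_hd_fi_prod-before} is not additive over disjoint $E$, and would contribute uncancelled $O(\pspi/s)$ terms in a naive expansion; it is because you ultimately absorb this piece wholesale into the $\Theta^{|E|^2/\bdt}$ error factor rather than asking it to cancel that the argument closes. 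Finally, you should cite \eqref{eq_E-in-PAM-cond} (the edge-set form) rather than \Cref{lem_prob_pie_cond} itself, since $\vpie\cup\vrhoe$ is a good edge set but not an edge-labeled path.
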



\begin{proof}
In \eqref{eq_E-in-PAM-cond}, we \ch{have} proved that
\begin{align}\label{eq_E-in-PAM-cond-rep}
    \PP^r(E\subseteq \PAndmdel)&=\Theta^{\frac{1}{\log n}}\prod_{s=2}^n
	\frac{(\alpha+\psE-1)_{\psE}(\beta_s+\qsE-1)_{\qsE}}{(\alpha+\beta_s+\psE+\qsE-1)_{\psE+\qsE}}.
\end{align}
We now apply \eqref{eq_E-in-PAM-cond-rep} to different choices of $E$:
\begin{itemize}
    \item[$\rhd$] With $\vec\pi\cap\vec\rho=\emptyset$ and $E=\vpie\cup\vrhoe$, \eqref{eq_E-in-PAM-cond-rep} implies that
\eqan{\label{eq_hd_prod_simp_pq_twopath}
	\prob^r(\vpie,\vrhoe\subseteq \PAndmdel) =\Theta^{\frac{1}{\log n}}\prod_{s=2}^n
	\frac{(\alpha+p_s-1)_{p_s}(\beta_s+q_s-1)_{q_s}}{(\alpha+\beta_s+p_s+q_s-1)_{p_s+q_s}},
	} 
where \ch{now}
\eqan{p_s=p_s^{\sss \pi\cup\rho}=\sum_{e\in \vpie\cup\vrhoe}\indic{s=\ushort{e}}=\pspi+\psrho\quad\text{and}\quad q_s=q_s^{\sss \pi\cup\rho}=\sum_{e\in \vpie\cup\vrhoe}\indic{s\in (\ushort{e},\bar{e})}=\qspi+\qsrho,\nn
}
and where $\pspi,\qspi,\psrho,\qsrho$ are defined as in \eqref{def_hd_pq}.
\smallskip

\item[$\rhd$] With $E=\vpie$ and ${E=}\vrhoe$, \eqref{eq_E-in-PAM-cond-rep} implies that
\begin{align}\label{eq_hd_prod_simp_pq_twopath-2}
        &\prob^r(\vpie\subseteq \PAndmdel)\prob^r(\vrhoe\subseteq \PAndmdel)\\
        &\qquad=\Theta^{\frac{1}{\log n}}\prod_{s=2}^n\frac{(\alpha+\pspi-1)_{\pspi}(\beta_s+\qspi-1)_{\qspi}}{(\alpha+\beta_s+\pspi+\qspi-1)_{\pspi+\qspi}}\frac{(\alpha+\psrho-1)_{\psrho}(\beta_s+\qsrho-1)_{\qsrho}}{(\alpha+\beta_s+\psrho+\qsrho-1)_{\psrho+\qsrho}}.\nn
\end{align}
\end{itemize}

{To prove \eqref{con-var-disjoint-paths}, and given \eqref{eq_hd_prod_simp_pq_twopath} and \eqref{eq_hd_prod_simp_pq_twopath-2}, we need to compare the two products on their rhs's.}{}
In what follows, we {will prove}{} that, for any $s\in[2,n]$,
\begin{align}\label{eq_bound_falling_decomposition}
    &\frac{(\alpha+p_s-1)_{p_s}(\beta_s+q_s-1)_{q_s}}{(\alpha+\beta_s+p_s+q_s-1)_{p_s+q_s}}\\
    &\qquad\leq \Theta^{{\zeta}^{-2} k^2 n^{-2}} \frac{(\alpha+\pspi-1)_{\pspi}(\beta_s+\qspi-1)_{\qspi}}{(\alpha+\beta_s+\pspi+\qspi-1)_{\pspi+\qspi}}\frac{(\alpha+\psrho-1)_{\psrho}(\beta_s+\qsrho-1)_{\qsrho}}{(\alpha+\beta_s+\psrho+\qsrho-1)_{\psrho+\qsrho}}.\nn
  \end{align}
{By \eqref{eq_hd_prod_simp_pq_twopath} and \eqref{eq_hd_prod_simp_pq_twopath-2}, this obviously proves \eqref{con-var-disjoint-paths}. We are left to proving \eqref{eq_bound_falling_decomposition}.}{}

Since $\vpie,\vrhoe\in \NESA^{\sss e,k,r}$,  all vertices occurring in {$\vpie,\vrhoe$}{} have an age no less than $\zeta n$. Then, for $s< {\zeta} n$, $\pspi,\qspi,\psrho,\qsrho$ are all $0$, and \eqref{eq_bound_falling_decomposition} holds for these \ch{values of $s$}. 

Now we consider the case that $s\geq {\zeta} n$. Since $\vec\pi\cap\vec\rho=\emptyset$, at least one of $\pspi$ and $\psrho$ is zero. Without loss of generality, we assume that $\psrho=0$. As $q_s=\qspi+\qsrho$, we can then decompose the lhs of \eqref{eq_bound_falling_decomposition} as 
\begin{align}\label{left-bound_falling_decomposition}
    &\frac{(\alpha+p_s-1)_{p_s}(\beta_s+q_s-1)_{q_s}}{(\alpha+\beta_s+p_s+q_s-1)_{p_s+q_s}}\\
    &\qquad=\frac{(\alpha+\pspi-1)_{\pspi}(\beta_s+\qspi-1)_{\qspi}}{(\alpha+\beta_s+\pspi+\qspi-1)_{\pspi+\qspi}}\frac{(\beta_s+\qspi+\qsrho-1)_{\qsrho}}{(\alpha+\beta_s+\pspi+\qspi+\qsrho-1)_{\qsrho}}.\nn
\end{align}
For the rhs of \eqref{eq_bound_falling_decomposition}, since $\psrho=0$, 
\begin{align}\label{right-bound_falling_decomposition}
    &\frac{(\alpha+\pspi-1)_{\pspi}(\beta_s+\qspi-1)_{\qspi}}{(\alpha+\beta_s+\pspi+\qspi-1)_{\pspi+\qspi}}\frac{(\alpha+\psrho-1)_{\psrho}(\beta_s+\qsrho-1)_{\qsrho}}{(\alpha+\beta_s+\psrho+\qsrho-1)_{\psrho+\qsrho}}\\
    &\qquad=\frac{(\alpha+\pspi-1)_{\pspi}(\beta_s+\qspi-1)_{\qspi}}{(\alpha+\beta_s+\pspi+\qspi-1)_{\pspi+\qspi}}\frac{(\beta_s+\qsrho-1)_{\qsrho}}{(\alpha+\beta_s+\qsrho-1)_{\qsrho}}.\nn
\end{align}
\zhu{Considering} \eqref{left-bound_falling_decomposition} alongside \eqref{right-bound_falling_decomposition}, we see that they have a common factor 
\begin{align*}
    \frac{(\alpha+\pspi-1)_{\pspi}(\beta_s+\qspi-1)_{\qspi}}{(\alpha+\beta_s+\pspi+\qspi-1)_{\pspi+\qspi}}.
\end{align*}
Hence, it is sufficient to compare $\frac{(\beta_s+\qspi+\qsrho-1)_{\qsrho}}{(\alpha+\beta_s+\pspi+\qspi+\qsrho-1)_{\qsrho}}$ and $\frac{(\beta_s+\qsrho-1)_{\qsrho}}{(\alpha+\beta_s+\qsrho-1)_{\qsrho}}$.

Recall that $\alpha=m+\delta$ and $\beta_s=(2s-3)m+\delta(s-1)$. For each integer $j\in[0,\qsrho-1]$, since $\qsrho\leq k$ and $s\geq {\zeta} n$,
\begin{align*}
    \frac{\beta_s+\qspi+j}{\alpha+\beta_s+\qspi+j}-\frac{\beta_s+j}{\alpha+\beta_s+j}=\frac{\alpha\qspi}{(\alpha+\beta_s+\qspi+j)(\alpha+\beta_s+j)}\leq \Theta {\zeta}^{-2} k n^{-2}\frac{\beta_s+j}{\alpha+\beta_s+j}.
\end{align*}
Then,
\begin{align*}
     \frac{\beta_s+\qspi+j}{\alpha+\beta_s+\pspi+\qspi+j}\leq\frac{\beta_s+\qspi+j}{\alpha+\beta_s+\qspi+j}\leq (1+\Theta {\zeta}^{-2} k n^{-2})\frac{\beta_s+j}{\alpha+\beta_s+j}.
\end{align*}
Hence,
\begin{align}\label{step-bound_falling_decomposition}
    \frac{(\beta_s+\qspi+\qsrho-1)_{\qsrho}}{(\alpha+\beta_s+\pspi+\qspi+\qsrho-1)_{\qsrho}}\leq (1+\Theta {\zeta}^{-2} k n^{-2})^k\frac{(\beta_s+\qsrho-1)_{\qsrho}}{(\alpha+\beta_s+\qsrho-1)_{\qsrho}},
\end{align}
Recall \Cref{def-Theta}. Since $(1+\Theta {\zeta}^{-2} k n^{-2})^k\leq \e^{\Theta {\zeta}^{-2} k^2 n^{-2}}=\Theta^{{\zeta}^{-2} k^2 n^{-2}}$,
 \eqref{eq_bound_falling_decomposition} follows from the combination of \eqref{left-bound_falling_decomposition}-\eqref{step-bound_falling_decomposition}, {as desired}.
\invisible{Then, by \eqref{eq_hd_prod_simp_pq_twopath}-\eqref{eq_bound_falling_decomposition},
\begin{align*}
    &\prob^r(\vpie,\vrhoe\subseteq \PAndmdel) =\Theta^{\frac{1}{\log n}}\prod_{s=2}^n
	\frac{(\alpha+p_s-1)_{p_s}(\beta_s+q_s-1)_{q_s}}{(\alpha+\beta_s+p_s+q_s-1)_{p_s+q_s}}\\
 &\leq \Theta^{\frac{1}{\log n}+{\zeta}^{-2} k^2 n^{-1}}\prod_{s=2}^n\frac{(\alpha+\pspi-1)_{\pspi}(\beta_s+\qspi-1)_{\qspi}}{(\alpha+\beta_s+\pspi+\qspi-1)_{\pspi+\qspi}}\frac{(\alpha+\psrho-1)_{\psrho}(\beta_s+\qsrho-1)_{\qsrho}}{(\alpha+\beta_s+\psrho+\qsrho-1)_{\psrho+\qsrho}}\\
 &= \Theta^{\frac{1}{\log n}+{\zeta}^{-2} k^2 n^{-1}}\prob^r(\vpie\subseteq \PAndmdel)\prob^r(\vrhoe\subseteq \PAndmdel),
\end{align*}
as desired.}
\end{proof}
With \Cref{lem-var-disjoint-paths}, \Cref{lem-con-case-disjoint} follows directly:

\begin{proof}[Proof of \Cref{lem-con-case-disjoint}]
    By \Cref{lem-var-disjoint-paths}, for any $k\leq 2\log_\nu n$, on the good event $\GNW$, 
\begin{align*}
\sum_{\substack{\vpie,\vrhoe\in \NESA^{\sss e,k,r},\\\vec\pi\cap\vec\rho=\emptyset}} &\Big(\prob^r(\vpie, \vrhoe\subseteq \PAndmdel)-\prob^r(\vpie\subseteq \PAndmdel)\prob^r(\vrhoe\subseteq \PAndmdel)\Big)\\
&\leq (\Theta^{\frac{1}{\log n}+{\zeta}^{-2} k^2 n^{-1}}-1)\sum_{\substack{\vpie,\vrhoe\in \NESA^{\sss e,k,r},\\\vec\pi\cap\vec\rho=\emptyset}}\prob^r(\vpie\subseteq \PAndmdel)\prob^r(\vrhoe\subseteq \PAndmdel)\nn\\
&{\leq (\Theta^{\frac{1}{\log n}+{\zeta}^{-2} k^2 n^{-1}}-1)\expec^r[N_{n,r}(k)]^2=o(1)\expec^r[N_{n,r}(k)]^2,}
\end{align*}
{since $\Theta^{\frac{1}{\log n}+{\zeta}^{-2} k^2 n^{-1}}-1=o(1)$, uniformly in the choices of $\Ver_1$ and $\Ver_2$, as desired.}
\invisible{, and 
\begin{align*}
    \sum_{\substack{\vpie,\vrhoe\in \NESA^{\sss e,k,r},\\\vec\pi\cap\vec\rho=\emptyset}}\prob^r(\vpie\subseteq \PAndmdel)\prob^r(\vrhoe\subseteq \PAndmdel)\leq \expec^r[N_{n,r}(k)]^2,
\end{align*}
we conclude that, uniformly in the choices of $\Ver_1$ and $\Ver_2$,
\begin{align*}
\sum_{\substack{\vpie,\vrhoe\in \NESA^{\sss e,k,r},\\\vec\pi\cap\vec\rho=\emptyset}} &\Big(\prob^r(\vpie, \vrhoe\subseteq \PAndmdel)-\prob^r(\vpie\subseteq \PAndmdel)\prob^r(\vrhoe\subseteq \PAndmdel)\Big)\\
&=o(1) \expec^r[N_{n,r}(k)]^2,    
\end{align*}
as desired.}
\end{proof}


\subsection{Path decomposition in Case \ref{it_var_3}}\label{sec-path-decompose-u}
We now consider Case \ref{it_var_3}, where $\vec\pi\cap\vec\rho\neq\emptyset$ and $\vpie\neq\vrhoe$ for some $\vpie,\vrhoe\in \NESA^{\sss e,k,r}$. In this setting, $\vrhoe\backslash\vpie$, {which equals}{} the set of edges in $\vrhoe$ that are not in $\vpie$, can be decomposed as a collection of subpaths {$\vec \rho(1)^{\sss e},\ldots, \vec \rho(u)^{\sss e}$ of $\vrhoe$, for a certain $u\in[k]$. Here, the subpaths are recorded}{} in the order of their labels in $\vrhoe$ (see \Cref{pic_path_decompose}) such that 
\begin{enumerate}
    \item the vertices in $\vec \rho(j)^{\sss e}$ and $\vpie$ are disjoint, except \zhu{for} both ends of $\vec \rho(j)^{\sss e}$;
    \item both ends of $\vec \rho(j)^{\sss e}$ are in $\vpie$, except \zhu{for} 
    $\rho_0$, \ch{which} can be in \zhu{$\partial B_r^{\sss(G_n)}(\Ver_1)\cap [\zeta n,n]$,} and 
    $\rho_k$, \ch{which} can be in $\partial B_r^{\sss(G_n)}(\Ver_2)\cap[\zeta n,n]$.
\end{enumerate}
As $\vpie$ divides $\vrhoe$ into $u$ subpaths, we refer to $u$ as the \textit{decomposition size} of $\vrhoe$ wrt $\vpie$.
\begin{figure}[htbp]
  \centering\def\svgwidth{0.7\columnwidth} 
  \includesvg{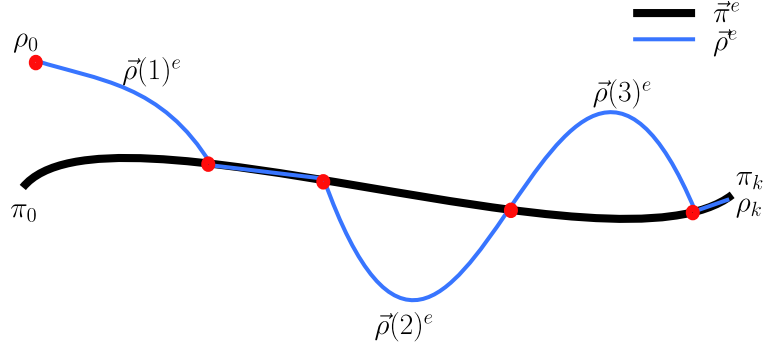}
  \caption{A sample of the path decomposition with decomposition size $u=3$}
  \label{pic_path_decompose}
\end{figure}

Using this decomposition, we can write $\prob^r(\vpie, \vrhoe\subseteq \PAndmdel)$ as follows:
\begin{lemma}[Decomposition of probability for distinct, but intersecting, $\vpie$ and $\vrhoe$]\label{lem_prod_twopath_decompose}
  For $n\geq \zeta^{-2}$ and $k\leq 2\log_\nu n$, with  $\vpie,\vrhoe\in \NESA^{\sss e,k,r}$ such that $\vec\pi\cap\vec\rho\neq\emptyset$ and $\vpie\neq\vrhoe$,
    \zhu{\begin{align}\label{eq_prod_twopath_decompose}
    &\prob^r(\vpie, \vrhoe\subseteq \PAndmdel)=\Theta ^u\prob^r(\vpie\subseteq \PAndmdel)\prod_{j=1}^u \prob^r(\vec\rho(j)^{\sss e}\subseteq \PAndmdel).
\end{align}}
\end{lemma}

\begin{proof}
Since $\vec\pi,\vec\rho$ are self-avoiding paths, in the edge set $\vpie\cup\vrhoe$, each vertex has degree at most $4$ (see \Cref{pic_path_decompose}). Hence, the edge set $\vpie\cup\vrhoe$ satisfies \Cref{ass_e} with $\bdt =\lceil \zeta n\rceil$. Then, analogous to \eqref{eq_hd_prod_simp_pq_twopath}, by \eqref{eq_E-in-PAM-cond},
 \eqan{\label{eq_hd_prod_simp_pq_var}
	\prob^r(\vpie, \vrhoe\subseteq \PAndmdel)
 &=\Theta^{\frac{1}{\log n}}\prod_{s=2}^n \frac{(\alpha+p_s-1)_{p_s}(\beta_s+q_s-1)_{q_s}}{(\alpha+\beta_s+p_s+q_s-1)_{p_s+q_s}},
	}
where now
\eqan{p_s=p_s^{\sss \pi\cup\rho}=\sum_{e\in \vpie\cup\vrhoe}\indic{s=\ushort{e}}=\pspi+\sum_{j=1}^u p_s^{\sss \rho(j)},\nn}
and
\eqan{q_s=q_s^{\sss \pi\cup\rho}=\sum_{e\in \vpie\cup\vrhoe}\indic{s\in (\ushort{e},\bar{e})}=\qspi+\sum_{j=1}^u q_s^{\sss \rho(j)}.\nn
}
{In particular,}{} $p_s\leq 4$ since each vertex has degree at most $4$ in $\vpie\cup\vrhoe$.

\ch{Since} $p_s=p_s^{\sss \pi}+\sum_{j\in[u]}p_s^{\sss \rho(j)}$, we note that, for each $s\in[n]$,
\begin{align}\label{eq-decom-aps1-u}
    (\alpha+p_s-1)_{p_s}= (\alpha+p_s^\pi-1)_{p_s^\pi}\prod_{j=1}^u(\alpha+p_s^{\sss \rho(j)}-1)_{p_s^{\sss \rho(j)}}
\end{align}
precisely when at most one of $p_s^\pi,p_s^{\rho(1)},\ldots,p_s^{\rho(u)}$ is nonzero. Furthermore, even if \eqref{eq-decom-aps1-u} does not hold, since $p_s\leq 4$, $\alpha=m+\delta$ and $(x)_0=1$, we still have
\begin{align*}
    (\alpha+p_s-1)_{p_s}=\Theta  (\alpha+p_s^\pi-1)_{p_s^\pi}\prod_{j=1}^u(\alpha+p_s^{\sss \rho(j)}-1)_{p_s^{\sss \rho(j)}}.
\end{align*}
On the other hand, at least two of $p_s^\pi,p_s^{\rho(1)},\ldots,p_s^{\rho(u)}$ are nonzero only when $s$ is equal to one end of the path $\vec \rho(j)^{\sss e}$
for some $j\in[u]$. Hence, there are at most $2u{\leq 2k}$ such $s$. Consequently,
 \begin{align}\label{eq_divide_prod_var_1}
     \prod_{s=2}^n (\alpha+p_s-1)_{p_s}=\Theta ^u\prod_{s=2}^n (\alpha+p_s^\pi-1)_{p_s^\pi}\prod_{j=1}^u(\alpha+p_s^{\sss \rho(j)}-1)_{p_s^{\sss \rho(j)},}
\end{align}
Moreover, by \eqref{eq_hd_cal_1_E}, for $\bdt =\lceil \zeta n\rceil$ and any edge set $E$ satisfying \Cref{ass_e},
\begin{align}\label{eq_hd_cal_1_E-rep}
    \prod_{s=2}^n \frac{(\beta_s+\qsE-1)_{\qsE}}{(\alpha+\beta_s+\psE+\qsE-1)_{\psE+\qsE}}=\Theta^{\zeta^{-1}\frac{\log^2 n}{n}}(2m+\delta)^{-\abs{E}}\prod_{e\in E}\frac{1}{\ushort{e}^{1-\chi}\bar{e}^{\chi}},
\end{align}
Note that $\Theta^{\zeta^{-1}\frac{\log^2 n}{n}}=\Theta$ for $n\geq\zeta^{-2}$ from \eqref{eq-b-stay-theta}.
Let $\ell_j$ be the length of ${\vec \rho(j)^{\sss e}}$ and $\ell=\sum_{j\in[u]}\ell_j\leq k\leq 2\log_\nu n$. For $E=\vpie\cup\vrhoe$, \eqref{eq_hd_cal_1_E-rep} gives that
\begin{align}
\label{eq_hd_cal_1_picuprho}
    \prod_{s=2}^n \frac{(\beta_s+q_s-1)_{q_s}}{(\alpha+\beta_s+p_s+q_s-1)_{p_s+q_s}}=\Theta(2m+\delta)^{-k-\ell}\prod_{e\in \vpie\cup\vrhoe}\frac{1}{\ushort{e}^{1-\chi}\bar{e}^{\chi}}.
\end{align}
Analogously, for $E={\vec \rho(j)^{\sss e}}$ and $\vpie$, \eqref{eq_hd_cal_1_E-rep} gives that
\begin{align}
\label{eq_hd_cal_1_picuprho-2}
    \prod_{s=2}^n \frac{(\beta_s+q_s^{\sss \rho(j)}-1)_{q_s^{\sss \rho(j)}}}{(\alpha+\beta_s+p_s^{\sss \rho(j)}+q_s^{\sss \rho(j)}-1)_{p_s^{\sss \rho(j)}+q_s^{\sss \rho(j)}}}=\Theta(2m+\delta)^{-\ell_j}\prod_{e\in \vec\rho(j)^{\sss e}}\frac{1}{\ushort{e}^{1-\chi}\bar{e}^{\chi}},
\end{align}
and
\begin{align}
\label{eq_hd_cal_1_picuprho-3}
    \prod_{s=2}^n \frac{(\beta_s+\qspi-1)_{\qspi}}{(\alpha+\beta_s+\pspi+\qspi-1)_{\pspi+\qspi}}=\Theta(2m+\delta)^{-k}\prod_{e\in \vec\pi^{\sss e}}\frac{1}{\ushort{e}^{1-\chi}\bar{e}^{\chi}}.
\end{align}
Hence, we conclude from \eqref{eq_hd_cal_1_picuprho} to \eqref{eq_hd_cal_1_picuprho-3} that
\begin{align}\label{eq_prod_twopath_decompose_0}
    &\prod_{s=2}^n \frac{(\beta_s+\qspi-1)_{\qspi}}{(\alpha+\beta_s+\pspi+\qspi-1)_{\pspi+\qspi}}\prod_{j=1}^u \frac{(\beta_s+q_s^{\sss \rho(j)}-1)_{q_s^{\sss \rho(j)}}}{(\alpha+\beta_s+p_s^{\sss \rho(j)}+q_s^{\sss \rho(j)}-1)_{p_s^{\sss \rho(j)}+q_s^{\sss \rho(j)}}}\\
    &\qquad=\Theta^u(2m+\delta)^{-k-\ell}\prod_{e\in \vpie\cup\vrhoe}\frac{1}{\ushort{e}^{1-\chi}\bar{e}^{\chi}}= \Theta^u\prod_{s=2}^n \frac{(\beta_s+q_s-1)_{q_s}}{(\alpha+\beta_s+p_s+q_s-1)_{p_s+q_s}}.\nn
\end{align}
Combining  \eqref{eq_hd_prod_simp_pq_var}, \eqref{eq_divide_prod_var_1} and \eqref{eq_prod_twopath_decompose_0} gives that
\begin{align*}
        &\prob^r(\vpie, \vrhoe\subseteq \PAndmdel)\\
    &=\Theta ^u\prod_{s=2}^n \frac{(\alpha+p_s^\pi-1)_{p_s^\pi}(\beta_s+\qspi-1)_{\qspi}}{(\alpha+\beta_s+\pspi+\qspi-1)_{\pspi+\qspi}}\prod_{j=1}^u \frac{(\alpha+p_s^{\sss \rho(j)}-1)_{p_s^{\sss \rho(j)}}(\beta_s+q_s^{\sss \rho(j)}-1)_{q_s^{\sss \rho(j)}}}{(\alpha+\beta_s+p_s^{\sss \rho(j)}+q_s^{\sss \rho(j)}-1)_{p_s^{\sss \rho(j)}+q_s^{\sss \rho(j)}}}\nn.
\end{align*}
Then \eqref{eq_prod_twopath_decompose} follows directly from applying \Cref{lem_prob_pie_cond} to the {above equation}.
\end{proof}

\subsection{Analysis of Case \ref{it_var_3}: Decomposition of  paths}\label{sec-con-case-intersect}
In this section, we prove \Cref{lem-con-case-intersect}.
With \Cref{lem_prod_twopath_decompose} in hand, we aim to bound the product factors in \eqref{eq_prod_twopath_decompose}\zhu{, which, by \Cref{lem_prob_pie_cond}, is
\begin{align*}
      \Theta^u\prod_{s=2}^n \prod_{j=1}^u \frac{(\alpha+p_s^{\sss \rho(j)}-1)_{p_s^{\sss \rho(j)}}(\beta_s+q_s^{\sss \rho(j)}-1)_{q_s^{\sss \rho(j)}}}{(\alpha+\beta_s+p_s^{\sss \rho(j)}+q_s^{\sss \rho(j)}-1)_{p_s^{\sss \rho(j)}+q_s^{\sss \rho(j)}}},
\end{align*}
}
using $\expec^r[N_{n,r}(k)]$. To do so, we express $\vec\rho(j)$ as the tuple $(\rho(j)_0,\ldots,\rho(j)_{\ell_j})$ and let  
$\lb^{\rho(j)}_i$ be the label of $\rho(j)_i$ in $\vec\rho$. 

In  \Cref{lem_hd_sum_restrict_path}, we have proved that for any $n\geq \zeta^{-2}$, any $\ell\leq 2\log_\nu n$, any $\lb^\pi_{0}\in \cbc{\rO,\rY}$ and any integers $\nVer_1,\nVer_2\in[\zeta n,n]$ such that $\nVer_1\not\in B_{r-1}^{\sss(G_n)}(\Ver_1)\cup B_r^{\sss(G_n)}(\Ver_2)$ and $\nVer_2\not\in B_r^{\sss(G_n)}(\Ver_1)\cup B_{r-1}^{\sss(G_n)}(\Ver_2)$,
    \begin{align}\label{eq_hd_sum_restrict_path-rep}
    &\sum_{\vpie\in \TESA_{\nVer_1,\nVer_2}^{\sss e,\ell}}\prod_{s=2}^n\frac{(\alpha+p_s-1)_{p_s}(\beta_s+q_s-1)_{q_s}}{(\alpha+\beta_s+p_s+q_s-1)_{p_s+q_s}}\\
        &\qquad= \Theta n^{-\ell}\sum_{\vec\pi\in \TSA_{\nVer_1,\nVer_2}^{\sss \ell}}\prod_{i=1}^k\kappa\big((\pi_{i-1}/n,\lb^\pi_{i-1}),(\pi_{i}/n,\lb^\pi_i)\big).\nn
\end{align}
\ch{Since} $\TSA_{\rho(j)_0,\rho(j)_{\ell_j}}^{\sss \ell_j}\subseteq \NA_{\rho(j)_0,\rho(j)_{\ell_j}}^{\ell_j}$, \eqref{eq_hd_sum_restrict_path-rep} gives that
\begin{align}\label{eq_hd_lower_form_prod_kappa_var}
    &\sum_{\vec\rho(j)^{\sss e}\in \TESA_{\rho(j)_0,\rho(j)_{\ell_j}}^{\sss e,\ell_j}} \prod_{s=2}^n \frac{(\alpha+p_s^{\sss \rho(j)}-1)_{p_s^{\sss \rho(j)}}(\beta_s+q_s^{\sss \rho(j)}-1)_{q_s^{\sss \rho(j)}}}{(\alpha+\beta_s+p_s^{\sss \rho(j)}+q_s^{\sss \rho(j)}-1)_{p_s^{\sss \rho(j)}+q_s^{\sss \rho(j)}}}\\
    &\qquad\leq \Theta n^{-\ell_j}\sum_{\vec\rho(j)\in\NA_{\rho(j)_0,\rho(j)_{\ell_j}}^{\ell_j}}\prod_{i=1}^{\ell_j}\kappa\big((\rho(j)_{i-1}/n,\lb^{\rho(j)}_{i-1}),(\rho(j)_{i}/n,\lb^{\rho(j)}_i)\big).\nn
\end{align} 

In the remainder of this section, we divide the proof of \Cref{lem-con-case-intersect} into two subcases, according the decomposition size $u$.

\paragraph{The decomposition size is at least $2$}
We first consider the case that $u\geq 2$:
\begin{lemma}[Contribution of cases with decomposition size at least $2$]\label{lem-var_upper_part1_0}
For any $\vep\in (0,1)$ and $k\leq (1+\vep)\log_\nu n$, on the good event $\GNW$, uniformly in the choices of $\Ver_1$ and $\Ver_2$,
    \begin{align}
\label{var_upper_part1_0}
    &\sum_{\vpie\in \NESA^{\sss e,k,r}} \sum_{\substack{\vrhoe\in \NESA^{\sss e,k,r},\\ u \geq 2}}\prob^r(\vpie, \vrhoe\subseteq \PAndmdel) = o(1)\expec^r[N_{n,r}(k)]^2.
\end{align}
\end{lemma}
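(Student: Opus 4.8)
The plan is to feed the probability decomposition of \Cref{lem_prod_twopath_decompose} into the left-hand side of \eqref{var_upper_part1_0} and then sum out the subpaths using the crude path bound of \Cref{lem-gen-sum-product-kappa}. By \eqref{eq_prod_twopath_decompose}, for $\vpie,\vrhoe\in\NESA^{\sss e,k,r}$ with $\vec\pi\cap\vec\rho\neq\emptyset$, $\vpie\neq\vrhoe$ and decomposition size $u=u(\vpie,\vrhoe)$, $\prob^r(\vpie,\vrhoe\subseteq\PAndmdel)$ equals $\Theta^u\,\prob^r(\vpie\subseteq\PAndmdel)$ multiplied by the product over $j\in[u]$ of the single-path falling-factorial weights $W(\vec\rho(j)^{\sss e})$ of the subpaths $\vec\rho(j)^{\sss e}$, where $W(\vec\rho(j)^{\sss e})$ is exactly the $\vpie$-product appearing on the right of \eqref{eq_hd_prod_simp_pq} written for $\vec\rho(j)^{\sss e}$ in place of $\vpie$. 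Since $\sum_{\vpie\in\NESA^{\sss e,k,r}}\prob^r(\vpie\subseteq\PAndmdel)=\expec^r[N_{n,r}(k)]$, factoring $\prob^r(\vpie\subseteq\PAndmdel)$ out of the double sum reduces \eqref{var_upper_part1_0} to showing that, uniformly in $\Ver_1,\Ver_2$ and in $\vpie\in\NESA^{\sss e,k,r}$, one has $\sum_{\vrhoe:\,u\geq 2}\Theta^u\prod_{j=1}^u W(\vec\rho(j)^{\sss e})=o(1)\,\expec^r[N_{n,r}(k)]$.

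For a fixed $\vpie$ I would enumerate the relevant $\vrhoe$ through their \emph{skeleton}: the decomposition size $u$, the ordered list of the endpoints of the subpaths $\vec\rho(1),\dots,\vec\rho(u)$ --- all of which lie on the fixed path $\vpie$, with the possible exception of $\rho_0\in\partial B_r^{\sss(G_n)}(\Ver_1)$ and $\rho_k\in\partial B_r^{\sss(G_n)}(\Ver_2)$ --- and the lengths $\ell_1,\dots,\ell_u\geq1$ of the subpaths. Because $\vpie$ is self-avoiding, once the skeleton and the interiors of the subpaths are prescribed, $\vrhoe$ is determined (the stretches of $\vrhoe$ running along $\vpie$ between consecutive subpaths are forced to be the unique $\vpie$-segments joining the prescribed endpoints), and relaxing the self-avoidance of $\vrhoe$ only over-counts. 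On $\GNW$ each subpath endpoint has at most $\max\cbc{k+1,M}$ choices, so for given $\ell_1,\dots,\ell_u$ there are at most $\Theta M^2(k+1)^{O(u)}$ skeletons. For a fixed skeleton the sum over the interiors of the $u$ subpaths factorizes over $j$, and, by the falling-factorial-to-kernel estimate underlying \eqref{eq_hd_lower_form_prod_kappa_var} followed by \Cref{lem-gen-sum-product-kappa}, the $j$-th factor is at most $\Theta n^{-\ell_j}\cdot\Theta\zeta^{-2}\nu^{\ell_j}n^{\ell_j-1}=\Theta\zeta^{-2}\nu^{\ell_j}n^{-1}$.

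Summing over skeletons and over $\ell_1,\dots,\ell_u\geq1$ with $\sum_j\ell_j\leq k$ --- valid since the $\vec\rho(j)^{\sss e}$ are edge-disjoint inside $\vrhoe$ --- and bounding the number of compositions of any $\ell\leq k$ into $u$ parts by $k^u$ while $\nu^{\sum_j\ell_j}\leq\nu^k$, the total contribution of a fixed $u\geq2$ is at most $\Theta M^2\nu^k\bc{Ck^{c_0}/n}^u$, where the constants $C,c_0$ absorb $\Theta^u$, $(\Theta\zeta^{-2})^u$ and the $(k+1)^{O(u)}$ skeleton count and depend only on $m,\delta,\zeta$. Since $k\leq(1+\vep)\log_\nu n$ is of order $\log n$, $Ck^{c_0}/n\to0$, so the geometric sum over $u\geq2$ is dominated by $u=2$ and is at most $\Theta M^2\zeta^{-4}k^{2c_0}\nu^kn^{-2}$ for $n$ large. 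Multiplying by the $\vpie$-sum gives the bound $\Theta M^2\zeta^{-4}k^{2c_0}\nu^kn^{-2}\,\expec^r[N_{n,r}(k)]$ for the left-hand side of \eqref{var_upper_part1_0}; dividing by $\expec^r[N_{n,r}(k)]^2$ and using the first-moment lower bound $\expec^r[N_{n,r}(k)]\geq\Theta(3/2)^{2r}\zeta^2c_\zeta\nu^{k(1-\vep/4)}n^{-1}$ proved (for $1\leq k\leq2\log_\nu n$) inside the proof of \Cref{prop-path-count-PAMc-LB_simple}, the quotient is at most $C'k^{2c_0}\nu^{k\vep/4}n^{-1}\leq C'k^{2c_0}n^{(1+\vep)\vep/4-1}$ for a constant $C'$ depending only on $m,\delta,M,r,\zeta$, and this tends to $0$ since $(1+\vep)\vep/4<\tfrac{1}{2}$ for $\vep\in(0,1)$. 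This proves \eqref{var_upper_part1_0}.

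The step I expect to be the main obstacle is the entropy bookkeeping in the second and third paragraphs: attaching $u$ subpaths to $\vpie$ and invoking \eqref{eq_prod_twopath_decompose} costs the factors $\Theta^u$ and $(k+1)^{O(u)}$, which are exponential in $u$ while $u$ can be as large as $k$, so they are controlled only by the $n^{-1}$ gain, the cost of each subpath returning to $\vpie$, which comes from \Cref{lem-gen-sum-product-kappa}. The whole estimate thus hinges on the polylog-versus-polynomial competition $k^{c_0}\ll n$, and one must moreover make sure the $n^{-u}$ gain available for $u\geq2$ beats not only this entropy but also leaves enough room --- the budget $(1+\vep)\vep/4<1/2$ --- for the final comparison with the first moment squared. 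Checking that the subpath endpoints lie in $[\zeta n,n]$ and satisfy the hypotheses needed to apply \Cref{lem-gen-sum-product-kappa}, and tracking the precise (rather than $O(u)$) count of anchor points, are the routine but somewhat delicate remaining details.
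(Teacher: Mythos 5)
Your proposal is correct and follows essentially the same route as the paper's proof: apply the decomposition of \Cref{lem_prod_twopath_decompose} to factorize $\prob^r(\vpie,\vrhoe\subseteq\PAndmdel)$, bound each subpath sum by $\Theta\zeta^{-2}\nu^{\ell_j}n^{-1}$ via the falling-factorial-to-kernel step \eqref{eq_hd_lower_form_prod_kappa_var} combined with \Cref{lem-gen-sum-product-kappa}, count anchor points and compositions to get a geometric series in $u$ dominated by $u=2$, and conclude from $\nu^k n^{-2}=o(1)$. The cosmetic differences (you count endpoints as $M^2(k+1)^{O(u)}$, the paper uses $(k+2M)^{2u}$; your $k^{c_0}$ absorbs both) do not matter.

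One small point worth flagging: your final step is actually a bit tighter than the paper's. The derivation yields the left-hand side bounded by an $o(1)$ prefactor times a \emph{single} power of $\expec^r[N_{n,r}(k)]$, and to convert this into $o(1)\,\expec^r[N_{n,r}(k)]^2$ for all $k\leq(1+\vep)\log_\nu n$ (not just $k=k_\nu^\star$, where the first moment diverges) one does need to divide through and invoke the lower bound $\expec^r[N_{n,r}(k)]\geq\Theta(3/2)^{2r}\zeta^2c_\zeta\nu^{k(1-\vep/4)}n^{-1}$, exactly as you do, checking that $(1+\vep)\vep/4<1$. The paper's penultimate display appears to carry an extraneous square on $\expec^r[N_{n,r}(k)]$ that the preceding computation does not quite produce, so it elides this comparison; your version closes that gap explicitly.
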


\begin{proof}
To prove \eqref{var_upper_part1_0}, we  assume that $n\geq \zeta^{-2}$. By \zhu{\Cref{lem_prob_pie_cond,lem_prod_twopath_decompose}}, the lhs of \eqref{var_upper_part1_0} is
\begin{flalign}
\label{var_upper_part1_0-decom}
    \sum_{\vpie\in \NESA^{\sss e,k,r}} \sum_{\substack{\vrhoe\in \NESA^{\sss e,k,r},\\ \sss u \geq 2}}\prob^r(\vpie, \vrhoe\subseteq \PAndmdel) 
 = \sum_{\vpie\in \NESA^{\sss e,k,r}}\prob^r(\vpie\subseteq \PAndmdel)\\
 \times\sum_{\substack{\vrhoe\in \NESA^{\sss e,k,r},\\ \sss u \geq 2}}\Theta^u\prod_{s=2}^n\prod_{j=1}^u \frac{(\alpha+p_s^{\sss \rho(j)}-1)_{p_s^{\sss \rho(j)}}(\beta_s+q_s^{\sss \rho(j)}-1)_{q_s^{\sss \rho(j)}}}{(\alpha+\beta_s+p_s^{\sss \rho(j)}+q_s^{\sss \rho(j)}-1)_{p_s^{\sss \rho(j)}+q_s^{\sss \rho(j)}}}.\nn
\end{flalign}
To handle the products on the rhs of \eqref{var_upper_part1_0-decom}, we first note from \Cref{lem-gen-sum-product-kappa} and \eqref{eq_hd_lower_form_prod_kappa_var} that, for each $j\in [u]$,
\begin{align}\label{eq_hd_lower_form_prod_kappa_var_2}
         \sum_{\vec\rho(j)^{\sss e}\in \TESA_{\rho(j)_0,\rho(j)_{\ell_j}}^{\sss e,\ell_j}} \prod_{s=2}^n \frac{(\alpha+p_s^{\sss \rho(j)}-1)_{p_s^{\sss \rho(j)}}(\beta_s+q_s^{\sss \rho(j)}-1)_{q_s^{\sss \rho(j)}}}{(\alpha+\beta_s+p_s^{\sss \rho(j)}+q_s^{\sss \rho(j)}-1)_{p_s^{\sss \rho(j)}+q_s^{\sss \rho(j)}}}\leq \Theta \zeta^{-2}\nu^{\ell_j} n^{-1}.
    \end{align}

On the other hand, given $\vpie\in \NESA^{\sss e,k,r}$, for each subpath $\vec\rho(j)$ of $\vec\rho$, its two ends $\rho(j)_0,\rho(j)_{\ell_j} $ are in the set $ H=\vec\pi\cup \partial B_r^{\sss(G_n)}(\Ver_1)\cup \partial B_r^{\sss(G_n)}(\Ver_2)$. Hence, on the good event $\GNW$, there are at most $k+2M$ choices for each of these ends.  Then, \eqref{eq_hd_lower_form_prod_kappa_var_2} gives that
\begin{align}\label{eq_product_sum_twopath-old}
    &\sum_{\substack{\vrhoe\in \NESA^{\sss e,k,r},\\ u \geq 2}}\Theta^u\prod_{s=2}^n\prod_{j=1}^u \frac{(\alpha+p_s^{\sss \rho(j)}-1)_{p_s^{\sss \rho(j)}}(\beta_s+q_s^{\sss \rho(j)}-1)_{q_s^{\sss \rho(j)}}}{(\alpha+\beta_s+p_s^{\sss \rho(j)}+q_s^{\sss \rho(j)}-1)_{p_s^{\sss \rho(j)}+q_s^{\sss \rho(j)}}}\\
    \leq& \sum_{u=2}^k  \sum_{\ell_1,\ldots,\ell_u}\Theta^u\prod_{j=1}^u\Big(\sum_{\substack{\rho(j)_0,\rho(j)_{\ell_j}\in H,\\\vec\rho(j)^{\sss e}\in \NESA_{\rho(j)_0,\rho(j)_{\ell_j}}^{\sss e, \ell_j}}}\prod_{s=2}^n\frac{(\alpha+p_s^{\sss \rho(j)}-1)_{p_s^{\sss \rho(j)}}(\beta_s+q_s^{\sss \rho(j)}-1)_{q_s^{\sss \rho(j)}}}{(\alpha+\beta_s+p_s^{\sss \rho(j)}+q_s^{\sss \rho(j)}-1)_{p_s^{\sss \rho(j)}+q_s^{\sss \rho(j)}}}\Big)\nn\\
\leq&\sum_{u=2}^k \sum_{\ell_1,\ldots,\ell_u}\Theta^u\prod_{j=1}^u(k+2M)^2\Theta{\zeta}^{-2}\nu^{\ell_j}n^{-1}\leq \sum_{u=2}^k\sum_{\ell_1,\ldots,\ell_u}\Theta^u (k+2M)^{2u}{\zeta}^{-2u}\nu^{k}n^{-u} ,\nn
\end{align}
where $\sum_{\ell_1,\ldots,\ell_u}$ sums over all $\ell_1,\ldots,\ell_u$ such that $\ell_i\in [k]$ and $\ell_1+\cdots+\ell_u\leq k$, and we note that there are at most $k^u$ choices for these $(\ell_1,\ldots,\ell_u)$ that satisfy this condition. Hence, \eqref{eq_product_sum_twopath-old} yields that
{\small
\begin{align}\label{eq_product_sum_twopath-further}
    & \sum_{\substack{\vrhoe\in \NESA^{\sss e,k,r},\\ \sss u \geq 2}}\Theta^{u}\prod_{s=2}^n\prod_{j=1}^u \frac{(\alpha+p_s^{\sss \rho(j)}-1)_{p_s^{\sss \rho(j)}}(\beta_s+q_s^{\sss \rho(j)}-1)_{q_s^{\sss \rho(j)}}}{(\alpha+\beta_s+p_s^{\sss \rho(j)}+q_s^{\sss \rho(j)}-1)_{p_s^{\sss \rho(j)}+q_s^{\sss \rho(j)}}}\\
&\qquad\leq \sum_{u=2}^k k^u\Theta^u (k+2M)^{2u}{\zeta}^{-2u}\nu^{k}n^{-u}.\nn
\end{align}
}

Let $\Theta^{\star,3}$ be a uniform upper bound of $\Theta$ up to now.  Let $n_1$ be such that, for all $n\geq n_1$, 
\begin{align*}
    2\Theta^{\star,3} \log_\nu n(2\log_\nu n+2M)^2\zeta^{-2} n^{-1}\leq 1.
\end{align*}
Then, for $n\geq n_1$ and $k\leq 2\log_\nu n$, \eqref{eq_product_sum_twopath-further} gives that
\begin{align}\label{eq_product_sum_twopath}
    & \sum_{\substack{\vrhoe\in \NESA^{\sss e,k,r},\\ \sss u \geq 2}}\Theta^u\prod_{s=2}^n\prod_{j=1}^u \frac{(\alpha+p_s^{\sss \rho(j)}-1)_{p_s^{\sss \rho(j)}}(\beta_s+q_s^{\sss \rho(j)}-1)_{q_s^{\sss \rho(j)}}}{(\alpha+\beta_s+p_s^{\sss \rho(j)}+q_s^{\sss \rho(j)}-1)_{p_s^{\sss \rho(j)}+q_s^{\sss \rho(j)}}}\\
&\qquad\leq  \Theta \zeta^{-4} k^2(k+2M)^4 \nu^k n^{-2}\frac{1}{1-\Theta^{\star,3} \zeta^{-2}k(k+2M)^2 n^{-1}}\nn\\
&\qquad= \Theta \zeta^{-4} k^2(k+2M)^4 \nu^k n^{-2}.\nn
\end{align}
Consequently,  we conclude from \eqref{var_upper_part1_0-decom} and \eqref{eq_product_sum_twopath} that 
\begin{align*}
    &\sum_{\vpie\in \NESA^{\sss e,k,r}} \sum_{\substack{\vrhoe\in \NESA^{\sss e,k,r},\\ {\scriptscriptstyle u \geq 2}}}\prob^r(\vpie, \vrhoe\subseteq \PAndmdel) 
 \leq \Theta   \zeta^{-4} k^2(k+2M)^4 \nu^k n^{-2}\expec^r[N_{n,r}(k)]^2.\nn
\end{align*}
 Further, as
 $k\leq (1+\vep)\log_\nu n$, uniformly in the choices of $\Ver_1$ and $\Ver_2$,
\begin{align*}
    \Theta   \zeta^{-4} k^2(k+2M)^4 \nu^k n^{-2}=\Theta \zeta^{-4} (\log n)^2(\log n+M)^4 n^{{-(1-\vep)}}=o(1),
\end{align*}
and thus we conclude that \eqref{var_upper_part1_0} holds.
\end{proof}

\paragraph{The decomposition size is $1$}
We now turn to the case that $u=1$: 
\begin{lemma}[Contribution of cases \ch{with} decomposition  \zhu{size} $1$]\label{lem-u=1}
With $n_0$ defined as in \Cref{lem_hd_sum_restrict_path-sum-gen}, for $n>n_0$ and $k\leq(1+\vep)\log_\nu n$, on the good event $\GNW$, 
    \begin{align}\label{eq_u=1}
  &\sum_{\vpie, \vrhoe\in \NESA^{\sss e,k,r}\colon u=1}\prob^r(\vpie, \vrhoe\subseteq \PAndmdel)\leq \Theta \bc{3/2}^{-r} {\zeta}^{-4} c_{\zeta}^{-1}\expec^r[N_{n,r}(k)]^2,
\end{align}
\end{lemma}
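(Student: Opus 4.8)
The plan is to handle the case $u=1$ by exploiting \Cref{lem_prod_twopath_decompose}, which for $u=1$ reads
\begin{align*}
\prob^r(\vpie,\vrhoe\subseteq \PAndmdel)=\Theta\,\prob^r(\vpie\subseteq \PAndmdel)\prod_{s=2}^n\frac{(\alpha+p_s^{\sss\rho(1)}-1)_{p_s^{\sss\rho(1)}}(\beta_s+q_s^{\sss\rho(1)}-1)_{q_s^{\sss\rho(1)}}}{(\alpha+\beta_s+p_s^{\sss\rho(1)}+q_s^{\sss\rho(1)}-1)_{p_s^{\sss\rho(1)}+q_s^{\sss\rho(1)}}}.
\end{align*}
Here $\vec\rho(1)$ is the single subpath into which $\vpie$ divides $\vrhoe$; its two ends $\rho(1)_0,\rho(1)_{\ell_1}$ lie in the set $H=\vec\pi\cup\big(\partial B_r^{\sss(G_n)}(\Ver_1)\cap[\zeta n,n]\big)\cup\big(\partial B_r^{\sss(G_n)}(\Ver_2)\cap[\zeta n,n]\big)$. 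First I would sum over $\vrhoe$ with $\vpie$ fixed: fixing the two endpoints $a=\rho(1)_0$, $b=\rho(1)_{\ell_1}$ in $H$ and the length $\ell_1\in[k]$, the inner sum over the subpath is bounded using \eqref{eq_hd_lower_form_prod_kappa_var} together with the crude bound \Cref{lem-gen-sum-product-kappa}, which gives $\Theta\zeta^{-2}\nu^{\ell_1}n^{-1}$ for each choice. Since $|H|\le k+2M$ on $\GNW$, summing over $a,b\in H$ and $\ell_1\le k$ yields a factor $\Theta(k+2M)^2 k\,\zeta^{-2}\nu^{k}n^{-1}$. Multiplying back by $\sum_{\vpie}\prob^r(\vpie\subseteq\PAndmdel)=\expec^r[N_{n,r}(k)]$ and using $\nu^k\le \Theta\, n^{1+\vep}$ for $k\le(1+\vep)\log_\nu n$ would give a bound of the form $\Theta(k+2M)^2 k\,\zeta^{-2}n^{\vep}\expec^r[N_{n,r}(k)]$, which is \emph{not} yet of the desired shape, because $n^{\vep}\expec^r[N_{n,r}(k)]$ is generally much larger than $\expec^r[N_{n,r}(k)]^2/((3/2)^r\zeta^4 c_\zeta)$ once the first moment is of polynomial size $n^{\vep/2}$ — in fact this crude bound is off by roughly a full power of $n^{\vep/2}$.

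The main obstacle, therefore, is precisely this last step: the trivial ``$|H|\le k+2M$ choices for the endpoints'' estimate is too lossy. I expect the correct argument to be more delicate and to go the other way: instead of bounding the number of endpoint choices, one should \emph{reassemble} the two paths. Given $\vpie$ and the subpath $\vec\rho(1)^{\sss e}$ with ends $a,b\in\vpie$, one rebuilds $\vrhoe$ by noting that $\vrhoe$ consists of the portion of $\vpie$ from $\rho_0$ to $a$, then $\vec\rho(1)^{\sss e}$, then the portion of $\vpie$ from $b$ to $\rho_k$ (up to relabelling/orientation); so the pair $(\vpie,\vrhoe)$ with $u=1$ is essentially determined by $\vpie$, a second path $\vec\rho(1)^{\sss e}$ from $a$ to $b$ with $a,b\in\vec\pi$, and the two ``anchor'' positions $\rho_0\in\partial B_r(\Ver_1)$, $\rho_k\in\partial B_r(\Ver_2)$. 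The key point is that fixing $\rho_0$ and $\rho_k$ together with the \emph{second} path $\vec\rho(1)$ (from $a$ to $b$) reconstructs a full path from $\partial B_r(\Ver_1)$ to $\partial B_r(\Ver_2)$ — call it $\vec\sigma$ — after replacing the middle of $\vpie$ by $\vec\rho(1)$. Then $\prob^r(\vpie\subseteq\PAndmdel)\cdot(\text{subpath factor})$ is comparable, up to a $\Theta$ and a power-of-$n$ correction coming from the lengths not quite matching, to $\prob^r(\vpie\subseteq\PAndmdel)\prob^r(\vec\sigma^{\sss e}\subseteq\PAndmdel)$ divided by the probability of the common overlap piece of $\vpie$ and $\vec\sigma^{\sss e}$. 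Summing the pair $(\vpie,\vec\sigma^{\sss e})$ freely over $\NESA^{\sss e,k,r}\times\NESA^{\sss e,k',r}$ gives $\expec^r[N_{n,r}(k)]\expec^r[N_{n,r}(k')]$; the price is a reciprocal of a single-edge (or short-path) connection probability along the overlap, which is $\Theta\,n^{\chi}$ for one edge and which must be controlled by the length discrepancy and by the lower bound on $\expec^r[N_{n,r}(k)]$ from \Cref{prop-path-count-PAMc-LB_simple}, namely $\expec^r[N_{n,r}(k)]\ge\Theta(3/2)^{2r}\zeta^2 c_\zeta n^{\vep/2}$.

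Concretely, the key steps in order are: (i) apply \Cref{lem_prod_twopath_decompose} with $u=1$ to write the left-hand side of \eqref{eq_u=1} as $\sum_{\vpie}\prob^r(\vpie\subseteq\PAndmdel)\cdot(\text{sum over second subpaths and anchors})$; (ii) for the sum over the second subpath, use \eqref{eq_hd_lower_form_prod_kappa_var} to pass to kernel products $\sum_{\vec\rho(1)\in\NA}\prod\kappa$, and then, crucially, apply \Cref{lem_hd_sum_restrict_path-sum-gen-nka-compare} together with the lower bound \Cref{lem_hd_sum_restrict_path-sum-gen-nka} in the \emph{denominator}, i.e.\ bound the numerator $\sum\prod\kappa$ over the second path by $\Theta^{-1}$ times the same type of sum that appears (without avoidance constraints) in the lower bound for $\expec^r[N_{n,r}(k)]$; (iii) recombine $\vpie$ and $\vec\rho(1)$ into a genuine path $\vec\sigma^{\sss e}\in\NESA^{\sss e,k',r}$ with $k'$ within $O(k)$ of $k$, so that $\sum_{\vec\sigma}\prob^r(\vec\sigma^{\sss e}\subseteq\PAndmdel)\le\Theta\,\expec^r[N_{n,r}(k')]\le\Theta\,\nu^{k'-k}\expec^r[N_{n,r}(k)]$, using monotonicity of the first moment in $k$ up to a $\nu$-factor; (iv) bound the leftover geometric factors in the lengths $\ell_1,k',\ldots$ against the $(3/2)^{-r}$ gain available from property \ref{it_pb_3} in \Cref{ass_Br} (each of the two ``attachment'' points along $\vec\pi$ removes a factor comparable to the local branching ratio $3/2$, which is where the $(3/2)^{-r}$ ultimately comes from — analogously to how it appears in \Cref{prop-path-count-PAMc-Var-UB}); and (v) collect all the $\Theta$'s, $\zeta$'s and $c_\zeta$'s, verifying that the total is at most $\Theta(3/2)^{-r}\zeta^{-4}c_\zeta^{-1}\expec^r[N_{n,r}(k)]^2$ for $n>n_0$. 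Step (ii)–(iii), the reassembly that converts the ``second subpath'' sum into $\expec^r[N_{n,r}(k')]$ while only paying a controlled reciprocal connection probability, is the heart of the argument and the step I expect to require the most care, since it is exactly where the naive endpoint-counting bound fails and where \Cref{lem_hd_sum_restrict_path-sum-gen-nka,lem_hd_sum_restrict_path-sum-gen-nka-compare} and \Cref{prop-path-count-PAMc-LB_simple} must be combined just right.
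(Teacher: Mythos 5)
You correctly identify the starting point (\Cref{lem_prod_twopath_decompose} with $u=1$, \eqref{eq_hd_lower_form_prod_kappa_var}, and the crude \Cref{lem-gen-sum-product-kappa}), and you correctly diagnose the central difficulty: the naive endpoint-counting bound $|H|\le k+2M$ gives $\Theta\,\zeta^{-2}k(k+2M)^{2}\nu^{k}n^{-1}\expec^r[N_{n,r}(k)]\sim n^{\vep}\expec^r[N_{n,r}(k)]$, which overshoots $\expec^r[N_{n,r}(k)]^2\sim n^{\vep}$ by essentially a factor $n^{\vep/2}$, so a comparison against the first-moment lower bound is indeed needed. You also name the right collection of lemmas: \Cref{lem_hd_sum_restrict_path-sum-gen-nka}, \Cref{lem_hd_sum_restrict_path-sum-gen-nka-compare}, \Cref{prop-path-count-PAMc-LB_simple}, and the role of property \ref{it_pb_3} of \Cref{ass_Br}.

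However, your step (iii), which you yourself flag as the heart of the matter, does not work as stated, and it is not what the paper does. You propose to reassemble $(\vpie,\vrhoe)$ into a pair $(\vpie,\vec\sigma^{\sss e})$, write $\prob^r(\vpie,\vrhoe)\approx\prob^r(\vpie)\prob^r(\vec\sigma^{\sss e})/\prob^r(\text{overlap})$, and then ``sum the pair freely'' to get $\expec^r[N_{n,r}(k)]\expec^r[N_{n,r}(k')]$, paying only ``a reciprocal of a single-edge (or short-path) connection probability along the overlap''. Two things go wrong. First, the overlap $\vpie\cap\vrhoe$ is not a single edge or a short path: when $u=1$ it consists of a segment of $\vpie$ of length $k-\ell$, which for small $\ell$ is almost the whole path, so $\prob^r(\text{overlap})^{-1}$ is of order $n^{k-\ell}$ in magnitude, not $\Theta\,n^{\chi}$. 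Second, you cannot sum $(\vpie,\vec\sigma^{\sss e})$ freely over $\NESA^{\sss e,k,r}\times\NESA^{\sss e,k',r}$: the two paths are constrained to share the overlap, and dropping this constraint after having put $\prob^r(\text{overlap})^{-1}$ in the denominator is not an inequality in the direction you need. Your remark that ``each attachment point removes a factor $3/2$'' also does not reflect where the $(3/2)^{-r}$ actually comes from.

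The paper's argument is cleaner and genuinely different at exactly this point. It never reassembles $\vrhoe$. Instead, it fixes $\vpie$, fixes $j,\ell$, and bounds the remaining sum $n^{-\ell}\sum_{a\in A}\sum_{\vec\rho(1)\in\NP^{\ell}_{\pi_j,a}}\prod_i\nka$ by the following comparison against the first moment. Write $\expec^r[N_{n,r}(k)]=\Theta n^{-k}\sum_{\nVer_1\in\mathscr K_1,\nVer_2\in\mathscr K_2}\sum_{\vec\rho\in\NP^k_{\nVer_1,\nVer_2}}\prod\nka$ using \Cref{lem_hd_sum_restrict_path-sum-gen-nka-compare}. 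Split each such full $\vec\rho$ at position $k-\ell$ into $\vec\xi(1)$ of length $k-\ell$ and $\vec\xi(2)$ of length $\ell$. The sum over the tail $\vec\xi(2)$ dominates the sum over $\vec\rho(1)$ up to $\Theta\,\zeta^{2}$, by \Cref{cor_easy-general-bound} (which compares kernel-product sums over paths with different endpoints). The sum over the head $\vec\xi(1)$ is then bounded below by $\Theta(3/2)^{r}\zeta^{2}c_{\zeta}\nu^{(k-\ell)(1-\vep/4)}n^{k-\ell}$ using \Cref{lem_hd_sum_restrict_path-sum-gen-nka} with $F_1=\mathscr K_1$ and $F_2=[\zeta n,n]$; this lone use of $|F^-_1|\ge(3/2)^r$ on $\GNW$ is the source of the $(3/2)^{-r}$ in the end result. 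Rearranging gives $n^{-\ell}\sum_{a\in A}\sum_{\vec\rho(1)}\prod\nka\le\Theta(3/2)^{-r}\zeta^{-4}c_{\zeta}^{-1}\nu^{(\ell-k)(1-\vep/4)}\expec^r[N_{n,r}(k)]$, and the geometric sum $\sum_{\ell\le k}\sum_{j\le k-\ell}\nu^{(\ell-k)(1-\vep/4)}=\Theta$ (using $\vep<1$) finishes the job. So the comparison is between the length-$\ell$ \emph{subpath} sum and the \emph{full-length} first-moment sum with an artificial completion, not between the pair $(\vpie,\vrhoe)$ and two independent paths; this sidesteps entirely the overlap-probability problem that breaks your reassembly.
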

Recall from \Cref{sub-sec-modify-kappa-lower-bound} that
\eqn{
	\nka\big((x,s), (y,t)\big)=\frac{c_{st}(\indic{x>y, t=\srO}+\indic{x< y, t=\srY})}{(x\vee y)^{\chi}(x\wedge y)^{1-\chi}}+\frac{c_{\srO\srY}\indic{x>y, t=\srO}}{x},\nn
	}
and $\NP_{u,v}^{\sss \ell}=\cbc{(\pi_0,\ldots,\pi_k)\in [n]^{k+1}:~\pi_0=u,\pi_k=v,\pi_i\geq \zeta n~\forall i\in[k-1]}$ is the set of 
$k$-step path between $u,v$ without the avoidance constraints.
Before \ch{giving} the proof of \Cref{lem-u=1}, we first present the following lemma, which states that the sums of kernel products on paths with different endpoints are comparable:
\begin{lemma}[Comparison of kernel products with different ends]\label{cor_easy-general-bound}
For any $\ell\geq 1$, any $u,v,u',v'\in[\zeta n,n]$, and $\lb^\pi_0,\lb^\rho_0\in\cbc{\rO,\rY}$, 
\begin{align}\label{eq_easy-general-bound-up}
&\sum_{\vec\pi\in \NP_{u,v}^\ell}\prod_{i=1}^{\ell}\nka\big((\pi_{i-1}/n,\lb^\pi_{i-1}),(\pi_{i}/n,\lb^\pi_i)\big)\\
&\qquad\leq \Theta {\zeta}^{-2}\sum_{\vec\rho\in \NP_{u',v'}^\ell}\prod_{i=1}^\ell\nka\big((\rho_{i-1}/n,\lb^\rho_{i-1}),(\rho_{i}/n,\lb^\rho_i)\big).\nn
\end{align}
\end{lemma}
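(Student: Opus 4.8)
The plan is to prove \Cref{cor_easy-general-bound} by a direct ``surgery'' argument on paths: given a $\ell$-step path $\vec\pi$ from $u$ to $v$ with all intermediate vertices in $[\zeta n, n]$, I will produce a path $\vec\rho$ from $u'$ to $v'$ by replacing only the first and last vertices of $\vec\pi$, keeping $\rho_i = \pi_i$ for all $1 \le i \le \ell-1$. This surgery is injective once the intermediate vertices are fixed, so summing over all of $\NP_{u,v}^\ell$ will correspond to summing over a subset of $\NP_{u',v'}^\ell$, and the two products of kernels will differ only in the first factor $\nka\big((\rho_0/n,\lb^\rho_0),(\rho_1/n,\lb^\rho_1)\big)$ versus $\nka\big((\pi_0/n,\lb^\pi_0),(\pi_1/n,\lb^\pi_1)\big)$, and the last factor $\nka\big((\rho_{\ell-1}/n,\lb^\rho_{\ell-1}),(\rho_\ell/n,\lb^\rho_\ell)\big)$ versus $\nka\big((\pi_{\ell-1}/n,\lb^\pi_{\ell-1}),(\pi_\ell/n,\lb^\pi_\ell)\big)$.

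The key estimate is then the pointwise bound on the first and last kernel factors. By \eqref{eq-change-label-add-theta1-new}, for any labels and any $x,y \in [\zeta, 1]$ we have $\nka\big((x,s),(y,t')\big) = \Theta \frac{1}{(x\vee y)^\chi (x \wedge y)^{1-\chi}}$ when the label on the second coordinate is compatible, but more to the point, uniformly over $x, y \in [\zeta, 1]$ both $\nka\big((x,s),(y,t')\big)$ and its value with a different first argument lie in $[\Theta \zeta, \Theta \zeta^{-1}]$ times $1$: indeed $\frac{1}{(x\vee y)^\chi(x\wedge y)^{1-\chi}} \in [1, \zeta^{-1}]$ since $\chi \in (1/2,1)$, and the added term $c_{\srO\srY}/x \le c_{\srO\srY}/\zeta$, so the kernel is bounded above by $\Theta \zeta^{-1}$ and below by $\Theta$ (at least $c_{\srY\srO}$ when signs are favorable, which they are for at least one choice of label since $q(y-x) \in \{\rO,\rY\}$ always makes one of the two indicators active unless $x = y$, and when $x=y$ the added term gives $c_{\srO\srY}/x \ge c_{\srO\srY} \ge \Theta$). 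Concretely, the ratio of the new first (resp. last) factor to the old one is a quotient of two quantities each in $[\Theta, \Theta\zeta^{-1}]$, hence is itself $\Theta \zeta^{-1} \cdot \Theta = \Theta\zeta^{-1}$; multiplying the two ratios gives the overall factor $\Theta \zeta^{-2}$ claimed in \eqref{eq_easy-general-bound-up}.

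So the proof proceeds as follows. First I would fix a label convention: for the path $\vec\rho$ obtained by surgery, set $\lb^\rho_i = q(\rho_i - \rho_{i-1})$ for $i \ge 2$ (these equal $\lb^\pi_i$ since $\rho_{i-1} = \pi_{i-1}$ and $\rho_i = \pi_i$ for $2 \le i \le \ell-1$, and for $i = \ell$ it is determined by $\rho_{\ell-1} = \pi_{\ell-1}$ and $\rho_\ell = v'$), and $\lb^\rho_0$ is the prescribed value, $\lb^\rho_1 = q(\rho_1 - \rho_0) = q(\pi_1 - u')$. Second, I would write
\begin{align*}
\prod_{i=1}^\ell \nka\big((\rho_{i-1}/n,\lb^\rho_{i-1}),(\rho_i/n,\lb^\rho_i)\big) = \nka\big((u'/n,\lb^\rho_0),(\pi_1/n,\lb^\rho_1)\big) \prod_{i=2}^{\ell-1}\nka\big(\cdots\big) \nka\big((\pi_{\ell-1}/n,\lb^\rho_{\ell-1}),(v'/n,\lb^\rho_\ell)\big),
\end{align*}
and similarly for $\vec\pi$, noting the middle products $\prod_{i=2}^{\ell-1}$ agree exactly. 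Third, I would bound the first and last factors of $\vec\pi$ below by $\Theta$ (or more precisely observe $\nka \ge c_{\srY\srO} > 0$ when the sign indicator is active, and $\ge c_{\srO\srY}/x \ge \Theta$ when $x=y$ and $t=\rO$; the only bad case $x = y$ with $t = \rY$ cannot occur as the first/last factor of a path in $\NP$ unless forced, and even then one handles it as a $\Theta$ since $\lb^\pi_0$ is free), and the corresponding factors of $\vec\rho$ above by $\Theta \zeta^{-1}$ using $\frac1{(x\vee y)^\chi(x\wedge y)^{1-\chi}} \le \zeta^{-1}$ and $c_{\srO\srY}/x \le \Theta\zeta^{-1}$ for $x \ge \zeta$. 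Summing the resulting inequality over $\vec\pi \in \NP_{u,v}^\ell$ (equivalently over the fixed middle segments $(\pi_1,\ldots,\pi_{\ell-1}) \in [\zeta n,n]^{\ell-1}$) and recognizing the right side as a sum over a subset of $\NP_{u',v'}^\ell$ gives \eqref{eq_easy-general-bound-up}.

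The main obstacle I anticipate is bookkeeping around the degenerate case $x = y$ together with the label $\rY$ in the first or last factor, since there $\kappa$ (and the relevant indicator part of $\nka$) vanishes while the added $c_{\srO\srY}/x$ term only fires for label $\rO$. The resolution is that the label $\lb^\pi_0$ on the starting vertex is a free parameter in these sums (it ranges over $\cbc{\rO,\rY}$ and is allowed to be chosen), and for the last edge, the label $\lb^\pi_\ell = q(\pi_\ell - \pi_{\ell-1})$ is $\rY$ precisely when $\pi_\ell > \pi_{\ell-1}$, so $x \ne y$ there automatically; hence the only truly free degenerate spot is the first factor, where we may just absorb it into $\Theta$ by the observation in \Cref{re-kappa-old-new} and \eqref{eq-change-label-add-theta1-new}. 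With that caveat dispatched, the argument is routine surgery plus the uniform two-sided bound $\nka\big((x,s),(y,t)\big) \in [\Theta, \Theta\zeta^{-1}]$ on $[\zeta,1]^2$, and the factor $\Theta\zeta^{-2}$ is exactly the product of the two endpoint ratios.
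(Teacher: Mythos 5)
Your approach is essentially the paper's: fix the intermediate vertices $\pi_1,\dots,\pi_{\ell-1}$, swap the two endpoints, and control the two affected factors pointwise by the two-sided bound $\nka \in [\Theta, \Theta\zeta^{-1}]$ on $[\zeta,1]^2$, yielding the $\Theta\zeta^{-2}$ factor. One small imprecision: you claim the middle products $\prod_{i=2}^{\ell-1}$ ``agree exactly.'' They do not quite—when you replace $\pi_0=u$ by $\rho_0=u'$, the label $\lb^\rho_1 = q(\rho_1-\rho_0) = q(\pi_1-u')$ can differ from $\lb^\pi_1 = q(\pi_1-u)$, and this label appears in the \emph{first argument} of the $i=2$ factor; so that factor is only equal up to a $\Theta$ (by \eqref{eq-change-label-add-theta1-new}). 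The paper's version notes only $\prod_{i=3}^{\ell-1}$ agree exactly and treats $i=2$ as a $\Theta$ factor. Since your $\Theta\zeta^{-2}$ already absorbs such multiplicative $\Theta$'s, this does not affect the conclusion, but it is worth stating accurately. The rest—including your handling of the degenerate $x=y$ case using the added self-loop term (which only fires for $t=\rO$, consistent with $q(0)=\rO$)—is sound.
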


\begin{proof}
\zhu{Fix}  $\ell\geq 0$. For $\vec\pi=(\pi_i)_{0\leq i\leq \ell}\in[\zeta n,n]^\ell$ and $\vec\rho=(\rho_i)_{0\leq i\leq \ell}\in[\zeta n,n]^\ell$, if $\pi_i=\rho_i$ for all $i\in[\ell-1]$, we claim that for any $\lb^\pi_0,\lb^\rho_0\in\cbc{\rO,\rY}$,
\begin{align}\label{eq-a-path-easy-general-ub}
    \prod_{i=1}^{\ell}\nka\big((\pi_{i-1}/n,\lb^\pi_{i-1}),(\pi_{i}/n,\lb^\pi_i)\big)\leq \Theta \zeta^{-2}\prod_{i=1}^{\ell}\nka\big((\rho_{i-1}/n,\lb^\rho_{i-1}),(\rho_{i}/n,\lb^\rho_i)\big).
\end{align}


Indeed, if $\ell=1,2$, as $\pi_i,\rho_i\in[\zeta n,n]$, \eqref{eq-a-path-easy-general-ub} follows directly from \eqref{eq-change-label-add-theta1-new}. \zhu{For} higher values of $\ell$, this is a straightforward computation.

\invisible{Move to apopendix:
\RvdH{Do the labels (which also depend on start and end point) not create a problem?}\zhu{This is absorbed in the $\Theta$ factor.}

If $\ell\geq 3$, since $\pi_i=\rho_i$ for all $i\in[\ell-1]$,  \eqref{def_label_direction} gives that $\lb^\pi_j=\lb^\rho_j$ for integer $j\in[2,\ell-1]$. Hence,
\begin{align*}
    \prod_{i=3}^{\ell-1}\nka\big((\pi_{i-1}/n,\lb^\pi_{i-1}),(\pi_{i}/n,\lb^\pi_i)\big)= \prod_{i=3}^{\ell-1}\nka\big((\rho_{i-1}/n,\lb^\rho_{i-1}),(\rho_{i}/n,\lb^\rho_i)\big).
\end{align*}
For $i=1$ or $\ell$, by  \eqref{eq-change-label-add-theta1-new},
\begin{align*}
   \nka\big((\pi_{i-1}/n,\lb^\pi_{i-1}),(\pi_{i}/n,\lb^\pi_i)\big)\leq \Theta \zeta^{-1}\leq\Theta \zeta^{-1}\nka\big((\rho_{i-1}/n,\lb^\rho_{i-1}),(\rho_{i}/n,\lb^\rho_i)\big).
\end{align*}
For $i=2$, since $\pi_j=\rho_j$ for $j=1,2$, by \eqref{eq-change-label-add-theta1-new},
\begin{align*}
    \nka\big((\pi_1/n,\lb^\pi_1),(\pi_2/n,\lb^\pi_2)\big)=\Theta \nka\big((\rho_1/n,\lb^\rho_1),(\rho_2/n,\lb^\rho_2)\big).
\end{align*}
Consequently, 
\begin{align*}
    \prod_{i=1}^{\ell}\nka\big((\pi_{i-1}/n,\lb^\pi_{i-1}),(\pi_{i}/n,\lb^\pi_i)\big)\leq \Theta \zeta^{-2}\prod_{i=1}^{\ell}\nka\big((\rho_{i-1}/n,\lb^\rho_{i-1}),(\rho_{i}/n,\lb^\rho_i)\big),
\end{align*}
i.e., \eqref{eq-a-path-easy-general-ub} holds.}

Let $\pi_0=u$, $\pi_\ell=v$, $\rho_0=u'$, $\rho_\ell=v'$. Since $\pi_i=\rho_i$ for all $i\in[\ell-1]$, \eqref{eq_easy-general-bound-up} follows directly from summing \eqref{eq-a-path-easy-general-ub} over all integer $\pi_i=\rho_i\in[\zeta n, n]$ for all $i\in[\ell-1]$. 
\end{proof}

With \Cref{cor_easy-general-bound} in hand, we are now ready to prove \Cref{lem-u=1}:

\begin{proof}[Proof of \Cref{lem-u=1}]
For $u=1$, let $\ell=\ell_1$ be the length of path $\vec\rho(1)$. According to the decomposition, $\vec\pi$ and $\vec\rho(1)=\vec\rho\backslash\vec\pi$ can only intersect at the two ends of the subpath  $\vec\rho(1)$. Then, depending on whether $\rho(1)_0$, $\rho(1)_\ell$ or both are in $\vec\pi$, there are three possible cases, which we illustrate in \Cref{pic_path_decompose_u1}, and we note from $\vec\pi\cap\vec\rho\neq\emptyset$ that at least one of the ends of $\vec\rho(1)$ is in the path $\vec\pi$.
\begin{figure}[ht]
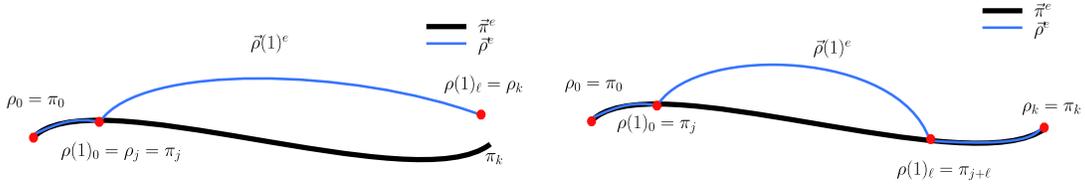

\begin{minipage}[r]{0.48\linewidth}
\includesvg[width=1\linewidth]{path_decomposition_0.svg}
\end{minipage}\hfill
\begin{minipage}[r]{0.48\linewidth}
\includesvg[width=1\linewidth]{path_decomposition_1.svg}
\end{minipage}\hfill
\caption{Two cases for the path decomposition of $\vrhoe$ arise when \zhu{the} decomposition size $u$ \zhu{equals $1$} and $\rho(1)_0\in\vec\pi$, where this categorization depends on  whether  $\rho(1)_\ell$ is in $\vec\pi$. The case where $\rho(1)_0\not\in\vec\pi$ and $\rho(1)_\ell\in\vec\pi$ is simply the reverse of the first graph.}
  \label{pic_path_decompose_u1}
\end{figure}

We first assume that $\rho(1)_0\in\vec\pi$ and show that, on the good event $\GNW$,
    \begin{align}\label{eq_u=1-1}
  &\sum_{\vpie\in \NESA^{\sss e,k,r}}\sum_{\substack{\vrhoe\in \NESA^{\sss e,k,r},\\u=1,\rho(1)_0\in\vec\pi}}\prob^r(\vpie, \vrhoe\subseteq \PAndmdel)\leq \Theta \bc{3/2}^{-r} {\zeta}^{-4} c_{\zeta}^{-1}\expec^r[N_{n,r}(k)]^2.
\end{align}

Fix $j\in[0,k-1]$ and let $\rho(1)_0=\pi_j$. Then, $\rho_i=\pi_i$ for each $ i\leq j$ and $i\geq j+\ell+1$, as shown in the two {plots in}{} \Cref{pic_path_decompose_u1}, \ch{ since, if not,}  $u\geq 2$. 
Furthermore,
\begin{itemize}
    \item[$\rhd$] if $\ell=k-j$, which corresponds to the first {plot in \Cref{pic_path_decompose_u1}}, then $\rho(1)_\ell=\rho_k\in \partial B_r^{\sss(G_n)}(\Ver_2)\cap[\zeta n,n]$ (recall that $\vrhoe\in \NESA^{\sss e,k,r}$);
    \item[$\rhd$] if $\ell< k-j$, which corresponds to the second {plot in \Cref{pic_path_decompose_u1}}, then $\rho(1)_\ell=\pi_{j+\ell}$.
\end{itemize} 
Let $\mathscr{K}_i=\partial B_r^{\sss(G_n)}(\Ver_i)\cap[\zeta n,n]$ for $i=1,2$. 
We then define 
\begin{align}
\label{A-def}
    A=A(j,\ell)=\begin{cases}
        \pi_{j+\ell},\quad&\ell<k-j,\\
        \mathscr{K}_2,\quad&\ell=k-j,
    \end{cases}
\end{align}
\ch{to be} the set of all possible choices for $\rho_{j+\ell}$.

By \zhu{\Cref{lem_prob_pie_cond,lem_prod_twopath_decompose}}, {for $\vpie, \vrhoe$ for which $u=1$}, for $n\geq \zeta^{-2}$, on the good event $\GNW$,
\begin{align}\label{eq_half_replace}
    &\prob^r(\vpie, \vrhoe\subseteq \PAndmdel)=\Theta  \prob^r(\vpie\subseteq \PAndmdel)\prod_{s=2}^n  \frac{(\alpha+p_s^{\sss \rho(1)}-1)_{p_s^{\sss \rho(1)}}(\beta_s+q_s^{\sss \rho(1)}-1)_{q_s^{\sss \rho(1)}}}{(\alpha+\beta_s+p_s^{\sss \rho(1)}+q_s^{\sss \rho(1)}-1)_{p_s^{\sss \rho(1)}+q_s^{\sss \rho(1)}}}.
\end{align}
 
We first fix $\vpie$, $j$ and $\ell$. Then combining  \eqref{eq_half_replace} with \eqref{eq_hd_lower_form_prod_kappa_var} for $u=1$ gives that
\begin{align}\label{product_twopath_n2k-j}
 &\sum_{\substack{\vrhoe\in \NESA^{\sss e,k,r},u=1,\\\rho_j=\pi_j,\rho_{j+\ell}\in A}}\prob^r(\vpie, \vrhoe\subseteq \PAndmdel)\leq \sum_{a\in A}\sum_{\vec\rho(1)^{\sss e}\in \NA_{\pi_j,a}^{\sss e,\ell}}\prob^r(\vpie, \vrhoe\subseteq \PAndmdel)\\
 &= \Theta  \prob^r(\vpie\subseteq \PAndmdel)\sum_{a\in A}\sum_{\vec\rho(1)^{\sss e}\in \NA_{\pi_j,a}^{\sss e,\ell}}\prod_{s=2}^n  \frac{(\alpha+p_s^{\sss \rho(1)}-1)_{p_s^{\sss \rho(1)}}(\beta_s+q_s^{\sss \rho(1)}-1)_{q_s^{\sss \rho(1)}}}{(\alpha+\beta_s+p_s^{\sss \rho(1)}+q_s^{\sss \rho(1)}-1)_{p_s^{\sss \rho(1)}+q_s^{\sss \rho(1)}}}\nn\\
&\leq \Theta n^{-\ell} \prob^r(\vpie\subseteq \PAndmdel)
\sum_{a\in A}\sum_{\vec\rho(1)\in \NA_{\pi_j,a}^{\sss \ell}}\prod_{i=1}^{\ell}\kappa\big((\rho(1)_{i-1}/n,\lb^{\rho(1)}_{i-1}),(\rho(1)_{i}/n,\lb^{\rho(1)}_i)\big).\nn
\end{align}
Since 
\begin{align}\label{eq-sum-two-paths-rewrite}
    &\sum_{\vpie\in \NESA^{\sss e,k,r}}\sum_{\substack{\vrhoe\in \NESA^{\sss e,k,r},u=1}}\prob^r(\vpie, \vrhoe\subseteq \PAndmdel)\\
    &\qquad=\sum_{\vpie\in \NESA^{\sss e,k,r}}\sum_{\ell\in[k]}\sum_{j=0}^{k-\ell}\sum_{\substack{\vrhoe\in \NESA^{\sss e,k,r},u=1,\\\rho_j=\pi_j,\rho_{j+\ell}\in A}}\prob^r(\vpie, \vrhoe\subseteq \PAndmdel),
\nn\end{align}
if one can show that, for suffciently large $n$, uniformly in  $\vpie\in \NESA^{\sss e,k,r}$,
\begin{align}\label{eq-eq-sum-two-paths-rewrite-aim}
    &\sum_{\ell\in[k]}\sum_{j=0}^{k-\ell}n^{-\ell}\sum_{a\in A}\sum_{\vec\rho(1)\in \NA_{\pi_j,a}^{\sss \ell}}\prod_{i=1}^{\ell}\kappa\big((\rho(1)_{i-1}/n,\lb^{\rho(1)}_{i-1}),(\rho(1)_{i}/n,\lb^{\rho(1)}_i)\big)\\
    &\qquad\leq \Theta \bc{3/2}^{-r} {\zeta}^{-4} c_{\zeta}^{-1}\expec^r[N_{n,r}(k)],\nn
\end{align}
then \eqref{eq_u=1-1} follows from the combination of \eqref{product_twopath_n2k-j}-\eqref{eq-eq-sum-two-paths-rewrite-aim}. In what follows, we prove a stronger statement that reads, for $A$ as in \eqref{A-def},
\begin{align}\label{eq-eq-sum-two-paths-rewrite-aim-new}
    &\sum_{\ell\in[k]}\sum_{j=0}^{k-\ell}n^{-\ell}\sum_{a\in A}\sum_{\vec\rho(1)\in \NP_{\pi_j,a}^{\sss \ell}}\prod_{i=1}^{\ell}\nka\big((\rho(1)_{i-1}/n,\lb^{\rho(1)}_{i-1}),(\rho(1)_{i}/n,\lb^{\rho(1)}_i)\big)\\
    &\qquad\leq \Theta \bc{3/2}^{-r} {\zeta}^{-4} c_{\zeta}^{-1}\expec^r[N_{n,r}(k)].\nn
\end{align}

A natural idea would be \ch{to use} inequalities like \eqref{eq_hd_lower_trans_sum_int} and \eqref{eq_hd_lower_part1-ded} to {upper}{} bound 
{the summand for fixed $\ell$ and $j$,}
\invisible{\begin{align*}
n^{-\ell}\sum_{a\in A}\sum_{\vec\rho(1)\in \NP_{\pi_j,a}^{\sss \ell}}\prod_{i=1}^{\ell}\nka\big((\rho(1)_{i-1}/n,\lb^{\rho(1)}_{i-1}),(\rho(1)_{i}/n,\lb^{\rho(1)}_i)\big)
\end{align*}} which {would give}{} rise to a {bound}{} of the order of $\Theta n^{-1}\nu^\ell$. For $k=\rou{(1+\vep)\log_\nu n}$ and $\ell=k$, we have $n^{-1}\nu^\ell=n^{\vep}$. 

{Unfortunately}, by \Cref{prop-path-count-PAMc-LB_simple}, we only have a lower bound of the order of $n^{\vep/2}$ for the rhs of \eqref{eq-eq-sum-two-paths-rewrite-aim-new}. Consequently, {our previous bounds are}{} not enough to deduce \eqref{eq-eq-sum-two-paths-rewrite-aim-new}. 
The essential problem arises from the fact that we use the spectral norm to derive the upper bound and the (truncated) spectral radius to derive the lower bound, resulting in the loss of a sublinear factor when applying \Cref{pro_hd_spe_con}. To overcome this problem, we need to handle this {delicate}{} comparison more carefully, {as we next explain.}
\smallskip



Recall from \eqref{eq_hd_lowerbound_r} that, for $k\leq 2\log_\nu n$, $n\geq \zeta^{-2}$ and any $\lb^\pi_0\in\cbc{\rO,\rY}$, on the good event $\GNW$,
\begin{align*}
\expec^r[N_{n,r}(k)]&=\sum_{\vrhoe\in \NESA^{\sss e,k,r}}\prob^r(\vrhoe\subseteq \PAndmdel)\\
    &=\Theta n^{-k}\sum_{\substack{\nVer_1\in \mathscr{K}_1,\\\nVer_2\in \mathscr{K}_2}}\sum_{\vec\rho\in \TSA_{\nVer_1,\nVer_2}^{\sss k}}\prod_{i=1}^k\kappa\big((\rho_{i-1}/n,\lb^\rho_{i-1}),(\rho_{i}/n,\lb^\rho_i)\big)\nn.\\
    \end{align*}
Then,  \Cref{lem_hd_sum_restrict_path-sum-gen-nka-compare} \ch{gives} that, for $n\geq n_0$, on the event $\GNW$,
\begin{align}\label{eq_hd_sum_restrict_path-sum-gen-var}
\expec^r[N_{n,r}(k)]
    &= \Theta n^{-k}\sum_{\substack{\nVer_1\in \mathscr{K}_1,\\\nVer_2\in \mathscr{K}_2}}\sum_{\vec\rho\in \NP_{\nVer_1,\nVer_2}^{\sss k}}\prod_{i=1}^k\nka\big((\rho_{i-1}/n,\lb^\rho_{i-1}),(\rho_{i}/n,\lb^\rho_i)\big).
    \end{align}

{Comparing}{} $\sum_{\vec\rho\in \NP_{\nVer_1,\nVer_2}^{\sss k}}\prod_{i=1}^k\nka\big((\rho_{i-1}/n,\lb^\rho_{i-1}),(\rho_{i}/n,\lb^\rho_i)\big)$ with $$\sum_{\vec\rho(1)\in \NP_{\pi_j,a}^{\sss \ell}}\prod_{i=1}^{\ell}\nka\big((\rho(1)_{i-1}/n,\lb^{\rho(1)}_{i-1}),(\rho(1)_{i}/n,\lb^{\rho(1)}_i)\big),$$
we see that the difference arises only from the length of the path, and \ch{its ends}, where the latter can only give rise to a $\Theta \zeta^{-2}$ factor according to \Cref{cor_easy-general-bound}. To handle  the difference {arising} from the length of the path, we want to divide the paths in the first summation into two subpaths, one is a path of length $\ell$, which is the same as $\rho(1)$, the other is a path of length $k-\ell$, and we bound the contribution of the latter path from above using  \Cref{lem_hd_sum_restrict_path-sum-gen-nka}.

As previously stated, we divide $\vec\rho$ into two subpaths \zhu{as} $\vec\xi(1)=(\rho_0,\ldots,\rho_{k-\ell})$ \zhu{and} $\vec\xi(2)=(\rho_{k-\ell},\ldots,\rho_{k})$. Then, the length of $\vec\xi(2)$ is $\ell$ and
\begin{align}
&\sum_{\vec\rho\in \NP_{\nVer_1,\nVer_2}^{\sss k}}\prod_{i=1}^k\nka\big((\rho_{i-1}/n,\lb^\rho_{i-1}),(\rho_{i}/n,\lb^\rho_i)\big)\label{eq:decompose-rho-xi}\\
    &\qquad= \sum_{b\in [\zeta n,n]}\sum_{\vec\xi(1)\in \NP_{\nVer_1,b}^{\sss k-\ell}}\prod_{i=1}^{k-\ell}\nka\big((\rho_{i-1}/n,\lb^\rho_{i-1}),(\rho_{i}/n,\lb^\rho_i)\big)\nn\\
    &\qquad\quad\times\sum_{\vec\xi(2)\in \NP_{b,\nVer_2}^{\sss \ell}}\prod_{i=k-\ell+1}^{k}\nka\big((\rho_{i-1}/n,\lb^\rho_{i-1}),(\rho_{i}/n,\lb^\rho_i)\big)\nn.
\end{align}
We then show that
\begin{align}
    &\sum_{\nVer_2\in K_2}\sum_{\vec\xi(2)\in \NP_{b,\nVer_2}^{\sss \ell}}\prod_{i=k-\ell+1}^{k}\nka\big((\rho_{i-1}/n,\lb^\rho_{i-1}),(\rho_{i}/n,\lb^\rho_i)\big)\label{eq:rho1-xi2}\\
    &\qquad\geq \Theta \zeta^{2}\sum_{a\in A}\sum_{\vec\rho(1)\in \NP_{\pi_j,a}^{\sss \ell}}\prod_{i=1}^{\ell}\nka\big((\rho(1)_{i-1}/n,\lb^{\rho(1)}_{i-1}),(\rho(1)_{i}/n,\lb^{\rho(1)}_i)\big).\nn
\end{align}

Recall that  $\mathscr{K}_2=\partial B_r^{\sss(G_n)}(\Ver_2)\cap[\zeta n,n]$. By \Cref{cor_easy-general-bound}, as $\mathscr{K}_2\neq \emptyset$ by \Cref{ass_Br},
\begin{itemize}
    \item[$\rhd$] if $A=\mathscr{K}_2$, then, for any $\nVer_2\in \mathscr{K}_2$ and any $b\in [\zeta n,n]$,
    \begin{align}\label{eq-path-prod-k2-1}
    \sum_{\vec\rho(1)\in \NP_{\pi_j,\nVer_2}^{\sss \ell}}&\prod_{i=1}^{\ell}\nka\big((\rho(1)_{i-1}/n,\lb^{\rho(1)}_{i-1}),(\rho(1)_{i}/n,\lb^{\rho(1)}_i)\big)\\
    &\leq \Theta \zeta^{-2}\sum_{\vec\xi(2)\in \NP_{b,\nVer_2}^{\sss \ell}}\prod_{i=k-\ell+1}^{k}\nka\big((\rho_{i-1}/n,\lb^\rho_{i-1}),(\rho_{i}/n,\lb^\rho_i)\big).\nn
\end{align}
\item[$\rhd$] if $A=\pi_{j+\ell}$, then, for any $\nVer_2\in \mathscr{K}_2$ and any $b\in [\zeta n,n]$,
\begin{align}\label{eq-path-prod-k2-2}
    \sum_{\vec\rho(1)\in \NP_{\pi_j,\pi_{j+\ell}}^{\sss \ell}}&\prod_{i=1}^{\ell}\nka\big((\rho(1)_{i-1}/n,\lb^{\rho(1)}_{i-1}),(\rho(1)_{i}/n,\lb^{\rho(1)}_i)\big)\\
    &\leq \Theta \zeta^{-2}\sum_{\vec\xi(2)\in \NP_{b,\nVer_2}^{\sss \ell}}\prod_{i=k-\ell+1}^{k}\nka\big((\rho_{i-1}/n,\lb^\rho_{i-1}),(\rho_{i}/n,\lb^\rho_i)\big).\nn
\end{align}
\end{itemize}
Combining \eqref{eq-path-prod-k2-1} and \eqref{eq-path-prod-k2-2} yields \eqref{eq:rho1-xi2}.

For the remaining  factor, i.e., that on the last line of \eqref{eq:decompose-rho-xi}, \zhu{note from \Cref{ass_Br} that $\abs{\mathscr{K}_1}\geq (3/2)^r$ on the good event $\GNW$. Then, by   \Cref{lem_hd_sum_restrict_path-sum-gen-nka}, with $F_1=\mathscr{K}_1$ and $F_2=[\zeta n,n]$ in \Cref{lem_hd_sum_restrict_path-sum-gen-nka}, for sufficiently large $n$, on the good event $\GNW$},
\begin{align*}
    \sum_{\nVer_1\in \mathscr{K}_1}&\sum_{b\in [\zeta n,n]}\sum_{\vec\xi(1)\in \NP_{\nVer_1,b}^{\sss j}}\prod_{i=1}^{k-\ell}\nka\big((\rho_{i-1}/n,\lb^\rho_{i-1}),(\rho_{i}/n,\lb^\rho_i)\big)\\
    &\geq \Theta \abs{F_1}\abs{F_2}{\zeta}^2c_{\zeta}\nu^{(k-\ell)(1-\vep/4)}n^{k-\ell-1}=\Theta \bc{3/2}^r {\zeta}^2c_{\zeta}\nu^{(k-\ell)(1-\vep/4)}n^{k-\ell}.
\end{align*}
Combining the above inequality with \eqref{eq:decompose-rho-xi} and \eqref{eq:rho1-xi2} gives that 
\begin{flalign}\label{eq-bound-rho-with-without-pi}
    \sum_{\substack{\nVer_1\in \mathscr{K}_1,\\\nVer_2\in \mathscr{K}_2}}\sum_{\vec\rho\in \NP_{\nVer_1,\nVer_2}^{\sss k}}\prod_{i=1}^k\nka\big((\rho_{i-1}/n,\lb^\rho_{i-1}),(\rho_{i}/n,\lb^\rho_i)\big)
    \geq \Theta \bc{3/2}^r {\zeta}^4 c_{\zeta}\nu^{(k-\ell)(1-\vep/4)}n^{k-\ell}\nn\\
    \times\sum_{a\in A}\sum_{\vec\rho(1)\in \NP_{\pi_j,a}^{\sss \ell}}\prod_{i=1}^{\ell}\nka\big((\rho(1)_{i-1}/n,\lb^{\rho(1)}_{i-1}),(\rho(1)_{i}/n,\lb^{\rho(1)}_i)\big).
\end{flalign}
Note from \eqref{eq_hd_sum_restrict_path-sum-gen-var} that the lhs of \eqref{eq-bound-rho-with-without-pi} is $\Theta n^k \expec^r[N_{n,r}(k)]$. Then,
\begin{align}\label{eq-bound-rho-with-without-pi-2}
    &n^{-\ell}\sum_{a\in A}\sum_{\vec\rho(1)\in \NP_{\pi_j,a}^{\sss \ell}}\prod_{i=1}^{\ell}\nka\big((\rho(1)_{i-1}/n,\lb^{\rho(1)}_{i-1}),(\rho(1)_{i}/n,\lb^{\rho(1)}_i)\big)\nn\\
    &\quad\leq \Theta \bc{3/2}^{-r} {\zeta}^{-4} c_{\zeta}^{-1}\nu^{(\ell-k)(1-\vep/4)}\expec^r[N_{n,r}(k)].
\end{align}
Summing \eqref{eq-bound-rho-with-without-pi-2} over all integers $\ell\in[k]$ and $j\in[0,k-\ell]$ {leads us to}
\begin{align*}
    &\sum_{\ell\in[k]}\sum_{j=0}^{k-\ell}n^{-\ell}\sum_{a\in A}\sum_{\vec\rho(1)\in \NP_{\pi_j,a}^{\sss \ell}}\prod_{i=1}^{\ell}\nka\big((\rho(1)_{i-1}/n,\lb^{\rho(1)}_{i-1}),(\rho(1)_{i}/n,\lb^{\rho(1)}_i)\big)\\
    &\qquad\leq \Theta \bc{3/2}^{-r} {\zeta}^{-4} c_{\zeta}^{-1}\expec^r[N_{n,r}(k)]\sum_{\ell\in[k]}\sum_{j=0}^{k-\ell}\nu^{(\ell-k)(1-\vep/4)}.
\end{align*}
Since $\vep\in(0,1)$, $\sum_{s=0}^\infty (s+1)\nu^{-s(1-\vep/4)}=\Theta $. Thus, for sufficiently large $n$ and $k\leq (1+\vep)\log_\nu n$, {\eqref{eq-eq-sum-two-paths-rewrite-aim-new} holds}{} uniformly in  $\vpie\in \NESA^{\sss e,k,r}$, which eventually leads to \eqref{eq_u=1-1}. Moreover, \zhu{analogously} to the proof {of}{} \eqref{eq_u=1-1}, on the good event $\GNW$, for $n\geq n_0$,
    \begin{align}\label{eq_u=1-2}
  &\sum_{\vpie\in \NESA^{\sss e,k,r}}\sum_{\substack{\vrhoe\in \NESA^{\sss e,k,r},\\u=1,\rho(1)_\ell\in\vec\pi}}\prob^r(\vpie, \vrhoe\subseteq \PAndmdel)\leq \Theta \bc{3/2}^{-r} {\zeta}^{-4} c_{\zeta}^{-1}\expec^r[N_{n,r}(k)]^2.
\end{align}
Combining \eqref{eq_u=1-1} and \eqref{eq_u=1-2} finishes the proof of \Cref{lem-u=1}.
\end{proof}
\begin{proof}[Proof of \Cref{lem-con-case-intersect}]
    \Cref{lem-con-case-intersect} follows directly from \Cref{lem-var_upper_part1_0,lem-u=1}.
\end{proof}


\longversion{
\begin{appendix}
\section{Good events are likely: Proof of Lemma~\ref{LEM_RE_BR}}\label{sec-proof-ass-Br}
In this section, we prove \Cref{LEM_RE_BR}.
For this purpose, we first establish some regularity results for $\PPT$, which is the local limit of $\PAndmdel$ introduced in \Cref{sec_offspring_operator}, as we describe below. Then, using the local convergence, we extend the results to $\PAndmdel$. Let us start by introducing marked local convergence.

\paragraph{A metric on the space of marked-rooted graphs} 
To describe how close two marked-rooted graphs are, we introduce a metric to measure the distance between them.

Given two complete separable metric spaces $\Xi$ and $\Xi'$, let $(G_1,o_1)$ and $(G_2,o_2)$ be two (locally finite) marked-rooted graphs, where each vertex $v_i\in G_i$ is marked with $m_i(v_i)\in \Xi$ and each edge $e_i\in G_i$ is marked with $m_i'(e_i)\in \Xi'$ for $i=1,2$. We denote the metric on $\Xi$ by $\dRG_\Xi$, and the metric on $\Xi'$ by $\dRG_{\Xi'}$.
Then we define the distance between $(G_1,o_1)$ and $(G_2,o_2)$ as 
\begin{align*}
    \dRG_{\mathcal{G}_\star}\big((G_1,o_1),(G_2,o_2)\big)=\frac{1}{1+R},
\end{align*}
where $R$ is the largest integer $r$ such that the two marked-rooted graphs $\zhu{\bar{B}_r^{(G_1)}}(o_1)$ and $\zhu{\bar{B}_r^{(G_2)}}(o_2)$ are isomorphic when ignoring their vertex and edge marks, and there exists a bijection $\varphi$ from the vertex and edge sets of $\zhu{\bar{B}_r^{(G_1)}}(o_1)$ to the vertex and edge sets of $\zhu{\bar{B}_r^{(G_2)}}(o_2)$ such that  
\begin{itemize}
\item[$\rhd$]$\Ver_1=\varphi(\Ver_2)$;
\item[$\rhd$]an edge $e\in \zhu{\bar{B}_r^{(G_1)}}(o_1)$ has endpoints $u$ and $v$ if and only if the edge $\varphi(e)\in  \zhu{\bar{B}_r^{(G_2)}}(o_2)$ and it has endpoints $\varphi(u)$ and $\varphi(v)$;
    \item[$\rhd$]for each $v\in \zhu{\bar{B}_r^{(G_1)}}(o_1)$, $\dRG_\Xi(m_1(v),m_2(\varphi(v)))\leq 1/r$;
    \item[$\rhd$]for each edge $e\in\zhu{\bar{B}_r^{(G_1)}}(o_1)$, $\dRG_{\Xi'}(m_1'(e),m_2'(\varphi(e))\leq 1/r$.
\end{itemize}
\paragraph{The rooted tree $\PPT$ is the local limit of $(\PAndmdel,\Ver)$ under $\dRG_{\mathcal{G}_\star}$} Let $\Ver$ be a uniformly chosen vertex in $\PAndmdel$. 
In order to compare $(\PAndmdel,\Ver)$ with the rooted tree $\PPT$, we define the \textit{mark} of each vertex in $\PAndmdel$ as its age divided by $n$, and the mark of each edge in $\PAndmdel$ as its label in $[m]$. Let
\begin{itemize}
    \item[$\rhd$]$\dRG_\Xi$  be the absolute difference on $\RR$ such that $\dRG_\Xi(x,y)=\abs{x-y}$;
    \item[$\rhd$] $\dRG_{\Xi'}$  be the discrete metric on $[m]$ such that $\dRG_{\Xi'}(x,y)=\indic{x\neq y}$.
\end{itemize}
Then, in \cite[Theorem 1.5]{GarHazHofRay22}, Garavaglia, Hazra, the first author and Ray proved that for any (locally finite) marked-rooted graph $H$ with vertex marks in $[0,1]$ equipped with metric $\dRG_\Xi$ and edge marks in $[m]$ equipped with metric $\dRG_{\Xi'}$, and any $R\geq 0$, as $n$ tends to infinity,
\begin{align}
\label{dis-pam-local-limit-0}
\PP\bc{\dRG_{\mathcal{G}_\star}\big((\PAndmdel,\Ver),H\big)\leq \frac{1}{1+R}}\longrightarrow\PP\bc{\dRG_{\mathcal{G}_\star}\big(\PPT,H\big)\leq \frac{1}{1+R}},
\end{align}
In the context of random graph theory, this means that $\PPT$ is the local weak limit of $\PAndmdel$. In fact, the above convergence can be strengthened to convergence {\em in probability} when looking at proportions of vertices \ch{$v\in [n]$} for which $\dRG_{\mathcal{G}_\star}\big((\PAndmdel,\ch{v}),H\big)\leq \frac{1}{1+R}$. This implies marked local convergence {\em in probability}, which  implies that the marked local neighborhoods around two uniformly chosen vertices are close to being {\em independent.}

According to \eqref{dis-pam-local-limit-0}, if we want to investigate the neighborhood of a uniformly chosen vertex in $\PAndmdel$ for large $n$, it is sufficient to examine the neighborhood of the root of $\PPT$. This is generally how the local limit \ch{helps} in proving \ch{desirable properties} of random graphs, including global properties such as the size of the giant component and diameter \cite{Hofs24}. For us, this link will be crucial in proving \Cref{LEM_RE_BR}.

\begin{proof}[Proof of \Cref{LEM_RE_BR}]
We begin by investigating $\PPT$. Given $\bar{\bf p}$ as a  $\PPT$ truncated at depth $r$ (i.e., removing all the vertices at distance more than $r$ from the root), we consider the probability that the following holds:
\begin{enumerate}[label=(\Alph*)]
    \item the vertices in $\bar{\bf p}$  are marked with elements in $[2\eta,1]$;\label{It_pb_1}
        \item the number of vertices in $\bar{\bf p}$ is no more than $M$;\label{It_pb_2}
    \item at least $(3/2)^r$ leaves of $\bar{\bf p}$ are marked with elements in $[5{\zeta},1-4{\zeta}]$;\label{It_pb_3}
            \item the tree $\bar{\bf p}$ has a root $\varnothing$ and each leaf in $\bar{\bf p}$ has a distance $r$ from $\varnothing$;\label{It_pb_4}
        \item the edges in $\bar{\bf p}$ are labeled in $[m]$;\label{It_pb_5}
            \item  for each non-leaf vertex $v$ in  $\bar{\bf p}$ and each $j\in[m]$, there is precisely one
edge labeled $j$ such that $v$ is the younger endpoint of this edge.\label{It_pb_6}
\end{enumerate}
We note that conditions \ref{It_pb_4}-\ref{It_pb_6} are necessary  for $\bar{\bf p}$ to be a $\PPT$ truncated at depth $r$ (see \Cref{sec_offspring_operator}), and thus for $\bar{\bf p}$ they hold with probability $1$. 
Further, by \cite[Lemma 4.7]{GarHazHofRay22}, for each $r\geq 0$,
\begin{align*}
    \lim_{M\to\infty}\limsup_{\eta\searrow 0} \PP\Big(\text{\ref{It_pb_1} and \ref{It_pb_2} hold for $\bar{\bf p}$}\Big)=1.
\end{align*}

Next, we deal with \ref{It_pb_3}. We first note that for the root or a vertex with label $\rO$ in $\PPT$, it has $m$ offsprings with label $\rO$. As $m\geq 2$, for each $r\geq 2$, there are at least $2^{r-2}$ vertices at  distance $r-2$ to the root in $\PPT$  that are labeled $\rO$ and  are older than the root. We further note from \Cref{sec_offspring_operator} that, with $\chi=\frac{m+\delta}{2(m+\delta)}$,
\begin{itemize}
    \item[$\rhd$] for each vertex $\omega$ with label $\rO$ and age $A_\omega$, the number of its \ch{offspring} with label $\rY$ and age between $[1-4\zeta,1]$ follows a conditional Poisson distribution with random mean 
    \[
    \int_{A_\omega\vee (1-4\zeta)}^1\frac{\Gamma_\omega(1-\chi)}{x^{\chi}A_\omega^{1-\chi}}\dint x=\Gamma_\omega A_\omega^{\chi-1}(1-(A_\omega\vee(1-4\zeta))^{1-\chi}),
    \]
    where $\Gamma_\omega$ is a gamma distribution with shape $m+\delta+1$ and rate $1$.

    \item[$\rhd$] for each vertex $\omega i$ with label $\rY$ and age $A_{\omega i}\geq 1-4\zeta$, it has at least \zhu{one} offspring with label $\rO$, and the age of this vertex has distribution $U_{\omega i}^{1/\chi}A_{\omega i}$, where $U_{\omega i} $ is uniformly distributed in $[0,1]$, independent of everything else.
\end{itemize}
Given $A_\varnothing<1$ as the age of the root, for each $\omega$ with label $\rO$ and $A_\omega\leq A_\varnothing$, since $x^{\chi-1}(1-(x\vee(1-4\zeta))^{1-\chi})$ is positive in $(0,A_\varnothing]$ and tends to $\infty$ as $x\searrow 0$, it has a positive lower bound $c(A_\varnothing)$ in $(0,A_\varnothing]$. For $\Gamma_\omega$, let $c'=\PP\bc{\Gamma_\omega\geq 1}>0$. Then, with probability at least $\e^{-c(A_\varnothing)c'}$, $\omega$ has a child $A_{\omega i}$ with label $\rY$ and age in the interval $[1-4\zeta,1]$. For this child, with probability $\big((1-4\zeta)^{\chi}-(5\zeta)^{\chi}\big)/A_{\omega i}^{\chi}\geq (1-4\zeta)^{\chi}-(5\zeta)^{\chi}=c''$, it has a child with label $\rO$ and age between $[5\zeta,1-4\zeta]$. Therefore, with probability at least $\e^{-c(A_\varnothing)c'}c''$, $\omega$ has a grandchild with age between $[5\zeta,1-4\zeta]$. 

Consequently, when $r\geq 3$, for the $2^{r-2}$ previously mentioned vertices  at distance $r-2$ to the root in $\PPT$ that are labeled $\rO$ and  are older than the root, the total number of their grandchildren with age in $[5\zeta,1-4\zeta]$ is stochastically bounded from below by $\Bin\big(2^{r-2},\e^{-c(A_\varnothing)c'}c''\big)$, a binomial random variable with $2^{r-2}$ trials and success probability $\e^{-c(A_\varnothing)c'}c''$. For sufficiently large $r$ such that $2^{r-2}\e^{-c(A_\varnothing)c'}c''>(3/2)^r$, by Chebyshev's inequality, as $r\to\infty$,
\begin{align*}
    &\PP\bc{\Bin\big(2^{-r+2},\e^{-c(A_\varnothing)c'}c''\big)\leq \bc{3/2}^r\mid A_\varnothing}\\
    &\qquad\leq \PP\bc{\big|\Bin\big(2^{-r+2},\e^{-c(A_\varnothing)c'}c''\big)-2^{r-2}\e^{-c(A_\varnothing)c'}c''\big|\geq 2^{r-2}\e^{-c(A_\varnothing)c'}c''-\bc{3/2}^r\mid A_\varnothing}\\
    &\qquad\leq \frac{2^{r-2}\e^{-c(A_\varnothing)c'}c''(1-\e^{-c(A_\varnothing)c'}c'')}{(2^{r-2}\e^{-c(A_\varnothing)c'}c''-\bc{3/2}^r)^2}\to 0.
\end{align*}
By the law of total expectation and \zhu{the} dominated convergence theorem, we conclude that
\begin{align*}
    \lim_{r\to \infty}\PP\bc{\Bin\big(2^{-r+2},\e^{-c(A_\varnothing)c'}c''\big)\leq \bc{3/2}^r}=0,
\end{align*}
which means that as $r\to\infty$, the probability that there are at least $(3/2)^r$ vertices with age in $[5\zeta,1-4\zeta]$ and at distance $r$ of the root in $\PPT$ tends to $1$. Hence, uniformly in the values of $M$ and $\eta$,
\begin{align*}
\lim_{r\to\infty}\PP\Big(\text{\ref{It_pb_3} holds for $\bar{\bf p}$}\Big)=1.
\end{align*}
Consequently,
\begin{align}\label{eq-ap-local-limit-sat-ass}
   \lim_{r\to\infty}\lim_{M\to\infty}\lim_{\eta\searrow 0}\PP\bc{\text{\ref{It_pb_1} to \ref{It_pb_6} hold for $\bar{\bf p}$}}=1.
\end{align}

Recall the definition of $\mathcal{G}_\star$ and $\dRG_{\mathcal{G}_\star}$ from \Cref{sec_offspring_operator}, where $\mathcal{G}_\star$ is the set of all (locally finite) rooted-marked graphs with vertex marks in $[0,1]$ equipped with the absolute difference {metric}{} on $\RR$ and edge marks in $[m]$ equipped with {the discrete}{} metric.

\zhu{To extend the result from $\PPT$ to $\PAndmdel$, by \eqref{dis-pam-local-limit-0}, 
for any marked-rooted graph $H$ with vertex marks in $[0,1]$ and edge marks in $[m]$, and any $R\geq 0$, with $\Ver$ a uniformly chosen vertex in $\PAndmdel$, as $n$ tends to infinity,}
\begin{align}\label{dis-pam-local-limit}
\PP\bc{\dRG_{\mathcal{G}_\star}\big((\PAndmdel,\Ver),H\big)\leq \frac{1}{1+R}}\longrightarrow\PP\bc{\dRG_{\mathcal{G}_\star}\big(\PPT,H\big)\leq \frac{1}{1+R}},
\end{align}
where the mark of each vertex in $\PAndmdel$ is its age divided by $n$.
{Equation}{} \eqref{dis-pam-local-limit} yields that $(\PAndmdel,\Ver)$ converges to $\PPT$ in distribution under the metric $\dRG_{\mathcal{G}_\star}$, using a standard argument similar to the proof of \cite[Theorem 2.7]{Hofs24}. \footnote{We treat two isomorphic marked-rooted graphs as the same graph in this convergence.}
\smallskip

Further, we note that for random variables $(X_n)_{n\geq 1}$ on $\RR$, by \cite[Theorem 3.2.8]{durrett2019probability}, if $X_n$ converges in distribution to $X$, we can couple $X_n$ and $X$ to $(\hat{X}_n,\hat{X})$ such that $\hat{X}_n$ converges almost surely to $\hat{X}$. By \cite[Theorem A.8]{Hofs24}, with metric $\dRG_{\mathcal{G}_\star}$, the space $\mathcal{G_\star}$ is a Polish space. Then, \cite[Theorem 13.1.1]{Dudl02} yields that this space of marked-rooted graphs and $\RR$ are isomorphic. 
Hence, we can couple $(\PAndmdel,\Ver)$ and $\PPT$ such that $(\PAndmdel,\Ver)$ converges almost surely to $\PPT$. Through this coupling, for any non-negative $R$, as $n\to\infty$,
\begin{align}\label{eq-ap-con-mark-graph}
\PP\bc{\dRG_{\mathcal{G}_\star}\big((\PAndmdel,\Ver),\PPT\big)\leq \frac{1}{1+R}}\to 0.
\end{align}

Let $G_n=\PAndmdel$. With $R=\max\cbc{r,\zeta^{-1},\eta^{-1}}$ and $\bar{\bf p}$ as a $\PPT$ truncated at depth $r$, once 
\begin{align*}
    \dRG_{\mathcal{G}_\star}\big((\PAndmdel,\Ver),\PPT\big)\leq \frac{1}{1+R},
\end{align*}
there is a bijection $\varphi$ from the vertex and edge sets of $\zhu{\bar{B}_r^{(G_n)}}(\Ver)$ to the 
vertex and edge sets of $\bar{\bf p}$ such that
\begin{itemize}
\item[$\rhd$] $\Ver=\varphi(\varnothing)$;
\item[$\rhd$] an edge $e\in \zhu{\bar{B}_r^{(G)}}(o)$ has endpoints $u$ and $v$ if and only if the edge $\varphi(e)\in  \bar{\bf p}$ has endpoints $\varphi(u)$ and $\varphi(v)$;

    \item[$\rhd$] $\abs{m(v)-m(\varphi(v))}\leq 1/r$ for each $v\in \zhu{\bar{B}_r^{(G_n)}}(\Ver)$;
    \item[$\rhd$] $m'(e)=m'(\varphi(e))$ for each edge $e$ in $\zhu{\bar{B}_r^{(G_n)}}(\Ver)$.
\end{itemize}
{Here}{} $m(v)$ and $m'(u,v)$ are the marks of vertex $v$ and the {directed}{} edge $(u,v)$ in the graph, respectively. Specifically, $\abs{m(v)-m(\varphi(v))}\leq \max\cbc{\zeta,\eta}$. Hence, the ages of each vertex in $ \zhu{\bar{B}_r^{(G_n)}}(\Ver)$ are in $[\eta n,n]$, and there are at least $(3/2)^r$ vertices in $\PAndmdel$ at distance $r$ of $\Ver$ such that their ages are in $[4\zeta n,(1-3\zeta)n]$. We further note that $\gdist{\PAndmdel}\bc{\Ver_1,\Ver_2}\geq 2r+1$ implies that the $r$-neighborhoods of $\Ver_1$ and $\Ver_2$ are disjoint. Hence, for two uniformly chosen vertices $\Ver_1,\Ver_2$ in $\PAndmdel$, by a union bound,
\begin{align*}
&\PP\bc{\text{$\big(\bar{B}_r^{\sss(G_n)}(\Ver_1), \bar{B}_r^{\sss(G_n)}(\Ver_2)\big)$ does not satisfy \ref{it_pb_1} to \ref{it_pb_6} in \Cref{ass_Br}}}\\
    &\leq \PP\bc{\gdist{\PAndmdel}\bc{\Ver_1,\Ver_2}\leq 2r}+2\PP\bc{\dRG_{\mathcal{G}_\star}\big((\PAndmdel,\Ver),\PPT\big)\geq \frac{1}{1+R}}\\
    &\quad+2\PP\bc{\text{At least one of \ref{It_pb_1} to \ref{It_pb_6} does not hold for $\bar{\bf p}$}}.
\end{align*}
Therefore, by \Cref{pro_lowerbound}, \eqref{eq-ap-local-limit-sat-ass} and \eqref{eq-ap-con-mark-graph},
\begin{align}\label{eq-ap-Br-ass}
\lim_{r\to\infty}\limsup_{M\to\infty}\limsup_{\eta\searrow 0}&\limsup_{n\to\infty}\PP\Big(\text{$\big(\bar{B}_r^{\sss(G_n)}(\Ver_1), \bar{B}_r^{\sss(G_n)}(\Ver_2)\big)$ does not satisfy}\\
&\hspace{4cm}\text{\ch{\ref{it_pb_1} to \ref{it_pb_6} in \Cref{ass_Br}}}\Big)=0\nonumber.
\end{align}

For \eqref{eq-ass-on-psi}, we assume that $\big(\bar{B}_r^{\sss(G_n)}(\Ver_1), \bar{B}_r^{\sss(G_n)}(\Ver_2)\big)=(\bar{{\bf t}}_1,\bar{{\bf t}}_2)$ is good, that is, it satisfies \ref{it_pb_1} to \ref{it_pb_6} in \Cref{ass_Br}. Note that
\begin{align*}
    \Big(1-\sum_{v\in V^{\sss\circ}(\bar{{\bf t}}_1)\cup V^{\sss\circ}(\bar{{\bf t}}_2)}\psi_\nu\Big)^{\log^2 n}\geq 1-\log^2 n\sum_{v\in V^{\sss\circ}(\bar{{\bf t}}_1)\cup V^{\sss\circ}(\bar{{\bf t}}_2)}\psi_\nu.
\end{align*}
Recall that $(\psi_j)_{2\leq j\leq n}$ are independent beta random
variables with parameters $\alpha=m+\delta$ and $\beta_j=(2j-3)m+\delta(j-1)$. {Thus,}{} $\Erw\brk{\psi_j}=\frac{m+\delta}{(2j-2)m+j\delta}$. Then,
\ch{since} $V^{\sss\circ}(\bar{{\bf t}}_1)\cup V^{\sss\circ}(\bar{{\bf t}}_2)\subseteq [\eta n,n]$ and $\abs{V^{\sss\circ}(\bar{{\bf t}}_1)\cup V^{\sss\circ}(\bar{{\bf t}}_2)}\leq 2M$, 
\begin{align*}
    \Erw\Big[\sum_{v\in V^{\sss\circ}(\bar{{\bf t}}_1)\cup V^{\sss\circ}(\bar{{\bf t}}_2)}\psi_\nu\Big]\leq 2M\frac{m+\delta}{(2\zeta n-2)m+\zeta n \delta}.
\end{align*}
Hence, Markov's inequality yields that, as $n\to\infty$,
\begin{align*}
    \PP\Big(\log^2 n\sum_{v\in V^{\sss\circ}(\bar{{\bf t}}_1)\cup V^{\sss\circ}(\bar{{\bf t}}_2)}\psi_\nu\geq 1-\e^{-1}\Big)\leq (1-\e^{-1})^{-1}2M\log^2 n\frac{m+\delta}{(2\zeta n-2)m+\zeta n \delta}\longrightarrow 0.
\end{align*}
Therefore, as $n\to\infty$,
\begin{align}\label{eq-ap-psi-ass}
    &\PP\Big(1-\sum_{v\in V^{\sss\circ}(\bar{{\bf t}}_1)\cup V^{\sss\circ}(\bar{{\bf t}}_2)}\psi_\nu\geq \e^{-\frac{1}{\log^2 n}}\mid \text{$(\bar{{\bf t}}_1,\bar{{\bf t}}_2)$ is good}\Big)\\
    &\qquad\geq \PP\Big(\Big(1-\sum_{v\in V^{\sss\circ}(\bar{{\bf t}}_1)\cup V^{\sss\circ}(\bar{{\bf t}}_2)}\psi_\nu\Big)^{\log^2 n}\geq \e^{-1}\mid\text{$(\bar{{\bf t}}_1,\bar{{\bf t}}_2)$ is good}\Big)\nn\\
    &\qquad\geq \PP\Big(\log^2 n\sum_{v\in V^{\sss\circ}(\bar{{\bf t}}_1)\cup V^{\sss\circ}(\bar{{\bf t}}_2)}\psi_\nu\leq 1-\e^{-1}\mid\text{$(\bar{{\bf t}}_1,\bar{{\bf t}}_2)$ is good}\Big)\longrightarrow 1.\nn
\end{align}
Combining \eqref{eq-ap-Br-ass} and \eqref{eq-ap-psi-ass} gives \Cref{LEM_RE_BR}, as desired.
\end{proof}

\section{Convergence of spectral radius: Proof of Remark~\ref{RE-CON-SPE-RADIUS}}\label{sec-app-conv-nu-kappa-hvep}
Recall that 
\begin{align*}
    \kappa^\circ_{\zeta}((x,s),(y,t))=\indic{x,y\in [3{\zeta},1-{\zeta}]} \kappa\big((x,s),(y,t)\big).
\end{align*}
Let $\nu_{\zeta}$ be the spectral radius of $\bfT_{\kappa^\circ_{\zeta}}$. In this section, we prove \Cref{RE-CON-SPE-RADIUS}, that is,
\begin{align}\label{eq-conv-nu-kappa}
    \lim_{\zeta\searrow 0}\nu_{\zeta}=\nu.
\end{align}

\begin{proof}[Proof of \eqref{eq-conv-nu-kappa}]

    
We first show that $\bfT_{\kappa^\circ_{\zeta}}^2$ is irreducible in $\mathcal{S}_{{\zeta}}=[3{\zeta},1-{\zeta}]\times\cbc{\rO,\rY}$. \zhu{Recall from \eqref{cst-def-PAM-rep} that $c_{\srY\srO}=\min_{s,t\in\cbc{\srO,\srY}} c_{st}$. Then, by \eqref{kappa-PAM-fin-def-form-a-rep-new_term} and \eqref{cst-def-PAM-rep}, $\kappa\big((x,s),(y,t)\big)\geq c_{\srY\srO}$ when ${x>y,~t=\rO}$ or ${x<y,~t=\rY}$. Hence}, given $(x,s)\in (3{\zeta},1-{\zeta})\times\cbc{\rO,\rY}$, for a nonzero and non-negative function $f$ in $L^2(\mathcal{S}_{{\zeta}})$,
\eqan{\label{eq_hd_bound_T2fcirc}
\bfT_{\kappa^\circ_{\zeta}}^2 f(x,s)\geq&\int_{x}^{1-{\zeta}}\int_{3{\zeta}}^{x}\kappa\big((x,s),(y,\rO)\big)\kappa\big((y,\rO),(z,\rY)\big)f(z,\rY)\dint y\dint z\\
&+\int_{3{\zeta}}^{x}\int_{x}^{1-\zeta}\kappa\big((x,s),(y,\rY)\big)\kappa\big((y,\rY),(z,\rO)\big)f(z,\rO)\dint y\dint z\nn\\
\geq& c_{\srY\srO}^2\min\cbc{x-3\zeta,1-\zeta-x}\int_{3{\zeta}}^{1-{\zeta}}(f(z,\rO)+f(z,\rY))\dint z>0,\nn
}
that is, $\bfT_{\kappa^\circ_{\zeta}}^2$ is irreducible {on the space}{} $\mathcal{S}_{{\zeta}}=[3{\zeta},1-{\zeta}]\times\cbc{\rO,\rY}$.
Then, by \cite[Theorem 43.8 and items (ii) and (iii) after that]{Zaan97} (see \cite[Theorem 41.6]{Zaan97} for the definition of \ch{a} Hille-Tamarkin operator), with  $r(\bfT_{\kappa^\circ_{\zeta}}^2)$ the spectral radius of $\bfT_{\kappa^\circ_{\zeta}}^2$, there exists a $u_{\zeta}$ in $L^2(\mathcal{S}_{{\zeta}})$ such that $u_{\zeta}>0$ almost surely in $\mathcal{S}_{{\zeta}}$ and 
\begin{align}\label{eq-T^2u}
\bfT_{\kappa^\circ_{\zeta}}^2u_{\zeta}=r(\bfT_{\kappa^\circ_{\zeta}}^2)u_{\zeta}.
\end{align}
Further, the combination of the Cauchy–Schwarz inequality and $u_{\zeta} \in L^2(\mathcal{S}_{{\zeta}})$ yields that $u_{\zeta} \in L(\mathcal{S}_{{\zeta}})$. Hence, we can assume that 
\begin{align}\label{eq_normalize-u}
\int_{3{\zeta}}^{1-{\zeta}}(u_{\zeta}(z,\rO)+u_{\zeta}(z,\rY))\dint z=1.   
\end{align}
Moreover, the definition of the spectral  radius gives that $$r(\bfT_{\kappa^\circ_{\zeta}}^2)=\lim_{k\to\infty}\lVert \bfT_{\kappa^\circ_{\zeta}}^{2k}\rVert^{1/k}=\lim_{k\to\infty}\big(\lVert \bfT_{\kappa^\circ_{\zeta}}^{2k}\rVert^{1/2k}\big)^2=r(\bfT_{\kappa^\circ_{\zeta}})^2=\nu_{\zeta}^2.$$
Hence, the combination of \eqref{eq_hd_bound_T2fcirc}, \eqref{eq-T^2u} and \eqref{eq_normalize-u} yields that, for any $x\in[4\zeta,1-2\zeta]$ and any $s\in\cbc{\rO,\rY}$,
\begin{align}\label{eq_r-spec-1}
\nu_{\zeta}^2 u_{\zeta}(x,s)=\bfT_{\kappa^\circ_{\zeta}}^2u_{\zeta}(x,s)&\geq c_{\srY\srO}^2\min\cbc{x-3\zeta,1-\zeta-x}\int_{3{\zeta}}^{1-{\zeta}}(u_{\zeta}(z,\rO)+u_{\zeta}(z,\rY))\dint z\nn\\
&\geq c_{\srY\srO}^2\zeta,
\end{align}
that is, $u_{\zeta}$ has a positive lower bound $c_{\srY\srO}^2\zeta\nu_{\zeta}^{-2}$ on $[4\zeta,1-2\zeta]\times \cbc{\rO,\rY}$.

On the other hand, for  $\zeta<\zeta_0/2$, where $\zeta_0$ is defined as in \Cref{pro_hd_spe_con},  \Cref{pro_hd_spe_con} implies  that
there exist $\underline{\nu}_{2\zeta}>0$ and $c_{2\zeta}\in (0,1)$ such that $\lim_{\zeta\searrow 0}\underline{\nu}_{2\zeta}=\nu$, and, for any $k\geq 0$,
\begin{align}\label{eq_r-spec-2}
    \langle {\bf 1}, \bfT_{\kappa^\circ_{2\zeta}}^{k}{\bf 1}\rangle\geq c_{2\zeta}\underline{\nu}_{2\zeta}^k.
\end{align}
Consequently, \ch{since} $c_{\srY\srO}^{-2}\zeta^{-1}\nu_{\zeta}^{2}u_{\zeta}\geq 1$ on $[4\zeta,1-2\zeta]\times \cbc{\rO,\rY}$ by \eqref{eq_r-spec-1},  the combination of \eqref{eq_r-spec-1} and \eqref{eq_r-spec-2} gives that
\begin{align*}
    c_{\srY\srO}^{-4}\zeta^{-2}\nu_{\zeta}^{2k+4}\norm{u_{\zeta}}^2&=c_{\srY\srO}^{-4}\zeta^{-2}\nu_{\zeta}^{4}\langle u_{\zeta}, \bfT_{\kappa^\circ_{\zeta}}^{2k}u_{\zeta}\rangle=\langle c_{\srY\srO}^{-2}\zeta^{-1}\nu_{\zeta}^{2}u_{\zeta}, \bfT_{\kappa^\circ_{\zeta}}^{2k}c_{\srY\srO}^{-2}\zeta^{-1}\nu_{\zeta}^{2}u_{\zeta}\rangle\\
    &\geq \langle {\bf 1}, \bfT_{\kappa^\circ_{2\zeta}}^{2k}{\bf 1}\rangle\geq c_{2\zeta}\underline{\nu}_{2\zeta}^{2k},\nn
\end{align*}
i.e.,
\begin{align*}
    \nu_{\zeta}\geq \norm{u_{\zeta}}^{1/(k+2)} c_{2\zeta}^{1/(2k+4)}c_{\srY\srO}^{2/(k+2)}\zeta^{1/(k+2)}\underline{\nu}_{2\zeta}^{k/(k+2)}.
\end{align*}
Letting $k\to\infty$ and then $\zeta\searrow 0$, we conclude that
\begin{align*}
    \liminf_{\zeta\searrow 0}\nu_{\zeta}\geq\liminf_{\zeta\searrow 0}\underline{\nu}_{2\zeta}=\nu,
\end{align*}
which gives the lower bound part for \eqref{eq-conv-nu-kappa}.

We further note that
\begin{align*}
    \nu_{\zeta}=\lim_{k\to\infty}\lVert \bfT_{\kappa^\circ_{\zeta}}^{k}\rVert^{1/k}\leq \lim_{k\to\infty}\lVert \bfT_{\kappa}^{k}\rVert^{1/k}=\nu,
\end{align*}
which gives the upper bound part for \eqref{eq-conv-nu-kappa}, as desired.
\end{proof}

\section{Typical distances for $\PAnamdel$ and $\PAnbmdel$: Collapsing}\label{sec-td-other-pams}
In this section, we show that \Cref{thm-log-PA-delta>0} holds for $\PAnamdel$ and $\PAnbmdel$ as well. {We start by defining $\PAnamdel$. For this, we}{} start from the definition of $\PAnaonedel$, and then use the collapsing procedure to construct $\PAnamdel$:
\begin{itemize}
    \item[$\rhd$] {$\PAnaonedel$:} We first consider $n=1$ and $\delta>-1$. This is a graph with one vertex labeled $1$, and a self-loop labeled $1$. As the graph size grows from $n$ to $n+1$, a vertex $n+1$ and an out-edge of $n+1$ labeled $1$ are added to the graph.  The other endpoint of this out-edge from $n+1$ is equal to $n+1$ with probability $\frac{1+\delta}{n(2+\delta)+1+\delta}$, and equal to $i$ with probability $\frac{D_i(n)+\delta}{n(2+\delta)+1+\delta}$ for each $i\in [n]$, where $D_i(n)$ is the degree of vertex $i$ in $\PAnaonedel$.

    \item[$\rhd$] {$\PAnamdel$:} The graph $\PAnamdel$ is derived from $\PAnaonedelcol$ by simultaneously collapsing the vertices $m(\ell-1)+1,\ldots,m\ell$ into a new vertex $\ell$ for each $\ell\in [n]$. We label an edge in $\PAnamdel$ as $j\in[m]$ if it is an out-edge of $m(\ell-1)+j$ in $\PAnaonedelcol$ for some $\ell\in[n]$.
\end{itemize}
Analogously, $\PAnbonedel$ and $\PAnbmdel$ are defined as follows:
\begin{itemize}
    \item[$\rhd$] {$\PAnbonedel$:} We first consider $n=2$ and $\delta>-1$. This is a graph with two vertices, labeled $1$ and $2$, an edge labeled $1$ from $1$ to $2$ and an edge labeled $1$ from $2$ to $1$. As the graph size grows from $n$ to $n+1$, a vertex $n+1$ and an out-edge labeled $1$ from $n+1$ are added to the graph.  The other endpoint of this out-edge of $n+1$ is equal to $i$ with probability $\frac{D_i(n)+\delta}{n(2+\delta)}$ for each $i\in [n]$, where $D_i(n)$ is the degree of vertex $i$ in $\PAnbonedel$.
    \item[$\rhd$] {$\PAnbmdel$:} The graph $\PAnbmdel$ is derived from $\PAnbonedelcol$ by simultaneously collapsing the vertices $m(\ell-1)+1,\ldots,m\ell$ into a new vertex $\ell$ for each $\ell\in [n]$. We label an edge in $\PAnbmdel$ as $j\in[m]$ if it is an out-edge of  $m(\ell-1)+j$ in $\PAnbonedelcol$ for some $\ell\in[n]$.
\end{itemize}
Similar definitions can also be found in \cite{Hofs24}. We observe that the construction of $\PAnbonedelcol$ is very similar to \ch{that of} $\PAndonedelcol$, differing only in the starting \ch{graph}. This observation is crucial for the subsequent proof.

\begin{proof}[Proof of \Cref{thm-log-PA-delta>0} for $\PAnamdel$ and $\PAnbmdel$]
We begin by proving that \Cref{thm-log-PA-delta>0} holds for $\PAnbmdel$. To achieve this, we use the Pólya urn representation of $\PAnbonedelcol$. Next, we study the collapsing procedure, and demonstrate that many results for $\PAndmdel$ also holds for $\PAnbmdel$. Finally, we extend the proof to $\PAnamdel$. Let us now give the details of this proof.
\smallskip

Since each vertex in $\PAnbonedelcol$, except vertex $1$, is connected to an older vertex, the  collapsing procedure yields that
 $\PAnbmdel$ is a connected graph.

Let $(\psi_j)_{j\in [mn]}$ be mutually independent random variables such that
\begin{itemize}
    \item[$\rhd$] $\psi_1=1$, and $\psi_2$ is a beta random
variable with parameters $\alpha_2=2+\delta/m$;
\item[$\rhd$] $(\psi_j)_{3\leq j\leq mn}$ are  beta random
variables with parameters $\alpha_j=\alpha=1+\delta/m$ and $\beta_j=2j-1+\delta(j-1)/m$.
\end{itemize}
\ch{Recall the definition of $\PU$ in Section \ref{sec_polya_pam}.} By \cite[Proposition A.4]{GarHazHofRay22}, $\PAnbonedelcol$ has the same distribution as $\PUm$ with weights $(\psi_j)_{2\leq j\leq mn}$.

Each edge $(\ell,i,j)$ in $\PAnbmdel$ corresponds to an edge in $\PAnbonedelcol$ before the collapsing. The younger endpoint of this pre-collapsed edge is $(\ell-1)m+i$, while there are $m$ choices for the older endpoints, that is, $(j-1)m+u$ for some $u\in[m]$. 
Given a $k$-step edge-labeled  self-avoiding path $\vpie=\cbc{(\pi_{h-1}\vee \pi_h,i_h,\pi_{h-1}\wedge\pi_h):~h\in[k]}$ in $ \PAnbmdel$, 
we denote $e_h=(\pi_{h-1}\vee \pi_h,i_h,\pi_{h-1}\wedge \pi_h),~h\in[k]$ as the edges in this path. Then, 
\begin{align*}
   \cbc{ e_{h,u_h}=(m(\pi_{h-1}\vee \pi_h-1)+i_h,1,(m-1)(\pi_{h-1}\wedge\pi_h)+u_h):~u_h\in[m]}
\end{align*}
is the set of all possible corresponding pre-collapsing edges of $e_h$ in \zhu{$[n]$}. Given $\vec u=(u_1,\ldots,u_k)\in[m]^k$ and 
$\vpie$, let
\begin{align*}
    \vpie_{\vec u}=\cbc{(m(\pi_{h-1}\vee \pi_h-1)+i_h,1,(m-1)(\pi_{h-1}\wedge\pi_h)+u_h)~\forall~h\in [k]}
\end{align*}
be a possible pre-collapsing edge set in $\PAnbonedelcol$ which gives rise to $\vpie$ in $\PAnbmdel$ after the collapsing. We further note that for $\vec u_1\neq \vec u_2$, two edge sets $\vpie_{\vec u_1}$ and $\vpie_{\vec u_2}$ cannot occur in $\PAnbonedelcol$ at the same time, due to the fact that $\vpie_{\vec u_1}\cup\vpie_{\vec u_2}$ is not an edge set (recall the definition of edge set in \Cref{def:edge-set}). Denote the probability conditionally on $(\psi_j)_{j\in[mn]}$ by $\PP_{mn}$. 
Since $\PAnbonedelcol$ is a Pólya urn graph $\PUm$ with weights $(\psi_j)_{2\leq j\leq mn}$, analogous to \eqref{eq_hd_lpath_probability},
\begin{align*}
    \PP_{mn}\bc{\vpie\subseteq \PAnbonedelcol}=\prod_{s=2}^{mn} \psi_s^{\pspiu}(1-\psi_s)^{\qspiu},
\end{align*}
where 
\begin{align}\label{eq_hd_lpath_probability-b}
    \pspiu=\sum_{e\in \vpie_{\vec u}}\indic{s=\ushort{e}} \quad\text{and}\quad
\qspiu=\sum_{e\in \vpie_{\vec u}}\indic{s\in (\ushort{e},\bar{e})},
\end{align}
and where, for $e\in \vpie_{\vec u}$, $\bar{e}$ and $\ushort{e}$ denote the younger and older endpoints of edge $e$ in $\PAnbonedelcol$, respectively.
Then, by \eqref{expctation_function_beta}, 
\begin{align}\label{eq_hd_lower_part0-b}
    \PP\bc{\vpie\subseteq \PAnbmdel}&=\sum_{\vec u\in [m]^k}\PP\bc{\vpie_{\vec u}\subseteq \PAnbonedelcol}\\
    &=\sum_{\vec u\in [m]^k}\prod_{s=2}^{mn}\frac{(\alpha_s+\pspiu-1)_{\pspiu}(\beta_s+\qspiu-1)_{\qspiu}}{(\alpha+\beta_s+\pspiu+\qspiu-1)_{\pspiu+\qspiu}}.\nn
\end{align}
For $\bdt \geq 3$ and $\vpie$ satisfying \Cref{ass_e}, $\max\cbc{\pspiu,\qspiu}\geq 1$ only if $s\geq \bdt $. Further, by \eqref{eq_hd_lpath_probability-b}, $p_s\in\cbc{0,1,2}$. Hence, {analogously}{} to \eqref{eq_path-first-prod}, as $\alpha_s=1+\delta/m$, for $\bdt \geq 3$,
\begin{align}\label{eq_path-first-prod-b}
    \prod_{s=2}^{mn}(\alpha_s+\pspiu-1)_{\pspiu}=(1+\delta/m)^{k-\sum_{s\in\vec\pi_{\vec u}}\indic{\pspiu=2}}(2+\delta/m)^{\sum_{s\in\vec\pi_{\vec u}}\indic{\pspiu=2}}.
\end{align}

Further, {analogously}{} to \eqref{eq_hd_cal_1_E},
\begin{align}\label{eq_hd_cal_1_E-b}
\prod_{s=2}^{mn}\frac{(\beta_s+\qspiu-1)_{\qspiu}}{(\alpha+\beta_s+\pspiu+\qspiu-1)_{\pspiu+\qspiu}}=\Theta^{\frac{\log^2 n}{\bdt }}(2+\delta/m)^{-k}\prod_{e\in \vpie_{\vec u}}\frac{1}{\ushort{e}^{1-\chi}\bar{e}^{\chi}},
\end{align}
where $\chi=\frac{1+\delta/m}{2+\delta/m}=\frac{m+\delta}{2m+\delta}$.
Moreover, by \Cref{ass_e}, $k\leq 4\log_\nu n$ and $\pi_h\geq \bdt $. Then,
\begin{align}\label{eq-app-1/nu-col}
    \prod_{e\in \vpie_{\vec u}}\frac{1}{\ushort{e}^{1-\chi}\bar{e}^{\chi}}&=\prod_{i=1}^k\frac{1}{\big(m(\pi_{h-1}\wedge \pi_h-1)+i_h)\big)^{1-\chi}\big((m-1)(\pi_{h-1}\vee \pi_h)+u_h\big)^\chi}\nn\\
    &=\Theta^{\frac{\log n}{\bdt }}m^{-k}\prod_{i=1}^k\frac{1}{(\pi_{h-1}\wedge \pi_h)^{1-\chi}(\pi_{h-1}\vee \pi_h)^\chi}.
\end{align}
Hence, we conclude from \eqref{eq_hd_lower_part0-b} to \eqref{eq-app-1/nu-col} that 
\begin{align}\label{eq-app-need-label-1}
    \PP\bc{\vpie\subseteq \PAnbmdel}=&\Theta^{\frac{\log^2 n}{\bdt }}\bc{\frac{m+\delta}{2m+\delta}}^k\sum_{\vec u\in [m]^k}\bc{\frac{m+\delta}{2m+\delta}}^{-\sum_{s\in\vec\pi_{\vec u}}\indic{\pspiu=2}}\\
    &\qquad\times m^{-k}\prod_{i=1}^k\frac{1}{(\pi_{h-1}\wedge \pi_h)^{1-\chi}(\pi_{h-1}\vee \pi_h)^\chi}.\nn
\end{align}

To calculate $\sum_{\vec u\in [m]^k}\bc{\frac{m+\delta}{2m+\delta}}^{-\sum_{s\in\vec\pi_{\vec u}}\indic{\pspiu=2}}$, we note that two edges in $\vpie$ share a vertex only if their indices in $\vpie$ are adjacent. If $\pspiu=2$, then there exist two pre-collapsing edges in $\vpie_{\vec u}$ that share an older endpoint. Hence, they also share an endpoint after collapsing. Recall that $\ushort{e}$ is the older endpoint of {the}{} edge $e$. Then, there exists $h\in[k-1]$ such that $s=\ushort{e}_{h,u_h}=\ushort{e}_{h+1,u_{h+1}}$, that is,  $u_h=u_{h+1}$ and   $\ushort{e}_h=\ushort{e}_{h+1}$. 

Given $\vpie$ and $\vec u$, let
 $T_{\pi}=\cbc{h\in[k-1]:~\ushort{e}_h=\ushort{e}_{h+1}}$ and $S_{\pi,\vec u}=\cbc{h\in T_{\pi}:~u_h=u_{h+1}}$. 
Then,
\begin{align}\label{eq-app-need-label-2}
    \sum_{\vec u\in [m]^k}\bc{\frac{m+\delta}{2m+\delta}}^{-\sum_{s\in\vec\pi_{\vec u}}\indic{\pspiu=2}}=\sum_{\vec u\in [m]^k}\big(\frac{m+\delta}{2m+\delta}\big)^{-\abs{S_{\pi,\vec u}}}.
\end{align}
Given $T_{\pi}$, for each $h\in T_{\pi}$ and $(u_i)_{i\in[h]}$, among all $m$ choices of $u_{h+1}$, there is precisely one that satisfies $u_h=u_{h+1}$.  Hence, when $\vec u$ taking a uniform distribution on $[m]^k$, \ch{and} given  $\abs{T_{\pi}}$, $\abs{S_{\pi,\vec u}}$ follows a binomial distribution with $\abs{T_{\pi}}$ trials and success probability ${1/m}$. Consequently,
\begin{align}\label{eq-app-need-label-3}
    \expec\Bigg[\sum_{\vec u\in [m]^k}\bc{\frac{m+\delta}{2m+\delta}}^{-\abs{S_{\pi,\vec u}}}~&\Big|~\abs{T_{\pi}}\Bigg]=m^k\sum_{i=0}^{\abs{T_{\pi}}}\bc{\frac{1}{m}}^{i}\bc{1-\frac{1}{m}}^{\abs{T_{\pi}}-i}\bc{\frac{m+\delta}{2m+\delta}}^{-i}\nn\\
    &=m^k\bc{1-\frac{1}{m}+\frac{1}{m}\frac{2m+\delta}{m+\delta}}^{\abs{T_{\pi}}}=m^u\bc{\frac{m+1+\delta}{m+\delta}}^{\abs{T_{\pi}}}.
\end{align}
Note that $\abs{T_{\pi}}=\sum_{s\in\vec\pi}\indic{\pspi=2}=N_{\srO\srY}$ (recall \eqref{def_NOY}). Combining \eqref{eq-app-need-label-1} to  \eqref{eq-app-need-label-3} yields that
\begin{align*}
    &\PP\bc{\vpie\subseteq \PAnbmdel}\\
    &\qquad=\Theta^{\frac{\log^2 n}{\bdt }}\bc{\frac{m+\delta}{2m+\delta}}^{k-N_{\srO\srY}}\bc{\frac{m+1+\delta}{2m+\delta}}^{N_{\srO\srY}}\prod_{i\in[k]}\frac{1}{(\pi_{i-1}\wedge\pi_i)^{1-\chi}(\pi_{i-1}\vee\pi_i)^\chi },
\end{align*}
which is exactly the same as \eqref{eq_hd_prob_one_path}, except that $\PAndmdel$ is replaced by $\PAnbmdel$. By the same adaptation, we can show that \eqref{eq-path-prob-upper-a1-j} and \Cref{lem_prob_pie_cond} hold when replacing $\PAndmdel$ by $\PAnbmdel$. The rest of the proof of \Cref{thm-log-PA-delta>0} for $\PAnbmdel$ is identical to that for $\PAndmdel$. {This completes the proof for $\PAndmdel$.}{}
\smallskip

For $\PAnamdel$, by \cite[Theorem 5.27]{Hofs24}, for $m\geq 2$, $\PAnamdel$ is a connected graph whp.
Recall that we {have}{} defined $\SPU$ in \Cref{re-pama-spu}. We note from \cite[Proposition 3.2]{GarHazHofRay22} that $\PAnaonedelcol$ has the same distribution as $\SPUm$ with weights $(\psi_j)_{2\leq j\leq mn}$, where $(\psi_j)_{2\leq j\leq mn}$ are independent beta random
variables with parameters $\alpha=1+\delta/m$ and $\beta_j=(2+\delta/m)(j-1)$. Denote the probability $\PP$ conditionally on $(\psi_j)_{j\in[mn]}$ {by $\PP_{mn}$}. Now, in {the definition of the edge set in}{} \Cref{def:edge-set}, we set $j_h\leq \ell_h$ instead of $j_h\leq \ell_h-1$.  By \Cref{re-pama-spu}, when considering $\PAnaonedelcol$ as this $\SPUm$, in $\PAnaonedelcol$,
\eqan{\label{eq_con_prob_present-a}
\PP_{mn}\bc{(\ell,1,j)\in \PAnaonedelcol}=\PP_n\bc{S_{\ell}^{\sss(n)} U_{\ell,1}\in I_j^{\sss(n)}}=\frac{\varphi_j^{\sss(n)}}{S_{\ell}^{\sss(n)}}=\psi_j\prod_{i=j+1}^{\ell}(1-\psi_i).
}
Then, {analogously}{} to \eqref{eq_hd_lpath_probability},
\begin{align*}
    \PP_{mn}\bc{\vpie\subseteq \PAnamdel}=\prod_{s=2}^{mn} \psi_s^{\pspiu}(1-\psi_s)^{\qspiu},
\end{align*}
where
$\pspiu=\sum_{e\in \vpie_{\vec u}}\indic{s=\ushort{e}}$, but now
$\qspiu=\sum_{e\in \vpie_{\vec u}}\indic{s\in (\ushort{e},\bar{e}]}$. 
The rest of the proof of \Cref{thm-log-PA-delta>0} for $\PAnamdel$ is identical to that of $\PAnbmdel$.
\end{proof}

\section{Crude path upper bound: Proof of Lemma~\ref{lem-gen-sum-product-kappa}}
\label{sec-path-upper-bound}
In this section, we derive the crude upper bound on the  kernel products in \Cref{lem-gen-sum-product-kappa}.

\begin{proof}[\ch{Proof of \Cref{lem-gen-sum-product-kappa}}]
For $\ell=1$, \eqref{kappa-PAM-fin-def-form-a-rep} gives that
\begin{align*}
    \sum_{\vec\pi\in\NA_{\nVer_1,\nVer_2}^{\sss \ell}}\prod_{i=1}^{\ell}\kappa\big((\pi_{i-1}/n,\lb^\pi_{i-1}),(\pi_{i}/n,\lb^\pi_i)\big)=\kappa\big((\nVer_1/n,\lb^\pi_0),(\nVer_2/n,\lb^\pi_1)\big)\leq \Theta \zeta^{-1}.
\end{align*}
Hence, \eqref{eq-gen-sum-product-kappa} holds for $\ell=1$.

For $\ell\geq 2$, by \eqref{eq-change-label-add-theta1}, for each $\vec\pi\in\NA_{\nVer_1,\nVer_2}^{\sss \ell}$,
\begin{align*}
    \prod_{i=1}^{\ell}\kappa\big((\pi_{i-1}/n,\lb^\pi_{i-1}),(\pi_{i}/n,\lb^\pi_i)\big)\leq \Theta \zeta^{-2}\prod_{i=2}^{\ell-1}\kappa\big((\pi_{i-1}/n,\lb^\pi_{i-1}),(\pi_{i}/n,\lb^\pi_i)\big).
\end{align*}
Summing the \ch{above inequality} over all $\vec\pi\in\NA_{\nVer_1,\nVer_2}^{\sss \ell}$,
\begin{align}
&\sum_{\vec\pi\in\NA_{\nVer_1,\nVer_2}^{\sss \ell}}\prod_{i=1}^{\ell}\kappa\big((\pi_{i-1}/n,\lb^\pi_{i-1}),(\pi_{i}/n,\lb^\pi_i)\big)\label{eq-gen-sum-product-kappa-2-2}\\
&\qquad\leq \Theta \zeta^{-2}\sum_{\substack{\pi_i\in [n],\\i\in[\ell-1]}}\prod_{i=2}^{\ell-1}\kappa\big((\pi_{i-1}/n,\lb^\pi_{i-1}),(\pi_{i}/n,\lb^\pi_i)\big).\nn
\end{align}
Note from \eqref{decrease-kappa} and \eqref{decrease-kappa-0} that
\begin{align*}
    \kappa\big((\pi_{i-1}/n,\lb^\pi_{i-1}),(\pi_{i}/n,\lb^\pi_i)\big)\leq &n\int_{(\pi_i-1)/n}^{\pi_i/n}\kappa\big((x_{i-1},\rmq_{i-1}),(x_i,\rmq_i)\big)\dint x_i,
\end{align*}
and
\begin{align*}
        \kappa\big((\pi_1/n,\lb^\pi_1),(\pi_2/n,\lb^\pi_2)\big)\leq &n^2\int_{(\pi_1-1)/n}^{\pi_1/n}\int_{(\pi_2-1)/n}^{\pi_2/n}\kappa\big((x_1,\rmq_1),(x_2,\rmq_2)\big)\dint x_1\dint x_2,
\end{align*}
where $\rmq_i=q(x_i-x_{i-1})$ for $i\in[\ell]$ (recall the definition of $q(\cdot)$ in  \eqref{def_label_direction}).
Hence, we conclude from \eqref{eq-gen-sum-product-kappa-2-2} that
\begin{align}\label{eq-gen-sum-product-kappa-2}
\sum_{\vec\pi\in\NA_{\nVer_1,\nVer_2}^{\sss \ell}}&\prod_{i=1}^{\ell}\kappa\big((\pi_{i-1}/n,\lb^\pi_{i-1}),(\pi_{i}/n,\lb^\pi_i)\big)\nn\\
&\leq \Theta \zeta^{-2}n^{\ell-1}\int_0^1\cdots\int_0^1\prod_{i=2}^{\ell-1}\kappa\big((x_{i-1},\rmq_{i-1}),(x_i,\rmq_i)\big)\dint x_1\cdots \dint x_{\ell-1}.
\end{align}
Consequently, combining    \eqref{eq-gen-sum-product-kappa-2} with \eqref{eq-new-Tkappa-2} yields that
\begin{align*}
        &\sum_{\vec\pi\in\NA_{\nVer_1,\nVer_2}^{\sss \ell}}\prod_{i=1}^{\ell}\kappa\big((\pi_{i-1}/n,\lb^\pi_{i-1}),(\pi_{i}/n,\lb^\pi_i)\big)\leq \Theta \zeta^{-2} n^{\ell-1}\langle {\bf 1},\bfT_\kappa^{\ell-2} {\bf 1}\rangle\leq \Theta\zeta^{-2}\nu^\ell n^{\ell-1},
\end{align*}
i.e., \eqref{eq-gen-sum-product-kappa} holds for $\ell\geq 2$.
\end{proof}

\section{Restoring avoidance constraints: Proof of Lemma~\ref{lem_hd_sum_restrict_path-sum-gen-nka-compare}}
\label{app-E}
In this section, we prove \Cref{lem_hd_sum_restrict_path-sum-gen-nka-compare}. Recall that in \Cref{lem_hd_sum_restrict_path-sum-gen-nka}, we established a lower bound on the summation of kernel products over paths in $\NP_{\nVer_1,\nVer_2}^{\sss k}$. 
Note that in $\TSA_{\nVer_1,\nVer_2}^{\sss k}$, we have $\pi_i\not\in B_r^{\sss(G_n)}(\Ver_1)\cup B_r^{\sss(G_n)}(\Ver_2)$ for all $i\in[k-1]$, while also 
$\pi_i\neq \pi_j$ for all $i\neq j$. In $\NP_{\nVer_1,\nVer_2}^{\sss k}$, such avoidance constraints are not present, so we wish to reinstate these avoidance constraints and eliminate the extra terms in \eqref{eq_prod_kappa_need_in_var-F12}.

In this proof, we separately address the contributions of cases involving self-loops, cycles of length at least $2$ and intersections with the $r$-neighborhoods of $\Ver_1$ and $\Ver_2$. This distinction arises because we use \Cref{lem-gen-sum-product-kappa} in the proofs, which requires the path to be neighbor-avoiding. We compare summations of kernel products over paths in $\NP_{\nVer_1,\nVer_2}^{\sss k}$ and $\TSA_{\nVer_1,\nVer_2}^{\sss k}$ with those in $\NA_{\nVer_1,\nVer_2}^{\sss k}$ in \Cref{sec-proof-eq_hd_sum_restrict_path-sum-gen-1-2} and \Cref{sec-proof-eq_hd_sum_restrict_path-sum-gen-1-1}, respectively.

\subsection{Putting back the self-loops}\label{sec-proof-eq_hd_sum_restrict_path-sum-gen-1-2}
We start by deriving a upper bound 
on the contribution of \ch{paths} containing self-loops, that is, the paths in $\NP_{\nVer_1,\nVer_2}^{\sss k}$ for some $\nVer_1\in F_1$ and $\nVer_2\in F_2$ that are not in $ \NA_{\nVer_1,\nVer_2}^{\sss k}$. 
The underlying idea is as follows: For each $i\in [k]$, the number of choices of $\pi_i$ is around $(1-\zeta)n$, with only one option that satisfies $\pi_i=\pi_{i-1}$; for each self-loop, there are at most $k$ positions where it can occur. Hence, the contribution of paths with $\ell$ self-loops to \eqref{eq_prod_kappa_need_in_var-F12} is about $k^\ell/n^\ell$ smaller than the main contribution.
\begin{figure}[htbp]
  \centering\def\svgwidth{0.7\columnwidth} 
  \includesvg{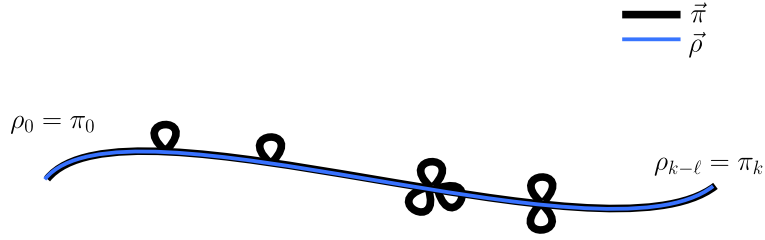}
  \caption{A sample of the decomposition}
  \label{pic_path_decompose_pi}
\end{figure}

 

{Each path}{} $\vec\pi\in \NP_{\nVer_1,\nVer_2}^{\sss k}$ can be decomposed into a self-avoiding path and several self-loops (see \Cref{pic_path_decompose_pi}). 

Let $\SL=\cbc{i\in[k]:~\pi_i=\pi_{i-1}}$ be the set of indices of the self-loops in $\vec\pi$ and $\ell$ be the cardinality of $\SL$. Define $\vec\rho:=(\rho_0,\ldots,\rho_{k-\ell})$ as the path obtained from $\vec\pi$ through removing all its self-loops. Then $\vec\rho\in \NA^{k-\ell}_{\pi_0,\pi_k}$. 


Furthermore, we can decompose $\prod_{i=1}^k\nka\big((\pi_{i-1}/n,\lb^\pi_{i-1}),(\pi_{i}/n,\lb^\pi_i)\big)$ according to the previous path decomposition as 
\begin{align}\label{eq-path-decomp-self-loop}
&\prod_{i=1}^k\nka\big((\pi_{i-1}/n,\lb^\pi_{i-1}),(\pi_{i}/n,\lb^\pi_i)\big)\\
    &\quad=\prod_{i=1}^{k-\ell}\nka\big((\rho_{i-1}/n,\lb^\rho_{i-1}),(\rho_{i}/n,\lb^\rho_i)\prod_{i\in \SL} \nka\big((\pi_{i-1}/n,\lb^\pi_{i-1}),(\pi_{i}/n,\lb^\pi_i)\big).\nn
\end{align}
Since $\nVer_1\in F_1$, $\nVer_2\in F_2$ and $\vec\pi\in \NP_{\nVer_1,\nVer_2}^{\sss k}$, $\pi_i\geq \zeta n$ for all $0\leq i\leq k$. Then, by \eqref{eq-change-label-add-theta1-new}, 
\begin{align*}
\nka\big((\pi_{i-1}/n,\lb^\pi_{i-1}),(\pi_{i}/n,\lb^\pi_i)\big)\leq \Theta\zeta^{-1}.    
\end{align*}
Applying this inequality to all $i\in\SL$, \eqref{eq-path-decomp-self-loop} {is bounded by}
\begin{align*}
    \prod_{i=1}^k\nka\big((\pi_{i-1}/n,\lb^\pi_{i-1}),(\pi_{i}/n,\lb^\pi_i)\big)\leq\Theta ^{\ell}\zeta^{-\ell}\prod_{i=1}^{k-\ell}\nka\big((\rho_{i-1}/n,\lb^\rho_{i-1}),(\rho_{i}/n,\lb^\rho_i)\big).
\end{align*}
Then, summing over all $ \vec\pi\in \NP_{\nVer_1,\nVer_2}^{\sss k}$ given $\SL=\cbc{s_1,\ldots,s_\ell}$, and combining with \Cref{lem-gen-sum-product-kappa} yields that, for $\ell\geq 1$, 
\begin{align*}
&\sum_{\substack{\vec\pi\in \NP_{\nVer_1,\nVer_2}^{\sss k},\\\SL=\cbc{s_1,\ldots,s_\ell}}}\prod_{i=1}^k\nka\big((\pi_{i-1}/n,\lb^\pi_{i-1}),(\pi_{i}/n,\lb^\pi_i)\big)\\
    &\qquad\leq \Theta ^\ell\zeta^{-\ell}\sum_{\vec\pi\in \NA^{k-\ell}_{\nVer_1,\nVer_2}}\prod_{i=1}^{k-\ell}\nka\big((\rho_{i-1}/n,\lb^\rho_{i-1}),(\rho_{i}/n,\lb^\rho_i)\big)\leq \Theta^\ell\zeta^{-\ell-2}\nu^{k-\ell}n^{k-\ell-1}
\end{align*}
Given $\ell\in [k]$, since $s_i\in[k]$ for $i\in[\ell]$, there are at most $k^\ell$ choices of $\SL$. Summing over all $\nVer_1\in F_1$, $\nVer_2\in F_2$, all $\ell$ from $1$ to $k$ and all possible $\SL$ of size $\ell$, we conclude that 
\begin{align*}
    &\sum_{\substack{\nVer_1\in F_1,\\\nVer_2\in F_2}}\sum_{\substack{\vec\pi\in \NP_{\nVer_1,\nVer_2}^{\sss k},\\\abs{\SL}\geq 1}}\prod_{i=1}^k\nka\big((\pi_{i-1}/n,\lb^\pi_{i-1}),(\pi_{i}/n,\lb^\pi_i)\big)\leq \abs{F_1}\abs{F_2}\sum_{\ell=1}^k k^\ell\Theta^\ell\zeta^{-\ell-2}\nu^{k}n^{k-\ell-1}.
\end{align*}
Let $\Theta^{\star,1}$ be a uniform upper bound of $\Theta$ up to now.
Let $n_1\geq \zeta^{-2}$ be a number such that $2\Theta^{\star,1} \zeta^{-1}(\log_\nu n) n^{-1}<\frac{1}{2}$ for all $n\geq n_1$. Then, for $n\geq n_1$ and $k\leq 2\log_\nu n$, 
\begin{align}\label{eq-con-self-loop}
    \sum_{\substack{\nVer_1\in F_1,\\\nVer_2\in F_2}}&\sum_{\substack{\vec\pi\in \NP_{\nVer_1,\nVer_2}^{\sss k},\\\abs{\SL}\geq 1}}\prod_{i=1}^k\nka\big((\pi_{i-1}/n,\lb^\pi_{i-1}),(\pi_{i}/n,\lb^\pi_i)\big)\leq \abs{F_1}\abs{F_2}\sum_{\ell=1}^\infty k^\ell\Theta ^{\ell}\zeta^{-\ell-2}\nu^k n^{k-1-\ell}\nn\\
    &\leq\abs{F_1}\abs{F_2}n^{k-1}\nu^k\frac{\Theta \zeta^{-3}kn^{-1}}{1-\Theta^{\star,1} \zeta^{-1}kn^{-1}}=\Theta \abs{F_1}\abs{F_2}\zeta^{-3}k\nu^k n^{k-2}.
\end{align}
Note that
\begin{align*}
    &\sum_{\substack{\nVer_1\in F_1,\\\nVer_2\in F_2}}\sum_{\vec\pi\in \NP_{\nVer_1,\nVer_2}^{\sss k}\backslash\NA_{\nVer_1,\nVer_2}^{\sss k}}\prod_{i=1}^k\nka\big((\pi_{i-1}/n,\lb^\pi_{i-1}),(\pi_{i}/n,\lb^\pi_i)\big)\\
    &\qquad\leq  \sum_{\substack{\nVer_1\in F_1,\\\nVer_2\in F_2}}\sum_{\substack{\vec\pi\in \NP_{\nVer_1,\nVer_2}^{\sss k},\\\abs{\SL}\geq 1}}\prod_{i=1}^k\nka\big((\pi_{i-1}/n,\lb^\pi_{i-1}),(\pi_{i}/n,\lb^\pi_i)\big).
\end{align*}
Then, combining \eqref{eq-con-self-loop} with \eqref{eq_prod_kappa_need_in_var-F12} yields that
\zhu{\begin{align*}
&\frac{\sum_{\substack{\nVer_1\in F_1,\\\nVer_2\in F_2}}\sum_{\vec\pi\in \NP_{\nVer_1,\nVer_2}^{\sss k}\backslash\NA_{\nVer_1,\nVer_2}^{\sss k}}\prod_{i=1}^k\nka\big((\pi_{i-1}/n,\lb^\pi_{i-1}),(\pi_{i}/n,\lb^\pi_i)\big)}{\sum_{\substack{\nVer_1\in F_1,\\\nVer_2\in F_2}}\sum_{\vec\pi\in \NP_{\nVer_1,\nVer_2}^{\sss k}}\prod_{i=1}^k\nka\big((\pi_{i-1}/n,\lb^\pi_{i-1}),(\pi_{i}/n,\lb^\pi_i)\big)}\\
 &\qquad\leq \Theta\frac{\abs{F_1}\abs{F_2}\zeta^{-3}k\nu^k n^{k-2}}{\abs{F_1}\abs{F_2}{\zeta}^2 c_{\zeta}\nu^{k(1-\vep/4)}n^{k-1}}=\Theta{\zeta}^{-5} c_{\zeta}^{-1}k\nu^{k\vep /4}n^{-1}.
%
%
\end{align*}}
Hence, we conclude that, uniformly in the choices of $\Ver_1$ and $\Ver_2$, there exists an $n_2>\zeta^{-2}$,  depending only on $m,\delta,M,r,\vep$ and $\zeta$, such that for any $n>n_2$ and $k\leq 2\log_\nu n$, on the event $\GNW$,
\begin{align}\label{eq-frac-kappa-1/2}
   &\frac{\sum_{\substack{\nVer_1\in F_1,\\\nVer_2\in F_2}}\sum_{\vec\pi\in \NP_{\nVer_1,\nVer_2}^{\sss k}\backslash\NA_{\nVer_1,\nVer_2}^{\sss k}}\prod_{i=1}^k\nka\big((\pi_{i-1}/n,\lb^\pi_{i-1}),(\pi_{i}/n,\lb^\pi_i)\big)}{\sum_{\substack{\nVer_1\in F_1,\\\nVer_2\in F_2}}\sum_{\vec\pi\in \NP_{\nVer_1,\nVer_2}^{\sss k}}\prod_{i=1}^k\nka\big((\pi_{i-1}/n,\lb^\pi_{i-1}),(\pi_{i}/n,\lb^\pi_i)\big)}\\
   &\qquad\leq \Theta{\zeta}^{-5} c_{\zeta}^{-1}(\log n) n^{\vep /2-1} \leq \frac{1}{2}.\nn
\end{align}
Hence, for $n_0\geq n_2$, if $n\geq n_0$,
\begin{align}\label{eq_hd_sum_restrict_path-sum-gen-1-2-new}
    &\sum_{\substack{\nVer_1\in F_1,\\\nVer_2\in F_2}}\sum_{\vec\pi\in \NA_{\nVer_1,\nVer_2}^{\sss k}}\prod_{i=1}^k\nka\big((\pi_{i-1}/n,\lb^\pi_{i-1}),(\pi_{i}/n,\lb^\pi_i)\big)\\
    &\qquad\geq \frac{1}{2}\sum_{\substack{\nVer_1\in F_1,\\\nVer_2\in F_2}}\sum_{\vec\pi\in \NP_{\nVer_1,\nVer_2}^{\sss k}}\prod_{i=1}^k\nka\big((\pi_{i-1}/n,\lb^\pi_{i-1}),(\pi_{i}/n,\lb^\pi_i)\big)\nn.
\end{align}

\subsection{Putting back longer cycles and intersections}\label{sec-proof-eq_hd_sum_restrict_path-sum-gen-1-1}
In this section, we bound the summations of kernel products over paths in$\TSA_{\nVer_1,\nVer_2}^{\sss k}$ from below by those in $\NA_{\nVer_1,\nVer_2}^{\sss k}$.

For this purpose, we put back the avoidance constraints that  $\vec\pi\in \TSA_{\nVer_1,\nVer_2}^{\sss k}$ neither contains any cycle of length at least $2$, nor intersects with the $r$-neighborhoods of $\Ver_1$ and $\Ver_2$. We do these one by one.
\paragraph{The avoidance constraints: bringing back the cycles of length at least $2$}
We first calculate the contribution of the non-self-avoiding case where $\vec\pi\in \NA_{\nVer_1,\nVer_2}^{\sss k}$ satisfies that $\pi_{j_1}=\pi_{j_2}$ for some $0\leq j_1<j_1+1< j_2\leq k$.  Here we do not allow $j_2=j_1+1$ since $\vec\pi$ is neighbor-avoiding.

In this case, the path contains a cycle of length $j_2-j_1$. 
Given $0\leq j_1<j_1+1< j_2\leq k$, we claim that, for $\nVer_1\in F_1$ and $\nVer_2\in F_2$,
\begin{align}\label{eq-bring-back-cycle-a1a2}
   \sum_{\substack{\vec\pi\in \NA_{\nVer_1,\nVer_2}^{\sss k},\\\pi_{j_1}=\pi_{j_2}}} \prod_{i=1}^k\nka\big((\pi_{i-1}/n,\lb^\pi_{i-1}),(\pi_{i}/n,\lb^\pi_i)\big)\leq \Theta n^{k-2}\zeta^{-6}\nu^{k}.
\end{align}
We prove \eqref{eq-bring-back-cycle-a1a2} using a case analysis as follows:
\begin{itemize}
    \item[$\rhd$] If $j_1=0$ and $j_2=k$, then $\nVer_1=\nVer_2$. In this scenario, the lhs of \eqref{eq-bring-back-cycle-a1a2} is zero as $\cbc{\nVer_1}\not\in F_1\cap F_2=\emptyset$. Hence, \eqref{eq-bring-back-cycle-a1a2} holds.
    \item[$\rhd$] If $j_1=0$ and $j_2<k$, then we can divide $\vec\pi$ into two subpaths $\vec\pi(1)=\bc{\pi_0,\ldots,\pi_{j_2}}$ and $\vec\pi(2)=\bc{\pi_{j_2},\ldots,\pi_k}$. Consequently,
\begin{align*}
   &\sum_{\substack{\vec\pi\in \NA_{\nVer_1,\nVer_2}^{\sss k},\\\pi_{j_1}=\pi_{j_2}}} \prod_{i=1}^k\nka\big((\pi_{i-1}/n,\lb^\pi_{i-1}),(\pi_{i}/n,\lb^\pi_i)\big)\\
   &\qquad\leq \sum_{\vec\pi(1)\in \NA_{\nVer_1,\nVer_1}^{j_2}} \prod_{i=1}^{j_2}\nka\big((\pi_{i-1}/n,\lb^\pi_{i-1}),(\pi_{i}/n,\lb^\pi_i)\big)\\
   &\qquad\quad\times\sum_{\vec\pi(2)\in \NA_{\nVer_1,\nVer_2}^{k-j_2}} \prod_{i=j_2+1}^k\nka\big((\pi_{i-1}/n,\lb^\pi_{i-1}),(\pi_{i}/n,\lb^\pi_i)\big),
\end{align*}
and then \eqref{eq-bring-back-cycle-a1a2} follows directly by applying \Cref{lem-gen-sum-product-kappa} to the above inequalities.

\item[$\rhd$] If $j_1>0$ and $j_2=k$, \ch{then,} by an analogous argument as in the previous case, \eqref{eq-bring-back-cycle-a1a2} holds.

\item[$\rhd$] For $1\leq j_1\leq j_1+1\leq j_2\leq k-1$, we can divide $\vec\pi$ into three subpaths $\vec\pi(1)=\cbc{\pi_0,\ldots,\pi_{j_1}}$, $\vec\pi(2)=\cbc{\pi_{j_1},\ldots,\pi_{j_2}}$ and $\vec\pi(3)=\cbc{\pi_{j_2},\ldots,\pi_k}$. Then, 
\begin{align*}
   \sum_{\substack{\vec\pi\in \NA_{\nVer_1,\nVer_2}^{\sss k},\\\pi_{j_1}=\pi_{j_2}}}& \prod_{i=1}^k\nka\big((\pi_{i-1}/n,\lb^\pi_{i-1}),(\pi_{i}/n,\lb^\pi_i)\big)\\
   &\quad\leq \sum_{b=\ceil{\zeta n}}^n\sum_{\vec\pi(1)\in \NA_{\nVer_1,b}^{j_1}} \prod_{i=1}^{j_1}\nka\big((\pi_{i-1}/n,\lb^\pi_{i-1}),(\pi_{i}/n,\lb^\pi_i)\big)\\
   &\qquad\times\sum_{\vec\pi(2)\in \NA_{b,b}^{j_2-j_1}} \prod_{i=j_1+1}^{j_2}\nka\big((\pi_{i-1}/n,\lb^\pi_{i-1}),(\pi_{i}/n,\lb^\pi_i)\big)\\
   &\qquad\times\sum_{\vec\pi(3)\in \NA_{b,\nVer_2}^{k-j_2}} \prod_{i=j_2+1}^k\nka\big((\pi_{i-1}/n,\lb^\pi_{i-1}),(\pi_{i}/n,\lb^\pi_i)\big),
\end{align*}
and then \eqref{eq-bring-back-cycle-a1a2} follows directly by applying \Cref{lem-gen-sum-product-kappa} to the {above three contributions}.
\end{itemize}
Hence, we conclude that \eqref{eq-bring-back-cycle-a1a2} holds.
\smallskip

Summing \eqref{eq-bring-back-cycle-a1a2} over all $\nVer_1\in F_1$, $\nVer_2\in F_2$ and all $0\leq j_1<j_1+1<j_2\leq k$ gives that
\begin{align}\label{eq-bring-back-cycle}
   &\sum_{\substack{\nVer_1\in F_1,\\\nVer_2\in F_2}}\sum_{j_1=0}^{k-2}\sum_{j_2=j_1+2}^{k}\sum_{\substack{\vec\pi\in \NA_{\nVer_1,\nVer_2}^{\sss k},\\\pi_{j_1}=\pi_{j_2}}} \prod_{i=1}^k\nka\big((\pi_{i-1}/n,\lb^\pi_{i-1}),(\pi_{i}/n,\lb^\pi_i)\big)\\
   &\qquad\leq \Theta\abs{F_1}\abs{F_2}\zeta^{-6}k^2\nu^{k}n^{k-2}.\nn
\end{align}

\paragraph{The avoidance constraints: \zhu{restoring} the intersections with the $r$-neighborhoods} We continue by calculating the contribution of the case where the neighbor-avoiding path $\vec\pi$ intersects $B_r^{\sss(G_n)}(\Ver_1)\cup B_r^{\sss(G_n)}(\Ver_2)$ at some points other than $\pi_0$ and $\pi_k$. 
Given a $j\in [k-1]$ such that $\pi_j\in B_r^{\sss(G_n)}(\Ver_1)\cup B_r^{\sss(G_n)}(\Ver_2)$, we can divide $\vec\pi$ into two subpaths $\vec\pi(1)=\bc{\pi_0,\ldots,\pi_j}$ and $\vec\pi(2)=\bc{\pi_j,\ldots,\pi_k}$. We further note that $M$ is an upper bound for both $\abs{B_r^{\sss(G_n)}(\Ver_1)}$ and $\abs{B_r^{\sss(G_n)}(\Ver_2)}$ when $\big(\bar{B}_r^{\sss(G_n)}(\Ver_1), \bar{B}_r^{\sss(G_n)}(\Ver_2)\big)$ is a good pair. Then, on the good event $\GNW$, \Cref{lem-gen-sum-product-kappa} yields that
\begin{align*}
   \sum_{\substack{\vec\pi\in \NA_{\nVer_1,\nVer_2}^{\sss k},\\\pi_j\in B_r^{\sss(G_n)}(\Ver_1)\cup B_r^{\sss(G_n)}(\Ver_2)}} &\prod_{i=1}^k\nka\big((\pi_{i-1}/n,\lb^\pi_{i-1}),(\pi_{i}/n,\lb^\pi_i)\big)\\
   &\leq\sum_{b\in B_r^{\sss(G_n)}(\Ver_1)\cup B_r^{\sss(G_n)}(\Ver_2)} \sum_{\vec\pi(1)\in \NA_{\nVer_1,b}^{j}} \prod_{i=1}^{j}\nka\big((\pi_{i-1}/n,\lb^\pi_{i-1}),(\pi_{i}/n,\lb^\pi_i)\big)\\
   &\quad\times\sum_{\vec\pi(2)\in \NA_{b,\nVer_2}^{k-j}} \prod_{i=j+1}^k\nka\big((\pi_{i-1}/n,\lb^\pi_{i-1}),(\pi_{i}/n,\lb^\pi_i)\big)\\
   &\leq \Theta \zeta^{-4}M\nu^{k} n^{k-2}.
\end{align*}
Summing over all $\nVer_1\in F_1$, $\nVer_2\in F_2$ and all $j\in [k-1]$, we conclude that 
\begin{align}\label{eq-contri-intersect}
    \sum_{\substack{\nVer_1\in F_1,\\\nVer_2\in F_2}}\sum_{j\in[k-1]}&\sum_{\substack{\vec\pi\in \NA_{\nVer_1,\nVer_2}^{\sss k},\\\pi_j\in B_r^{\sss(G_n)}(\Ver_1)\cup B_r^{\sss(G_n)}(\Ver_2)}} \prod_{i=1}^k\nka\big((\pi_{i-1}/n,\lb^\pi_{i-1}),(\pi_{i}/n,\lb^\pi_i)\big)\nn\\
    &\leq \Theta\abs{F_1}\abs{F_2} \zeta^{-4}Mk\nu^{k} n^{k-2}.
\end{align}

Note that 
\begin{align}\label{pth_con_na_without_sa}
    &\sum_{\substack{\nVer_1\in F_1,\\\nVer_2\in F_2}}\sum_{\vec\pi\in \NA_{\nVer_1,\nVer_2}^{\sss k}\backslash\TSA_{\nVer_1,\nVer_2}^{\sss k}}\prod_{i=1}^k\nka\big((\pi_{i-1}/n,\lb^\pi_{i-1}),(\pi_{i}/n,\lb^\pi_i)\big)\\
    &\qquad\leq\sum_{\substack{\nVer_1\in F_1,\\\nVer_2\in F_2}}\sum_{j_1=0}^{k-2}\sum_{j_2=j_1+2}^{k}\sum_{\substack{\vec\pi\in \NA_{\nVer_1,\nVer_2}^{\sss k},\\\pi_{j_1}=\pi_{j_2}}} \prod_{i=1}^k\nka\big((\pi_{i-1}/n,\lb^\pi_{i-1}),(\pi_{i}/n,\lb^\pi_i)\big)\nn\\
    &\qquad\quad+\sum_{\substack{\nVer_1\in F_1,\\\nVer_2\in F_2}}\sum_{j\in[k-1]}\sum_{\substack{\vec\pi\in \NA_{\nVer_1,\nVer_2}^{\sss k},\\\pi_j\in B_r^{\sss(G_n)}(\Ver_1)\cup B_r^{\sss(G_n)}(\Ver_2)}} \prod_{i=1}^k\nka\big((\pi_{i-1}/n,\lb^\pi_{i-1}),(\pi_{i}/n,\lb^\pi_i)\big).\nn
\end{align}
Recall $n_2$, which depends only on $m,\delta,M,r,\vep$ and $\zeta$, as defined in \eqref{eq-frac-kappa-1/2}.  Combining \eqref{eq_prod_kappa_need_in_var-F12},  \eqref{eq_hd_sum_restrict_path-sum-gen-1-2-new}, \eqref{eq-bring-back-cycle}, \eqref{eq-contri-intersect} \zhu{and}  \eqref{pth_con_na_without_sa} yields that, for $n\geq n_2$,
\zhu{\begin{align}\label{eq-sum-sa-tse-dif-con}
    &\frac{\sum_{\substack{\nVer_1\in F_1,\\\nVer_2\in F_2}}\sum_{\vec\pi\in \NA_{\nVer_1,\nVer_2}^{\sss k}\backslash\TSA_{\nVer_1,\nVer_2}^{\sss k}}\prod_{i=1}^k\nka\big((\pi_{i-1}/n,\lb^\pi_{i-1}),(\pi_{i}/n,\lb^\pi_i)\big)}{\sum_{\substack{\nVer_1\in F_1,\\\nVer_2\in F_2}}\sum_{\vec\pi\in \NA_{\nVer_1,\nVer_2}^{\sss k}}\prod_{i=1}^k\nka\big((\pi_{i-1}/n,\lb^\pi_{i-1}),(\pi_{i}/n,\lb^\pi_i)\big)}\\ 
    &\qquad\geq \Theta \frac{\abs{F_1}\abs{F_2}\zeta^{-6}k^2\nu^{k}n^{k-2}+\abs{F_1}\abs{F_2} \zeta^{-4}M k\nu^{k} n^{k-2}}{\abs{F_1}\abs{F_2}{\zeta}^2c_{\zeta}\nu^{k(1-\vep/4)}n^{k-1}}\nn\\
    &\qquad\geq \Theta c_{\zeta}^{-1}\zeta^{-8}Mk^2\nu^{k\vep/4} n^{-1}.\nn
\end{align}}
Note that there exists an $n_0\geq n_2$, depending only on $m,\delta,M,r,\vep$ and $\zeta$, such that for any $n\geq n_0$, with $\Theta^{\star,2}$ the uniform upper bound \zhu{on} $\Theta$ up to now, 
\zhu{\begin{align*}
    (2\log_\nu n)^{-2} n^{1-\vep/2}\geq 2\Theta^{\star,2}   c_{\zeta}^{-1}\zeta^{-8}M.
\end{align*}}
Then,
for $n\geq n_0$ and $k\leq 2\log_\nu n$, the rhs of \eqref{eq-sum-sa-tse-dif-con} has a lower bound of $1/2$. Hence, we conclude that
\begin{align}\label{eq_hd_sum_restrict_path-sum-gen-1-1-new}
    &\sum_{\substack{\nVer_1\in F_1,\\\nVer_2\in F_2}}\sum_{\vpie\in \TESA_{\nVer_1,\nVer_2}^{\sss e,k}}\prod_{i=1}^k\nka\big((\pi_{i-1}/n,\lb^\pi_{i-1}),(\pi_{i}/n,\lb^\pi_i)\big)\\
    &\qquad\geq \frac{1}{2}\sum_{\substack{\nVer_1\in F_1,\\\nVer_2\in F_2}}\sum_{\vec\pi\in \NA_{\nVer_1,\nVer_2}^{\sss k}}\prod_{i=1}^k\nka\big((\pi_{i-1}/n,\lb^\pi_{i-1}),(\pi_{i}/n,\lb^\pi_i)\big).\nn
\end{align}

\begin{proof}[Proof of \Cref{lem_hd_sum_restrict_path-sum-gen-nka-compare}]
    \zhu{\Cref{lem_hd_sum_restrict_path-sum-gen-nka-compare}} follows directly from the combination of \eqref{eq_hd_sum_restrict_path-sum-gen-1-2-new}, \eqref{eq_hd_sum_restrict_path-sum-gen-1-1-new} and \Cref{re-kappa-old-new}.
\end{proof}

\end{appendix}
}

\begin{acks}[Acknowledgments]
The authors would like to thank Joost Jorritsma, Julia Komj\'athy and Rounak Ray for helpful discussions. \zhu{The authors would further like to thank the reviewers for their careful reading and helpful suggestions, especially for pointing out easy proofs for \Cref{pro_hd_spe_con} and \Cref{lem_hd_sum_restrict_path-sum-gen-nka}.}
\end{acks}

\begin{funding}
Both authors are supported by the Netherlands Organisation for Scientific Research (NWO) through the Gravitation NETWORKS grant 024.002.003 and by the National Science Foundation under Grant No. DMS-1928930 while they were in residence at the Simons Laufer Mathematical Sciences Institute in Berkeley, California, during the spring semester. The work of the second author is further supported by the European Union’s Horizon 2020 research and innovation programme under the Marie Skłodowska-Curie grant agreement No. 945045.
\end{funding}

\shortversion{
\begin{supplement}
\stitle{Logarithmic typical distances in
preferential attachment models}
\sdescription{This material includes proofs and additional details supporting this paper.}
\end{supplement}
}


\DeclareRobustCommand{\Hofstad}[3]{#3}
\DeclareRobustCommand{\Esker}[3]{#3}
\bibliographystyle{imsart-number} 
\bibliography{bibRGCN}       
\end{document}